\theoremstyle{plain}
\newtheorem{thm}{\underline{Theorem}}[section]
\newtheorem{lemma}[thm]{\underline{Lemma}}
\newtheorem*{defn*}{\underline{Definition}}
\newtheorem*{cor*}{\underline{Corollary}}
\newtheorem*{note*}{\underline{Note}}
\newcommand{\eps}{\varepsilon}
\newcommand{\al}{\alpha}
\newcommand{\ga}{\gamma}
\newcommand{\la}{\lambda}
\newcommand{\innp}[1]{\left< #1 \right>}
\newcommand{\pr}[1]{\left( #1 \right) }
\newcommand{\Fr}[1]{\ensuremath{\mathfrak{#1}}}
\newtheorem{lemma_bis}{\underline{Lemma bis}}[thm]
\newcommand{\nsub}{\trianglelefteq}
\begin{document}

\title{\textbf{"The Edmonton Notes on Nilpotent Groups" by Philip Hall}}
\author{Mark Pengitore}
\maketitle

\begin{abstract}

This note is a reproduction of the well known and historically significant series of notes called "The Edmonton Notes on Nilpotent Groups" based on lectures given by Philip Hall using the copy found in the Queen Mary College Mathematics Notes series. We use the original numbering of statements, definitions, footnotes, proofs, abstract, etc. to recover as much of the original document as possible.
\end{abstract}
\tableofcontents


\begin{center}
\textbf{Foreword}
\end{center}
These notes are based on a series of lectures given by Professor Philip Hall at the Canadian Mathematical Congress Summer Seminar at the University of Albert from August 12 to August 30 in 1957. They were published by the Canadian Mathematical Society initially and were circulated in small numbers. These notes were then later published in Queen Mary College Mathematics Notes in 1969 \cite{hall_queen_mary}. 
These typewritten notes have largely disappeared from circulation. Fortunately, a version of these notes were included in the 1988 volume of collected works of Philip Hall \cite{hartley}. However since then, this book has become hard to find outside some libraries.

I was fortunate to receive a digital scan of the Edmonton Notes on Nilpotent Groups by Philip Hall which appeared in the Queen Mary College Mathematics Notes published 1969 through email correspondence with Khalid Bou-Rabee while I was in graduate school at Purdue University. Recently I took the initiative to typeset these historically significant notes because I believe it would be of great value for the group theory community and the broader math community to have access to these valuable notes. Since I am not the original author of the notes nor a member of the estate of Philip Hall, I began the process of obtaining permission to post my typeset version of these notes to the arXiv. Through contacting various mathematical organizations such as the American Mathematical Society, the Canadian Mathematical Society, and the Queen Mary University of London mathematics department, I found out that Philip Hall's literary estate was inherited by Dr. Jim Roseblade who is currently an Emeritus Fellow of Jesus College of the University of Cambridge. In 2019, he gave Hall's literary collection to the Cambridge University Library including any copyrights to anything in the collection. Representatives from Cambridge University library told me that they have 38 boxes of work and notes from Philip Hall which is uncatalogued. My original plan was to visit the University of Cambridge to search these boxes for any clues on these notes. Fortunately, representatives from the Queen Mary University of London school of mathematics have told me that they consider the Queen Mary Mathematics Notes to be defunct and have given me permission to post these notes.

As mentioned above, these notes are historically significant and are very influential in the theory of nilpotent groups with over 250 citations. As reviews indicate, much but not all of the material in these lecture notes have appeared in various subsequent research papers and books like Gilbert Baumslag's book "Notes on nilpotent Groups \cite{baumslag}" and "Polycyclic Groups" by Dan Segal \cite{segal} for instance. Therefore, I believe that it is a great value to have these notes available to be read and referenced by whomever may need them. I finish by stressing that I claim no credit for the material found in this note and that it is all originally due to Philip Hall.

In this typesetting of these notes, I strove to stay as true to the original document as much as possible including the original notation and language that were used. I also included the foreword as it originally appeared in the notes. The differences between this version and the original notes are as follows:
\begin{itemize}
    \item The appendix has been changed into footnotes which are referenced and appear close to where they are originally appear as in the original document to ensure readability.
    \item I adjusted spacing throughout the article to improve readability.
    \item In the original notes, curly brackets $\left\{ A \right\}$ indicate a group generated by the set $A$. To improve readability and for conveniences of the modern audience, we use $\left<A\right>$ to indicate the group generated by the set $A$.
    \item I added a notation subsection for readers to reference while going through these notes.
\end{itemize}\

{\bf Acknowledgments} I want to thank a number of people. I thank Khalid Bou-Rabee for providing me with a digital scan of these invaluable notes which allowed me to start the process of giving access to these notes to the broader mathematical community. I thank Jean-Francois Lafont for giving me advice on what mathematical organizations to contact for information on who controlled the intellectual rights of these notes.  I thank Maura Goss who is a rights, licensing, and permissions specialist of the American Mathematical Society for their extended interactions with me via email. Their emails were valuable in ascertaining who to contact in order to obtain permission to post these notes. I thank Frank Bowles of the department of archives and modern manuscripts of Cambridge University Library who directed me to contact Jim Roseblade. I thank Jim Roseblade for explaining the publication history and intellectual rights history of these notes to me. I thank Matt Fayers who is the director of education operations of the school of mathematics of Queen Mary University of London who gave me permission to post these notes. Finally, I thank Peter Kropholler for allowing me to fill in the portions of these notes which were unreadable in my electronic copy with his paper copy of the Edmonton Notes on Nilpotent Groups. \\

\begin{center}\textbf{Notation}
\end{center}
Let $G$ be a group, and let $A, B \leq G$ be nonempty subsets. Let $x,y \in G$ be elements.
\begin{itemize}
    \item $\left<A\right>$ is the subgroup group generated by the set $A$.
    \item $|G|$ is the cardinality of the group $G$.
    \item $[x,y] = x^{-1}y^{-1}xy$ and $[x_1, x_2, \ldots, x_k] = [[x_1, \ldots, x_{k-1}], x_k]$. We define $[A,B]$ to be the subgroup generated by $[a,b]$ where $a \in A$ and $b \in B$. We similarly define $[A_1, \ldots, A_k]$.
    \item $C_G(A)$ is the centralizer of the set $A$ in $G$.
    \item $x^y =y^{-1}xy$ and $A^B$ is the set of all elements $a^b$ for $a \in A$ and $b \in B$.
    \item $G'$ is the commmutator subgroup of $G$.
    \item $G^{(i)}$ is the $i$-th term of the derived series of $G$.
    \item $\gamma_i(G)$ is the $i$-th term of the lower central series, and when $G$ is clear from context, we write $\Gamma_i = \gamma_i(G)$.
    \item $\zeta_i(G)$ is the $i$-th step of the upper central series, and when $G$ is clear from context, we write $Z_i = \zeta_i(G).$\\
\end{itemize}

\begin{center}
\title{\textbf{The Edmonton Notes on Nilpotent Groups}} \\
$\:$ \newline
\author{Philip Hall}\\
\end{center}

These notes reproduce a series of lectures given by Professor P. H. of the Summer Institute of the Canadian Mathematical Congress held at Edmonton in 1957. They were originally published by the Canadian Mathematical Society but have been unobtainable for many years. The present edition differs from the first only in having a rewritten $\S 4.$  We have also added references to some relevant papers that have appear since 1957\\

   We are extremely grateful to the Canadian Mathematical Society and, of course, to Professor Hall for their permission to reissue these notes.\\
   
  \begin{flushright} August 1969\\
   
   Mathematics Department,\\
   Queen Mary College

   \end{flushright}

\section{Central Series}
\begin{defn*}
 Let $H \nsub G$, $K \nsub G$, and $K \leq H$. If $H / K$ is contained in the centre of $G/K$, then $H/ K$ is called a \underline{central factor of $G$}. A group $G$ is called \underline{nilpotent} if and only if it has a finite series of normal subgroups: 
\begin{equation}\label{first_central_series} 
G = G_0 \geq G_1 \geq \ldots \geq G_r = 1\tag{1}
\end{equation} 
such that $G_{i-1} / G_i$ is a central factor of $G$ for each $i = 1, 2, \ldots, r$. The number $r$ is the formal \underline{length} of the series (\ref{first_central_series}). The smallest value of $r$ for any such central series of $G$ is called the \underline{class} of $G$.
\end{defn*}

Thus Abelian groups are the same as nilpotent groups of class $1$, except that unit groups are of class $0$.

\begin{defn*} If $x$ and $y$ are elements of a group $G$, their \underline{commutator} $x^{-1}  y^{-1}  x  y$ is written $[x,y]$. If $X,Y$ are subgroups of $G$, then $[X,Y]$ is the subgroup generated by all the commutators $[x,y]$ with $x \in X$, $y \in Y$.
\end{defn*}

Since $[y,x] = [x,y]^{-1}$, we have $[Y,X] = [X,Y]$. By convention, for $n > 2$, 
$$
[x_1, x_2, \ldots, x_n] = [[x_1, \ldots,x_{n-1}], x_n],
$$ 
and similarly for subgroups.

To say that $H / K$ is a central factor of $G$ is to say that 
$$
x y = y  x\text{ mod } K
$$ for all $x \in H$, $y \in G$, or again, that 
$$
[H,G] \leq K.
$$

Thus (\ref{first_central_series}) is a central series of $G \Leftrightarrow [G_{i-1}, G] \leq G_i$ for $i = 1, 2, \ldots, r$.

\begin{defn*} The \underline{upper central series}
$$
1 =\zeta_0(G) \leq \zeta_1(G) \leq \ldots 
$$ 
of any group $G$ is defined inductively by the rule that $\zeta_{i+1}(G)/ \zeta_{i}(G)$ is the centre of $G / \zeta_i(G)$. Thus $\zeta_1(G)$ is the centre of $G$.

The \underline{lower central series} written 
$$
G= \gamma_1(G) \geq \gamma_2(G) \geq \ldots
$$ 
of $G$ is defined by the rule that $\gamma_{i+1}(G) = [\gamma_i(G), G]$.
\end{defn*}

When $G$ is understood from the context, we write $Z_i = \zeta_i(G)$ and $\Gamma_j = \gamma_j(G)$. Thus $\Gamma_j = [G, \ldots, G]$ with $j$ terms. In particular, $\Gamma_2 = [G, G]$ is the \underline{first derived group} of $G$, often written $G'$.

If (\ref{first_central_series}) is any central series of $G$, then by induction on $i,j$
$$
Z_i \geq G_{r-i}, \quad \Gamma_{j+1} \leq G_j \quad (i,j = 0, 1, 2, \ldots, r).
$$
Hence,
\begin{lemma}\label{lemma1.1}
$G$ is nilpotent of class $c$ if and only if
$$
 Z_0 < Z_1 < \ldots < Z_c = G \Leftrightarrow G = \Gamma_1 > \Gamma_2 > \ldots > \Gamma_{c+1} = 1.
$$
\end{lemma}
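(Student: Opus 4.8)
The plan is to exploit the two chains of inequalities recorded just above the statement: for \emph{any} central series $G = G_0 \geq G_1 \geq \cdots \geq G_r = 1$ one has $Z_i \geq G_{r-i}$ and $\Gamma_{j+1} \leq G_j$ for $i,j = 0,1,\ldots,r$. First I would note the complementary fact that the upper and lower central series are themselves central series once they terminate. Indeed each $Z_i = \zeta_i(G)$ and each $\Gamma_j = \gamma_j(G)$ is normal in $G$; since $\zeta_i(G)/\zeta_{i-1}(G)$ is the centre of $G/\zeta_{i-1}(G)$, the criterion "$H/K$ central $\Leftrightarrow [H,G]\leq K$" gives $[Z_i,G]\leq Z_{i-1}$, and $[\Gamma_i,G]=\Gamma_{i+1}$ holds by definition. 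Hence if $Z_m = G$ then $G = Z_m \geq Z_{m-1} \geq \cdots \geq Z_0 = 1$ is a central series of length $m$, and if $\Gamma_{n+1}=1$ then $G = \Gamma_1 \geq \cdots \geq \Gamma_{n+1} = 1$ is one of length $n$.

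Next I would compare three numbers, all simultaneously finite exactly when $G$ is nilpotent: the class $c$ of $G$ (the least length of a central series), the least $m$ with $Z_m = G$, and the least $n$ with $\Gamma_{n+1}=1$. Feeding a central series of minimal length $c$ into the two inequality chains yields $Z_c \geq G_0 = G$ and $\Gamma_{c+1}\leq G_c = 1$, so $m \leq c$ and $n \leq c$. Conversely the previous paragraph converts the terminating upper (resp. lower) central series into a central series of length $m$ (resp. $n$), so $c \leq m$ and $c \leq n$. Therefore $c = m = n$; in particular $Z_c = G$ with $Z_{c-1}\neq G$, and $\Gamma_{c+1}=1$ with $\Gamma_c\neq 1$.

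It remains only to promote the weak inclusions $Z_0 \leq Z_1 \leq \cdots \leq Z_c$ and $\Gamma_1 \geq \cdots \geq \Gamma_{c+1}$ to strict ones, and this is the single spot needing a separate argument — the "stabilisation" of both series. If $Z_k = Z_{k+1}$ then $G/Z_k$ has trivial centre, whence $Z_{k+1} = Z_{k+2} = \cdots$; likewise if $\Gamma_k = \Gamma_{k+1}$ then $\Gamma_{k+1} = [\Gamma_k,G] = [\Gamma_{k+1},G] = \Gamma_{k+2} = \cdots$. Since $Z_{c-1}\neq Z_c = G$ and $1 = \Gamma_{c+1}\neq \Gamma_c$, stabilisation rules out any equality among $Z_0,\ldots,Z_c$ or among $\Gamma_1,\ldots,\Gamma_{c+1}$, giving the displayed strict chains. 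Running the implications backwards closes the loop: a strict upper central chain of the displayed shape forces $m$ (the least index with $Z_m = G$) to equal $c$, hence the class equals $c$, and symmetrically for the lower central chain. The only real care needed is bookkeeping — keeping $c$, $m$, $n$ notationally distinct until proven equal — since the mathematical content is entirely carried by the two inequality chains together with stabilisation.
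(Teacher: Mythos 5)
Your proof is correct and follows exactly the route the paper intends: the notes establish the inequalities $Z_i \geq G_{r-i}$ and $\Gamma_{j+1} \leq G_j$ for any central series and then simply write ``Hence'' before the lemma, and your argument is precisely the fleshing-out of that step (converting the terminating upper and lower central series back into central series, deducing that the three minimal lengths coincide, and using stabilisation to get strictness). Nothing is missing.
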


For any group $G$, $\zeta_i(G)$ is a characteristic subgroup. Since commutators map into commutators in any homomorphic mapping of a group, $\gamma_j(G)$ is not only characteristic but fully invariant in $G$ (it maps into itself under any endomorphism of $G$). More generally

\begin{lemma}\label{lemma1.2}
$K \nsub G \Rightarrow \gamma_j(G/K) =  K \gamma_j(G)/ K$.\\ In particular, $G / K$ is nilpotent $\Leftrightarrow $ $K$ contains some term of the lower central series of $G$.
\end{lemma}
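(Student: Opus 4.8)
The plan is to prove the displayed identity $\gamma_j(G/K) = K\gamma_j(G)/K$ by induction on $j$, and then derive the ``in particular'' clause as a corollary. For the base case $j=1$, both sides equal $G/K$, so there is nothing to check. For the inductive step, assume $\gamma_j(G/K) = K\gamma_j(G)/K$; I want to show $\gamma_{j+1}(G/K) = K\gamma_{j+1}(G)/K$. By definition of the lower central series, $\gamma_{j+1}(G/K) = [\gamma_j(G/K), G/K]$, and by the inductive hypothesis this equals $[K\gamma_j(G)/K, G/K]$.

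The key computational point is the general fact that for $H \nsub G$ with $K \nsub G$ and $K \le H$, one has $[H/K, G/K] = [H,G]K/K$. This follows because the natural projection $\pi\colon G \to G/K$ is a surjective homomorphism, commutators map to commutators (so $\pi([H,G]) = [\pi(H), \pi(G)] = [H/K, G/K]$ as subgroups), and $\pi^{-1}([H/K,G/K])$ is precisely $[H,G]K$ since $\ker\pi = K$. Applying this with $H = K\gamma_j(G)$ (which is normal in $G$ since $\gamma_j(G)$ is fully invariant, hence normal, and products of normal subgroups are normal), I get
\[
\gamma_{j+1}(G/K) = [K\gamma_j(G), G]K/K.
\]
Now I must identify $[K\gamma_j(G), G]K$ with $\gamma_{j+1}(G)K = [\gamma_j(G), G]K$. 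The inclusion $\supseteq$ is clear. For $\subseteq$, note $[K\gamma_j(G), G]$ is generated by commutators $[ka, g]$ with $k \in K$, $a \in \gamma_j(G)$, $g \in G$; using the standard commutator expansion $[ka,g] = [k,g]^a[a,g]$ and the fact that $[k,g] \in K$ (since $K$ is normal) while $[a,g] \in \gamma_{j+1}(G)$, each such generator lies in $K\gamma_{j+1}(G)$. Hence $[K\gamma_j(G),G] \le K\gamma_{j+1}(G)$, and multiplying by $K$ gives equality of $[K\gamma_j(G),G]K$ with $K\gamma_{j+1}(G)$, completing the induction.

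Finally, for the ``in particular'' statement: $G/K$ is nilpotent if and only if $\gamma_j(G/K) = 1$ for some $j$ (by Lemma~\ref{lemma1.1}), which by the identity just proved is equivalent to $K\gamma_j(G)/K = 1$, i.e.\ $\gamma_j(G) \le K$, i.e.\ $K$ contains the $j$-th term of the lower central series of $G$.

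I expect the main obstacle to be the bookkeeping around the auxiliary identity $[H/K, G/K] = [H,G]K/K$ and the verification that $[K\gamma_j(G),G]K = K\gamma_{j+1}(G)$ --- in particular making sure the commutator-expansion argument is clean and that all the relevant subgroups are genuinely normal so that the products and quotients make sense. The conceptual content is entirely routine; the care needed is purely in the manipulation of commutator subgroups under quotient maps.
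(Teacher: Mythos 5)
Your proof is correct. The paper itself offers no written proof: Lemma~\ref{lemma1.2} is stated immediately after the observation that ``commutators map into commutators in any homomorphic mapping,'' and the intended argument is the one-line consequence of that remark --- $\gamma_j$ is a verbal subgroup (generated by values of the word $[x_1,\dots,x_j]$, cf.\ the corollary to Lemma~\ref{lemma1.5}), and a verbal subgroup of a surjective image is the image of the verbal subgroup, so $\gamma_j(G/K)=\pi(\gamma_j(G))=K\gamma_j(G)/K$ directly. Your induction on $j$ reaches the same conclusion by a longer route: your auxiliary identity $[H/K,G/K]=[H,G]K/K$ is exactly the ``commutators map to commutators'' fact specialised to the binary commutator word, and the extra step of identifying $[K\gamma_j(G),G]K$ with $K\gamma_{j+1}(G)$ via the expansion $[ka,g]=[k,g]^a[a,g]$ is bookkeeping that the direct verbal-subgroup argument avoids entirely (one could also avoid it within your framework by noting $\pi(K\gamma_j(G))=\pi(\gamma_j(G))$ and applying the surjectivity of $\pi$ on commutator subgroups to $H=\gamma_j(G)$ rather than to $K\gamma_j(G)$). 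What your version buys is self-containedness at this point in the notes --- it uses only the definition of the lower central series and the commutator identity of Lemma~\ref{lemma2.1} --- at the cost of more manipulation. The deduction of the ``in particular'' clause is the standard one and matches what the paper intends.
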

\begin{lemma}\label{lemma1.3}
If $G$ is nilpotent of class $c$, then every subgroup and every factor group of $G$ is nilpotent of class at most $c$. If $c > 1$, and $x \in G$, then $H = \left<x,G'\right>$ is of class at most $c-1$.
\end{lemma}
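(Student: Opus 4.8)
The plan is to handle the three assertions in turn, using Lemma~\ref{lemma1.1} as the working characterization of nilpotency and class. For a subgroup $H \leq G$, I would prove by induction on $j$ that $\gamma_j(H) \leq \gamma_j(G)$: the base case $j=1$ is $H \leq G$, and the inductive step follows since $\gamma_{j+1}(H) = [\gamma_j(H), H] \leq [\gamma_j(G), G] = \gamma_{j+1}(G)$, where we use monotonicity of the commutator bracket in each argument. Hence if $\Gamma_{c+1} = 1$ then $\gamma_{c+1}(H) = 1$, so by Lemma~\ref{lemma1.1} $H$ is nilpotent of class at most $c$. For a factor group $G/K$ with $K \nsub G$, Lemma~\ref{lemma1.2} gives $\gamma_{c+1}(G/K) = K\gamma_{c+1}(G)/K = K/K = 1$, so $G/K$ is nilpotent of class at most $c$ as well.

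For the final assertion, assume $c > 1$, fix $x \in G$, and set $H = \langle x, G' \rangle$. I would first observe that $H \nsub G$, since $H$ contains $G'$ and $G/G'$ is abelian, so that $H/G'$ (being a subgroup of the abelian group $G/G'$) is normal in $G/G'$, whence $H \nsub G$. The goal is to show $\gamma_c(H) = 1$, i.e.\ that $H$ has class at most $c-1$. The key step is to prove, by induction on $j \geq 2$, that $\gamma_j(H) \leq \gamma_{j+1}(G)$. For the base case $j = 2$: $\gamma_2(H) = [H, H] = H'$ is generated by commutators $[h_1, h_2]$ with $h_i \in H$; since $H = \langle x, G'\rangle$ and $G' \nsub G$, every element of $H$ is congruent mod $G'$ to a power of $x$, so modulo $[G', G] = \gamma_3(G)$... here one must be a little careful, so the cleanest route is: $H/\gamma_3(G)$ has the property that $x\gamma_3(G)$ is central in it (because $[x, G'] \leq [G, G'] = \gamma_3(G)$ and $[x,x]=1$), and $G'/\gamma_3(G)$ is central in $G/\gamma_3(G)$ hence in $H/\gamma_3(G)$; therefore $H/\gamma_3(G)$ is generated by central elements, so it is abelian, giving $\gamma_2(H) \leq \gamma_3(G)$. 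The inductive step is then immediate: $\gamma_{j+1}(H) = [\gamma_j(H), H] \leq [\gamma_{j+1}(G), G] = \gamma_{j+2}(G)$.

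Granting the claim, we get $\gamma_c(H) \leq \gamma_{c+1}(G) = 1$ (using $c+1 \geq 3$, i.e.\ $c \geq 2$, which is where the hypothesis $c>1$ enters, both to make $\gamma_3(G)$ a meaningful term below $G'$ and to ensure $\gamma_{c+1}(G)=1$), so $H$ is nilpotent of class at most $c-1$ by Lemma~\ref{lemma1.1}. The main obstacle, and the only place demanding genuine care, is the base case $\gamma_2(H) \leq \gamma_3(G)$: one needs the observation that $x$ together with the (now central, mod $\gamma_3(G)$) subgroup $G'$ generates $H$, so that passing to $H/\gamma_3(G)$ kills all commutators among the generators and forces the quotient abelian. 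Everything else is a routine induction on the monotonicity of $[\,\cdot\,,\cdot\,]$.
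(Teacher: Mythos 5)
Your proof is correct, but it takes a different route from the notes. For the final assertion the notes argue entirely on the upper central series: since $\Gamma_{c+1}=1$ one has $G'=\Gamma_2\leq \zeta_{c-1}(G)$, and the general containment $\zeta_i(G)\cap H\leq \zeta_i(H)$ then gives $G'\leq \zeta_{c-1}(H)$, so $H/\zeta_{c-1}(H)$ is generated by the image of $x$, i.e.\ cyclic; the standard fact that a group with cyclic central quotient is abelian (applied to $H/\zeta_{c-2}(H)$) forces $H=\zeta_{c-1}(H)$. You instead work with the lower central series, establishing $\gamma_j(H)\leq\gamma_{j+1}(G)$ for $j\geq 2$ by induction, with the base case $H'\leq\gamma_3(G)$ obtained by observing that $H/\gamma_3(G)$ is generated by mutually commuting elements. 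Both arguments are sound. The notes' version is shorter and leans on two small standard facts ($\zeta_i(G)\cap H\leq\zeta_i(H)$ and the cyclic-central-quotient trick), while yours yields the more precise quantitative containments $\gamma_j(\left<x,G'\right>)\leq\gamma_{j+1}(G)$, which is in the spirit of the commutator calculus of \S 3 (compare Theorem \ref{theorem3.4}(i) and Lemma \ref{lemma3.7}). Your treatment of the first two assertions (monotonicity of $\gamma_j$ under subgroups, and Lemma \ref{lemma1.2} for quotients) is the standard argument the notes leave implicit.
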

\begin{proof}\noindent For $$
G' \leq \zeta_{c-1}(G) \cap H \leq \zeta_{c-1}(H) \cap H \leq \zeta_{c-1}(H)
$$
so $H /\zeta_{c-1}(H)$ is cyclic, hence $1$.
\end{proof}

A direct product of nilpotent groups $G_\alpha$ is nilpotent if and only if there is a bound for the classes of the direct factors.
\begin{lemma}\label{lemma1.4}
If $|G| = p^n > 1$, where $p$ is a prime, then $Z_1 > 1.$ Hence $G$ is nilpotent of class $<n.$
\end{lemma}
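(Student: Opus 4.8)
The plan is to prove the statement in two parts, mirroring its structure. The first part is the classical claim that a nontrivial finite $p$-group has nontrivial centre; the second part deduces nilpotency of class strictly less than $n$ from this.

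For the first part, I would use the class equation. Let $G$ act on itself by conjugation; the orbits are the conjugacy classes, and the singleton classes are exactly the elements of $Z_1 = \zeta_1(G)$. Thus
$$
p^n = |G| = |Z_1| + \sum_j |C_j|,
$$
where the $C_j$ range over the conjugacy classes of size greater than $1$. Each such $C_j$ has size $[G : C_G(x_j)]$ for a representative $x_j$, hence is a power of $p$ that is greater than $1$, so $p$ divides each $|C_j|$. Since $p$ also divides $p^n$ (as $n \geq 1$), it follows that $p \mid |Z_1|$. But $Z_1$ contains the identity, so $|Z_1| \geq 1$, and being divisible by $p$ forces $|Z_1| \geq p > 1$. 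The only external input here is Lagrange's theorem and the orbit–stabilizer count, both of which are standard and available at this point in the notes.

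For the second part, I would argue by induction on $n$ that a group of order $p^n$ is nilpotent of class at most $n-1$ (interpreting class $0$ for the trivial group when $n = 0$, or simply handling $n = 1$ as the abelian base case). If $|G| = p$, then $G$ is cyclic, hence abelian, hence of class $1 = n - 1$... wait, for $n=1$ the claimed bound ``class $< n$'' would read class $< 1$, i.e. class $0$; but $G$ of order $p$ has class $1$. So the correct reading is that the bound ``class $< n$'' is asserted for $n > 1$, or one takes the base case $n = 1$ as giving class $\leq 1$. I would set up the induction cleanly: for $n \geq 1$, a group of order $p^n$ has class at most $\max(1, n-1)$, equivalently class $< n$ whenever $n \geq 2$. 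Given $|G| = p^n$ with $n \geq 2$, by the first part $Z_1 > 1$, so $|G/Z_1| = p^m$ with $m < n$; by the inductive hypothesis $G/Z_1$ is nilpotent of class at most $m$. Then lifting a central series of $G/Z_1$ through the quotient map and appending the term $Z_1 \geq 1$ shows $\zeta_{m+1}(G) = G$, so $G$ has class at most $m + 1 \leq n$. A small additional argument — or a cleaner phrasing using Lemma \ref{lemma1.2} applied to the upper central series, or noting $\zeta_i(G/Z_1) = \zeta_{i+1}(G)/Z_1$ — then sharpens this to class $\leq n - 1$ when $n \geq 2$, since $G/Z_1$ actually has class $< m$ unless $m \leq 1$, and in the edge cases the bound $n - 1$ is checked directly.

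The main obstacle is not mathematical depth but bookkeeping with the off-by-one in the class bound and the degenerate small cases: making sure the induction hypothesis is stated so that it both feeds itself and yields exactly ``class $< n$''. The cleanest route is probably to prove $\zeta_i(G/Z_1) = \zeta_{i+1}(G)/Z_1$ directly from the definition of the upper central series (this is immediate since $\zeta_1(G/Z_1)$ is the centre of $G/Z_1$, which is $\zeta_2(G)/Z_1$, and then induct), and then combine with Lemma \ref{lemma1.1} to translate the order bound on $G/Z_1$ into a bound on where the upper central series of $G$ terminates.
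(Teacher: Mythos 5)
The paper states Lemma \ref{lemma1.4} without proof, treating it as classical, so there is no in-text argument to compare against; judged on its own, your proposal is the standard and correct one (class equation for $Z_1>1$, then induction on $n$ via $G/Z_1$). You are also right to flag the off-by-one: for $n=1$ the group has class $1$, so the bound ``class $<n$'' is really meant for $n\geq 2$. The one place where your write-up is thinner than it should be is the exclusion of the case $|G/Z_1|=p$: the naive induction only yields class $\leq m+1\leq n$, and to sharpen this to $<n$ you must rule out $m=1$, which requires the standard fact that $G/\zeta_1(G)$ cyclic forces $G$ abelian (whence $Z_1=G$ and $m=0$). You wave at this with ``checked directly,'' but it is the essential ingredient, not mere bookkeeping; note that the notes themselves invoke exactly this fact in the proof of Lemma \ref{lemma1.3} (``$H/\zeta_{c-1}(H)$ is cyclic, hence $1$''), so you could simply cite that step. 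With that point made explicit, the induction closes cleanly: for $n\geq 2$, either $G$ is abelian (class $1\leq n-1$) or $|G/Z_1|=p^m$ with $2\leq m\leq n-1$, and the inductive bound class$(G/Z_1)\leq m-1$ gives class$(G)\leq m\leq n-1$.
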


\begin{defn*} A \underline{word} $\theta(x)$ in the variables $x_1, x_2, \ldots$ is a formal expression 
$$
\theta(x) =  x_{i_1}^{r_1}, x_{i_2}^{r_2}, \ldots x_{i_n}^{r_n}
$$ 
with integers $r_1,r_2,\ldots r_n$. If $G$ is any group, $\theta(G)$ is the subgroup generated by all elements of $G$ expressible in the form $\theta(a)$ for suitable choices of $a_1, a_2, \ldots$ in $G$. $\theta(G)$ is fully invariant in $G$.
\end{defn*}

\begin{lemma}\label{lemma1.5}
Let $\theta(x), \phi(y)$ be any two words involving the variables $x_1, \ldots, x_m$ and $y_1,\ldots, y_n$ at most. Define 
$$
\psi(x) = [\theta(x_1, \ldots, x_m), \phi(x_{m+1}, \ldots, x_{m+n})].
$$ Then for any group $G$, $$\psi(G) = [\theta(G), \phi(G)].$$
\end{lemma}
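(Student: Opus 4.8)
The plan is to prove the two inclusions $\psi(G) \leq [\theta(G),\phi(G)]$ and $[\theta(G),\phi(G)] \leq \psi(G)$ separately, after first recording that $\psi$ is itself a word in the sense of the preceding definition. Indeed, writing $[\theta(x),\phi(y)] = \theta(x)^{-1}\phi(y)^{-1}\theta(x)\phi(y)$ and substituting the power products $\theta,\phi$, one gets again a product of integral powers of the variables $x_1,\dots,x_{m+n}$; hence $\psi(G)$ is fully invariant in $G$, and in particular $\psi(G) \nsub G$. This normality is the engine of the whole argument, so I would state it explicitly at the start.

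The inclusion $\psi(G) \leq [\theta(G),\phi(G)]$ is immediate: every generator of $\psi(G)$ is of the form $\psi(a) = [\theta(a_1,\dots,a_m),\phi(a_{m+1},\dots,a_{m+n})]$, and here $\theta(a_1,\dots,a_m) \in \theta(G)$ while $\phi(a_{m+1},\dots,a_{m+n}) \in \phi(G)$, so $\psi(a) \in [\theta(G),\phi(G)]$.

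For the reverse inclusion it is enough to show $[u,v] \in \psi(G)$ whenever $u \in \theta(G)$ and $v \in \phi(G)$. I would use the standard commutator expansion identities $[xy,z] = [x,z]^y[y,z]$ and $[x,yz] = [x,z]\,[x,y]^z$, together with their consequences for inverse arguments such as $[x^{-1},z] = ([x,z]^{-1})^{x^{-1}}$. First fix a single value $v = \phi(b)$ and set $S_v = \{\, u \in G : [u,v] \in \psi(G)\,\}$. Using the two identities and the normality of $\psi(G)$ one checks $S_v$ is a subgroup; it contains every value $\theta(a)$, because $[\theta(a),\phi(b)] = \psi(a_1,\dots,a_m,b_1,\dots,b_n) \in \psi(G)$, so $S_v \supseteq \theta(G)$. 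Next fix $u \in \theta(G)$ and set $T_u = \{\, v \in G : [u,v] \in \psi(G)\,\}$; by the same kind of argument $T_u$ is a subgroup, and by the previous step it contains every value $\phi(b)$, hence $T_u \supseteq \phi(G)$. Therefore $[u,v] \in \psi(G)$ for all $u \in \theta(G)$ and $v \in \phi(G)$, and so $[\theta(G),\phi(G)] \leq \psi(G)$, which combined with the first inclusion gives equality.

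The point requiring the most care is verifying that $S_v$ and $T_u$ are subgroups: closure under products and under inversion both reduce, via the commutator expansion identities, to the statement that a conjugate (or an inverse) of an element of $\psi(G)$ again lies in $\psi(G)$, i.e. exactly to the normality of $\psi(G)$ noted at the outset. Once that is in hand, the rest is routine bookkeeping.
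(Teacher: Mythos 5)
Your proof is correct and is essentially the paper's own argument, spelled out in full: the paper's one-line justification ("mod $\psi(G)$, every $\theta$-value commutes with every $\phi$-value") is exactly your observation that $S_v$ and $T_u$ are subgroups (preimages of centralizers in $G/\psi(G)$) containing the respective generating values, with normality of the verbal subgroup $\psi(G)$ doing the work. Nothing further is needed.
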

\begin{proof}
Clearly $\psi(G) \leq [\theta(G), \phi(G)]$. But $\text{mod } \phi(G)$, every $\theta$-value in $G$ commutes with every $\psi$-value.
\end{proof}

\begin{cor*}The terms $\gamma_n(G)$ of the lower central series of $G$ are \underline{verbal subgroups} in this sense, if we define 
$$
\gamma_n(x) = [x_1, x_2, \ldots, x_n].
$$
\end{cor*}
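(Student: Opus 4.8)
The plan is to argue by induction on $n$, with Lemma~\ref{lemma1.5} doing all the work in the inductive step. First I would dispose of the base case: for $n = 2$ the word is $\gamma_2(x) = [x_1, x_2]$, so the verbal subgroup generated by its values in $G$ is by definition $[G,G] = \gamma_2(G)$, the second term of the lower central series. (If one prefers to start at $n = 1$, the word $\gamma_1(x) = x_1$ has verbal subgroup all of $G = \gamma_1(G)$, equally trivially.)

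For the inductive step, suppose the term $\gamma_n(G)$ of the lower central series coincides with the verbal subgroup determined by the word $\gamma_n(x) = [x_1, \ldots, x_n]$. Observe that $\gamma_{n+1}(x) = [x_1, \ldots, x_{n+1}] = [\,\gamma_n(x_1,\ldots,x_n),\, x_{n+1}\,]$, which is exactly the shape to which Lemma~\ref{lemma1.5} applies: take $\theta$ to be the word $\gamma_n$ in the variables $x_1, \ldots, x_n$, and take $\phi(y_1) = y_1$ to be the identity word in the single variable $y_1$, so that $\psi(x) = \gamma_{n+1}(x)$. Lemma~\ref{lemma1.5} then gives, for every group $G$, that the verbal subgroup of $\psi$ equals $[\theta(G), \phi(G)]$, where $\theta(G)$ is the verbal subgroup of $\gamma_n$ and $\phi(G) = G$ since $\phi$ is the identity word. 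By the inductive hypothesis $\theta(G) = \gamma_n(G)$, so the verbal subgroup of $\gamma_{n+1}(x)$ equals $[\gamma_n(G), G] = \gamma_{n+1}(G)$, completing the induction.

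There is essentially no obstacle here; the one point that needs care is purely notational — keeping the word $\gamma_n(x)$ (a formal expression in the variables $x_i$) distinct from the subgroup $\gamma_n(G)$ (its set of values, equivalently the corresponding term of the lower central series), and remembering that the bracket $[\theta(G), \phi(G)]$ in Lemma~\ref{lemma1.5} denotes the commutator of the two \emph{verbal subgroups}, not of individual word values. Once that bookkeeping is straight, the recursion $\gamma_{n+1}(G) = [\gamma_n(G), G]$ defining the lower central series matches, term by term, the recursion $\gamma_{n+1}(x) = [\gamma_n(x), x_{n+1}]$ on words, and the corollary falls out.
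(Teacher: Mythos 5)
Your proof is correct and is exactly the argument the paper intends: the corollary is stated without proof precisely because the induction via Lemma~\ref{lemma1.5}, taking $\theta = \gamma_n$ and $\phi$ the identity word so that $\psi(G) = [\gamma_n(G), G] = \gamma_{n+1}(G)$, is the only step needed. Your care in distinguishing the word $\gamma_n(x)$ from the subgroup $\gamma_n(G)$ is exactly the right bookkeeping.
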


\begin{note*}
Let 
$$
\prod_{\lambda \in \Lambda} G_\lambda = C
$$ 
be a Cartesian product, $\Phi_\lambda = \phi(G_{\lambda})$, $\phi$ any word. In general 
$$
\phi(C) < \prod_{\lambda \in \Lambda} \Phi_\lambda.
$$
\end{note*}

\begin{defn*} $t^y = y^{-1}t y$. $A^B$ is the set of all elements $a^b$ for $a$ in $A$, $b$ in $B$.\end{defn*}

\begin{lemma}\label{lemma1.6}
Let $G = \left<A\right>$ and let $H = \left<B^G\right> \nsub G$. Then $[H,G] = \left<C^G\right>$ where $C$ is the set of all $[a,b]$ with $a$ in $A$, $b$ in $B$.
\end{lemma}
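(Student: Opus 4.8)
The plan is to prove the two inclusions $\langle C^G\rangle \leq [H,G]$ and $[H,G] \leq \langle C^G\rangle$ separately, where the first is essentially trivial and the second carries all the weight. For the easy inclusion: each generator $[a,b]$ of $C$ has $a \in A \subseteq G$ and $b \in B \subseteq B^G \subseteq H$, so $[a,b] \in [H,G]$; since $[H,G]$ is normal in $G$ (being generated by commutators of two normal subgroups, or directly: $[H,G] \nsub G$ because $H \nsub G$), it contains $C^G$ and hence $\langle C^G\rangle$. So set $N = \langle C^G\rangle$; we know $N \nsub G$ and $N \leq [H,G]$, and the goal is the reverse inclusion.

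For $[H,G] \leq N$ I would work in the quotient $\bar G = G/N$ and show $[\bar H, \bar G] = 1$, i.e. that $\bar H$ is central in $\bar G$. Write bars for images. We want every element of $\bar H$ to commute with every element of $\bar G$. Since $\bar G = \langle \bar A\rangle$ and the set of elements centralizing a given $\bar h$ is a subgroup, it suffices to show each $\bar h \in \bar H$ commutes with each $\bar a$, $a \in A$. Now $\bar H = \langle \bar B^{\bar G}\rangle$, so $\bar H$ is generated by the conjugates $\bar b^{\bar g}$ with $b \in B$, $g \in G$. The crucial point is the choice of $N$: modulo $N$ we have $[\bar a, \bar b] = 1$ for all $a \in A$, $b \in B$ (since $[a,b] \in C \subseteq N$), i.e. $\bar a$ centralizes every $\bar b$, $b \in B$; and $N$ being normal and containing all $[a,b]$ means it actually contains $[a,b]^g$ for all $g$, so $\bar a$ centralizes $\bar b^{\bar g}$ as well — hence $\bar a$ centralizes the whole generating set of $\bar H$, hence all of $\bar H$. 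Thus $[\bar H, \bar a] = 1$ for every $a \in A$.

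To finish I need to upgrade ``$\bar a$ centralizes $\bar H$ for every $a \in A$'' to ``$\bar G$ centralizes $\bar H$.'' This is where I would invoke Lemma~\ref{lemma1.5} (or argue it by hand): the centralizer $C_{\bar G}(\bar H)$ is a subgroup of $\bar G$ containing $\bar A$, and $\bar G = \langle \bar A \rangle$, so $C_{\bar G}(\bar H) = \bar G$, i.e. $\bar H \leq Z(\bar G)$, which is exactly $[\bar H, \bar G] = 1$, i.e. $[H,G] \leq N$. Combined with the easy inclusion this gives $[H,G] = N = \langle C^G\rangle$.

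The main obstacle — really the only subtle point — is making sure the normalization by $N = \langle C^G\rangle$ (rather than by the smaller $\langle C\rangle$) is genuinely used and genuinely enough: one needs that $\bar a$ centralizes not just $\bar b$ but every conjugate $\bar b^{\bar g}$, and this is precisely why the conjugates $C^G$, not just $C$, must be thrown into $N$. Once that is seen, the ``subgroup of elements that centralize'' trick handles the passage from generators to the whole groups on both sides ($A$ generating $G$, and $B^G$ generating $H$), and the proof closes. No delicate commutator identities are needed.
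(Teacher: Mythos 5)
Your overall strategy is the same as the paper's: set $N=\langle C^G\rangle$, observe that $N$ is normal in $G$ with $N\le[H,G]$, and then show that $\bar H$ is central in $\bar G=G/N$. But one step is wrongly justified. You claim that because $N$ contains $[a,b]^g$ for all $g$, the element $\bar a$ centralizes $\bar b^{\bar g}$. This does not follow: $[a,b]^g=[a^g,b^g]$, so what $C^G\subseteq N$ gives you is that $\bar a^{\bar g}$ centralizes $\bar b^{\bar g}$ --- conjugation acts on both entries of the commutator. The element $[a,b^g]$ is not itself a conjugate of anything in $C$, and there is no one-line reason it lies in $N$.

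The repair is to run the two ``generators suffice'' arguments in the opposite order, which is exactly what the paper does. Fix $b\in B$. Modulo $N$ we have $[\bar a,\bar b]=1$ for every $a\in A$, so the centralizer of $\bar b$ in $\bar G$ contains the generating set $\bar A$ and hence is all of $\bar G$: each $\bar b$ is central in $\bar G$. Consequently $\bar b^{\bar g}=\bar b$ for all $g$, so $\bar H=\langle\bar B^{\bar G}\rangle=\langle\bar B\rangle$ is generated by central elements and is therefore central, giving $[H,G]\le N$. This also corrects your closing diagnosis of why the normal closure is needed: $C^G$ is thrown in so that $N$ is normal (so the quotient exists and the easy inclusion $N\le [H,G]$ can be combined with the hard one), not because the commutators $[a,b^g]$ must be killed as extra generators --- those vanish automatically once each $\bar b$ is seen to be central.
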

\begin{proof}
\noindent Write $K = \left<C^G\right>$. Clearly $K \nsub G$ and $K' \leq [H,G]$ since $H \nsub G$. But $\text{mod } K$, every generator $a$ of $G$ commutes with every $b \in B$. Hence each $bK$ belongs to the centre of $G/K$. Thus $H / K$ is contained of $G/K$; $[H, G] \leq K$.
\end{proof}

\begin{lemma}\label{lemma1.7}
Let $G = \left<A\right>$. Then $\Gamma_n = \gamma_n(G)$ is generated by $\Gamma_{n+1}$ together with all of the commutators $[a_1, \ldots, a_n]$ with each $a_i$ in $A$.
\end{lemma}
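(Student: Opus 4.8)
The plan is to prove Lemma~\ref{lemma1.7} by induction on $n$, using Lemma~\ref{lemma1.6} as the engine. For $n=1$ the statement is trivial, since $\Gamma_1 = G = \left<A\right>$ and there is nothing to prove. For $n=2$, apply Lemma~\ref{lemma1.6} with $B = A$: then $H = \left<A^G\right> = \left<A\right> = G$ (as $G = \left<A\right>$ is already normal), so $\Gamma_2 = [G,G] = \left<C^G\right>$ where $C$ is the set of all $[a,b]$ with $a,b \in A$. This gives that $\Gamma_2$ is generated by the conjugates $C^G$; I then need to observe that adjoining $\Gamma_3$ lets me replace the conjugates $c^g$ by the plain commutators $c$, because $c^g = c[c,g]$ and $[c,g] \in [\Gamma_2, G] = \Gamma_3$.

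For the inductive step, suppose the result holds for $n$, so $\Gamma_n$ is generated by $\Gamma_{n+1}$ together with the set $D$ of all $[a_1,\dots,a_n]$ with $a_i \in A$. I want to feed this into Lemma~\ref{lemma1.6} to compute $\Gamma_{n+1} = [\Gamma_n, G]$. The subtlety is that Lemma~\ref{lemma1.6} requires $H = \left<B^G\right>$ for some set $B$, i.e.\ $H$ should be the normal closure of a set; here $\Gamma_n$ is generated by $D \cup \Gamma_{n+1}$, and since $\Gamma_{n+1} = [\Gamma_n,G] \leq \left<D^G\right>$ (each generator of $\Gamma_n$ lies in $\left<D^G\right>$ modulo\dots actually one checks $\Gamma_n = \left<D^G\right>$ directly: $\left<D^G\right>$ is normal, contains $D$, and $\Gamma_n/\left<D^G\right>$ is generated by the images of $\Gamma_{n+1} = [\Gamma_n,G]$, which vanish mod the normal subgroup $\left<D^G\right>$), we get $\Gamma_n = \left<D^G\right>$. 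Now Lemma~\ref{lemma1.6} applies with $B = D$ and $A$ the generating set of $G$: it yields $\Gamma_{n+1} = [\Gamma_n, G] = \left<E^G\right>$ where $E$ is the set of all $[d,a]$ with $d \in D$, $a \in A$, that is, all $[a_1,\dots,a_n,a_{n+1}]$ with each $a_i \in A$.

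Finally I convert the normal closure $\left<E^G\right>$ into the form claimed in the lemma. Each conjugate $e^g$ with $e = [a_1,\dots,a_{n+1}] \in E$ satisfies $e^g = e \cdot [e,g]$, and $[e,g] \in [\Gamma_{n+1}, G] = \Gamma_{n+2} \leq \Gamma_{n+1}$. Hence modulo $\Gamma_{n+2}$ every generator $e^g$ of $\Gamma_{n+1}$ is congruent to the plain commutator $e$, so $\Gamma_{n+1}$ is generated by $\Gamma_{n+2}$ together with the commutators $[a_1,\dots,a_{n+1}]$, $a_i \in A$, completing the induction.

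I expect the main obstacle to be the bookkeeping point that $\Gamma_n$ is genuinely the normal closure $\left<D^G\right>$ of the bare commutators $D$ (not merely generated by $D$ together with $\Gamma_{n+1}$), which is what licenses the application of Lemma~\ref{lemma1.6}; this is a short argument using normality of $\left<D^G\right>$ and $\Gamma_{n+1} = [\Gamma_n,G]$, but it is the step where the inductive hypothesis has to be massaged into exactly the shape Lemma~\ref{lemma1.6} demands. Everything else is the routine identity $e^g = e[e,g]$ together with the fact that $[\Gamma_k, G] = \Gamma_{k+1}$.
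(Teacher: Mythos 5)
Your overall architecture matches the paper's: iterate Lemma~\ref{lemma1.6} to identify $\Gamma_{n+1}$ as a normal closure of weight-$(n+1)$ commutators in the generators, then pass from conjugates to plain commutators via $u^x = u[u,x]$ with $[u,x]\in\Gamma_{n+2}$. But the step you yourself flag as the main obstacle is genuinely gappy as written. You induct on the \emph{weak} statement ($\Gamma_n = \left<D\cup\Gamma_{n+1}\right>$) and then try to recover the \emph{strong} statement $\Gamma_n = \left<D^G\right>$ needed to invoke Lemma~\ref{lemma1.6}. Your justification --- that $\Gamma_n/\left<D^G\right>$ is generated by the images of $\Gamma_{n+1}=[\Gamma_n,G]$, ``which vanish mod the normal subgroup $\left<D^G\right>$'' --- is circular: that vanishing is precisely the assertion $\Gamma_{n+1}\leq\left<D^G\right>$ that you are trying to establish. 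If you try to argue it honestly, writing $N=\left<D^G\right>$, the weak hypothesis gives $\Gamma_n = N\Gamma_{n+1}$, whence $\Gamma_{n+1}=[N\Gamma_{n+1},G]\leq N\Gamma_{n+2}$ and by iteration $\Gamma_n\leq N\Gamma_{n+k}$ for all $k$; this descent never closes up unless $G$ is nilpotent, and the lemma is stated for arbitrary $G=\left<A\right>$ (it is applied later to free groups, where $\bigcap_k\Gamma_k=1$ but an intersection argument still does not yield $\Gamma_n=N$).

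The repair is to make the \emph{strong} statement the induction hypothesis, which is exactly what the paper does implicitly: prove by induction that $\Gamma_n=\left<C_n^G\right>$ where $C_n$ is the set of all $[a_1,\ldots,a_n]$ with $a_i\in A$. The base case is $\Gamma_1=G=\left<A^G\right>$, and the inductive step is a single application of Lemma~\ref{lemma1.6} with $B=C_n$ and $H=\left<C_n^G\right>=\Gamma_n$, giving $\Gamma_{n+1}=[\Gamma_n,G]=\left<C_{n+1}^G\right>$ (up to replacing commutators by their inverses, which does not affect normal closures). Only after this induction is complete do you weaken to the statement of the lemma: each conjugate $u^x=u[u,x]$ of $u\in C_n$ has $[u,x]\in[\Gamma_n,G]=\Gamma_{n+1}$, so $\Gamma_n=\left<C_n^G\right>=\left<C_n,\Gamma_{n+1}\right>$. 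With that reorganization the rest of your argument goes through verbatim.
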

\begin{proof}
For by (\ref{lemma1.6}), $\Gamma_n$ is generated by these commutators $[a_1,\ldots,a _n]$  together with all of their conjugates in $G$. But $u^x = u  [u, x]$ and $[u,x] \in \Gamma_{n+1}$ for all $x \in G$.
\end{proof}
\begin{thm}\label{theorem1.8}
Let $G = \left<a_1,a_2, \ldots, a_q\right>$ be a finitely generated nilpotent group. Then $G$ has a central series $$G = G_0 \geq G_1 \geq \ldots \geq G_r = 1$$ with each $G_{i-1} / G_i$ cyclic.
\end{thm}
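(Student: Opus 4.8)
The plan is to refine the lower central series of $G$. Since $G$ is nilpotent, say of class $c$, Lemma \ref{lemma1.1} gives
$$G = \Gamma_1 > \Gamma_2 > \ldots > \Gamma_{c+1} = 1,$$
and each factor $\Gamma_n/\Gamma_{n+1}$ is abelian, being central in $G/\Gamma_{n+1}$ because $[\Gamma_n,G] = \Gamma_{n+1}$. The one substantive step is to observe that each $\Gamma_n/\Gamma_{n+1}$ is \emph{finitely generated} as an abelian group. This falls straight out of Lemma \ref{lemma1.7} applied to the finite generating set $A = \{a_1,\ldots,a_q\}$: the subgroup $\Gamma_n$ is generated by $\Gamma_{n+1}$ together with the finitely many commutators $[a_{i_1},\ldots,a_{i_n}]$, so $\Gamma_n/\Gamma_{n+1}$ is generated by the finitely many images of these commutators.

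Next I would use the elementary fact that a finitely generated abelian group $\bar{A} = \left<b_1,\ldots,b_k\right>$ admits a finite chain $1 \leq \left<b_1\right> \leq \left<b_1,b_2\right> \leq \ldots \leq \bar{A}$ in which each successive quotient is cyclic (generated by the image of the next $b_j$); no appeal to the classification of finitely generated abelian groups is needed. Applying this to each $\Gamma_n/\Gamma_{n+1}$ and pulling the resulting chain back to $G$ produces, for every $n = 1,\ldots,c$, a chain
$$\Gamma_{n+1} = H_{n,0} \leq H_{n,1} \leq \ldots \leq H_{n,k_n} = \Gamma_n$$
of subgroups of $G$ with each $H_{n,j}/H_{n,j-1}$ cyclic.

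It then remains to check that concatenating these chains for $n = 1,2,\ldots,c$ yields a central series of $G$ in the sense of the definition, i.e. that each $H_{n,j} \nsub G$ and each successive factor is a central factor of $G$. This is automatic, and is really why the argument works: whenever $\Gamma_{n+1} \leq H_{n,j-1} \leq H_{n,j} \leq \Gamma_n$ we have
$$[H_{n,j},G] \leq [\Gamma_n,G] = \Gamma_{n+1} \leq H_{n,j-1} \leq H_{n,j},$$
where the inclusion $[H_{n,j},G] \leq H_{n,j}$ gives $H_{n,j} \nsub G$, and the inclusion $[H_{n,j},G] \leq H_{n,j-1}$ says exactly that $H_{n,j}/H_{n,j-1}$ lies in the centre of $G/H_{n,j-1}$. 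Reindexing the concatenated chain as $G = G_0 \geq G_1 \geq \ldots \geq G_r = 1$ gives the asserted central series with cyclic factors.

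I expect the finite generation of the lower central factors to be the only real obstacle, and Lemma \ref{lemma1.7} disposes of it at once; everything after that — splitting each abelian factor into cyclic pieces and verifying that centrality over $G$ survives the refinement — is routine, precisely because any subgroup sandwiched between $\Gamma_{n+1}$ and $\Gamma_n$ is automatically normal in $G$ with central factor.
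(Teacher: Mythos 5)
Your proposal is correct and follows essentially the same route as the paper: both refine the lower central series by interpolating, between $\Gamma_{k+1}$ and $\Gamma_k$, the subgroups obtained by adjoining one at a time the finitely many commutators $[a_{i_1},\ldots,a_{i_k}]$ supplied by Lemma \ref{lemma1.7}. Your explicit verification that $[H_{n,j},G] \leq \Gamma_{n+1} \leq H_{n,j-1}$ is exactly the (unstated) reason the paper's interpolated series is central with cyclic factors.
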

\begin{proof}
\noindent For let $G$ have class $c$, so that $\Gamma_{c+1} = 1$. For each $k=1,2, \ldots, c$ there are finite number of commutators of the form $$[a_{i_1}, a_{i_2}, \ldots, a_{i_k}],$$ say $u_{k_1}, u_{k_2}, \ldots, u_{k_m}$, where $m = m(k)$. Let 
$$
G_{k_j} = \left<\Gamma_{k+1}, u_{k_1}, u_{k_2}, \ldots, u_{k_j}\right>.
$$ If for each $k$ we interpolate the groups $G_{k_1}, G_{k_2}, \ldots, G_{k_{m-1}}$ between $\Gamma_{k+1} = G_{k_0}$ and $\Gamma_k = G_{k_m}$ we obtain a central series of $G$ with cyclic factor groups as required.
\end{proof}

\begin{defn*} A group $G$ is called \underline{polycyclic} if it has a series of subgroups $$
G = G_0 \geq G_1 \geq \ldots \geq G_r = 1
$$
such that, for each $i = 1, 2, \ldots, r$, $G_i \nsub G_{i-1}$ and $G_{i-1} / G_i$ is cyclic. If $G$ has a series of this kind for which each $G_i \nsub G$, then $G$ is called \underline{supersoluble}.

In a polycyclic group the number of cyclic factors $G_{i-1} / G_i$ $(i =1, \ldots, r)$ which are of infinite order will be called the \underline{rank} of $G$. It is independent of the particular series chosen.\footnote{Now frequently called the \underline{Hirsch number}}

A \underline{soluble} group $G$ is one which has a series of subgroups $G = G_0 \geq G_1 \geq \ldots \geq G_r = 1$ for which each $G_i \nsub G_{i-1}$ and each $G_{i-1} / G_i$ is abelian.

A group $G$ is said to satisfy the \underline{maximal condition} \textbf{Max} if all of its subgroups are finitely generated.
\end{defn*}

\begin{lemma}\label{lemma1.9}
Let $G$ be a finitely generated group. 
\begin{itemize}
\item[(i)] Finitely generated $+$ nilpotent $\Rightarrow$ supersoluble $\Rightarrow$ polycyclic $\Leftrightarrow$ soluble $+$ \textbf{Max}
\item[(ii)] Every supersoluble group $G$ has a nilpotent subgroup $K$ such that $[G:K]$ is finite and $K \geq G'.$
\end{itemize}
\end{lemma}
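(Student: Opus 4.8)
The plan is to establish (i) by chasing through the chain of implications and then deduce (ii) as a corollary of the structural information obtained along the way. For the first implication of (i), finitely generated plus nilpotent implies supersoluble: this is essentially Theorem~\ref{theorem1.8} together with the observation that the central series produced there has each term normal in $G$ (not merely in the previous term), since the $G_{k_j}$ are generated by $\Gamma_{k+1}$ and finitely many commutator values, and $\Gamma_{k+1}$ is normal while the relevant commutators are central modulo $\Gamma_{k+1}$; hence each $G_{k_j}\nsub G$ and the factors are cyclic, which is exactly supersolubility. Supersoluble implies polycyclic is immediate from the definitions, since a series with each term normal in $G$ is in particular a series with each term normal in its predecessor. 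For polycyclic implies soluble plus \textbf{Max}: grouping the cyclic factors shows a polycyclic group is soluble, and \textbf{Max} follows by induction on the length of the polycyclic series, using that an extension of a group satisfying \textbf{Max} by a cyclic (hence Noetherian) group again satisfies \textbf{Max} --- concretely, if $N\nsub G$ with $N$ and $G/N$ both having all subgroups finitely generated, then for $H\le G$ one writes $H$ as generated by finitely many generators of $H\cap N$ together with finitely many preimages of generators of $HN/N$.

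For the reverse implication, soluble plus \textbf{Max} implies polycyclic: take the derived series $G=G^{(0)}\ge G^{(1)}\ge\cdots\ge G^{(d)}=1$, which is finite by solubility; each factor $G^{(i)}/G^{(i+1)}$ is abelian, and as a subgroup-quotient of $G$ it inherits \textbf{Max}, so it is a finitely generated abelian group, hence (by the structure theorem, or by an elementary induction on the number of generators) has a finite series with cyclic factors; splicing these refinements of the derived series together gives a polycyclic series for $G$. This closes the cycle of equivalences in (i).

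For (ii), I would use that a supersoluble $G$ has a series $G=G_0\ge G_1\ge\cdots\ge G_r=1$ with each $G_i\nsub G$ and $G_{i-1}/G_i$ cyclic; $G$ acts by conjugation on each cyclic factor $G_{i-1}/G_i$, and $\Aut$ of a cyclic group is abelian, so the kernel $K$ of the combined action $G\to\prod_i\Aut(G_{i-1}/G_i)$ has abelian image, giving $G'\le K$ and $[G:K]$ finite (a product of finitely many finite automorphism groups; for an infinite cyclic factor $\Aut\cong\Z/2\Z$, still finite). Finally $K$ stabilizes the series $\{G_i\}$ elementwise in the sense that $[G_{i-1},K]\le G_i$, so this series is a \emph{central} series for $K$, whence $K$ is nilpotent by the definition in Section~1. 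The main obstacle is the bookkeeping in the reverse direction of (i): one must be careful that \textbf{Max} really is inherited by subgroups and quotients and is closed under cyclic extensions, and that the derived series of a soluble group with \textbf{Max} has finitely generated abelian factors that genuinely admit finite cyclic series --- none of this is deep, but it is the step where a sloppy argument would leave a gap, so I would write it as the clean two-out-of-three lemma for \textbf{Max} under extensions and then apply it mechanically.
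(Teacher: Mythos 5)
Your argument follows the paper's proof essentially step for step: Theorem~\ref{theorem1.8} for the first implication, refinement of an abelian series using \textbf{Max} and intersection of an arbitrary subgroup with the cyclic series for the equivalence in (i), and $K=\bigcap_i C_G(G_{i-1}/G_i)$ with the stabilizer argument for (ii). The only cosmetic point is that in (ii) the $G_i$ are not subgroups of $K$, so to obtain a literal central series of $K$ one should pass to $K_i=K\cap G_i$, for which $[K_{i-1},K]\le K\cap[G_{i-1},K]\le K\cap G_i=K_i$ --- exactly as the paper does.
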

\begin{proof} (i) The first two implications are clear from (\ref{theorem1.8}) and the definitions. Suppose $G$ is a soluble group satisfying the maximal condition. Then in the series $$G = G_0 \geq \ldots \geq G_r = 1$$ with Abelian factor groups $G_{i-1} / G_i,$ each $G_{i-1}$ is finitely generated, so that we can interpolate between each $G_{i-1}$ and $G_i$ a finite number of subgroups, to produce a series with cyclic factor groups. Hence $G$ is polycyclic.

Conversely, if each $G_{i-1} / G_{i}$ is cyclic, let $H$ any subgroup of $G$ and define $H_i = H \cap G_i$ $(i=0,1, \ldots, r)$. Then each $$H_{i-1} / H_i \cong G_i  H_{i-1} /  G_i$$ is again cyclic and we can choose $x_i \in H_{i-1}$ so that $H_{i-1} = \left<H_i,x_i\right>$. Therefore, $H = \left<x_1,x_2, \ldots, x_r\right>$ is finitely generated. Therefore $G$ satisfies the maximal condition.

(ii) If $G$ is supersoluble, we may suppose that each $G_i \nsub G$ and each $G_{i-1} / G_i$ cyclic. Let $C_i$ be the centraliser of $G_{i-1} / G_i$ in $G$. By definition, $C_i$ is the largest subgroup of $G$ such that $[G_{i-1}, C_i] \leq G_i.$ Since $G_{i-1}$ and $G_i$ are normal in $G$, so is $C_i$ and $G/C_i$ is isomorphic with a subgroup of the group of automorphisms $A_i$ of the cyclic group $G_{i-1} / G_i$. Therefore $A_i$ and hence $G/C_i$ are finite Abelian groups. Let $$K = \bigcap_{i=1}^r C_i.$$ Then $[G:K]$ is finite and $G/K$ is Abelian, so $K \geq G'$. If $K_i = K \cap G_i$ we have 
$$
[K_i, K] \leq K \cap [G_i, C_i] \leq K \cap G_{i+1} = K_{i+1}.
$$ Thus $K = K_0 \geq K_1 \geq \ldots \geq K_r$ is a central series of $K$. So $K$ is nilpotent.
\end{proof}

We conclude this section by giving an illustration of the use of Lemma \ref{lemma1.1} in induction proofs.
\begin{thm}\label{theorem1.10}
If $G = \left<a_1, a_2, \ldots, a_r\right>$ is nilpotent, where $a_i$ is of finite order $m_i$ $(i=1,2, \ldots, r)$, then $G$ is finite and its order divides some power of $m = m_1 m_2 \ldots m_r$.
\end{thm}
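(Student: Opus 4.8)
The plan is to induct on the nilpotency class $c$ of $G$, using Lemma~\ref{lemma1.3} and Lemma~\ref{lemma1.1} to pass to the quotient $G/G'$ and to the subgroups $\langle a_i, G'\rangle$. The base case $c \le 1$ is the abelian (or trivial) case: a finitely generated abelian group generated by elements of orders $m_1, \ldots, m_r$ is a quotient of $\Z/m_1 \oplus \cdots \oplus \Z/m_r$, hence finite of order dividing $m_1 \cdots m_r$, which certainly divides a power of $m$.

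For the inductive step, suppose $c > 1$ and the result holds for all finitely generated nilpotent groups of smaller class generated by finitely many elements of finite order. First pass to $\bar{G} = G/G'$: this is abelian, generated by the images $\bar a_i$ of orders dividing $m_i$, so $|\bar{G}|$ divides $m$ and in particular $G'$ has finite index in $G$. Now I would bound $G'$ itself. By Lemma~\ref{lemma1.3}, each $H_i = \langle a_i, G'\rangle$ has class at most $c-1$; the difficulty is that $G'$ need not be finitely generated \emph{a priori}, so the induction hypothesis does not immediately apply to $H_i$. However, $G$ is finitely generated and nilpotent, hence polycyclic by Lemma~\ref{lemma1.9}(i), so it satisfies \textbf{Max}; therefore $G'$ is finitely generated, say by $b_1, \ldots, b_s$, and then each $H_i$ is finitely generated. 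To run the induction on $H_i$ I need its generators to have finite order: $a_i$ does, and I can replace the $b_j$ by the finitely many conjugates $a_i^{-k} b_j a_i^{k}$ ($0 \le k < m_i$) — these still generate $H_i$ because $a_i^{m_i}$ is trivial modulo nothing, wait, $a_i$ has order $m_i$ in $G$, so conjugation by $a_i$ has order dividing $m_i$ on $G'$; but the $b_j$ themselves may have infinite order, which is the real obstacle.

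To get around this I would instead argue more directly using the lower central series. Set $\Gamma_k = \gamma_k(G)$, so $\Gamma_{c+1} = 1$. By Lemma~\ref{lemma1.7}, $\Gamma_k/\Gamma_{k+1}$ is generated by the (finitely many) images of commutators $[a_{i_1}, \ldots, a_{i_k}]$, and it is abelian. The key point is that each such commutator has finite order modulo $\Gamma_{k+1}$ whose order divides a power of $m$: one shows by a downward induction on $k$ (from $k = c$ to $k = 1$) that $\Gamma_k/\Gamma_{k+1}$ is finite of order dividing a power of $m$. For the generator $[a_{i_1},\ldots,a_{i_k}]$, commutator identities (or: the fact that in the quotient $\langle a_{i_1}, G'\rangle/\Gamma_{k+1}$ the relevant subgroup is central-by-lower-class, so the induction hypothesis on smaller class applies) force its order modulo $\Gamma_{k+1}$ to divide a power of $m$. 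Assembling: $|G| = \prod_{k=1}^{c} |\Gamma_k/\Gamma_{k+1}|$ is finite and divides a power of $m$.

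I expect the main obstacle to be exactly the bookkeeping in the inductive step: making precise why each basic commutator $[a_{i_1},\ldots,a_{i_k}]$ has order dividing a power of $m$ modulo the next term, without circularity. The cleanest route is probably to combine Lemma~\ref{lemma1.3} (each $\langle a_i, G'\rangle$ has class $< c$) with the polycyclic/\textbf{Max} property to legitimately apply the induction hypothesis of the theorem to a \emph{smaller-class} group, rather than trying to control $G'$ by hand; the theorem is then proved by induction on $c$ with the abelian case as base, and the finiteness of $G'$ (hence the applicability of the hypothesis) supplied for free by Lemma~\ref{lemma1.9}.
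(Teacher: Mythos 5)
You correctly locate the difficulty --- to apply the induction hypothesis to a smaller-class subgroup you need that subgroup to be generated by finitely many elements of \emph{finite order}, and the generators of $G'$ may well have infinite order --- but neither of your attempted workarounds closes this gap. The downward induction on the lower central series reduces to the claim that each commutator $[a_{i_1},\ldots,a_{i_k}]$ has order dividing a power of $m$ modulo $\Gamma_{k+1}$, which you yourself acknowledge you cannot justify ``without circularity''; and your closing suggestion that the ``finiteness of $G'$ \ldots\ is supplied for free by Lemma \ref{lemma1.9}'' is not right: Lemma \ref{lemma1.9} gives only finite \emph{generation} of $G'$ via \textbf{Max}, not finiteness, and finite generation says nothing about the orders of the generators. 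So as written the inductive step is not established.

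The missing idea is to replace $\left<a_i, G'\right>$ by the normal closure $A_i = \left<a_i^G\right>$. This subgroup is by construction generated by the conjugates of $a_i$, every one of which has order exactly $m_i$; since $A_i$ is finitely generated (\textbf{Max}, via Lemma \ref{lemma1.9}), finitely many of these conjugates already generate it. Moreover $A_i \leq \left<a_i, G'\right>$, so by Lemma \ref{lemma1.3} it has class $< c$, and the induction hypothesis of the theorem applies verbatim to give that $|A_i|$ is finite and divides a power of $m_i$. Since each $A_i$ is normal and contains $a_i$, one has $G = A_1 A_2 \cdots A_r$, so $|G|$ divides $\prod_{i=1}^r |A_i|$ and hence a power of $m$. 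This is exactly the paper's argument; your proposal circles around it but never lands on the one device --- torsion generators obtained for free by passing to the normal closure --- that makes the induction legitimate.
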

\begin{proof}
Let $ A_i = \left<a_i^G\right>$ be the normal closure of $a_i$ in $G$. It is enough to prove that $|A_i|$ divides some power of $m_i$, for then $G$ is finite and $|G| =|A_1 A_2 \ldots A_r|$ divides $\prod_{i=1}^r |A_i|$. If $G$ is Abelian, then $|A_i| = m_i$ as required. Let $G$ be of class $c > 1$. By Lemma \ref{lemma1.3}, $A_i$ (which is contained in $\left<G', a_i\right>$) is of class less than $c$. Also $A_i$ is finitely generated by (\ref{lemma1.9}). Thus $A_i$ is generated by finitely many conjugates of $a_i$ (all of which have order $m_i$), and so we may assume (by induction on $c$) that $|A_i|$ is finite and divides some power of $m_i.$ The theorem now follows immediately, as noted above.
\end{proof}
\begin{cor*}
In a nilpotent group, periodic elements of co-prime order commute.
\end{cor*}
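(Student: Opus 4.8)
The plan is to deduce the Corollary directly from Theorem \ref{theorem1.10}. Suppose $G$ is nilpotent and $x, y \in G$ are periodic elements whose orders $m$ and $n$ are coprime. First I would pass to the subgroup $H = \left<x, y\right>$, which is again nilpotent by Lemma \ref{lemma1.3}. By Theorem \ref{theorem1.10}, $H$ is finite and $|H|$ divides some power of $mn$. In particular, the only primes dividing $|H|$ are those dividing $m$ or $n$.

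Next I would consider the normal closures $A = \left<x^H\right>$ and $B = \left<y^H\right>$ in $H$. The argument in the proof of Theorem \ref{theorem1.10} shows that $|A|$ divides a power of $m$ and $|B|$ divides a power of $n$; since $\gcd(m,n) = 1$, the orders $|A|$ and $|B|$ are coprime, so $A \cap B = 1$. Both $A$ and $B$ are normal in $H$, so $[A, B] \leq A \cap B = 1$, which means every element of $A$ commutes with every element of $B$. Since $x \in A$ and $y \in B$, we conclude $xy = yx$.

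The only mildly delicate point — really the main thing to check rather than a genuine obstacle — is that the divisibility statements for $|A|$ and $|B|$ are exactly what Theorem \ref{theorem1.10} and its proof deliver: applying the theorem to $A = \left<x^H\right>$, which is finitely generated (by Lemma \ref{lemma1.9}) by finitely many conjugates of $x$, each of order $m$, gives $|A| \mid m^k$ for some $k$, and symmetrically for $B$. Everything else is a routine consequence of normality and coprimality of orders in a finite group. One could alternatively phrase the final step via the standard fact that in a finite group two normal subgroups of coprime order centralize each other, but deriving $A \cap B = 1$ and $[A,B] \le A \cap B$ by hand keeps the proof self-contained within the machinery already developed in this section.
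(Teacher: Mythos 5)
Your proof is correct, and it rests on the same engine as the paper's --- Theorem \ref{theorem1.10} --- but the decomposition differs. Hall's own proof is a one-liner: apply (\ref{theorem1.10}) to $\left<x, x^y\right>$ and to $\left<y, y^x\right>$. The point is that $\left<x, x^y\right>$ is generated by two elements of order dividing $m$, so its order divides a power of $m$, and it contains $[x,y] = x^{-1}x^y$; symmetrically $\left<y, y^x\right>$ has order dividing a power of $n$ and contains $[y,x] = [x,y]^{-1}$. Hence the order of $[x,y]$ divides coprime numbers and $[x,y]=1$. You instead work inside $H = \left<x,y\right>$, form the normal closures $A = \left<x^H\right>$ and $B = \left<y^H\right>$, extract from (\ref{theorem1.10}) (applied to $A$ and $B$, which are finitely generated by \ref{lemma1.9} by conjugates of $x$, resp.\ $y$) that $|A|$ and $|B|$ are coprime, and conclude via $[A,B] \leq A \cap B = 1$. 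Both arguments are sound; Hall's avoids normal closures and the ``normal subgroups of coprime order centralize each other'' step by choosing two-generator subgroups that each contain the commutator directly, while yours is longer but makes the mechanism (coprime normal subgroups commute) explicit and reuses the exact structure of the proof of (\ref{theorem1.10}).
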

\begin{proof}
For if $x,y$ are such elements, apply (\ref{theorem1.10}) to $\innp{x,x^y}$ and to $\innp{y, y^x}$.
\end{proof}

\section{Theorems of Burnside, Wielandt, McLain, Hirsch, and Plokin}
In any group, $x^y = x[x,y]$ and so $x^{yz} = x[x,z][x,y]^z$ but also $x[x,yz]$. Similarly $(xy)^z = xy[xy,z]$ but also $x^zy^z = x[x,z]^y[y,z] = xy[x,z]^y[y,z]$. Thus
\begin{lemma}\label{lemma2.1}
$$
[x,yz] = [x,z][x,y]^z 
$$
$$
[xy,z] = [x,z]^y[y,z].
$$
\end{lemma}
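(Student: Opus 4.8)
The plan is to verify both identities by a direct computation, expanding each side using only the definition $[x,y] = x^{-1}y^{-1}xy$ and the conjugation notation $x^y = y^{-1}xy$; no prior results are needed beyond the bookkeeping already displayed just before the statement. The cleanest route is the one the text foreshadows: compute a suitable product two different ways and compare. For the first identity, I would evaluate $x^{yz}$ in two ways. On one hand, $x^{yz} = (yz)^{-1}x(yz) = x\,[x,yz]$ directly from $x^g = x[x,g]$. On the other hand, applying $x^g = x[x,g]$ in stages, $x^{yz} = (x^y)^z = (x[x,y])^z = x^z\,[x,y]^z = x\,[x,z]\,[x,y]^z$. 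Equating the two expressions for $x^{yz}$ and cancelling the leading $x$ gives $[x,yz] = [x,z]\,[x,y]^z$, which is the first formula.

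For the second identity I would play the same game with $(xy)^z$. Directly, $(xy)^z = xy\,[xy,z]$. Expanding through the factors, $(xy)^z = x^z y^z = x[x,z]\cdot y[y,z]$; now insert a trivial factor $y y^{-1}$ to pull the $y$ to the front: $x[x,z]y[y,z] = xy\,(y^{-1}[x,z]y)\,[y,z] = xy\,[x,z]^y\,[y,z]$. Equating with $xy\,[xy,z]$ and cancelling $xy$ yields $[xy,z] = [x,z]^y\,[y,z]$, the second formula. Both derivations are exactly the manipulations sketched in the paragraph preceding the lemma, so the write-up is essentially a transcription of that paragraph with the cancellations made explicit.

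I do not expect any genuine obstacle here — the lemma is a formal identity in any group, and every step is a one-line substitution. The only thing to be careful about is the ordering convention for iterated products and for $x^y = y^{-1}xy$ (as opposed to $yxy^{-1}$): one must consistently use the paper's left-action-style convention $x^{gh} = (x^g)^h$, which holds for this choice since $(hg)^{-1}x(hg) = h^{-1}(g^{-1}xg)h$ — wait, one checks $x^{gh} = (gh)^{-1}x(gh) = h^{-1}g^{-1}xgh = (x^g)^h$, so the convention is compatible and the computation goes through verbatim. With that convention fixed, both identities drop out immediately.
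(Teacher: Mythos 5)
Your proposal is correct and is essentially identical to the paper's own argument: the text preceding the lemma derives both identities by computing $x^{yz}$ and $(xy)^z$ in two ways via $x^g = x[x,g]$ and cancelling, which is exactly what you do. The only addition you make is spelling out the insertion of $yy^{-1}$ and the compatibility $x^{gh}=(x^g)^h$, both of which are fine.
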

\begin{note*} $$[y,z] = [x,y^{-1}]^y = [x^{-1},y]^x = [y^{-1}, x^{-1}]^{xy} = [x,y]^{-1}.$$ \end{note*}

\begin{lemma}\label{lemma2.2}
Let $X,Y$ be subgroups of $G$, $\bar{X} = X  [X,Y]$ and $\bar{Y} = Y [X,Y]$. Then $[X,Y]$, $\bar{X}$, and $\bar{Y}$ are all normal in $\left<X, Y\right>$ and $\left<X, Y\right> =  \bar{X} \bar{Y}$.
\end{lemma}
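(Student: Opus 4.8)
The plan is to establish the three normality claims and the factorization essentially by the commutator identities of Lemma~\ref{lemma2.1}. Set $W = \left<X,Y\right>$ and $N = [X,Y]$. First I would show $N \nsub W$. It suffices to check that $N$ is carried into itself by conjugation by each generator, i.e.\ by every $x \in X$ and every $y \in Y$, since these generate $W$. A typical generator of $N$ is $[a,b]$ with $a \in X$, $b \in Y$. Conjugating by $x \in X$: using $[xa,b]... = ...$, more directly $[a,b]^x = [a^x, b^x]$ is not quite in the right form, so instead I would use the first identity of Lemma~\ref{lemma2.1} written as $[a,b]^x = [xa,b][x,b]^{-1}$ — wait, cleaner: from $[xy,z] = [x,z]^y[y,z]$ with the roles $x \mapsto a$, $y \mapsto$ something is awkward; the tidy route is $[a,b]^x = [ax, b]\,[x,b]^{-1}$, which follows from $[ax,b] = [a,b]^x[x,b]$ (an instance of the second identity with the product $ax$ in the first slot). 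Since $a \in X$ implies $ax \in X$, both $[ax,b]$ and $[x,b]$ lie in $N$, so $[a,b]^x \in N$. Conjugation by $y \in Y$ is symmetric, using the first identity of Lemma~\ref{lemma2.1}: $[a,b]^y = [a, by]\,[a,y]^{-1}$-type manipulation, with $by \in Y$. Hence $N \nsub W$.

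Next, with $N$ normal in $W$, the subgroups $\bar X = XN$ and $\bar Y = YN$ are genuine subgroups of $W$ (product of a subgroup with a normal subgroup). To see $\bar X \nsub W$, it is enough to see that $W/N$ normalizes $\bar X/N$; but in $W/N$ the images of $X$ and $Y$ commute elementwise (that is exactly what $[X,Y] \le N$ says), so $\bar X/N = $ image of $X$ is centralized by the image of $Y$ and is normalized (indeed centralized up to $N$) by the image of $X$ trivially — thus $\bar X / N$ is normal in $W/N$, whence $\bar X \nsub W$. The same argument gives $\bar Y \nsub W$.

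Finally, for the factorization $W = \bar X \bar Y$: the set $\bar X \bar Y = XN\,YN = XYN = XY$ (absorbing $N \le \bar X$ and $N \le \bar Y$), and I claim $XY$ is already a subgroup of $W$ containing the generating set $\{X,Y\}$, hence equals $W$. Closure under multiplication follows from $Y X \subseteq XY[X,Y] = XYN \subseteq XY$ using $yx = xy[x,y]^{-1}\cdot$... more precisely $yx = xy\,[y,x] = xy\,[x,y]^{-1}$ with $[x,y]^{-1} \in N \subseteq \bar Y$, so $YX \subseteq X\cdot YN = XY$; closure under inverses is $(xy)^{-1} = y^{-1}x^{-1} \in YX \subseteq XY$. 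Alternatively, and more cleanly, work in $W/N$: there the images of $X$ and $Y$ commute, so the image of $XY$ is a subgroup, hence $XYN$ is a subgroup of $W$, and it obviously contains $X$ and $Y$, so it is all of $W$; and $XYN = \bar X\bar Y$.

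\textbf{Main obstacle.} None of the steps is deep; the one point requiring care is the very first — verifying $N \nsub W$ by conjugating generators of $N$ by generators of $W$ and re-expressing the result via Lemma~\ref{lemma2.1} so that it visibly lies back in $N$. Once normality of $N$ is in hand, passing to the quotient $W/N$ (where everything commutes) makes the remaining assertions almost immediate, so I expect the write-up to be short.
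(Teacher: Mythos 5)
Your argument is correct and follows the same route as the paper: the paper's proof also uses the identities of Lemma~\ref{lemma2.1} to show that conjugation by the generators of $\left<X,Y\right>$ carries $[X,Y]$ into itself (it does this explicitly only for $Y$, invoking $[X,Y]=[Y,X]$ for the other half), and then dismisses the remaining claims as clear. Your write-up simply supplies the details the paper omits; the only nitpick is the order of factors in $[a,b]^y = [a,y]^{-1}[a,by]$, which does not affect the conclusion.
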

\begin{proof}
Take $x \in X$, and $y $ and $Z$ in $Y$ in the first equation of (\ref{lemma2.1}). Then $[x,y]^z \in [X,Y]$. Thus $Y$ belongs to the normaliser of $[X,Y]$. So $[X,Y] \nsub \left<X,Y\right>$. The rest is clear.
\end{proof}

\begin{lemma}\label{lemma2.3}
Let $H,K,L$ be normal subgroups of $G$. Then 
$$[H,K] \nsub G \quad \text{ and } \quad [H K, L] = [H,L] [K,L].$$
\end{lemma}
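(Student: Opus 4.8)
The plan is to prove the two assertions separately, both by appealing to Lemma \ref{lemma2.1} and the normality hypotheses. For the first assertion, that $[H,K] \nsub G$, I would take a generator $[h,k]$ of $[H,K]$ with $h \in H$, $k \in K$, and an arbitrary $g \in G$, and compute $[h,k]^g = [h^g, k^g]$; since $H$ and $K$ are normal in $G$, $h^g \in H$ and $k^g \in K$, so $[h,k]^g$ is again a generator of $[H,K]$. Hence conjugation by $g$ permutes the generating set of $[H,K]$, so $[H,K]^g = [H,K]$ for all $g$, giving $[H,K] \nsub G$. (Alternatively one could cite Lemma \ref{lemma2.2} applied to $X = H$, $Y = K$, noting that $\langle H,K\rangle$ is already normal in $G$, but the direct computation is cleaner here since we have full normality.)

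For the second assertion, the inclusion $[H,L][K,L] \leq [HK,L]$ is immediate since $H \leq HK$ and $K \leq HK$. For the reverse inclusion, I would note that $[HK,L]$ is generated by elements $[x,\ell]$ with $x \in HK$ and $\ell \in L$; writing $x = hk$ with $h \in H$, $k \in K$, the second identity of Lemma \ref{lemma2.1} gives
$$
[hk,\ell] = [h,\ell]^k [k,\ell].
$$
Now $[h,\ell] \in [H,L]$, and since $[H,L] \nsub G$ (by the first part, as $H$ and $L$ are normal) we have $[h,\ell]^k \in [H,L]$; also $[k,\ell] \in [K,L]$. Thus every generator of $[HK,L]$ lies in $[H,L][K,L]$, which is a subgroup (indeed normal in $G$, being a product of two normal subgroups), so $[HK,L] \leq [H,L][K,L]$.

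The only mild subtlety — and the step I would be most careful about — is making sure the set $\{[x,\ell] : x \in HK,\ \ell \in L\}$ really does generate $[HK,L]$ and that $HK$ is genuinely a subgroup; the latter holds because $H \nsub G$ forces $HK = KH$. Everything else is a routine application of the commutator identities of Lemma \ref{lemma2.1} together with the normality of $[H,L]$ in $G$, which is exactly what lets us absorb the conjugating factor $[h,\ell]^k$ back into $[H,L]$.
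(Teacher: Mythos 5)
Your proof is correct; the paper states this lemma without proof, and your argument is exactly the standard one it implicitly intends, deducing both claims from the identities of Lemma \ref{lemma2.1} together with normality. The one subtlety you flag — that $[h,\ell]^k$ must be absorbed back into $[H,L]$ via the normality of $[H,L]$ established in the first part — is handled correctly.
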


\begin{lemma}\label{lemma2.4}
Let $X = \left<A\right>$ and $Y = \left<B\right>$ be arbitrary subgroups of $G$. Let $C$ be the set of commutators $[a,b]$ with $a \in A$, $b \in B$.  Then $[X,Y] = \left<\left<C^X\right>^Y\right>$.
\end{lemma}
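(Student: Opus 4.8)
The plan is to prove the two inclusions $[X,Y] \subseteq \left<\left<C^X\right>^Y\right>$ and $[X,Y] \supseteq \left<\left<C^X\right>^Y\right>$ separately. The reverse inclusion is immediate: each $[a,b]$ with $a\in A$, $b\in B$ lies in $[X,Y]$, and $[X,Y]$ is normal in $\left<X,Y\right>$ by Lemma \ref{lemma2.2}, hence in particular invariant under conjugation by $X$ and by $Y$; so the normal-closure-type object $\left<\left<C^X\right>^Y\right>$ is contained in $[X,Y]$. The real content is the forward inclusion, for which I would set $N = \left<\left<C^X\right>^Y\right>$ and aim to show $[X,Y] \leq N$.

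First I would observe that $N$ is, by construction, normalised by $Y$, and I would argue it is also normalised by $X$: the group $\left<C^X\right>$ is $X$-invariant by definition, and conjugating $\left<C^X\right>^Y$ by $x\in X$ can be handled using the commutator identities of Lemma \ref{lemma2.1} (rewriting $y^{-1}cy$ conjugated by $x$ in terms of elements of $C$, their $X$-conjugates, and $Y$-conjugates thereof). Granting that $N \nsub \left<X,Y\right>$, I would then pass to the quotient $\bar G = \left<X,Y\right>/N$ and show that the images $\bar X$ and $\bar Y$ commute. For this it suffices to check that each generator $\bar a$ (with $a \in A$) commutes with each generator $\bar b$ (with $b\in B$), which is exactly the statement that $[a,b]\in N$ — true since $[a,b]\in C \subseteq N$. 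From $[\bar a,\bar b]=1$ for all generators one concludes $[\bar X,\bar Y]=1$, i.e. $[X,Y]\leq N$, completing the proof. This is essentially the same strategy used in the proof of Lemma \ref{lemma1.6}, applied one level up.

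The main obstacle I anticipate is verifying cleanly that $N$ is normalised by $X$ (equivalently, that $N \nsub \left<X,Y\right>$), since $N$ is defined with the $X$-closure on the inside and the $Y$-closure on the outside, so $X$-invariance is not built in syntactically. The cleanest route is probably to note that $\left<C^X\right>$ is normal in $X$ and that modulo $\left<C^X\right>$ every element of $A$ commutes with every element of $B$ (as in Lemma \ref{lemma1.6}); but here we cannot immediately quotient by $\left<C^X\right>$ inside $\left<X,Y\right>$ because it need not be normal there. Instead I would argue directly: for $x\in X$, $y\in Y$, $c\in C$, expand $(y^{-1}cy)^x$ using $c^x\in\left<C^X\right>$ and the identity $[x,y]=x^{-1}x^y$ together with Lemma \ref{lemma2.1}, to express it as a product of a $Y$-conjugate of an element of $\left<C^X\right>$ times further commutators that themselves lie in $N$; an induction on word length in the generators of $Y$ then shows $N^x \subseteq N$, and by symmetry ($x \mapsto x^{-1}$) that $N$ is $X$-invariant. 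Once normality of $N$ is secured, the remainder is routine.
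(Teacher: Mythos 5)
Your overall strategy differs from the paper's in a way that matters. The paper never proves that $H=\left<\left<C^X\right>^Y\right>$ is normal in $\left<X,Y\right>$ as an intermediate step; it shows directly, in two stages, that every commutator $[y,x]$ with $x\in X$, $y\in Y$ lies in $H$. Stage one: writing $x$ as a word in $A$ and conjugating by $b\in B$, one gets $b^{-1}xb=\prod\bigl(a_{i_\alpha}[a_{i_\alpha},b]\bigr)^{r_\alpha}$ with each $[a_{i_\alpha},b]\in C$, and collecting these factors using only the \emph{$X$-invariance of $\left<C^X\right>$} (which holds by construction) gives $[x,b]\in\left<C^X\right>$. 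Stage two: writing $y$ as a word in $B$ and conjugating by $x$, the factors $[b_{j_\beta},x]$ now lie in $\left<C^X\right>\leq H$, and collecting them uses only the \emph{$Y$-invariance of $H$} (again built in by construction), giving $[y,x]\in H$. Normality of $H$ in $\left<X,Y\right>$ is then a consequence of $H=[X,Y]$ and Lemma \ref{lemma2.2}, not a prerequisite. The elegance of the ordering $\left<\left<C^X\right>^Y\right>$ is precisely that each stage of the computation only ever needs the invariance that is syntactically present.

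Your route (normality of $N$ first, then the quotient argument) can be made to work, but the step you flag as the main obstacle is exactly where your sketch is circular as written. When you expand $\bigl(c^{y}\bigr)^{x}=\bigl((c^{x})^{y}\bigr)^{[y,x]}$, the "further commutator" $[y,x]$ is a general element of $[X,Y]$, and the claim that it "lies in $N$" is the content of the lemma itself. The fix is to carry out the paper's stage one as a genuine preliminary: prove $[X,B]\leq\left<C^X\right>$ by the word-collection argument. Then in your induction on the length of $y$ in the generators $B$, the only commutators that appear are $[b,x]=[x,b]^{-1}\in\left<C^X\right>\leq N$ with $b\in B$, and the conjugating elements are products of elements of $B$ and of $\left<C^X\right>$, both of which carry $N$ into itself. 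With that sub-lemma in place your argument closes; without it, the induction does not get off the ground. Note also that once you have done this much work you have already shown $[y,x]\in N$ for all $x,y$, so the passage to the quotient adds nothing — which is why the paper dispenses with it.
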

\begin{proof}
For $C \leq [X,Y] \nsub \left<X,Y\right>$ by (\ref{lemma2.2}). Hence $H = \left<\left<C^X\right>^Y\right> \subseteq [X,Y]$. Conversely let $a_{i_1}^{r_1} a_{i_2}^{r_2} \ldots a_{i_n}^{r_n} =x$ be any element of $X$, where each $a_j \in A$. Then for $b \in B$, we have 
$$
b^{-1} x b = \prod_{\alpha =1}^n ( a_{i_\alpha} [a_{i_\alpha},b])^{r^{\alpha}}.
$$ 
Each $[a_{i_\alpha},b] \in C$, each $a_{i_\alpha} \in X$. Hence 
$$
b^{-1}  x b \equiv x \text{ mod } \left<C^X\right>
$$
so that $[x,b] \in H$ for all $x \in X$,  $b \in B$. Let $y =b_{j_1}^{s_1} \ldots b_{j_m}^{s_m}$ be any element of $Y$. Then 
$$
x^{-1} y x= \prod_{\beta=1}^m \pr{b_{j_\beta} [b_{j_\beta}, x]}^{s_\beta}.
$$
Since each $[b_{j_\beta}, x] \in H = H^{Y}$ and each $b_{j_\beta} \in Y$, we have $$x^{-1}  y  x \equiv  y\text{ mod } H$$ or $[y,x] \in H$.
\end{proof}

From (\ref{lemma2.3}) we obtain
\begin{thm}[Fitting]\label{theorem2.5}
If $H$ and $K$ are normal nilpotent subgroups of $G$ of classes $c$ and $d$, then $H  K$ is nilpotent of class at most $c + d$.
\end{thm}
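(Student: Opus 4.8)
The plan is to show that $\gamma_{c+d+1}(HK) = 1$ by controlling the lower central series of the product $HK$ via the two-variable commutator machinery assembled in Lemmas \ref{lemma2.1}--\ref{lemma2.4}. First I would record that since $H, K \nsub G$, both are normal in $L \bdef HK$, and $[H,K] \leq H \cap K$ is normal in $L$ as well. The key structural input is Lemma \ref{lemma2.3}: for normal subgroups it gives the \emph{distributive law} $[H_1 H_2, M] = [H_1, M][H_2, M]$, and symmetrically $[M, K_1 K_2] = [M, K_1][M, K_2]$. Iterating this, I would expand $\gamma_n(L) = [\underbrace{HK, \ldots, HK}_{n}]$ as a product of basic commutator subgroups $[X_1, X_2, \ldots, X_n]$ where each $X_i \in \{H, K\}$. (To make this expansion legitimate one uses that all the intermediate subgroups appearing are normal in $L$; this follows by induction from Lemma \ref{lemma2.3}, since each $[X_1,\ldots,X_j]$ is again normal in $G$, hence in $L$.)

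The heart of the argument is then a counting/pigeonhole step. In any such term $[X_1, \ldots, X_n]$ with $n = c + d + 1$, among the $n$ entries either $H$ occurs at least $c+1$ times or $K$ occurs at least $d+1$ times. I would therefore prove a lemma of the form: if a commutator subgroup $[X_1, \ldots, X_n]$ contains $H$ at least $c+1$ times, then it is contained in $\gamma_{c+1}(H) = 1$, and similarly with $K$ and $d+1$. This requires moving the "$H$-entries" together past the "$K$-entries". The tool is again normality: since each $X_i$ is normal in $G$, a subgroup like $[H, K, H]$ can be compared with $[[H,H],K] = [\gamma_2(H), K]$ by using that $[A, B, C]$ with $A,B,C \nsub G$ is symmetric enough under the three-subgroups lemma — though since the three-subgroups lemma has not yet been introduced in the excerpt, I would instead argue directly: by Lemma \ref{lemma2.3}, $[H, K] \leq H$, so $[H,K,H] \leq [H,H] = \gamma_2(H)$ already, and more generally any iterated commutator in which $H$ appears $k$ times lies in $\gamma_k(H)$ because each commutation against $K$ (or against $H$) keeps us inside $H$ and each commutation against $H$ of something already inside $\gamma_j(H)$ lands in $[\gamma_j(H), G] \cap H \supseteq \gamma_{j+1}(H)$ — wait, one needs the \emph{other} inclusion, so the clean way is: $[\gamma_j(H), H] = \gamma_{j+1}(H)$ and $[\gamma_j(H), K] \leq [\gamma_j(H) \cap H, G]$; since $\gamma_j(H) \nsub G$ we get $[\gamma_j(H), G] \leq \gamma_j(H)$, but we want to \emph{descend}. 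The correct bookkeeping: track, for a term $[X_1, \ldots, X_n]$, the quantity "number of $H$'s among $X_1, \ldots, X_n$"; I claim such a term is contained in $\gamma_{h}(H) \cdot \gamma_{n - h + 1}(K)$ or something similar, proved by induction on $n$ using the distributive law and the normality of all pieces.

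The cleanest route, and the one I would ultimately adopt, is the classical one: prove by induction on $n$ that
\[
\gamma_n(HK) \leq \prod_{i+j = n,\; i,j \geq 1} [\,\gamma_i(H),\, \gamma_j(K)\,] \cdot \gamma_n(H) \cdot \gamma_n(K),
\]
or more simply that $\gamma_{c+d+1}(HK) \leq \langle [\gamma_i(H), \gamma_j(K)] : i + j = c+d+1 \rangle = 1$, since in each such pair either $i \geq c+1$ (forcing $\gamma_i(H) = 1$) or $j \geq d+1$ (forcing $\gamma_j(K) = 1$). The inductive step expands $\gamma_{n+1}(HK) = [\gamma_n(HK), HK] = [\gamma_n(HK), H][\gamma_n(HK), K]$ by Lemma \ref{lemma2.3} (all terms being normal in $G$ by full invariance of lower central terms, Lemma \ref{lemma1.2} and the surrounding remarks, together with normality of $H$ and $K$), and then uses $[[\gamma_i(H), \gamma_j(K)], H] \leq [\gamma_i(H), \gamma_j(K)] \cap$-type containments; here the one genuine subtlety — and the step I expect to be the main obstacle — is justifying $[[\gamma_i(H),\gamma_j(K)], H] \leq [\gamma_{i+1}(H), \gamma_j(K)][\gamma_i(H), \gamma_{j+1}(K)]$, which is exactly the content of the three-subgroups lemma; absent that lemma in the text so far, I would supply a short direct proof of the needed inclusion from Lemma \ref{lemma2.1} (the commutator identities $[x, yz] = [x,z][x,y]^z$ and $[xy,z] = [x,z]^y[y,z]$) applied to generators, noting that $[\gamma_i(H), \gamma_j(K)] \nsub G$ so conjugates of its generators stay inside it. Once that inclusion is in hand, the induction closes and setting $n = c+d+1$ gives $\gamma_{c+d+1}(HK) = 1$, i.e. $HK$ is nilpotent of class at most $c+d$.
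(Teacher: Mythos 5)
Your opening moves coincide with the paper's: expand $\gamma_n(HK)$ via the distributive law of Lemma \ref{lemma2.3} into a product of terms $[L_1,\ldots,L_n]$ with each $L_i \in \{H,K\}$, then apply pigeonhole with $n = c+d+1$. The gap is in the middle step, where you must show that a term with $r$ entries equal to $H$ lies in $\gamma_r(H)$. You actually write down both facts that make this work --- $[\gamma_j(H),H] = \gamma_{j+1}(H)$ and $[\gamma_j(H),G] \leq \gamma_j(H)$ because $\gamma_j(H) \nsub G$ --- but then abandon the argument with ``but we want to descend.'' That is the wrong demand: a commutation against $K$ does not need to increase the depth in the lower central series of $H$; it only needs not to decrease it. The induction on $n$ then closes immediately: if $[L_1,\ldots,L_{n-1}] \leq \gamma_{r'}(H)$, where $r'$ counts the $H$'s so far, then appending $L_n = H$ lands you in $[\gamma_{r'}(H),H] = \gamma_{r'+1}(H)$, while appending $L_n = K$ lands you in $[\gamma_{r'}(H),K] \leq \gamma_{r'}(H)$. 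Symmetrically the term lies in $\gamma_{n-r}(K)$, so $[L_1,\ldots,L_n] \leq \gamma_r(H) \cap \gamma_{n-r}(K)$, and with $n = c+d+1$ either $r > c$ or $n-r > d$, so every factor is trivial. This is exactly the paper's proof; you were one observation away from it. (Your tentative target ``$\gamma_h(H)\cdot\gamma_{n-h+1}(K)$ or something similar'' should be the intersection $\gamma_r(H)\cap\gamma_{n-r}(K)$.)

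The route you say you would ``ultimately adopt'' instead --- bounding $\gamma_n(HK)$ by $\prod_{i+j=n}[\gamma_i(H),\gamma_j(K)]$ --- is not closed. Its inductive step needs $[[\gamma_i(H),\gamma_j(K)],H] \leq [\gamma_{i+1}(H),\gamma_j(K)]\,[\gamma_i(H),\gamma_{j+1}(K)]$, which you correctly identify as a three-subgroups-lemma statement (it is Lemma \ref{lemma3.2}, proved in the next section from the identity of Lemma \ref{lemma3.1}); deriving it ``from Lemma \ref{lemma2.1} applied to generators'' is not a short computation, and you do not supply it. As written, the proposal therefore trades a one-line normality argument you already had in hand for an unproved lemma that the paper only establishes a section later.
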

\begin{proof}
For $\gamma_n(HK) = [H K, \ldots, H  K]$ with $n$ terms of the form \begin{equation}\label{fitting_eqn_1}
=\prod [L_1, L_2, \ldots, L_n]
\end{equation} 
where each $L_i$ is either $H$ or $K$. Since $H \nsub G$, we have also $\gamma_r(H) \nsub G$ for all $r$ and so $[\gamma_r(H), K] \leq \gamma_r(H)$. Thus if just $r$ of the terms $L_1, \hdots, L_n$ are equal to $H$ and the remaining $n-r$ equal to $K$, we have 
$$
[L_1, L_2, \ldots L_n] \subseteq \gamma_r(H) \cap \gamma_{n-r}(K).
$$
Choose $n = c + d +1$. Then for each factor in (\ref{fitting_eqn_1}), either $r>c$ or $n-r>d$. In either case that factor is $1$. So $\gamma_{c+d+1}(HK) = 1$.
\end{proof}

\begin{lemma}\label{lemma2.6}
Let $G$ be nilpotent of class $c$ and let $H$ be a proper subgroup of $G$. Define $H_0 = H$ and, inductively, $H_{i+1}$ to be the normaliser of $H_i$ in $G$. Then $$H = H_0 < H_1 < \ldots < H_r = G$$ for some $r \leq c$.
\end{lemma}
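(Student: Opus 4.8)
The plan is to show that whenever $H_i$ is a proper subgroup of $G$, its normaliser $H_{i+1}$ strictly contains it; since $G$ is nilpotent of class $c$, this forces the chain to reach $G$ in at most $c$ steps. The key mechanism is the so-called normaliser condition: in a nilpotent group, a proper subgroup is never self-normalising. I would prove this using the upper central series $1 = Z_0 \leq Z_1 \leq \ldots \leq Z_c = G$ from Lemma \ref{lemma1.1}. The idea is to find the smallest index $k$ such that $Z_k \not\leq H_i$; such a $k$ exists and is at least $1$ because $Z_0 = 1 \leq H_i$ while $Z_c = G \not\leq H_i$ (as $H_i$ is proper). Then $Z_{k-1} \leq H_i$, and I claim $[Z_k, H_i] \leq Z_{k-1} \leq H_i$, because $[Z_k, G] \leq Z_{k-1}$ by the definition of the upper central series (the image of $Z_k$ in $G/Z_{k-1}$ lies in the centre). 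Hence any element $z \in Z_k$ satisfies $z^{-1} h z = h[h,z^{-1}]^{?} \in h H_i = H_i$ for all $h \in H_i$ — more precisely, $[H_i, Z_k] \leq H_i$ means $H_i^z = H_i$ for every $z \in Z_k$ — so $Z_k$ normalises $H_i$, i.e. $Z_k \leq H_{i+1}$. Since $Z_k \not\leq H_i = H_0$-level set at this stage, this gives $H_i < H_{i+1}$.

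Carrying this out in order: first, observe each $H_i$ is a subgroup of $G$ and $H_i \leq H_{i+1}$ trivially (a subgroup normalises itself), so the chain is nondecreasing. Second, for the strict-increase step, suppose $H_i \neq G$; pick $k \geq 1$ minimal with $Z_k \not\leq H_i$, note $Z_{k-1} \leq H_i$, and use $[Z_k, G] \leq Z_{k-1}$ to conclude $[Z_k, H_i] \leq Z_{k-1} \leq H_i$. Third, deduce that conjugation by any $z \in Z_k$ sends $H_i$ into $H_i$: for $h \in H_i$, $h^z = h \cdot [h,z] \in H_i$ since $[h,z] \in [H_i, Z_k] \leq H_i$; running the same argument with $z^{-1}$ shows $h^z$ ranges over all of $H_i$, so $H_i^z = H_i$ and thus $z \in H_{i+1}$. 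Hence $Z_k \leq H_{i+1}$ but $Z_k \not\leq H_i$, giving $H_i < H_{i+1}$. Fourth, iterate: as long as $H_i \neq G$ the chain strictly grows; since $G$ is finite-length nilpotent (in the sense that $Z_c = G$), and at each strict step the minimal index $k$ with $Z_k \not\leq H_i$ strictly increases — indeed once $Z_k \leq H_{i+1}$ the next obstruction index is $> k$ — the process terminates at $H_r = G$ with $r \leq c$.

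The main obstacle, such as it is, lies in making the bound $r \leq c$ (rather than merely "$r$ finite") clean: one needs the observation that the minimal index $k(i)$ with $Z_{k(i)} \not\leq H_i$ is strictly increasing in $i$. This follows because at step $i$ we showed $Z_{k(i)} \leq H_{i+1}$, so $Z_{k(i)} \leq H_{i+1}$ forces $k(i+1) > k(i)$; and $k(i) \leq c$ always, with $k(i) = c$ meaning $Z_c = G \leq H_{i+1}$, i.e. we have already reached $G$. Thus after at most $c$ strict steps the chain exhausts $G$. Everything else is the routine commutator manipulation $h^z = h[h,z]$ from the Notation section together with the defining property $[\zeta_k(G), G] \leq \zeta_{k-1}(G)$ of the upper central series, so no genuinely hard point arises; the content is entirely the normaliser-condition argument packaged against the upper central series.
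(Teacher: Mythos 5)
Your proof is correct and rests on exactly the same mechanism as the paper's one-line argument: the paper shows $Z_i \leq H_i$ by induction using $[Z_{i+1},G]\leq Z_i$, which is precisely your observation that $Z_{k-1}\leq H_i$ forces $Z_k$ to normalise $H_i$, merely packaged via the strictly increasing obstruction index $k(i)$ instead of the direct inclusion $Z_i\leq H_i$. (The stray ``$?$'' and the garbled clause ``$Z_k \not\leq H_i = H_0$-level set'' in your first paragraph should be cleaned up, but the argument itself is sound.)
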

\begin{proof}
For $H_i \geq Z_i = \zeta_i(G)
$ by induction on $i$ and $Z_c = G.$\end{proof}

\begin{defn*} A subgroup $H$ of a group $G$ is said to be subnormal in $G$ if there exists a series of subgroups $$H = H_0 \nsub H_1 \nsub H_2 \nsub \ldots \nsub H_r = G.$$ Thus all subgroups of nilpotent groups are subnormal.
\end{defn*}

\begin{cor*}[\textbf{1}] If $H$ is a proper subgroup of the nilpotent group $G$, then $G' H$ is also a proper subgroup of $G$.
\end{cor*}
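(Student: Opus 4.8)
The plan is to prove the equivalent assertion that $G'H = G$ forces $H = G$; since $H \subseteq G'H$ always, this is exactly what is needed, and it contradicts the properness of $H$. So I would start by assuming $G = G'H = \gamma_2(G)\,H$ and aim to pull the factor $\gamma_2(G)$ down through the whole lower central series.

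The core is the claim that $G = \gamma_n(G)\,H$ for \emph{every} $n \geq 2$, which I would prove by induction on $n$, the case $n = 2$ being the hypothesis. For the inductive step, assume $G = \gamma_n(G)\,H$ and reduce modulo $\gamma_{n+1}(G)$; write $\bar X$ for the image of $X$ in $\bar G = G/\gamma_{n+1}(G)$. Since $\gamma_{n+1}(G) = [\gamma_n(G),G]$ by definition, $\overline{\gamma_n(G)}$ is central in $\bar G$, and $\bar G = \bar H\,\overline{\gamma_n(G)}$. As every element of $\bar G$ is a product of an element of $\bar H$ (which normalises $\bar H$) and a central element, we get $\bar H \nsub \bar G$, and $\bar G/\bar H \cong \overline{\gamma_n(G)}/(\overline{\gamma_n(G)} \cap \bar H)$ is abelian. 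Hence $\overline{G'} = [\bar G,\bar G] \leq \bar H$, i.e. $G' \subseteq H\,\gamma_{n+1}(G)$. Because $n \geq 2$ we have $\gamma_n(G) \subseteq \gamma_2(G) = G' \subseteq H\,\gamma_{n+1}(G)$, and since $\gamma_{n+1}(G) \nsub G$ this yields $\gamma_n(G)\,H \subseteq H\,\gamma_{n+1}(G) = \gamma_{n+1}(G)\,H \subseteq \gamma_n(G)\,H$; all three coincide, and the outer ones equal $G$ by hypothesis, so $G = \gamma_{n+1}(G)\,H$, completing the induction.

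Finally, taking $n = c+1$ where $c$ is the class of $G$, Lemma \ref{lemma1.1} gives $\gamma_{c+1}(G) = \Gamma_{c+1} = 1$, so $G = \gamma_{c+1}(G)\,H = H$; thus no proper $H$ can satisfy $G'H = G$. The one step carrying real content is the inductive passage, namely the observation that modulo $\gamma_{n+1}(G)$ the subgroup $H$ together with the now-central subgroup $\gamma_n(G)$ fills out all of $\bar G$, which forces $\bar H$ to be normal with abelian quotient; everything else is bookkeeping with the chain $\gamma_{n+1}(G) \leq \gamma_n(G) \leq \gamma_2(G) = G'$ and the normality of the lower central terms. I would also point out that this argument uses no finiteness hypothesis, so it covers an arbitrary nilpotent group and does not presuppose the existence of a maximal subgroup above $H$; when $G$ is finitely generated one can instead argue straight from Lemma \ref{lemma2.6}, since a maximal subgroup $M$ of a nilpotent group is normal (its normaliser strictly contains it by Lemma \ref{lemma2.6}, hence equals $G$), so $G/M$ is nilpotent with no proper nontrivial subgroup, hence abelian, hence $G' \leq M$, and any maximal $M \supseteq H$ then contains $G'H$.
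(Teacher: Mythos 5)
Your proof is correct, but it takes a genuinely different route from the paper's. The paper argues via subnormality: since every subgroup of a nilpotent group is subnormal (a consequence of Lemma \ref{lemma2.6}), the normal closure $K = \left<H^G\right>$ of a proper $H$ is still proper; then $G/K$ is a non-trivial nilpotent group, so its derived group $KG'/K$ is proper by Lemma \ref{lemma1.1}, whence $G'K < G$ and a fortiori $G'H < G$. You instead prove the contrapositive by descending the lower central series: assuming $G = G'H$, you show $G = \gamma_n(G)H$ for all $n$ by passing to $G/\gamma_{n+1}(G)$, where $\overline{\gamma_n(G)}$ is central, forcing $\bar H \nsub \bar G$ with abelian quotient and hence $G' \leq H\gamma_{n+1}(G)$; taking $n = c+1$ kills the correction term. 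Each step of your induction checks out (the appeal to the second isomorphism theorem is legitimate once $\bar H$ is normal, and the identity $H\gamma_{n+1}(G)H = \gamma_{n+1}(G)H$ uses only normality of $\gamma_{n+1}(G)$). The trade-off: the paper's argument is shorter but leans on the subnormality machinery and the properness of normal closures of subnormal subgroups; yours is self-contained, uses only the definition of the lower central series and Lemma \ref{lemma1.1}, and makes explicit the stronger "non-generator" statement $G'H = G \Rightarrow H = G$. Your closing remark about maximal subgroups is a third valid route but, as you note, needs \textbf{Max} (or some substitute) to guarantee a maximal subgroup above $H$, so it is not a proof in the general case.
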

\begin{proof}
For $H$ is subnormal in $G$. Hence the normal closure $K$ of $H$ in $G$ is also a proper subgroup of $G$. $G / K$ is non-trivial nilpotent group, and so its derived group, which is $KG' / K$, is a proper subgroup of $G / K$. Thus $G' K < G$; a fortiori $G' H$ is a proper subgroup of $G$.
\end{proof}

Suppose that $H$ and $K$ are subgroups of $G$ and that $|G:H| = m$ is finite. In general one can only conclude that $[K: H \cap K] \leq m$; and, if $|G:K| = n$ is also finite, that $[G: H \cap K] \leq mn$. But if $H$ is subnormal in $G$, then $[K:H \cap K]$ divides $m$ and $[G: H \cap K]$ divides $mn$. Thus, we may state the following.

\begin{cor*}[\textbf{2}] If $G$ is a nilpotent group and $H,K$ are subgroups of $G$ such that $|G:H| = m$ is finite, then $[K : H \cap K]$ divides $m$. If $|G:K| = n$ is also finite, then $[G: H \cap K]$ divides $mn$. If $$L = \bigcap_{x \in G} H^x,$$ then $[G:L]$ divides some power of $m$.
\end{cor*}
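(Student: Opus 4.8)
The plan is to reduce everything to the single fact, already flagged in the paragraph preceding the statement, that a subnormal subgroup of finite index controls indices multiplicatively. Since $G$ is nilpotent, every subgroup is subnormal (as observed just after Lemma~\ref{lemma2.6}), so $H$ admits a chain $H = H_0 \nsub H_1 \nsub \ldots \nsub H_r = G$. Because $H \leq H_i \leq G$ and $|G:H| = m$ is finite, each index $m_i = [H_i : H_{i-1}]$ is finite and $m = m_1 m_2 \cdots m_r$. I would prove, by induction on the defect $r$, the statement: \emph{if $H$ is subnormal in $G$ of defect $r$ with $[G:H]=m$ finite, then for every subgroup $K \leq G$ the index $[K : H \cap K]$ divides $m$.}

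For the base case $r \leq 1$ the subgroup $H$ is normal in $G$, so by the second isomorphism theorem $K/(H\cap K) \cong HK/H$, whence $[K : H\cap K] = [HK : H]$ divides $[G:H] = m$. For the inductive step put $M = H_{r-1}$, so that $M \nsub G$ with $[G:M] = m_r$, while $H$ is subnormal in $M$ of defect $r-1$ with $[M:H] = m_1\cdots m_{r-1}$. Given $K \leq G$, set $K_1 = K \cap M \leq M$. Applying the base case to $M \nsub G$ shows $[K : K_1]$ divides $[G:M]$; applying the inductive hypothesis to $H$ subnormal in $M$ with the subgroup $K_1 \leq M$ shows $[K_1 : H \cap K_1]$ divides $[M:H]$. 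Since $H \leq M$ we have $H \cap K_1 = H \cap K$, so $[K : H\cap K] = [K : K_1]\,[K_1 : H\cap K]$ divides $[G:M]\,[M:H] = m$, completing the induction.

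The remaining assertions follow formally. If also $[G:K] = n$ is finite, then $[G : H\cap K] = [G:K]\,[K : H\cap K]$ divides $nm$. Finally, since $[G:H]=m$ is finite, $H$ has only finitely many conjugates — their number $k = [G : N_G(H)]$ divides $m$ — so $L = \bigcap_{x\in G} H^x = H^{x_1} \cap \ldots \cap H^{x_k}$ for suitable $x_1, \ldots, x_k \in G$. Each $H^{x_j}$ is a subgroup of $G$, hence subnormal in $G$, of index $m$. Writing $L_j = H^{x_1}\cap \ldots \cap H^{x_j}$ and applying the italicised claim with the subnormal subgroup $H^{x_{j+1}}$ and the subgroup $K = L_j$ (noting $H^{x_{j+1}}\cap L_j = L_{j+1}$) shows $[L_j : L_{j+1}]$ divides $m$; since $[G : L_1] = m$, telescoping gives that $[G:L] = [G:L_k]$ divides $m^k$, a power of $m$.

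The only real work is the subnormal induction, and the one point there that needs a moment's care is that $|G:H|$ being finite forces every index along the subnormal chain to be finite (so the factorisation $m = m_1\cdots m_r$ is legitimate), together with the elementary identity $H\cap(K\cap M) = H\cap K$ used to identify the two fixed subgroups at each stage. Everything else is bookkeeping with the multiplicativity of the index.
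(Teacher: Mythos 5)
Your proposal is correct and follows exactly the route the paper intends: the corollary is stated as an immediate consequence of the remark in the preceding paragraph that a subnormal subgroup of finite index $m$ satisfies these divisibility relations, combined with the observation (after Lemma~\ref{lemma2.6}) that every subgroup of a nilpotent group is subnormal. The paper leaves that subnormal-index fact unproved, and your induction on the defect, together with the telescoping argument for $L$ over the finitely many conjugates of $H$, supplies precisely the missing details correctly.
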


\begin{thm}\label{theorem2.7}[Burnside-Wielandt]
Let $G$ be a finite group. The following are equivalent.
\begin{itemize} 
\item[(i)] Every maximal subgroup of $G$ is normal in $G$.
\item[(ii)] $G$ is the direct product of its Sylow subgroups.
\item[(iii)] $G$ is nilpotent.
\end{itemize}
\end{thm}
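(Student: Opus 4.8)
The plan is to prove the three implications $(iii)\Rightarrow(i)$, $(ii)\Rightarrow(iii)$, and $(i)\Rightarrow(ii)$ in a cycle, using the machinery already assembled in the notes. The implication $(iii)\Rightarrow(i)$ is immediate: if $G$ is nilpotent and $M$ is maximal, then by Corollary~1 to Lemma~\ref{lemma2.6} the subgroup $G'M$ is proper, hence equals $M$; but then $M\supseteq G'$, so $M/G'$ is a maximal (hence normal) subgroup of the abelian group $G/G'$, and therefore $M\nsub G$. Alternatively and even more directly, $H_1$, the normaliser of $M$, properly contains $M$ by Lemma~\ref{lemma2.6}, so $H_1=G$ and $M\nsub G$. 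The implication $(ii)\Rightarrow(iii)$ is also quick: a $p$-group is nilpotent by Lemma~\ref{lemma1.4}, and a finite direct product of nilpotent groups is nilpotent (the class being the maximum of the classes of the factors, which is finite), so $G$, being such a direct product, is nilpotent.

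The substantive implication is $(i)\Rightarrow(ii)$, and this is where the real work lies. First I would show that every Sylow subgroup $P$ of $G$ is normal. Suppose not; then its normaliser $N=N_G(P)$ is proper, so it lies in some maximal subgroup $M$, which by hypothesis is normal in $G$. But $P$ is a Sylow subgroup of $M$ as well, so by the Frattini argument $G=M\,N_G(P)=M\,N\subseteq M$, a contradiction. Hence every Sylow subgroup is normal. Once all Sylow subgroups $P_1,\dots,P_k$ (one for each prime $p_i$ dividing $|G|$) are normal, they are uniquely determined, their pairwise intersections are trivial for coprime orders, elements of coprime-order normal subgroups commute, and a straightforward order count gives $|P_1P_2\cdots P_k|=|G|$; therefore $G=P_1\times P_2\times\cdots\times P_k$ is the direct product of its Sylow subgroups.

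The Frattini argument is the one external ingredient not developed in the excerpt, so the main obstacle—or at least the main point to flag—is that one must either invoke it or reprove it in this context; it is, however, completely standard (if $P\in\mathrm{Syl}_p(M)$ and $M\nsub G$, then for $g\in G$ the subgroup $P^g$ is again a Sylow $p$-subgroup of $M$, hence conjugate to $P$ by some $m\in M$, so $gm^{-1}\in N_G(P)$ and $g\in N_G(P)M$). A minor secondary point is assembling the direct-product conclusion from normality of the Sylow subgroups: this needs the observation that a normal Sylow $p$-subgroup meets the product of the other Sylow subgroups trivially (by order considerations) and that the whole collection generates $G$ (again by order), after which normality of each factor yields the internal direct product decomposition. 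None of this is deep, but it should be stated carefully. Having closed the cycle $(iii)\Rightarrow(i)\Rightarrow(ii)\Rightarrow(iii)$, the theorem follows.
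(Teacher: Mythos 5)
Your proof is correct and follows essentially the same route as the paper: $(iii)\Rightarrow(i)$ via Lemma \ref{lemma2.6}, $(ii)\Rightarrow(iii)$ via Lemma \ref{lemma1.4}, and $(i)\Rightarrow(ii)$ by showing the normaliser of a Sylow subgroup cannot be proper. The paper phrases the key Sylow step as "a maximal subgroup containing $N_G(P)$ is its own normaliser" (contradicting normality of maximal subgroups), which is the same computation as your Frattini argument, merely packaged differently; your write-up just supplies the details the paper leaves to the reader.
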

\begin{proof}
For let $S$ be a Sylow subgroup of $G$, and $N$ its normaliser in $G$. If $N < G$, we can choose a maximal subgroup $M$ of $G$ which contains $N$. A simple application of Sylow's Theorem shows that $M$ is its own normalizer in $G$. Thus \textit{(i)} $\Rightarrow$ \textit{(ii)}. By (\ref{lemma1.4}), \textit{(ii)} $\Rightarrow$ \textit{(iii)}. By \ref{lemma2.6}, \textit{(iii)} $\Rightarrow$ \textit{(i)}.
\end{proof}

\begin{defn*}  Let $\mathcal{P}$ be any property of groups. A group $G$ is called \textbf{locally-$\mathcal{P}$} if every finitely generated subgroup of $G$ is $\mathcal{P}$. \footnote{It more common now to say $G$ is locally-$\mathcal{P}$ if every finitely generated subgroup is contained in a subgroup of $G$ that is $\mathcal{P}$.}
\end{defn*}
\begin{thm}\label{2.8}
$\:$\\
\vspace{-3.5mm}
\begin{itemize}
\item[(i)] Every maximal subgroup of a locally nilpotent group $G$ is normal in $G$. (McLain) 
\item[(ii)] If $H$ and $K$ are normal locally nilpotent subgroups of $G$, then $H K$ is also locally nilpotent (Hirsch).
\item[(iii)] If ever proper subgroup of $G$ is distinct from its normalizer in $G$, then $G$ is locally nilpotent. (Plotkin) \footnote{Heineken and Mohamad have shown recently that there exist groups with this normalizer condition but with trivial centre (J. Algebra, 10 (1968) 368-376).}
\end{itemize}
\end{thm}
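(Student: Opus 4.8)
The three parts are handled separately, with (ii) feeding into (iii).

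\textbf{(i) (McLain).} Let $M$ be maximal in the locally nilpotent group $G$ and suppose $M$ is not normal. Since $G'=[G,G]\nsub G$ (Lemma~\ref{lemma2.3}), $MG'$ is a subgroup with $M\le MG'\le G$; if $MG'=M$ then $G'\le M$, forcing $M\nsub G$, so by maximality $MG'=G$. Choose $a\in G\setminus M$ and write $a=mc$ with $m\in M$, $c\in G'$; then $c=m^{-1}a\notin M$, so $\langle M,c\rangle=G$. Now $c$ is a finite product of commutators $[x_j,y_j]^{\pm1}$ of elements of $G$, and each $x_j,y_j$, lying in $G=\langle M,c\rangle$, is a word in $c$ together with finitely many elements of $M$. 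Let $F$ be the subgroup generated by $c$, by $m$, and by all those finitely many elements of $M$; then $F$ is finitely generated, hence nilpotent. All the $x_j,y_j$ lie in $F$, so $c\in F'$, and $F=\langle M\cap F,\,c\rangle$ since $c$ and the chosen elements of $M\cap F$ generate $F$. By Lemma~\ref{lemma2.6} every maximal subgroup $M_0$ of the nilpotent group $F$ is normal, so $F/M_0$ is abelian and $F'\le M_0$; thus $c\in F'$ lies in every maximal subgroup of $F$, and (every proper subgroup of the finitely generated group $F$ being contained in a maximal one) $c$ is a non-generator of $F$. Then $F=\langle M\cap F,c\rangle$ forces $F=M\cap F\le M$, whence $c\in M$ — a contradiction.

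\textbf{(ii) (Hirsch).} It suffices to show that an arbitrary finitely generated $F\le HK$ is nilpotent. Writing $F=\langle f_1,\dots,f_n\rangle$ with $f_i=a_ib_i$, $a_i\in H$, $b_i\in K$, and putting $A=\langle a_1,\dots,a_n\rangle\le H$, $B=\langle b_1,\dots,b_n\rangle\le K$ (both finitely generated, hence nilpotent), we get $F\le D:=\langle A,B\rangle$, so it is enough to prove $D$ nilpotent. Since $H,K\nsub G$, the subgroups $D\cap H$ and $D\cap K$ are normal in $D$, contain $A$ and $B$, satisfy $D=(D\cap H)(D\cap K)$ (the right side is a subgroup containing $A$ and $B$), and $[D\cap H,D\cap K]\le H\cap K$. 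Hence $D\cap H$ and $D\cap K$ commute and intersect trivially modulo $D\cap H\cap K$, so $D/(D\cap H\cap K)$ is the direct product of the nilpotent groups $(D\cap H)/(D\cap H\cap K)\cong D/(D\cap K)$ and $(D\cap K)/(D\cap H\cap K)\cong D/(D\cap H)$ — nilpotent because, e.g., $D/(D\cap K)\cong DK/K$ is a finitely generated subgroup of $HK/K\cong H/(H\cap K)$, which is locally nilpotent — and so is nilpotent of some class $c$. A commutator estimate exactly as in the proof of Theorem~\ref{theorem2.5}, with $D\cap H$ and $D\cap K$ in the roles of $H$ and $K$ and $n=2c+1$, then yields $\gamma_{2c+1}(D)\le D\cap H\cap K$. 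It remains to absorb this "tail", which sits inside the locally nilpotent group $H\cap K$. I would do this by induction on the class $c$ of $D/(D\cap H\cap K)$: since $Z\bigl(\overline{D\cap H}\times\overline{D\cap K}\bigr)=Z(\overline{D\cap H})\times Z(\overline{D\cap K})$, one can peel off a cyclic central (mod $D\cap H\cap K$) factor lying in the image of $H$ or of $K$, check the quotient retains the product structure, and use local nilpotence of $H\cap K$ together with normality of $D\cap H\cap K$ in the finitely generated group $D$ to force the lower central series of $D$ to terminate.

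\textbf{(iii) (Plotkin).} First, every cyclic subgroup of $G$ is ascendant: for a proper $X<G$, the normalizer chain $X=X_0<X_1=N_G(X_0)<X_2=N_G(X_1)<\cdots$ (unions at limit stages) strictly increases by hypothesis until it reaches $G$. Next, by part (ii) together with directed unions, the join $R(G)$ of all normal locally nilpotent subgroups of $G$ is normal and locally nilpotent (finite products of normal locally nilpotent subgroups are locally nilpotent by repeated use of (ii), and an arbitrary join is a directed union of such finite products), and it is the unique maximal such subgroup, hence characteristic — the Hirsch–Plotkin radical. Then one shows, by transfinite induction along an ascending normal series $H=H_0\nsub H_1\nsub\cdots\nsub H_\beta=G$ from an ascendant locally nilpotent $H$, that $H\le R(H_\beta)$ for all $\beta$: the successor step uses that $R(H_\beta)$, being characteristic in $H_\beta\nsub H_{\beta+1}$, is a normal locally nilpotent subgroup of $H_{\beta+1}$, hence $\le R(H_{\beta+1})$; the limit step uses that the $R(H_\gamma)$ form an ascending chain whose union is a normal locally nilpotent subgroup of $H_\beta$. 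Applying this to each cyclic subgroup of $G$ gives $G=\langle\text{cyclic subgroups}\rangle\le R(G)\le G$, so $G=R(G)$ is locally nilpotent.

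\textbf{Where the difficulty lies.} Parts (i) and (iii) reduce to bookkeeping once the right reductions are set up (in (iii) the only delicate point is the limit-ordinal step of the transfinite induction). The real work is in (ii): because an extension of a nilpotent group by a locally nilpotent one need not be locally nilpotent (e.g. $\Z[1/2]\rtimes\Z$), knowing that $D/(D\cap H\cap K)$ is nilpotent does not by itself make $D$ nilpotent, and the normality of $H$ and $K$ in $G$ must be used in full to control the tail $\gamma_{2c+1}(D)\le H\cap K$ — that is the technical heart of the argument.
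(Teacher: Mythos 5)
Your part (i) is essentially the paper's argument (the paper picks a maximal subgroup $N$ of the finitely generated nilpotent group containing $K=\langle u_1,\dots,u_r\rangle$ and derives the contradiction directly; your Frattini/non-generator phrasing is the same idea). Your part (iii) is correct but takes a genuinely different route: the paper applies Zorn's lemma to get a maximal locally nilpotent subgroup $M$ containing a given element, uses (ii) to show $N_G(M)$ is self-normalizing, invokes the normalizer condition to get $M\nsub G$, and uses (ii) again for uniqueness of $M$; your construction of the full Hirsch--Plotkin radical and the transfinite induction along ascending series is heavier machinery but sound, and it isolates the reusable fact that ascendant locally nilpotent subgroups lie in the radical.

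The genuine gap is in part (ii), exactly where you flag it. Your reduction gives $\gamma_{N}(D)\leq D\cap H\cap K$ for suitable $N$, but this leaves $D$ as an extension of a subgroup of the locally nilpotent group $H\cap K$ by a nilpotent group, and (as your own example $\Z[1/2]\rtimes\Z$ shows) such extensions need not be nilpotent; moreover you have no finite generation for $\gamma_N(D)$, so you cannot even conclude the tail is nilpotent. The "peel off a cyclic central factor" sketch does not close this. The paper avoids the tail entirely by a different decomposition: with $X=\langle a_1,\dots,a_n\rangle\leq H$ and $Y=\langle b_1,\dots,b_n\rangle\leq K$, Lemma \ref{lemma2.4} gives $[X,Y]=\left<\left<C^X\right>^Y\right>$ where $C$ is the finite set of commutators $[a_i,b_j]\subseteq H\cap K$. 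Since $\left<C,X\right>$ is a finitely generated subgroup of $H$, it is nilpotent, hence satisfies \textbf{Max} by Lemma \ref{lemma1.9}, so $\left<C^X\right>$ is finitely generated and lies in $H\cap K$; repeating inside $K$ shows $[X,Y]$ is finitely generated and contained in $H\cap K$. Consequently $\bar{X}=X[X,Y]$ and $\bar{Y}=Y[X,Y]$ are \emph{finitely generated} subgroups of $H$ and $K$ respectively, hence genuinely nilpotent, normal in $L=\left<X,Y\right>=\bar{X}\bar{Y}$ by Lemma \ref{lemma2.2}, and Fitting's theorem (\ref{theorem2.5}) gives $L$ nilpotent with no residual tail to absorb. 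The missing idea in your proposal is precisely this use of \textbf{Max} to transport finite generation of $[X,Y]$ into $H\cap K$.
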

\begin{proof}
(i) Let $G$ be locally nilpotent. Suppose if possible that $M$ is a maximal subgroup of $G$ which s not normal in $G$. Then $M  \not \geq G'$ and we may choose an element $x$ in $G'$ which  is not in $M$. Hence $\left<M, x\right> = G$. Since $x \in G'$, there exist elements $y_i, z_i$ in $G$ such that $$x = \prod_{i=1}^n [y_i, z_i].$$ Each $y_i$ and $z_i$ may be expressed in terms of $x$ together with elements of $M$. Let $u_1, \ldots, u_r$ be the elements of $M$ involved in these expressions and let $H = \left<x, u_1, \ldots, u_r\right>$. Then $H$ is nilpotent and each $y_i$ and $z_i$ is in $H$. Hence $x \in H'$. But $K = \left<u_1, u_2, \ldots, u_r\right>$ does not contain $x$ since $K \leq M$. Hence $H<K$. Since $H$ is finitely generated, there exists a maximal subgroup $N$ of $H$ which contains $K$. By (\ref{lemma2.6}), $N \nsub H$ since $H$ is nilpotent. Hence $H/N$ is cyclic of order a prime, $N \geq H'$, $x \in N$, $N$ contains $\left<x,K\right> = H$, a contradiction.

(ii) Let $H$ and $K$ be normal in $G$ and locally nilpotent. Let $a_1b_1, a_2b_2, \ldots, a_nb_n$ be any finite set of elements of $HK$, where each $a_i \in H$ and each $b_i \in K$. Let $X = \left<a_1, \ldots, a_n\right>$ and $Y = \left<b_1, \ldots, b_n\right>$. We have to prove that $\left<a_1 b_1, \ldots, a_nb_n\right>$ is nilpotent and it will be sufficient to prove $\left<X,Y\right> = L$ is nilpotent. Let $C$ be the set of all commutators $[a_i,b_j]$. Since $H$ and $K$ are normal in $G$, $[H,K] \leq H \cap K$. Hence $\left<C,X\right>$ is a finitely generated subgroup of $H$ and therefore nilpotent. By (\ref{lemma1.9}), the subgroup $\left<C^X\right>$ of $\left<C, X\right>$ is finitely generated. Also $\left<C^X\right> \leq H \cap K$. Hence $\left<C^X, Y\right>$ is a finitely generated subgroup of $K$ and its subgroup $\left<\left<C^X\right>^Y\right> = [X,Y]$ by (\ref{lemma2.4}) is also finitely generated. Hence $\bar{X} = X[X,Y]$ is a finitely generated subgroup of $H$ and therefore nilpotent. Similarly $\bar{Y} = Y[X,Y]$ is nilpotent. But by (\ref{lemma2.2}), $L = \left<X,Y\right> = \bar{X} \bar{Y}$ and $\bar{X}, \bar{Y}$ are nilpotent in $L$. Hence $L$ is nilpotent by (\ref{theorem2.5}).

(iii) Let $G$ be any group. By Zorn's Lemma, every element $x$ of $G$ is contained in some maximal locally nilpotent subgroup of $M$ of $G$. Let $N$ be the normaliser of $M$ and let $y$ belong to the normaliser of $N$. Then $M$ and $M^y$ are normal locally nilpotent subgroups of $N$. By Hirsch's theorem, their product is also locally nilpotent. By the maximality of $M$, it follows that $M = M^y$, $y \in N$, $N$ is it own normaliser in $G$. If therefore $G$ has the property that each of its proper subgroups is distinct from its normaliser in $G$, we must have $N = G$, $M \nsub G$. In such a group every maximal locally nilpotent subgroup is normal. But by Hirsch's theorem, $M$ is then unique. Since every $x \in G$ belongs to some such $M$, we have $M=G$; $G$ is itself locally nilpotent.
\end{proof}
\begin{thm}\label{theorem2.9}
Let $M$ be a minimal normal subgroup of the group $G$.
\begin{itemize}
\item[(a)] If $G$ is locally soluble, then $M$ is abelian. 
\item[(b)] If $G$ is locally nilpotent, then $M$ lies in the centre of $G$ (when $M$ is finite of prime order).
\end{itemize}
\end{thm}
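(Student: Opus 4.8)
Both statements rest on the same consequence of minimality: any subgroup of $M$ that is invariant under conjugation by $G$ — for instance a characteristic subgroup of $M$, or the normal closure $\langle a^G\rangle$ of an element $a\in M$ — must be trivial or all of $M$. For (a) my plan is to start from the observation that $[M,M]=M'$ is characteristic (indeed fully invariant) in $M$, hence normal in $G$, so minimality gives $[M,M]=1$, in which case $M$ is abelian as desired, or $[M,M]=M$, which I want to rule out. So suppose $M$ is perfect, fix $1\neq a\in M$, and note that by minimality $\langle a^G\rangle=M$. Since $a\in M=[M,M]$, write $a=\prod_i[u_i,v_i]^{\pm1}$ with finitely many $u_i,v_i\in M$; and since each $u_i,v_i$ lies in $\langle a^G\rangle$, express all of them as group words in finitely many conjugates $a^{g_1},\dots,a^{g_r}$ with $g_j\in G$.

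The one real idea is then to localise. Put $G_0=\langle a,g_1,\dots,g_r\rangle$, a \emph{finitely generated} subgroup of $G$, and $M_0=\langle a^{G_0}\rangle\nsub G_0$. By construction $u_i,v_i\in M_0$, so $a\in M_0'$; moreover for any $g\in G_0$ one gets $a^g=\prod_i[u_i^g,v_i^g]^{\pm1}\in M_0'$, since $M_0$ is normal in $G_0$. Hence $M_0=\langle a^{G_0}\rangle\le M_0'$, i.e. $M_0$ is perfect. But $G_0$ is finitely generated, hence soluble (a finitely generated subgroup of a locally soluble group is soluble), say $G_0^{(d)}=1$; then $M_0=M_0^{(d)}\le G_0^{(d)}=1$, contradicting $a\neq1$. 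Thus $[M,M]=1$. I do not foresee a serious obstacle in (a): the tempting but futile move is to descend the derived series inside $M$ itself; the point is to notice that the finitely many relations witnessing ``$a\in[M,M]$'' already hold in a finitely generated — hence genuinely soluble — subgroup of $G$, in which a perfect subgroup can only be trivial.

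For (b), since $M$ has prime order $p$ we have $M=\langle a\rangle$ for any $1\neq a\in M$ (and $M$ is abelian, which part (a) also gives). Fix $g\in G$ and set $H=\langle a,g\rangle$; this is two-generated, hence nilpotent, and $M=\langle a\rangle$ is normal in $H$ because it is normal in $G$. Now use that a nontrivial normal subgroup of a nilpotent group meets the centre nontrivially: taking the least $i$ with $M\cap\zeta_i(H)\neq1$, one has $[M\cap\zeta_i(H),H]\le M\cap\zeta_{i-1}(H)=1$, so $M\cap\zeta_1(H)\neq1$ — this is the argument behind Lemma \ref{lemma1.1}. Since $|M|=p$, this forces $M\le\zeta_1(H)$, i.e. $[a,g]=1$; and as $g$ was arbitrary, $a\in\zeta_1(G)$, hence $M\le\zeta_1(G)$. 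The only ingredient here beyond the definitions is the elementary fact about normal subgroups of nilpotent groups just invoked, so I expect no obstacle.
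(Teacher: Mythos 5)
Part (a) of your proposal is correct and is essentially the paper's argument: the paper fixes a single non-trivial commutator $c=[a,b]$ with $a,b\in M$, uses minimality to write $a$ and $b$ inside $\langle c^{x_1},\dots,c^{x_n}\rangle$, and shows that $K=\langle c^{H}\rangle$ is perfect inside the finitely generated (hence soluble) subgroup $H=\langle c,x_1,\dots,x_n\rangle$; you first dispose of $M'$ by minimality and then run the same localisation with $a$ written as a product of commutators. The key idea — that the finitely many witnesses to perfectness already live in a finitely generated, genuinely soluble subgroup — is identical, so there is nothing to add here.

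Part (b) is where you and the paper diverge, and the divergence matters. You read the parenthetical as a hypothesis ``$|M|=p$''; under that reading your argument is correct and pleasantly short. But the paper's proof assumes nothing about $M$ beyond minimal normality: it shows $M\le\zeta_1(G)$ for \emph{any} minimal normal subgroup of a locally nilpotent group, and the parenthetical is intended as a consequence (read ``whence'': once $M$ is central, every subgroup of $M$ is normal in $G$, so minimality forces $M$ to be cyclic of prime order). Without the prime-order assumption your argument breaks at both of its load-bearing points: $M$ need not equal $\langle a\rangle$, and $M\cap\zeta_1(H)\neq1$ no longer yields $M\le\zeta_1(H)$ — indeed $M$ need not even be contained in your two-generated $H$, and minimality of $M$ in $G$ does not localise to minimality in $H$. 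The repair is exactly the device you used in (a): suppose $c=[a,b]\neq1$ for some $a\in M$, $b\in G$; by minimality $\langle c^G\rangle=M$, so $a$ lies in $\langle c^{x_1},\dots,c^{x_n}\rangle$ for finitely many $x_i\in G$; put $H=\langle b,c,x_1,\dots,x_n\rangle$, which is nilpotent, and $K=\langle c^H\rangle\nsub H$. Then $a\in K$ and $b\in H$ give $c\in[K,H]$, and since $[K,H]\nsub H$ this forces $K=\langle c^H\rangle\le[K,H]$, i.e. $[K,H]=K\neq1$, contradicting the nilpotence of $H$. You should replace your part (b) with this (or explicitly restrict your claim to the prime-order case, which is weaker than what the paper establishes).
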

\begin{proof}
(a) Assume, if possible, that $M$ is not abelian, and let $a$ and $b$ be elements of $M$ such that $c= [a,b] \neq 1$. By the minimality of $M$, $\left<c^G\right> = M$, so for suitable $x_1, \ldots, x_n$ in $G$, $a$ and $b$ lie $\left<c^{x_1}, c^{x_2}, \ldots, c^{x_n}\right>$. Let $H = \left<c, x_1, \ldots, x_n\right>$ and $K = \left<c^H\right>$. Now $c \in K'$ as both $a$ and $b$ are in $K$, and also $K' \nsub H$. We conclude $K' = K$, which is a contradiction since $H$ is by hypothesis a soluble group. Hence $M$ is Abelian.

(b) Assume, if possible, that the result is false. Then for some $a$ in $M$, $b$ in $G$, we have $c = [a,b] \neq 1$. By the minimality of $M$, $\left<c^G\right> = M$, so for suitable $x_1, \ldots, x_n$ in $G$, $a$ lies in $\left<c^{x_1}, c^{x_2}, \ldots, c^{x_n}\right>$. Let $H = \left<b, c, x_1, \ldots, x_n\right>$ and $K = \left<c^H\right>$. Since $a \in K$, $b \in H$ it follows that $c \in [K,H]$. As $[K,H]$ is normal in $H$, $[K,H] = K$. This is a contradiction since $H$ is by hypothesis a nilpotent group. Hence $M$ lies in the centre of $G$. \end{proof}

\begin{cor*}
A simple, locally soluble group must be cyclic of order  a prime.
\end{cor*}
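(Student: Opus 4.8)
The plan is to deduce this corollary directly from part (a) of Theorem \ref{theorem2.9}, applied to $G$ viewed as a minimal normal subgroup of itself. Suppose $G$ is a simple, locally soluble group. Simplicity means that $G$ has exactly two normal subgroups, $1$ and $G$, so in particular $G$ itself is a minimal (indeed the unique nontrivial) normal subgroup of $G$. Hence Theorem \ref{theorem2.9}(a) applies with $M = G$: since $G$ is locally soluble, we conclude that $M = G$ is abelian.

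Now an abelian group is simple if and only if it is cyclic of prime order. Indeed, in an abelian group every subgroup is normal, so a simple abelian group $G$ can have no proper nontrivial subgroups at all; a nontrivial group with no proper nontrivial subgroups is necessarily cyclic of prime order (pick $x \neq 1$; then $\langle x \rangle = G$, and if $x$ had infinite order or composite finite order it would have a proper nontrivial subgroup). This gives the claim.

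There is essentially no main obstacle here — the corollary is an immediate specialization of the theorem. The only point worth noting is the (trivial) observation that a simple group is in particular a minimal normal subgroup of itself, which is what licenses the application of Theorem \ref{theorem2.9}(a); and the elementary structural fact about simple abelian groups. I would present it in just a couple of lines, along the lines of: "Apply Theorem \ref{theorem2.9}(a) to $G$ as a minimal normal subgroup of itself to see $G$ is abelian; a simple abelian group is cyclic of prime order."
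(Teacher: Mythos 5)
Your proof is correct and is exactly the intended argument: the paper states this corollary without proof immediately after Theorem \ref{theorem2.9}, and the evident route is the one you take, namely to view the simple group $G$ as a minimal normal subgroup of itself, apply part (a) to conclude $G$ is abelian, and then use the elementary fact that a simple abelian group is cyclic of prime order. Nothing is missing.
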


\section{Nilpotent Groups of Automorphisms}
\begin{lemma}\label{lemma3.1}
$$[x, y^{-1}, z]^y[y, z^{-1},x]^z[z,x^{-1},y]^x = 1.$$
\end{lemma}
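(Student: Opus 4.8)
The plan is to recognise this as the Hall--Witt identity and to prove it by verifying it as a formal identity in the free group $F$ on the letters $x,y,z$; since it is an identity between words, once it holds in $F$ it holds in every group, and every step is just a manipulation of reduced words. The only real work is to organise that manipulation so that the cancellation becomes transparent.

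First I would establish a convenient normal form for one factor. Using $u^v = v^{-1}uv$, expanding $[x,y^{-1}] = x^{-1}yxy^{-1}$, and using that conjugation by $y$ is an automorphism (so that $[[x,y^{-1}],z]^y = [[x,y^{-1}]^y,\, z^y]$), a short computation gives
\[
[x,y^{-1},z]^y \;=\; (y^x)^{-1}\, z^{-1}\, y^x\, z^y \;=\; (z^{-1})^{y^x}\, z^y ,
\]
where $y^x = x^{-1}yx$ and $z^y = y^{-1}zy$. Applying the cyclic substitution $x \mapsto y \mapsto z \mapsto x$ gives the two companion formulas
\[
[y,z^{-1},x]^z = (z^y)^{-1}\, x^{-1}\, z^y\, x^z, \qquad
[z,x^{-1},y]^x = (x^z)^{-1}\, y^{-1}\, x^z\, y^x .
\]

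Next I would multiply the three factors in this order. The tail $z^y$ of the first meets the head $(z^y)^{-1}$ of the second, and the tail $x^z$ of the second meets the head $(x^z)^{-1}$ of the third, so two adjacent pairs cancel immediately, leaving
\[
(y^x)^{-1}\, z^{-1}\, y^x\, x^{-1}\, z^y\, y^{-1}\, x^z\, y^x .
\]
Substituting $y^x = x^{-1}yx$, $z^y = y^{-1}zy$, $x^z = z^{-1}xz$ turns this into an explicit word of length eighteen in $x^{\pm1}, y^{\pm1}, z^{\pm1}$, which then collapses by a chain of adjacent cancellations: the final $x$ of $y^x$ kills the following $x^{-1}$, which exposes a pair $y,y^{-1}$, then another $y,y^{-1}$, then $z,z^{-1}$, then $x^{-1},x$, then $z^{-1},z$, then $x,x^{-1}$, then $y^{-1},y$, and finally $x^{-1}x = 1$.

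The entire argument is bookkeeping; there is no conceptual obstacle. The one place to be careful is the final cancellation chain, which I would double-check by reducing the eighteen-letter word one pair at a time. One could instead derive the identity from Lemma~\ref{lemma2.1} by repeated use of $[x,yz]=[x,z][x,y]^z$ and $[xy,z]=[x,z]^y[y,z]$, but the substitution above keeps the computation shortest; note also that the Jacobi identity for the associated graded Lie ring recovers the identity only modulo higher commutators, so the exact statement genuinely needs the group-level computation.
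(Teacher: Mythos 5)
Your proof is correct and is essentially the paper's own argument: both verify the Hall--Witt identity as a word identity in the free group by computing each factor explicitly as the reduced word $x^{-1}y^{-1}xz^{-1}x^{-1}yxy^{-1}zy$ (your $(y^x)^{-1}z^{-1}y^{x}z^{y}$) together with its cyclic images. The paper merely organizes the final cancellation more slickly, writing each factor as $u^{-1}v$ with $u = xzx^{-1}yx$ and $v,w$ its cyclic permutes, so the product telescopes as $u^{-1}v\cdot v^{-1}w\cdot w^{-1}u = 1$ and your eighteen-letter reduction is avoided.
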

\begin{proof}
Let $u = x z  x^{-1}  y  x$ and let $v,w$ be obtained from $u$ by cyclic permutation of $x, y, z$. Then 
$$
[x, y^{-1}, z]^{y} = y^{-1}  [y^{-1}, x] z^{-1} [x, y^{-1}] zy = x^{-1} y^{-1}  x  z^{-1}  x^{-1}  y  x  y^{-1}  z 
 y = u^{-1}v = 1;
 $$ etcetera.
\end{proof}

\begin{lemma}\label{lemma3.2}
Let $X, Y, Z$ be subgroups of $G$. Let $X^* = [Y, Z, X]$, $Y^*  = [Z, X, Y]$, and $Z^* = [X, Y, Z]$. If $N \nsub G$ and $X^*, Y^* \leq N$, then $Z^* \leq N$.
\end{lemma}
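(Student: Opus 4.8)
The plan is to read off the result from the Hall--Witt identity of Lemma~\ref{lemma3.1}. Fix elements $x \in X$, $y \in Y$, $z \in Z$ and substitute into that identity:
$$[x,y^{-1},z]^{y}\,[y,z^{-1},x]^{z}\,[z,x^{-1},y]^{x}=1.$$
First I would locate the three factors. Since $[x,y^{-1}]\in[X,Y]$, the first factor before conjugation is $[x,y^{-1},z]=[[x,y^{-1}],z]\in[[X,Y],Z]=[X,Y,Z]=Z^{*}$ — this is exactly what we want to place inside $N$. Likewise $[y,z^{-1},x]=[[y,z^{-1}],x]\in[[Y,Z],X]=[Y,Z,X]=X^{*}\leq N$ and $[z,x^{-1},y]=[[z,x^{-1}],y]\in[[Z,X],Y]=[Z,X,Y]=Y^{*}\leq N$. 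Because $N\nsub G$, the conjugates $[y,z^{-1},x]^{z}$ and $[z,x^{-1},y]^{x}$ also lie in $N$; since $N$ is a subgroup, the identity then forces $[x,y^{-1},z]^{y}\in N$, and conjugating back gives $[x,y^{-1},z]\in N$. As $y^{-1}$ runs over all of $Y$ while $y$ does, this yields $[[x,y],z]\in N$ for all $x\in X$, $y\in Y$, $z\in Z$.

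The remaining step — and the only subtle one — is to upgrade this elementwise statement to the subgroup inclusion $Z^{*}=[[X,Y],Z]\leq N$; one cannot do this naively, since $[X,Y]$ consists of products of commutators, not merely single commutators $[x,y]$. I would clear this by passing to $\bar G=G/N$ (legitimate because $N$ is normal). There the conclusion just obtained reads $[[\bar x,\bar y],\bar z]=1$, i.e. each $\bar z\in\bar Z$ commutes with every generator $[\bar x,\bar y]$ of the subgroup $[\bar X,\bar Y]=\overline{[X,Y]}$; since the centraliser of a subset is a subgroup, $\bar z$ then commutes with all of $[\bar X,\bar Y]$. As this holds for each $\bar z\in\bar Z$, we get $[[\bar X,\bar Y],\bar Z]=1$, that is $\overline{[X,Y,Z]}=1$, which is precisely $Z^{*}\leq N$.

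The main obstacle is really the bookkeeping: correctly matching the three cyclically rotated weights $X^{*},Y^{*},Z^{*}$ to the three factors of Lemma~\ref{lemma3.1}, and noticing that it is exactly the passage modulo $N$ that converts the elementwise membership $[[x,y],z]\in N$ into the subgroup statement $[X,Y,Z]\leq N$. No normality hypothesis on $X,Y,Z$ is used, since every conjugation appearing above is absorbed by the normality of $N$.
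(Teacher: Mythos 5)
Your proposal is correct and follows exactly the paper's route: substitute into the Hall--Witt identity of Lemma~\ref{lemma3.1}, observe that the last two factors (and hence their conjugates, by normality of $N$) lie in $N$, conclude $[x,y^{-1},z]\in N$, and then note that mod $N$ each $z\in Z$ centralises the generators $[x,y^{-1}]$ of $[X,Y]$ and hence all of $[X,Y]$, giving $Z^*\leq N$. You merely spell out the final generator-to-subgroup step more explicitly than the paper does, which is a faithful elaboration rather than a different argument.
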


\begin{cor*}
If $X$, $Y$, $Z$ are all normal in $G$, then $Z^* \leq X^*Y^*$.
\end{cor*}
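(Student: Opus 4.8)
The plan is to deduce the corollary directly from Lemma \ref{lemma3.2} by taking $N = X^*Y^*$, so the entire task reduces to checking that this choice of $N$ satisfies the hypotheses of the lemma, namely that $N \nsub G$ and that $X^*, Y^* \leq N$. The last containments are trivial, since $X^*$ and $Y^*$ are each factors of the product $X^*Y^*$.

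First I would argue that $N$ is normal in $G$. Since $X$, $Y$, $Z$ are all normal in $G$, Lemma \ref{lemma2.3} (applied repeatedly: the commutator of two normal subgroups is normal) shows that $[Y,Z]$ is normal in $G$, hence $X^* = [[Y,Z],X] = [Y,Z,X]$ is normal in $G$; likewise $Y^* = [Z,X,Y]$ is normal in $G$. The product of two normal subgroups is again a normal subgroup, so $N = X^*Y^* \nsub G$.

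With $N \nsub G$, $X^* \leq N$, and $Y^* \leq N$ in hand, Lemma \ref{lemma3.2} applies verbatim and yields $Z^* \leq N = X^*Y^*$, which is the assertion of the corollary.

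There is essentially no obstacle here beyond bookkeeping: the only points to be careful about are that iterated commutator subgroups of normal subgroups are normal (a consequence of Lemma \ref{lemma2.3}) and that a product of normal subgroups is normal. Both are standard and require no computation, so the corollary is an immediate specialization of Lemma \ref{lemma3.2}.
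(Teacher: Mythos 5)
Your proof is correct and follows the route the paper intends: apply Lemma \ref{lemma3.2} with $N = X^*Y^*$, the only substantive point being that this $N$ is normal in $G$, which you justify via Lemma \ref{lemma2.3}. The paper's own proof is essentially the same argument (it inlines a proof of Lemma \ref{lemma3.2} via the identity of Lemma \ref{lemma3.1} and leaves the normality of $X^*Y^*$ implicit), so your version just makes that bookkeeping explicit.
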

\begin{proof}
For suppose $X^{*}  Y^{*} \leq N \nsub G$. Let $x \in X$, $y \in Y$, $z \in Z$ in (\ref{lemma3.1}). The last two factors on the left both lie in $N$. Hence $[x,y^{-1},z] \in N$; so every element $z$ of $Z$ commutes $\text{mod } N$ with every generator of $[x,y^{-1}]$ of $[X,Y]$. Thus $Z^{*} \leq N$.
\end{proof}

Also $[x,y,z^x]  [z,x,y^z]  [y,z,x^y] = 1$. In metabelian groups, we can write 
$$
[x,y,z] \cdot [z,x,y] \cdot [y,z,x] = 1.
$$

\begin{thm}\label{theorem3.3}
Let $H$ and $K$ be subgroups of $G$. Let $$H=H_0 \geq H_1 \geq \ldots$$ be a series of subgroups, all normal in $H$, and such that $[H_i,K] \leq H_{i+1}$ for each $i = 0, 1, 2, \ldots \: $. Define $K = K_1$ and let $K_j$ consist of all elements $x$ of $K$ such that $[H_i, x] \leq H_{i+j}$ for all $i$. Then $[K_j, K_\ell] \leq K_{j + \ell}$ for all $j, \ell = 1,2, \ldots$ and $[H_i, \gamma_j(K)] \leq H_{i+j}$ for all $i,j$.
\end{thm}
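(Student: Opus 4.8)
The plan is to prove the two assertions together by a simultaneous induction, using the Hall–Witt identity (Lemma 3.1) in its normal-subgroup form (Lemma 3.2) as the engine. First I would record the base cases: $[K_j, K_\ell] \le K_{j+\ell}$ for $\ell = 1$ is essentially the definition of $K_{j+1}$ together with the hypothesis that the $H_i$ are normal in $H$ — one checks that if $x \in K_j$ then $[H_i, [H_?, x]]$-type expressions land in the right term, but more directly, the statement $[K_j, K_1] \le K_{j+1}$ says: for $x \in K_j$, $y \in K = K_1$, and any $i$, we need $[H_i, [x,y]] \le H_{i+j+1}$. The second assertion for $j = 1$ is just the standing hypothesis $[H_i, K] \le H_{i+1}$, and for general $j$ it will follow once we know $\gamma_j(K) \le K_j$, which is itself an immediate consequence of the first assertion by induction on $j$ (since $\gamma_1(K) = K = K_1$ and $\gamma_{j+1}(K) = [\gamma_j(K), K] \le [K_j, K_1] \le K_{j+1}$).

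So the crux is the inequality $[K_j, K_\ell] \le K_{j+\ell}$, which I would prove by induction on $j + \ell$. Fix $x \in K_j$, $y \in K_\ell$; I must show $[x,y] \in K_{j+\ell}$, i.e. $[H_i, [x,y]] \le H_{i+j+\ell}$ for every $i$. Apply Lemma 3.2 (or rather the three-subgroup idea behind Lemma 3.1 directly) with the three "subgroups" being $\langle x\rangle$, $\langle y\rangle$, and $H_i$ — more precisely, set $X = \langle x \rangle$, $Y = \langle y \rangle$, $Z = H_i$, so that in the notation of Lemma 3.2, $X^* = [Y, Z, X] = [[y, H_i], x]$ wait — I need the rotation that puts $[H_i, [x,y]]$ as $Z^*$. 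Take $X = \langle x\rangle$, $Y = \langle y \rangle$, $Z = H_i$; then $Z^* = [X, Y, Z] = [[x,y], H_i]$, $X^* = [Y, Z, X] = [[y, H_i], x]$, and $Y^* = [Z, X, Y] = [[H_i, x], y]$. Now $[H_i, x] \le H_{i+j}$ since $x \in K_j$, hence $Y^* \le [H_{i+j}, y] \le H_{i+j+\ell}$ since $y \in K_\ell$; symmetrically $[y, H_i] = [H_i, y] \le H_{i+\ell}$, so $X^* \le [H_{i+\ell}, x] \le H_{i+\ell+j}$. With $N = H_{i+j+\ell}$ — which is normal in $H$ by hypothesis — Lemma 3.2 gives $Z^* = [[x,y], H_i] \le H_{i+j+\ell}$, i.e. $[H_i, [x,y]] \le H_{i+j+\ell}$, as required. (One must be slightly careful that Lemma 3.2 is stated for subgroups; since it ultimately rests on Lemma 3.1, which is an identity in arbitrary elements, applying it with cyclic subgroups is harmless, and the normal closures involved stay inside the relevant $H$-normal terms because all the $H_i$ are normal in $H$ and $x, y \in K \le$ the normalizer of each $H_i$.)

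The main obstacle I anticipate is bookkeeping rather than conceptual: making sure the indices in the Hall–Witt rotation are assigned so that \emph{two} of the three cyclic terms are already known to lie in $H_{i+j+\ell}$ (using the inductive/definitional facts $[H_i, x] \le H_{i+j}$ and $[H_i, y] \le H_{i+\ell}$ \emph{exactly once each}), leaving the third — the one we want — to be deduced. A secondary subtlety is the passage from "$[H_i, x] \le H_{i+j}$ for all $i$" to "$[H_{i'}, x] \le H_{i'+j}$ for the shifted index $i' = i+\ell$"; this is fine precisely because the defining condition for membership in $K_j$ is quantified over \emph{all} $i$. Once $[K_j, K_\ell] \le K_{j+\ell}$ is in hand, the remaining claim $[H_i, \gamma_j(K)] \le H_{i+j}$ follows from $\gamma_j(K) \le K_j$ (shown above) together with the very definition of $K_j$, completing the proof.
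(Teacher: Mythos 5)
Your proposal is correct and follows essentially the same route as the paper: the paper applies Lemma 3.2 directly with $X=K_j$, $Y=K_\ell$, $Z=H_i$ and $N=H_{i+j+\ell}$ (normal in $\left<H,K\right>$ because each $H_m$ is normal in $H$ and normalized by $K$), noting that both $[H_i,K_j,K_\ell]$ and $[H_i,K_\ell,K_j]$ lie in $H_{i+j+\ell}$ by definition of $K_j$ and $K_\ell$, and then deduces $\gamma_j(K)\leq K_j$ exactly as you do. Your elementwise version with cyclic subgroups and the induction on $j+\ell$ are harmless but unnecessary scaffolding --- the two hypotheses of the three-subgroups lemma hold outright from the definitions, so no induction is used.
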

\begin{proof}
For by definition of $K_j$ and $K_\ell$, both $[H_i,K_j,K_\ell]$ and $[H_i,K_\ell,K_j]$ are contained in $H_{i+j+\ell}$. But $H_{i+j+\ell} \nsub \left<H,K\right>$ so that (\ref{lemma3.2}) applies and gives $[H_i, [K_j,K_\ell]] \leq H_{i+j+\ell}$ and therefore $[K_j, K_\ell] \leq K_{j+\ell}$. Thus $K = K_1 \geq K_2 \geq \ldots$ is a central series of $K$ and $\gamma_j(K) \leq K_j$ so that $[H_i, \gamma_j(K)] \leq H_{i+j}$.
\end{proof}

\begin{thm}\label{theorem3.4}
For any group $G$, we have 
\begin{itemize}
\item[(i)] $[\Gamma_i, \Gamma_j] \leq \Gamma_{i+j}$
\item[(ii)] If $i \geq j$, $[Z_i, \Gamma_j] \leq Z_{i-j}$ and in particular $[Z_i, \Gamma_i] = 1$
\item[(iii)] Defining the \underline{derived series} $$G \geq G' \geq G'' \geq \ldots \geq G^{(k)} \geq$$ of $G$ by $G^{(k+1)} = [G^{(k)}, G^{(k)}]$, we have that $G^{(k)} \leq \Gamma_{2^k}$.
\end{itemize}
\end{thm}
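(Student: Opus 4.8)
The plan is to derive all three parts from Theorem~\ref{theorem3.3} by making suitable choices of the subgroups $H$, $K$ and the descending series $H = H_0 \geq H_1 \geq \cdots$; this is the standard device of ``central series seen from the other side.'' For part (i), I would set $H = \Gamma_i$ and take the series $H_k = \Gamma_{i+k}$, noting $[\Gamma_{i+k}, G] = \Gamma_{i+k+1}$ so the hypothesis $[H_k, K] \leq H_{k+1}$ holds with $K = G$. The conclusion $[H_0, \gamma_j(K)] \leq H_j$ of Theorem~\ref{theorem3.3} then reads $[\Gamma_i, \Gamma_j] \leq \Gamma_{i+j}$, which is exactly (i). (One must check that each $\Gamma_{i+k}$ is normal in $H = \Gamma_i$; this is clear since $\Gamma_{i+k}$ is normal in all of $G$.)

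For part (ii), I would instead run Theorem~\ref{theorem3.3} with $K = G$ and with the \emph{ascending} upper central series reindexed downward: fix $i$ and put $H = Z_i$, $H_k = Z_{i-k}$ for $0 \leq k \leq i$ and $H_k = 1$ for $k > i$. The defining property $[\,Z_{i-k}, G\,] \leq Z_{i-k-1}$ of the upper central series gives precisely $[H_k, G] \leq H_{k+1}$, and each $Z_{i-k}$ is normal in $G$ hence in $H$. Theorem~\ref{theorem3.3} then yields $[H_0, \gamma_j(G)] \leq H_j$, i.e. $[Z_i, \Gamma_j] \leq Z_{i-j}$ for $j \leq i$; taking $j = i$ gives $[Z_i, \Gamma_i] \leq Z_0 = 1$.

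For part (iii), I would argue by induction on $k$, the base case $k=0$ being $G^{(0)} = G = \Gamma_1 = \Gamma_{2^0}$. Assuming $G^{(k)} \leq \Gamma_{2^k}$, apply (i) with $i = j = 2^k$ to get $[\Gamma_{2^k}, \Gamma_{2^k}] \leq \Gamma_{2^{k+1}}$; since $G^{(k+1)} = [G^{(k)}, G^{(k)}] \leq [\Gamma_{2^k}, \Gamma_{2^k}]$ by monotonicity of the commutator in each argument, the induction closes.

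The only real obstacle is getting the bookkeeping in Theorem~\ref{theorem3.3} right: one must verify in each case that the chosen family $H_k$ is a descending chain of subgroups each normal in $H$ and satisfying $[H_k, K] \leq H_{k+1}$, and then correctly translate the theorem's conclusion $[H_i, \gamma_j(K)] \leq H_{i+j}$ (with the theorem's index $i$ set to $0$) back into the desired inequality. Once the dictionary is set up, (i) and (ii) are immediate and (iii) is a one-line induction; no genuinely new commutator identity is needed beyond what Theorem~\ref{theorem3.3} already packages (which in turn rests on the Hall--Witt identity, Lemma~\ref{lemma3.1}, via Lemma~\ref{lemma3.2}).
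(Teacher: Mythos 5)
Your proposal is correct and follows exactly the paper's route: all three parts are derived from Theorem~\ref{theorem3.3} with $K=G$, taking the (shifted) lower central series as the $H_k$ for (i), the reversed upper central series $H_k = Z_{i-k}$ for (ii), and then deducing (iii) from (i) by induction on $k$. The only cosmetic difference is that the paper sets $H_k=\Gamma_{k+1}$ once and reads off all of (i) from the general conclusion $[H_i,\gamma_j(K)]\leq H_{i+j}$, whereas you fix $i$ and use only the $i=0$ instance; this changes nothing of substance.
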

\begin{proof}
These results are all corollaries of (\ref{theorem3.3}). For \textit{(i)}, take $H_i = \Gamma_{i+1} = \gamma_{i+1}(G)$ and $K = G$. For \textit{(ii)}, take $H_j = Z_{i-j}$ for $j = 0,1, \ldots,i$ and again $K = G$. \textit{(iii)} follows from \textit{(i)} by induction on $k$.
\end{proof}

\begin{lemma}\label{3.5}
Let $$G = G_0 \geq G_1 \geq \ldots \geq G_r = 1$$ be a series of normal subgroups of $G$. Let $A$ be the group of all automorphisms of $G$ which leave each $G_i$ invariant and transform each $G_{i-1} / G_i$ identically. Then both $A$ and $[G,A]$ are nilpotent of class $< r$.
\end{lemma}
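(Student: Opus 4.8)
The plan is to turn the automorphisms in $A$ into honest group elements and then invoke Theorem~\ref{theorem3.3} together with the three subgroups lemma (Lemma~\ref{lemma3.2}). Form the split extension $\Gamma = G \rtimes A$, with $A$ acting on $G$ in the natural way, and extend the given series by setting $G_i = 1$ for $i \ge r$, so that the commutator inclusions below are available for every index. Two preliminary remarks set everything up. First, since each $G_i$ is normal in $G$ and is left invariant by $A$, and $\Gamma = \langle G, A \rangle$, each $G_i$ is normal in $\Gamma$; likewise $[G,A] \nsub \Gamma$ by Lemma~\ref{lemma2.2}. Second, since every $a \in A$ induces the identity on $G_{i-1}/G_i$, for $g \in G_{i-1}$ we get $[g,a] = g^{-1} g^{a} \in G_i$, and hence
\[
[G_{i-1}, A] \leq G_i \qquad (i \geq 1).
\]

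For $A$ itself I would apply Theorem~\ref{theorem3.3} inside $\Gamma$ with $H = G$, the series $H_i = G_i$, and $K = A$; the displayed inclusions are exactly the hypothesis $[H_i, K] \le H_{i+1}$. The theorem produces the central series $A = K_1 \geq K_2 \geq \cdots$ of $A$ with $\gamma_j(A) \le K_j$, where $K_j = \{\, a \in A : [G_i, a] \le G_{i+j}\ \text{for all } i \,\}$. Taking $i = 0$ shows that $K_r$ consists of automorphisms acting trivially on $G_0/G_r = G$, so $K_r = 1$; therefore $\gamma_r(A) = 1$ and $A$ is nilpotent of class $\le r-1 < r$.

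For $[G,A]$ I would show that $B_i := [G,A] \cap G_i$ is a central series of $B := [G,A]$. We have $B_1 = B$ (since $[G,A] \le G_1$, the case $i=1$ above), $B_r = 1$, and each $B_i \nsub B$; the only thing to verify is $[B_i, B] \leq B_{i+1}$, and since $[B_i,B] \le B$ this reduces to proving $[G_i, [G,A]] \leq G_{i+1}$. This inclusion is the heart of the matter — estimating $[G_i,[G,A]]$ directly is circular — so instead I would feed the three subgroups lemma (in $\Gamma$, with the normal subgroup $N = G_{i+1}$) the subgroups $X = G$, $Y = A$, $Z = G_i$. Two of the three iterated commutators lie in $G_{i+1}$ for cheap reasons: $[A, G_i, G] = [[G_i,A],G] \le [G_{i+1},G] \le G_{i+1}$ because $G_{i+1}\nsub G$, and $[G_i, G, A] = [[G_i,G],A] \le [G_i,A] \le G_{i+1}$ because $G_i \nsub G$; the lemma then forces $[G,A,G_i] = [[G,A],G_i] \le G_{i+1}$ as well. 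With that, $B = B_1 \geq B_2 \geq \cdots \geq B_r = 1$ is a central series of length $r-1$, so $[G,A]$ is nilpotent of class $\le r-1 < r$. The genuine content — and the only delicate point beyond bookkeeping with the extended series — is this three subgroups step establishing $[[G,A], G_i] \le G_{i+1}$.
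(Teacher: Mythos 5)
Your proof is correct and follows essentially the same route as the paper: pass to the semidirect product (the paper uses the holomorph), note $[G_{i-1},A]\le G_i$, apply Theorem~\ref{theorem3.3} to get $\gamma_r(A)=1$, and use the three subgroups lemma on $X=G$, $Y=A$, $Z=G_i$ to obtain $[[G,A],G_i]\le G_{i+1}$. The only cosmetic difference is at the very end: where you exhibit the central series $[G,A]\cap G_i$ of $[G,A]$ directly, the paper instead invokes Theorem~\ref{theorem3.3} a second time with $H=G_1\supseteq[G,A]$ and $K=[G,A]$ to conclude $\gamma_r([G,A])=1$.
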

\begin{proof}
Here $[G,A]$ is the subgroup of $G$ generated by $x^{-1}x^a = [x,a]$ with $x$ in $G$ and $a$ in $A$. We think of $G$ as replaced by its regular representation, so that $A$ and $G$ become subgroups of the holomorph of $G$. Then $[G_{i-1}, A] \leq G_i$ for each $i = 1,2, \ldots, r$. By (\ref{theorem3.3}), $[G, \gamma_r(A)] \leq G_r = 1$, so that $\gamma_r(A) = 1$. Also, since $G_{i-1} \nsub G$, we have 
$$
[G_{i-1}, G, A] \leq [G_{i-1}, A] \leq G_i.
$$
But $G_i \nsub \left<A, G\right>$. So, by (\ref{lemma3.2}), $[G_{i-1}, [G,A]] \leq G_i$ for each $i$. By (\ref{theorem3.3}) again, $[G_1, \gamma_{r-1}([G,A])] = 1$. But $[G,A] \leq G_1.$ Hence $\gamma_r([G,A]) = 1$.
\end{proof}

\begin{cor*}Let $\nu$ be any ring (associative and with a unit element). Let $T_n(\nu)$ be the group of all \underline{unitriangular} $n \times n$ matrices $x = (x_{ij})$ with coefficients in $\nu$ ($x_{ij=0}$ for $i > j$ and $x_{11} = x_{22} = \ldots = x_{nn} = 1)$. Then $T_n(\nu)$ is nilpotent of class $n-1$.
\end{cor*}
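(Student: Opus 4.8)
The plan is to exhibit $T_n(\nu)$ as a group of automorphisms of the type considered in Lemma~\ref{3.5}, and then separately produce a nontrivial commutator of weight $n-1$ to pin the class down exactly. For the upper bound, let $G$ be $\nu^n$, the set of $n$-component column vectors over $\nu$, viewed as an abelian group under addition, with standard basis $e_1, \ldots, e_n$; for $i = 0, 1, \ldots, n$ set $G_i = \langle e_1, e_2, \ldots, e_{n-i}\rangle$, so that $G = G_0 \geq G_1 \geq \cdots \geq G_n = 1$ is a series of normal subgroups of length $n$ (normality is automatic since $G$ is abelian). Each $x = (x_{ij}) \in T_n(\nu)$ acts on $G$ as an automorphism via $v \mapsto xv$ (with inverse $v \mapsto x^{-1} v$), and from $x e_j = e_j + \sum_{k<j} x_{kj}\, e_k$ one reads off that every $G_i$ is invariant under $x$ and that $x$ induces the identity on each quotient $G_{i-1}/G_i$ (generated by the image of $e_{n-i+1}$). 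The resulting homomorphism $T_n(\nu) \to \Aut(G)$ is injective, since $x e_j = e_j$ for all $j$ forces $x = I$. Hence $T_n(\nu)$ is isomorphic to a subgroup of the automorphism group $A$ of Lemma~\ref{3.5} taken with $r = n$, which is nilpotent of class $< n$; by Lemma~\ref{lemma1.3}, $T_n(\nu)$ is nilpotent of class at most $n-1$.

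For the reverse inequality (assuming $\nu \neq 0$), it is enough to display one nontrivial commutator of weight $n-1$ in $T_n(\nu)$. Write $E_{ij}$ for the matrix unit and put $x_k = I + E_{k,k+1} \in T_n(\nu)$ for $k = 1, \ldots, n-1$. A short computation with the rule $E_{ij}E_{k\ell} = \delta_{jk} E_{i\ell}$ gives $[\,I + E_{a,b},\, I + E_{b,c}\,] = I + E_{a,c}$ whenever $a < b < c$, so by induction $[x_1, x_2, \ldots, x_{n-1}] = I + E_{1,n} \neq I$. Since this is a weight-$(n-1)$ commutator it lies in $\gamma_{n-1}(T_n(\nu))$, which is therefore nontrivial; combined with the upper bound the class of $T_n(\nu)$ is exactly $n-1$.

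The verification that the flag $G_0 \geq \cdots \geq G_n$ is preserved with trivial action on the successive quotients is immediate from the shape of a unitriangular matrix, and the commutator identities are routine once one writes everything in terms of matrix units; neither is a real obstacle. One should note that $A$ can be strictly larger than $T_n(\nu)$ when $\nu$ is large, but this is harmless, as we only use that $T_n(\nu)$ embeds into the nilpotent group $A$ together with the fact (Lemma~\ref{lemma1.3}) that a subgroup of a nilpotent group of class $c$ has class at most $c$. The one place that genuinely has to be checked is that the length-$(n-1)$ iterated commutator of the elementary generators does not degenerate to the identity; this is exactly the computation $[x_1, \ldots, x_{n-1}] = I + E_{1,n}$ indicated above, and once it is verified the proof is complete. (If $\nu$ is the zero ring then $T_n(\nu)$ is trivial, so one tacitly assumes $1 \neq 0$ in $\nu$ for the class to equal $n-1$.)
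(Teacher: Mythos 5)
Your proof is correct and follows exactly the route the paper intends: the corollary is stated as an immediate consequence of Lemma~\ref{3.5}, and your upper bound realises $T_n(\nu)$ as a subgroup of the automorphism group $A$ of the flag $\nu^n = G_0 \geq G_1 \geq \cdots \geq G_n = 1$ with trivial action on the quotients. The paper leaves the lower bound implicit; your computation $[I+E_{a,b},\,I+E_{b,c}] = I+E_{a,c}$ and the resulting nonvanishing of $[x_1,\ldots,x_{n-1}] = I+E_{1,n}$ is the standard way to supply it, and your caveat about the zero ring is the right one.
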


\begin{thm}\label{theorem3.6}
Let $\mathfrak{k}$ be an ideal of $\nu$ such that $\mathfrak{k}^n = 0.$ Let $K_i = 1 + \mathfrak{k}^i (i = 1,2, \ldots).$ Then each $K_i$ is a multiplicative group; $[K_i, K_j[ \leq K_{i+j};$ $K = K_1$ is nilpotent of class $< n$ and $\gamma_i(K) \leq K_i.$
\end{thm}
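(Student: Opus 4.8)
The plan is to separate the ring-theoretic content from the group-theoretic content: first pin down what the subsets $K_i = 1 + \mathfrak{k}^i$ are and how their commutators behave, using only the ideal property and $\mathfrak{k}^n = 0$, and then read off nilpotency mechanically. First I would check that each $K_i$ is a group. Closure is immediate, since for $a,b \in \mathfrak{k}^i$ we have $(1+a)(1+b) = 1 + (a + b + ab)$ with $a + b + ab \in \mathfrak{k}^i$ ($\mathfrak{k}^i$ being an ideal, and $ab \in \mathfrak{k}^{2i}\subseteq\mathfrak{k}^i$). For inverses, note $a \in \mathfrak{k}^i \subseteq \mathfrak{k}$ forces $a^n \in \mathfrak{k}^n = 0$, so the geometric series terminates: $(1+a)\bigl(\sum_{m=0}^{n-1}(-a)^m\bigr) = 1 + (-1)^n a^n = 1$, and $\sum_{m=1}^{n-1}(-a)^m \in \mathfrak{k}^i$ because each $a^m \in \mathfrak{k}^{im}\subseteq\mathfrak{k}^i$. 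Hence $(1+a)^{-1}\in K_i$, and in particular every $x^{-1}$ with $x \in K_1$ lies in $\nu$ and differs from $1$ by an element of $\mathfrak{k}$.

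Next I would prove $[K_i,K_j] \leq K_{i+j}$. For $x = 1+a \in K_i$ and $y = 1+b \in K_j$ one has the crucial identity $xy - yx = ab - ba \in \mathfrak{k}^{i+j}$; writing $xy = yx + (ab-ba)$ and multiplying on the left by $x^{-1}y^{-1}$ gives $[x,y] = x^{-1}y^{-1}xy = 1 + x^{-1}y^{-1}(ab-ba)$, and $x^{-1}y^{-1}(ab-ba)\in\mathfrak{k}^{i+j}$ since $\mathfrak{k}^{i+j}$ is an ideal. Thus each such commutator lies in $K_{i+j}$, and since $K_{i+j}$ is already a group by the previous step, the subgroup $[K_i,K_j]$ that these commutators generate is contained in $K_{i+j}$. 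From here the rest is formal: $K_n = 1 + \mathfrak{k}^n = 1$, and taking $i = 1$ above gives $[K,K_j] = [K_1,K_j] \leq K_{j+1}$, so each $K_j \nsub K$ and $K = K_1 \geq K_2 \geq \cdots \geq K_n = 1$ is a central series of $K$ of length $n-1$; hence $K$ is nilpotent of class $\leq n-1 < n$. Finally $\gamma_i(K) \leq K_i$ follows by induction on $i$: $\gamma_1(K) = K = K_1$, and $\gamma_{i+1}(K) = [\gamma_i(K),K] \leq [K_i,K_1] \leq K_{i+1}$.

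Alternatively, the bounds $[K_i,K_j]\leq K_{i+j}$ and $\gamma_i(K)\leq K_i$ can be extracted in one stroke from Theorem \ref{theorem3.3}, exactly as Lemma \ref{3.5} and its corollary on $T_n(\nu)$ are proved: represent $K_1$ faithfully by left multiplication on the additive group of $\nu$ inside its holomorph, take the groups $H_i$ of that theorem to be $\mathfrak{k}^i$ (so $[H_i,K]\leq H_{i+1}$ because $a\mathfrak{k}^i\subseteq\mathfrak{k}^{i+1}$ for $a\in\mathfrak{k}$), and observe that the subgroup ``$K_j$'' the theorem manufactures — the elements $g$ with $[H_i,g]\leq H_{i+j}$ for all $i$ — is precisely $1+\mathfrak{k}^j$ (the inclusion $\supseteq$ is clear, and $\subseteq$ comes from testing at $i=0$). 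I do not anticipate a genuine obstacle here; the single load-bearing computation is $xy-yx = ab-ba\in\mathfrak{k}^{i+j}$, and the only thing requiring care is the routine bookkeeping that keeps all cofactors inside the appropriate power of $\mathfrak{k}$, which uses nothing beyond the ideal property and $\mathfrak{k}^n = 0$.
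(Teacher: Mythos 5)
Your proposal is correct, and your first argument is genuinely different from the paper's, while your ``alternative'' at the end is essentially the paper's actual proof. The paper proves the theorem by realising $K_1$ as the group of multiplications $\mu_{1+u}\colon x\mapsto x(1+u)$ acting on the additive group of $\nu$, setting $H_k$ equal to the translations by elements of $\mathfrak{k}^k$, computing $[\tau_a,\mu_{1+u}]=\tau_{au}$, and then invoking Theorem \ref{theorem3.3}; the identification of $1+\mathfrak{k}^j$ with the $j$-th term of the series that Theorem \ref{theorem3.3} manufactures uses exactly your observation that one tests against $\tau_1\in H_0$. Your primary route instead does everything by hand inside the ring: the closure and inverse computations are the same ones the paper performs to see that each $K_i$ is a group, but the commutator bound comes from the explicit identity $[1+a,1+b]=1+x^{-1}y^{-1}(ab-ba)$ with $ab-ba\in\mathfrak{k}^{i+j}$, after which the central series and $\gamma_i(K)\leq K_i$ follow formally. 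What the direct computation buys is self-containedness and an explicit formula for the commutator; what the paper's route buys is that the single easy relation $[H_{i},K]\leq H_{i+1}$ feeds into Theorem \ref{theorem3.3} and yields $[K_i,K_j]\leq K_{i+j}$ and $\gamma_i(K)\leq K_i$ simultaneously, in parallel with the proof of Lemma \ref{3.5}. Two trivial slips worth noting: the telescoping sum gives $1-(-1)^na^n$ rather than $1+(-1)^na^n$ (immaterial since $a^n=0$), and you should say a word about the inverse being two-sided (immediate, since $\sum_{m=0}^{n-1}(-a)^m$ is a polynomial in $a$ and so commutes with $1+a$). Neither affects the argument.
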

\begin{proof}
Define $H_k$ to consist of all translations $\tau_a: x \to x + a$ of $\nu$ such that $a \in \mathfrak{k}^k$. Interpret $\mathfrak{k}^0 = \nu.$ We may consider $K_i$ to consist of all multiplications $\mu_{1+u} \colon x \to x(1 + u)$ of $\nu$ for which $u \in \mathfrak{k}^i$. Since $u^n = 0$, $\mu_{1+u}$ has inverse $\mu_{1+v}$ where
$$
v = -u + u^2 - u^3 + \ldots \pm u^{n-1}
$$
also belongs to $\mathfrak{k}^i.$ Also 
$$
\mu_{1+u} \mu_{1+u'} = \mu_{1 + u + u' + uu'}.
$$
Thus $K_i$ is a group. Also  
$$
[\tau_a, \mu_{1+u}] = \tau_{au}.
$$
Thus, with $K = K_1,$ we have 
$$
[H_{i-1}, K] \leq H_i
$$ for each $i$. Again, since $\tau_1 \in H = H_0$ and 
$$
[\tau_1, \mu_{1+u}] = \tau_u,
$$ we see that $K_j$ consists of precisely those elements $x$ of $K$ such that 
$$
[H_i, x] \leq H_{i+j}
$$
for all $i$. (\ref{theorem3.3}) now gives the required result.
\end{proof}

\begin{lemma}\label{lemma3.7}
Let $H$ and $K$ be subgroups of $G$ such that $[H,K] \leq H'.$ Then 
$$
[\gamma_i(H), \gamma_j(K)] \leq \gamma_{i+j}(H)
$$
for all $i,j = 1,2, \ldots.$
\end{lemma}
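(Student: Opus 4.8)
The plan is to reduce the statement to Theorem~\ref{theorem3.3} applied to the lower central series of $H$. Set $L=\left<H,K\right>$ and $H_i=\gamma_{i+1}(H)$ for $i=0,1,2,\ldots$, so $H_0=H$ and each $H_i$ is characteristic, hence normal, in $H$. Feeding this series (together with the given $K$) into Theorem~\ref{theorem3.3} requires only that $[H_i,K]\leq H_{i+1}$ for every $i$, i.e.\ that $[\gamma_n(H),K]\leq\gamma_{n+1}(H)$ for all $n\geq1$; the case $n=1$ is exactly the hypothesis $[H,K]\leq H'$. Before attacking this I would record a normality fact, because a direct induction on $n$ turns out to be circular — the three subgroups lemma needs its target subgroup normal in the ambient group, which is close to what one is trying to prove. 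Since $[H,K]\leq H'\leq H$, the group $K$ normalises $H$, so $H\nsub L$; then an induction on $n$ using Lemma~\ref{lemma2.3} (which gives $[A,B]\nsub L$ whenever $A,B\nsub L$) shows that every $\gamma_n(H)=[\gamma_{n-1}(H),H]$ is normal in $L$.

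Next I would prove $[\gamma_n(H),K]\leq\gamma_{n+1}(H)$ by induction on $n$, the base case being the hypothesis. For the inductive step, write $\gamma_{n+1}(H)=[\gamma_n(H),H]$ and apply the three subgroups lemma (Lemma~\ref{lemma3.2}), working inside $L$, with $X=\gamma_n(H)$, $Y=H$, $Z=K$ and $N=\gamma_{n+2}(H)$; note that $N\nsub L$ by the previous paragraph and $\left<X,Y,Z\right>=L$. Here $[Y,Z,X]=[[H,K],\gamma_n(H)]\leq[\gamma_2(H),\gamma_n(H)]\leq\gamma_{n+2}(H)$ by the hypothesis together with Theorem~\ref{theorem3.4}(i), and $[Z,X,Y]=[[\gamma_n(H),K],H]\leq[\gamma_{n+1}(H),H]=\gamma_{n+2}(H)$ by the induction hypothesis. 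Hence Lemma~\ref{lemma3.2} gives $[X,Y,Z]=[\gamma_{n+1}(H),K]\leq\gamma_{n+2}(H)$, which closes the induction.

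With $[\gamma_n(H),K]\leq\gamma_{n+1}(H)$ established for all $n\geq1$, the hypotheses of Theorem~\ref{theorem3.3} hold for the series $H_i=\gamma_{i+1}(H)$ and the subgroup $K$, so that theorem yields $[H_i,\gamma_j(K)]\leq H_{i+j}$ for all $i\geq0$ and $j\geq1$; that is, $[\gamma_{i+1}(H),\gamma_j(K)]\leq\gamma_{i+j+1}(H)$, and re-indexing $i\mapsto i-1$ gives $[\gamma_i(H),\gamma_j(K)]\leq\gamma_{i+j}(H)$ for all $i,j\geq1$, as required. The only genuine obstacle is the normality bookkeeping in the preliminary step: the terms $\gamma_{n+2}(H)$ are a priori normal only in $H$, not in $G$, so one must pass to $L=\left<H,K\right>$ and check there — via Lemma~\ref{lemma2.3} — that the whole lower central series of $H$ is normal in $L$ before the three subgroups lemma can be brought to bear.
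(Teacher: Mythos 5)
Your argument is correct, and it is a faithful realization of the paper's one-line indication ``Proof by double induction'': one induction (on $n$, via Lemma~\ref{lemma3.2} with $N=\gamma_{n+2}(H)\nsub L$) establishes $[\gamma_n(H),K]\leq\gamma_{n+1}(H)$, and the second induction (on $j$) is exactly what Theorem~\ref{theorem3.3} packages, after which re-indexing gives the claim. Your preliminary check that the whole lower central series of $H$ is normal in $L=\left<H,K\right>$ is precisely the bookkeeping needed to legitimize the three subgroups lemma here, so nothing is missing.
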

\begin{proof}
Proof by double induction.
\end{proof}

\begin{cor*} Let $H$ be nilpotent of class $c$ and let $K$ be the group of all automorphisms of $H$ which transform $H/H'$ identically. Then $\gamma_j(K)$ transforms each of the groups $\gamma_i(H) / \gamma_{i+j}(H)$ identically and $K$ is nilpotent of class $<c.$
\end{cor*}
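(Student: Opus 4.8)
The plan is to obtain the Corollary as an essentially immediate application of Lemma \ref{lemma3.7}, so nearly all the work lies in matching up the hypotheses. First I would realize $K$ concretely by passing to the holomorph: form $G = H \rtimes \Aut(H)$ and regard both $H$ and $K$ as subgroups of $G$, so that for $x \in H$ and $\alpha \in K$ the commutator $[x,\alpha] = x^{-1}\alpha^{-1}x\alpha$ equals $x^{-1}x^{\alpha}$. The defining property of $K$ --- that each $\alpha \in K$ fixes every coset $xH'$ --- says exactly that $x^{-1}x^{\alpha} \in H'$ for all $x \in H$, $\alpha \in K$; since $H'$ is a subgroup containing all the generators $[x,\alpha]$ of $[H,K]$, this gives $[H,K] \leq H'$. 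I would also note that $H'$ and the terms $\gamma_i(H)$ are the same whether computed inside $H$ or as subgroups of $G$, since $H \leq G$, so Lemma \ref{lemma3.7} genuinely applies to this pair $H$, $K$.

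Lemma \ref{lemma3.7} then yields $[\gamma_i(H), \gamma_j(K)] \leq \gamma_{i+j}(H)$ for all $i,j \geq 1$. I would point out that this inclusion is just a restatement of the first assertion of the Corollary: for $\alpha \in \gamma_j(K)$ and $x \in \gamma_i(H)$ it says $x^{-1}x^{\alpha} = [x,\alpha] \in \gamma_{i+j}(H)$, i.e. $x^{\alpha} \equiv x \pmod{\gamma_{i+j}(H)}$, which is precisely what it means for $\gamma_j(K)$ to transform $\gamma_i(H)/\gamma_{i+j}(H)$ identically.

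For the nilpotency statement I would specialize to $i = 1$ and $j = c$. Since $H$ has class $c$, Lemma \ref{lemma1.1} gives $\gamma_{c+1}(H) = 1$, hence $[H, \gamma_c(K)] \leq \gamma_{c+1}(H) = 1$. Thus every automorphism in $\gamma_c(K)$ fixes $H$ pointwise and is therefore the identity automorphism; that is, $\gamma_c(K) = 1$, so $K$ is nilpotent of class at most $c-1$, as claimed. (The degenerate case $c=1$ is consistent: there $H' = 1$ forces $K = 1$, of class $0$.)

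There is no real obstacle here beyond bookkeeping. The only points requiring a moment's care are deriving $[H,K] \leq H'$ cleanly from the definition of $K$, checking that the lower central terms in Lemma \ref{lemma3.7} are the intrinsic ones, and observing that $K$ acts faithfully on $H$, which is what upgrades ``$\gamma_c(K)$ acts trivially'' to ``$\gamma_c(K) = 1$''. The genuine content has already been spent in Lemma \ref{lemma3.7} itself (and, behind it, in the three-subgroup lemma driving Theorem \ref{theorem3.3}).
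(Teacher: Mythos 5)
Your proof is correct and is exactly the intended derivation: the paper states this corollary without proof precisely because it follows from Lemma \ref{lemma3.7} in the way you describe --- pass to the holomorph so that ``transforms $H/H'$ identically'' becomes $[H,K]\leq H'$, read off $[\gamma_i(H),\gamma_j(K)]\leq\gamma_{i+j}(H)$, and use faithfulness of the action together with $\gamma_{c+1}(H)=1$ to conclude $\gamma_c(K)=1$.
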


\begin{lemma_bis}\label{lemma3.8}
Let $H \nsub K$. If both $H$ and $K/H'$ are nilpotent, so is $K$.
\end{lemma_bis}

\begin{thm}\label{theorem3.8}
Let $H$ and $K$ be arbitrary subgroups of $G$. Define $H_0 = H$ and inductively $H_{i+1} = [H_i, K].$ Then $H_r = 1$ implies $[H, \gamma_{1 + {r \choose 2}}(K)] = 1$ 
\end{thm}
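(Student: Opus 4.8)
The plan is to set up a filtration on $K$ adapted to the descending chain $H = H_0 \geq H_1 \geq \cdots \geq H_r = 1$ and to apply Theorem \ref{theorem3.3}. Concretely, for each $i \geq 0$ the hypotheses give $[H_i, K] = H_{i+1}$, and each $H_i$ is normal in $\left<H,K\right>$ (indeed $H_{i+1} = [H_i,K]$ is generated by commutators, and one checks inductively using Lemma \ref{lemma2.1} or Lemma \ref{lemma2.2} that the $H_i$ are normal in $\left<H,K\right>$; at worst we pass to $\left<H^{\left<H,K\right>}, K\right>$, which does not affect the conclusion). So the setup of Theorem \ref{theorem3.3} applies with this $H$-series and this $K$: setting $K_1 = K$ and $K_j = \{x \in K : [H_i,x] \leq H_{i+j}\ \forall i\}$, we obtain $[H_i, \gamma_j(K)] \leq H_{i+j}$ for all $i,j$. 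Since $H_r = 1$, the case $i=0$, $j=r$ already gives $[H, \gamma_r(K)] = 1$, so the theorem holds with the weak bound $\binom{r}{2}$ replaced by $r-1$ in the index — but the point of this sharper statement is the binomial bound, which must be \emph{smaller}, so in fact we need to work harder: $1 + \binom{r}{2} \geq r$ for $r \geq 2$ with equality only at $r=2$, so Theorem \ref{theorem3.3} alone is \emph{too weak} and the real content is obtaining the better bound by iterating.

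The correct approach is induction on $r$. For $r \leq 2$ the statement reduces to $[H_2,\,\cdot\,] $ trivialities handled directly: $H_1 = 1$ gives $[H,K] = 1$ and $1+\binom{1}{2} = 1$; $H_2 = 1$ means $[H,K] = H_1$ with $[H_1,K] = 1$, and $1 + \binom{2}{2} = 2$, so we need $[H,\gamma_2(K)] = 1$, which follows from Theorem \ref{theorem3.3} (or directly from Lemma \ref{lemma3.2} applied to $[H, K, K]$ and the normality of $H_2$). For the inductive step, assume the result for all smaller values. Apply the induction hypothesis to the \emph{shifted} chain $H_1 \geq H_2 \geq \cdots \geq H_r = 1$, which has length $r-1$ and satisfies $[H_i,K] = H_{i+1}$; this yields $[H_1, \gamma_{1+\binom{r-1}{2}}(K)] = 1$. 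Separately, $[H,K] = H_1$, so $[H,\,\gamma_{1+\binom{r-1}{2}}(K),\,K\,] \leq [H_1, \gamma_{1+\binom{r-1}{2}}(K)]$ — wait, the order is wrong; instead use that $[H, K] \leq H_1$ together with the just-proved vanishing of $[H_1, \gamma_{s}(K)]$ where $s = 1+\binom{r-1}{2}$.

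The key manipulation is then the following three-subgroup argument. Write $L = \gamma_s(K)$ with $s = 1 + \binom{r-1}{2}$, so $[H_1, L] = 1$, i.e. $[[H,K], L] = 1$. We want to deduce $[H, [K, L]] = 1$, equivalently $[H, \gamma_{s+1}(K)]$ is controlled, and more generally to push through $r-1$ further commutators with $K$ to kill $H$ itself down from $H_1$. Apply Lemma \ref{lemma3.2} (the Hall–Witt / three subgroups lemma) with the triple $(H, K, L)$: since $[K, L, H] = [[K,L],H]$ and $[L, H, K] = [[L,H],K]$, and $[H,K,L] = [[H,K],L] = [H_1,L] = 1$... the cyclic symmetry of Lemma \ref{lemma3.2} gives that if two of $[[H,K],L]$, $[[K,L],H]$, $[[L,H],K]$ lie in a normal subgroup $N$ then so does the third. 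Taking $N = 1$ and noting $[[L,H],K] = [[H,L],K]$ — here $[H,L] = [H, \gamma_s(K)]$ which by Theorem \ref{theorem3.3} lies in $H_s$, and we iterate: this is the inductive bookkeeping that, tracked carefully, turns the additive count $s + s + \cdots$ into the telescoping sum $1 + \binom{r-1}{2} + (r-1) = 1 + \binom{r}{2}$ via the identity $\binom{r-1}{2} + (r-1) = \binom{r}{2}$. The main obstacle is organizing exactly which three-subgroup triples to feed into Lemma \ref{lemma3.2} and in which order, so that the weights add up to $\binom{r}{2}$ rather than to the naive $r(r-1)$ one gets from applying Theorem \ref{theorem3.3} blindly; this combinatorial accounting — essentially a second, nested induction establishing $[H_i, \gamma_{1+\binom{r-i}{2}+\text{(lower terms)}}(K)] \subseteq H_{\text{something}}$ — is where all the work lies, and it is precisely the identity $\binom{r}{2} = \binom{r-1}{2} + (r-1)$ that makes the induction close.
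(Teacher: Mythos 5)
Your opening move is wrong in a way worth fixing before anything else. Theorem \ref{theorem3.3} does \emph{not} apply to the chain $H=H_0, H_1=[H,K],\dots$: its hypotheses require $H_0\geq H_1\geq\cdots$ with each $H_i$ normal in $H$, but for arbitrary $H,K$ one has no reason to expect $[H,K]\leq H$ at all (Lemma \ref{lemma2.2} only gives $H_1\nsub\left<H,K\right>$, whence $H_1\geq H_2\geq\cdots$ from the second term on). Passing to the normal closure of $H$ does not rescue this, since it changes the $H_i$ and destroys the hypothesis $H_r=1$. Moreover your comparison of the bounds is inverted: since $1+\binom{r}{2}\geq r$, the subgroup $\gamma_{1+\binom{r}{2}}(K)$ is \emph{contained in} $\gamma_r(K)$, so $[H,\gamma_r(K)]=1$ would be the \emph{stronger} conclusion and would make Theorem \ref{theorem3.8} trivial. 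The theorem is not a sharpening of \ref{theorem3.3}; it is a weaker conclusion salvaged under weaker hypotheses, and the whole difficulty is precisely that \ref{theorem3.3} is unavailable.

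The first half of your induction is right and matches the paper: applying the inductive hypothesis to the shifted chain $H_1\geq\cdots\geq H_r=1$ gives $[H_1,\gamma_s(K)]=1$ with $s=1+\binom{r-1}{2}$ (the paper phrases this via the automorphisms of $H_1$ induced by $K$ in the holomorph, concluding $\gamma_s(K)\leq C:=C_K(H_1)$, which is the same statement). But the second half — getting from ``$\gamma_s(K)$ centralizes $H_1$'' to ``$\gamma_{s+r-1}(K)$ centralizes $H$'' — is exactly where you stop, conceding that ``organizing which three-subgroup triples to feed into Lemma \ref{lemma3.2} and in which order \ldots is where all the work lies.'' That is the theorem; it cannot be left as bookkeeping. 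The paper's device, which you never formulate, is to set $C_1=C$, $C_{i+1}=[C_i,K]$, note that $\gamma_{s+r-1}(K)=\gamma_{1+\binom{r}{2}}(K)\leq C_r$ by the identity $\binom{r-1}{2}+(r-1)=\binom{r}{2}$, and then prove the intermediate claim $[H,C_i]\leq H_i$ by induction on $i$. That inner induction is carried out at the level of elements with the Hall--Witt identity of Lemma \ref{lemma3.1}: for $y\in K$, $x\in C_i$, $z\in H$, the factor $[y,z^{-1},x]$ dies because $[y,z^{-1}]\in H_1$ and $C_i\leq C$ centralizes $H_1$, the factor $[z,x^{-1},y]^{xy^{-1}}$ lands in $[H_i,K]=H_{i+1}$ by the inductive hypothesis and normality of $H_{i+1}$ in $\left<H,K\right>$, so $[x,y^{-1},z]\in H_{i+1}$, and a generator computation then gives $[H,C_{i+1}]\leq H_{i+1}$. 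Finally $[H,C_r]\leq H_r=1$. Without this (or an equivalent) intermediate statement and its proof, your argument has identified the right numerology but not proved the theorem. Your base cases $r\leq 2$ are fine.
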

\begin{proof}
The statement is clear for $r = 0,1$. Suppose $r>1$ and let $L = \left<H,K\right>.$ Then $H_1 \nsub L$ by (\ref{lemma2.2}), so that $H_1 \geq H_2 \geq \ldots \geq H_r = 1.$ Let $C$ be the centralizer of $H_1$ in $K$. Then $K / C \cong A$, the group of automorphisms of $H_1$ induced by $K$. Think of $A$ and $H_1$ as subgroups of the holomorph of $H_1.$ Then 
$$
[H_i, A] = H_{i+1} \quad (i=1, 2, \ldots, r-1).
$$ 
By induction on $r$, we may suppose $[H_1, \gamma_{1 + {r-1 \choose 2}}(A)] = 1$, so that $\gamma_{1 + {r-1 \choose 2}}(A) =1$ or, equivalently $\gamma_{1 + {r-1 \choose 2}}(K) \leq C$. Define $C = C_1$ and inductively $C_{i+1} = [C_i, K].$ We need only show that $[H, C_r] = 1.$ Since $C \leq K,$ we have $[H, C_1] \leq H_1.$ Suppose that for some $i < r$ we have proved that $[H, C_i] \leq H_i$. Let $y \in K$, $x \in C_i$, and $z \in H$. Then $[y, z^{-1}] \in H_1$ and hence $[y, z^{-1},x] = 1$ since $C_i \leq C.$ By (\ref{lemma3.1}), 
$$
[x, y^{-1},z][z, x^{-1},y]^{xy^{-1}} = 1.$$
But $[z,x^{-1}] \in H_i$ by the induction hypothesis. So $[z,x^{-1},y]^{xy^{-1}}$ belongs to $[H_i, K]^{xy^{-1}} = [H_i, K]$ by (\ref{lemma2.2}) since $xy^{-1} \in K.$ But $[H_i, K] = H_{i+1}.$ Hence $[x,y^{-1},z] \in H_{i+1}$. Here $[x,y^{-1}]$ is a typical generator of $C_{i+1}.$ Let $t = u_{j_1}^{r_1}u_{j_2}^{r_2} \ldots u_{j_n}^{r_n}$ be any element of $C_{i+1},$ expressed in terms of such generators $u_1, u_2, \ldots \: .$ Hence each $[u_j, z] \in H_{i+1}$ and $$
z^{-1}tz = \prod_{\alpha}(u_{j_\alpha} [u_{j_{\alpha}},z])^{r_{\alpha}}.
$$
Each $u_j$ belongs to $C$ and therefore commutes with every element of $H_1 \geq H_{i+1}.$ So 
$$
z^{-1}tz = t \prod_{\alpha}[u_j, z]^{r_\alpha}
$$ and $[t,z] \in H_{i+1}.$ So finally $[H, C_{i+1}] \leq H_{i+1}$ and all is proved. 
\end{proof}

\begin{cor*} Let $G = G_0 \geq G_1 \geq \ldots \geq G_r = 1$ be a chain of subgroups of $G$ and let $A$ be a group of automorphisms of $G$ such that $[G_{i-1}, A] \leq G_i$ for each $i=1, 2, \ldots, r$. Then $A$ is nilpotent of class at most ${r \choose 2}.$
\end{cor*}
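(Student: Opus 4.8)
The plan is to deduce this corollary directly from Theorem~\ref{theorem3.8}, applied inside the holomorph of $G$. First I would replace $G$ by its regular representation and form the holomorph $\mathrm{Hol}(G) = G \rtimes A^{\ast}$, where $A^{\ast}$ is the full automorphism group of $G$; then $G$ and the given group $A$ of automorphisms both sit as subgroups of one ambient group, and for $x \in G$, $a \in A$ the commutator $[x,a] = x^{-1}x^{a}$ is a genuine element of $G$. This is exactly the device already used in Lemma~\ref{3.5} and in the proof of Theorem~\ref{theorem3.8} itself, so it needs no further comment. In this ambient group I set $H = G$ and $K = A$.

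Next I would run the construction of Theorem~\ref{theorem3.8}: put $H_0 = G$ and $H_{i+1} = [H_i, A]$. The key point is that $H_i \leq G_i$ for every $i$, which follows by a one-line induction: $H_0 = G = G_0$, and if $H_{i-1} \leq G_{i-1}$ then
\[
H_i = [H_{i-1}, A] \leq [G_{i-1}, A] \leq G_i
\]
by the hypothesis. In particular $H_r \leq G_r = 1$, so $H_r = 1$, and Theorem~\ref{theorem3.8} gives
\[
\bigl[G, \gamma_{1 + \binom{r}{2}}(A)\bigr] = 1 .
\]

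To finish, I would observe that $[G, \gamma_{1 + \binom{r}{2}}(A)] = 1$ says precisely that every element of $\gamma_{1 + \binom{r}{2}}(A)$ centralises all of $G$, hence is the identity automorphism; thus $\gamma_{1 + \binom{r}{2}}(A) = 1$, and by Lemma~\ref{lemma1.1} the group $A$ is nilpotent of class at most $\binom{r}{2}$.

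I do not expect a genuine obstacle here, since the substance is entirely carried by Theorem~\ref{theorem3.8}. The only small points of care are that the $G_i$ are not assumed normal in $G$ — which is harmless, because the induction $H_i \leq G_i$ only compares the generating commutators $[g,a]$ — and that one keeps the commutator conventions straight in the holomorph, so that vanishing of $[G, \gamma_{1+\binom{r}{2}}(A)]$ really does translate into ``$\gamma_{1+\binom{r}{2}}(A)$ acts trivially on $G$''. Both are routine bookkeeping.
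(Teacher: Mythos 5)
Your proof is correct and is exactly the intended deduction: the paper states this as an immediate corollary of Theorem~\ref{theorem3.8}, and your argument (pass to the holomorph as in Lemma~\ref{3.5}, check $H_i \leq G_i$ by induction so $H_r = 1$, then conclude $\gamma_{1+\binom{r}{2}}(A)$ acts trivially and hence is trivial) is the route the paper has in mind. Your remark that normality of the $G_i$ is not needed is also right, since the containment $[H_{i-1},A]\leq[G_{i-1},A]$ is checked on generators.
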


\section{Isolators}
\begin{lemma}\label{lemma4.1}
Let $X$ and $Y$ be subgroups of $G$ such that $[X,Y,G] = 1$. Then, for $x \in X$ and $y \in Y$, the function $[x,y]$ is homomorphic in both arguments.
\end{lemma}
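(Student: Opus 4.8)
The plan is to exploit the commutator identities from Lemma \ref{lemma2.1} together with the hypothesis $[X,Y,G]=1$, which says that every commutator $[x,y]$ with $x\in X$, $y\in Y$ is central in $G$ (it commutes with everything in $\langle X,Y\rangle$, indeed in $G$). First I would record what the hypothesis buys us: for all $x\in X$, $y\in Y$, the element $[x,y]$ lies in $Z_1(\langle X,Y,\ldots\rangle)$, so in particular $[x,y]^z=[x,y]$ for any $z$ in the ambient group, and any two such commutators commute with each other. This is the key simplification that collapses the ``twisted'' terms appearing in Lemma \ref{lemma2.1}.

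Next I would verify homomorphy in the second argument. From Lemma \ref{lemma2.1} we have $[x,y_1y_2]=[x,y_2]\,[x,y_1]^{y_2}$. Since $[x,y_1]\in Z_1$, the conjugate $[x,y_1]^{y_2}$ equals $[x,y_1]$, so $[x,y_1y_2]=[x,y_2][x,y_1]=[x,y_1][x,y_2]$, the last equality because both factors are central and hence commute. Thus $y\mapsto[x,y]$ is a homomorphism from $Y$ (to the abelian central subgroup generated by all these commutators). Symmetrically, using $[x_1x_2,y]=[x_1,y]^{x_2}[x_2,y]=[x_1,y][x_2,y]$ — again because $[x_1,y]$ is central — gives homomorphy in the first argument. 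One should also note the edge cases $[x,1]=1=[1,y]$, which follow from the standard commutator identities and are needed for the map to be a genuine homomorphism; but these hold in any group.

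There is essentially no hard step here: the only thing to be careful about is making sure the conjugating elements $y_2$ (resp.\ $x_2$) indeed centralize $[x,y_1]$ (resp.\ $[x_1,y]$). This is exactly the content of $[X,Y,G]=1$ provided we know $Y\subseteq G$ and $X\subseteq G$, which is the standing assumption that $X,Y$ are subgroups of $G$; so $[X,Y,G]=1$ literally says $[[x,y],g]=1$ for all $g\in G$, and in particular for $g=y_2$ or $g=x_2$. The only mild subtlety worth a sentence in the writeup is that ``homomorphic in both arguments'' is being asserted for the function with values in $G$ itself (equivalently in $[X,Y]$), and one should observe $[X,Y]$ is then abelian — a quick consequence of all its generators being central — so that the two one-variable homomorphisms land in an abelian group and the bilinearity statement is unambiguous. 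I expect the write-up to be three or four lines, with the citation to Lemma \ref{lemma2.1} doing all the real work.
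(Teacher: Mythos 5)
Your proof is correct and is exactly the argument the paper intends: the paper's entire proof of this lemma is the single sentence ``This is a corollary of (\ref{lemma2.1}),'' and your write-up simply supplies the details — centrality of the commutators $[x,y]$ kills the conjugations in the identities of Lemma \ref{lemma2.1} and makes the factors commute. No further comment is needed.
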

\begin{proof}
This is a corollary of (\ref{lemma2.1}).
\end{proof}

An example: we may take $X= G$ and $Y = Z_2 = \zeta_2(G).$ Given $y \in Z_2$, the map $x \to [x,y]$ is homomorphic for $x \in G$. Hence $G/ C_G(y) \cong [G,y],$ the image group, which being contained in $Z_1$ is Abelian. Hence $G' \leq C_G(y)$ and $[Z_2, G'] = 1,$ an important special case of (\ref{theorem3.4}(ii)). In particular, in a perfect group $G = G',$ we have $Z_1 = Z_2 = \ldots \:.$

\begin{lemma}\label{lemma4.2}
Let $Z_r = \zeta_r(G)$. If the centre $Z_1$ of $G$ is of exponent $m > 0$ (i.e. if $x^m = 1$ for all $x \in Z_1$), then $Z_{r+1} / Z_r$ is of exponent $m$ and $Z_r$ is of exponent $m^r$ for all $r = 0,1,2, \ldots \:.$
\end{lemma}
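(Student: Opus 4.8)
The plan is to prove everything by induction on $r$, using the key fact from Lemma~\ref{lemma4.1} (and the example immediately following it) that the commutator map is bilinear in the relevant situation. The base case $r=0$ is trivial and $r=1$ is the hypothesis. For the inductive step, suppose $Z_{r}/Z_{r-1}$ has exponent $m$; I want to show $Z_{r+1}/Z_r$ has exponent $m$. Fix $y \in Z_{r+1}$. Then modulo $Z_r$, the element $y$ is central in $G/Z_r$, so $[y, G] \leq Z_r$; more precisely the map $x \mapsto [x,y]$ sends $G$ into $Z_r$. The crucial point is that this map, read modulo $Z_{r-1}$, is a homomorphism $G \to Z_r/Z_{r-1}$: indeed $[X,Y,G] \leq [Z_r, G] \leq Z_{r-1}$ when $X = G$, $Y = \langle y, Z_r\rangle$, which is exactly the hypothesis $[X,Y,G]=1$ of Lemma~\ref{lemma4.1} transported into $G/Z_{r-1}$. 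Hence $[x, y^m] \equiv [x,y]^m \pmod{Z_{r-1}}$ for all $x \in G$.

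Now apply the inductive hypothesis: $[x,y]$ lies in $Z_r$, and $Z_r/Z_{r-1}$ has exponent $m$, so $[x,y]^m \in Z_{r-1}$, giving $[x, y^m] \in Z_{r-1}$ for all $x \in G$. This says precisely that $y^m Z_{r-1}$ is central in $G/Z_{r-1}$, i.e. $y^m \in Z_r$. Since $y \in Z_{r+1}$ was arbitrary, $Z_{r+1}/Z_r$ has exponent $m$, completing the induction on the first assertion.

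For the exponent of $Z_r$ itself, induct on $r$ again: $Z_0 = 1$ has exponent $m^0 = 1$, and if $Z_{r-1}$ has exponent $m^{r-1}$, then for $y \in Z_r$ we have $y^m \in Z_{r-1}$ by the first part (since $Z_r/Z_{r-1}$ has exponent $m$), hence $y^{m^r} = (y^m)^{m^{r-1}} = 1$. So $Z_r$ has exponent dividing $m^r$.

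The main obstacle — really the only subtle point — is justifying the bilinearity step cleanly: one must check that the hypotheses of Lemma~\ref{lemma4.1} are met after passing to the quotient $G/Z_{r-1}$, so that $x \mapsto [x,y]$ is genuinely additive modulo $Z_{r-1}$ and the exponent can be pulled through the bracket. Everything else is bookkeeping with the definitions of the upper central series. (Note the statement should be read with the convention that "exponent $m$" allows $m$ to be merely an exponent, not necessarily the precise one; the divisibility phrasing in the induction is what is actually proved.)
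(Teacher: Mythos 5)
Your argument is correct and is essentially the paper's proof: the paper reduces to showing $Z_2/Z_1$ has exponent $m$ via $[Z_2,G,G]=1$ and Lemma \ref{lemma4.1} (so $[x^m,y]=[x,y]^m$ with $[x,y]\in Z_1$), then implicitly applies this to the quotients $G/Z_{r-1}$, which is exactly the induction you carry out explicitly modulo $Z_{r-1}$. The second part, $Z_r$ of exponent dividing $m^r$, is the same immediate induction in both.
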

\begin{proof}
Clearly, it is enough to show that $Z_2 / Z_1$ is of exponent $m$, i.e. that $[x^m,y] = 1$ for all $x \in Z_2$ and $y \in G$. But $[Z_2, G, G] = 1$ and so $[x^m,y] = [x,y]^m$ by (\ref{lemma4.1}). Since $[x,y] \in Z_1,$ the result follows.
\end{proof}
\begin{cor*} A finitely generated nilpotent group $G$ whose centre $Z_1$ is a $p$-group is itself a finite $p$-group.
\end{cor*}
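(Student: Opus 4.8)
The plan is to show that $G$ has finite exponent and then to invoke Theorem~\ref{theorem1.10}. The crucial observation is that finite generation of $G$ forces $Z_1=\zeta_1(G)$ to be \emph{finitely generated}, which (together with being a $p$-group) makes it finite; from there Lemma~\ref{lemma4.2} propagates the bounded exponent up the upper central series to all of $G$.

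First I would note that $G$, being finitely generated and nilpotent, is polycyclic by Lemma~\ref{lemma1.9}(i), so it satisfies \textbf{Max}; in particular its subgroup $Z_1$ is finitely generated. A finitely generated abelian group that is a torsion group is finite (by the structure theorem it is $\mathbb{Z}^n$ plus a finite group, and $n=0$ here since $\mathbb{Z}$ is torsion-free), so $Z_1$ is a finite $p$-group, say of exponent $m=p^k$. Now apply Lemma~\ref{lemma4.2}: since $Z_1$ has exponent $m$, each $Z_r=\zeta_r(G)$ has exponent dividing $m^r$. Letting $c$ be the class of $G$, Lemma~\ref{lemma1.1} gives $Z_c=G$, so $G$ itself has exponent dividing $m^c=p^{kc}$; in particular every element of $G$ has order a power of $p$.

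Finally, write $G=\langle a_1,\dots,a_q\rangle$. Each generator $a_i$ has finite order, a power of $p$, so Theorem~\ref{theorem1.10} shows that $G$ is finite and that $|G|$ divides a power of $m_1\cdots m_q$, which is a power of $p$. Hence $G$ is a finite $p$-group, as claimed.

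I do not expect a serious obstacle here; the one point that must be handled with care is the reduction to the finite-exponent case, i.e.\ that $Z_1$ is finitely generated. This is exactly where the hypothesis that $G$ (not merely its centre) is finitely generated is used—without it the statement is false (an infinite abelian $p$-group is its own centre yet is neither finite nor finitely generated). Everything after that is a routine chain: finitely generated abelian torsion $\Rightarrow$ finite, then Lemma~\ref{lemma4.2} to bound the exponent of $G$, then Theorem~\ref{theorem1.10} to conclude finiteness.
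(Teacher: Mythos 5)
Your proof is correct. The first half coincides with the paper's: both use Lemma \ref{lemma1.9} (via \textbf{Max}) to see that $Z_1$ is finitely generated, hence a finite $p$-group of exponent $m=p^k$, and both then invoke Lemma \ref{lemma4.2}. The two arguments part ways only at the finish. The paper uses the quotient form of Lemma \ref{lemma4.2}: each $Z_i/Z_{i-1}$ is a finitely generated abelian group of exponent $m$, hence a finite $p$-group, so $G=Z_r$ is finite as an iterated extension of finitely many finite $p$-layers. You instead use the global form ($Z_r$ has exponent $m^r$) to conclude that every element of $G=Z_c$ has $p$-power order, and then feed the generators into Theorem \ref{theorem1.10} to get finiteness and the $p$-power bound on $|G|$. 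Both routes are short and sound; the paper's stays entirely within \S 1 and \S 4 facts about the upper central series and needs no appeal to Theorem \ref{theorem1.10}, while yours trades the layer-by-layer counting for a single application of that earlier finiteness theorem. Your closing remark correctly identifies the one essential point, namely that finite generation of $G$ (not just of $Z_1$) is what makes $Z_1$ finitely generated and hence of bounded exponent.
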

\begin{proof}
For $Z_1$ is finitely generated by (\ref{lemma1.9}), hence of exponent $m= p^k$ for some $k$. Also $G = Z_r$ for some $r$. All the $Z_i / Z_{i-1}$ $(i=1, \ldots, r)$ are finitely generated Abelian groups, of exponent $m$ by (\ref{lemma4.2}), hence they are finite $p$-groups. So therefore is $G$.
\end{proof}

\begin{lemma}\label{lemma4.3}
Let $H$ be a subgroup of the finitely generated nilpotent group $G$ and let $K$ be a subgroup of $H$ such that $[H:K]$ is either infinite or divisible by the given prime $p$. Suppose further that $[MH: MK]$ is finite and prime to $p$ for every normal subgroup $M \neq 1$ in $G$. Then $G$ is in fact a finite $p$-group.
\end{lemma}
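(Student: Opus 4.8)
The plan is to reduce everything to the centre: by the Corollary to Lemma \ref{lemma4.2} a finitely generated nilpotent group whose centre is a $p$-group is already a finite $p$-group, so it suffices to prove that $\zeta_1(G)$ is a $p$-group. The only arithmetic input is an elementary index identity: for $N \nsub G$, computing $[NH:K]$ through the intermediate subgroups $H$ and $NK$ and using $[NH:H]=[N:N\cap H]$ and $[NK:K]=[N:N\cap K]$ gives
$$[N : N\cap H]\cdot[H:K] \;=\; [NH:NK]\cdot[N:N\cap K];$$
together with the remark that if $N$ is central with $N\cap K=1$ then $NK$ is the internal direct product $N\times K$.

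Assume for contradiction that $\zeta_1(G)$ is not a $p$-group. Since $G$ is polycyclic (Lemma \ref{lemma1.9}) its centre is finitely generated abelian, so it contains a subgroup $N$ that is either cyclic of order a prime $q\neq p$ or infinite cyclic; being central, $N$ and all of its subgroups are normal in $G$, so the hypothesis is available for $M=N$ and for subgroups of $N$. Now I run a short case analysis aimed at contradicting the clause ``$[H:K]$ is infinite or divisible by $p$''. If $N$ has order $q\neq p$, then in the displayed identity all the ``$N$-indices'' divide $q$, so $[NH:NK]$ being finite and prime to $p$ forces $[H:K]$ finite and prime to $p$ --- contradiction. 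If $N\cong\Z$: when $N\cap H=1$ the direct-product remark yields $NH=N\times H$ and $NK=N\times K$, hence $[H:K]=[NH:NK]$ is finite and prime to $p$ --- contradiction; otherwise $N\cap H\neq1$, and replacing $N$ by the still-central infinite cyclic subgroup $N\cap H$ we may assume $N\le H$ (this leaves $N\cap K$ unchanged, as $K\le H$). With $N\le H$, if moreover $N\cap K\neq1$ then $M=N\cap K$ is a nontrivial central subgroup contained in both $K$ and $H$, so $[MH:MK]=[H:K]$ is finite and prime to $p$ --- contradiction.

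This leaves the case that is the real content: $N=\langle z\rangle\cong\Z$ central, $N\le H$, and $N\cap K=1$; here $[H:K]$ genuinely is infinite, so no single subgroup $M$ suffices, and the idea is to play $N$ off against its subgroup $N^p=\langle z^p\rangle$. Both $N$ and $N^p$ are nontrivial central subgroups contained in $H$ and meeting $K$ trivially, so $NK=N\times K$ and $N^pK=N^p\times K$ are subgroups with $K\le N^pK\le NK\le H$ and $[NK:N^pK]=[N:N^p]=p$. Applying the hypothesis to $M=N$ and to $M=N^p$ makes $[H:NK]$ and $[H:N^pK]=p\,[H:NK]$ both finite and prime to $p$, which is impossible since the second is $p$ times a positive integer. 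Hence $\zeta_1(G)$ is a $p$-group and, by the Corollary to Lemma \ref{lemma4.2}, $G$ is a finite $p$-group. I expect the only delicate points to be the reduction to $N\le H$ in the infinite-cyclic case and this final $N$-versus-$N^p$ trick; everything else is the index identity and the cited corollary.
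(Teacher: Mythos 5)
Your proposal is correct and follows essentially the same route as the paper: reduce to showing $\zeta_1(G)$ is a $p$-group via the corollary to Lemma \ref{lemma4.2}, kill torsion-free central subgroups with the $N$ versus $N^p$ index comparison (the paper's $M$ versus $M_1=\left<a^p\right>$), and rule out central elements of prime order $q\neq p$ by the index identity relating $[H:K]$ to $[MH:MK]$. The only differences are organizational --- you argue by contradiction with an explicit case split and derive the reductions $M\cap H\neq 1$, $M\not\leq K$ inline, where the paper states them as a preliminary remark --- so the substance is the same.
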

Note that $M \cap H = 1$ and $M \leq K$ are alike impossible if $1 \neq M \nsub G;$ for each implies that $|MH: MK| = |H:K|.$
\begin{proof}
By the preceding corollary, it is enough to show that the centre $Z_1$ of $G$ is a $p$-group. Taking $M = \left<a\right>$ where $1 \neq a \in Z_1,$ suppose first that $M$ could be infinite. Since $M \cap H \neq 1$, we may suppose $M \leq H.$ But $M \cap K \nsub G$ and so $M \cap K = 1.$ If $M_1 = \left<a^p\right>,$ this gives 
$$
|M_1H : M_1K| = |H:MK|\cdot |MK: M_1K| = [H:MK] \cdot p
$$ which is not prime to $p$ although $1 \neq M_1 \nsub G.$ Hence $M$ cannot be infinite, and so $Z_1$ is periodic. Next, let $M$ be of order a prime. Then $M \leq H$ and $M \cap K = 1$ as before, while 
$$
|H:K| = |H:MK|\cdot|MK:K| = |MH: MK| \cdot|M|.
$$ 
By hypothesis, $[MH: MK]$ is finite but prime to $p$. Hence $[H:K]$ is finite and therefore divisible by $p$. Thus $|M| = p$ and $Z_1$ is a $p$-group as required.
\end{proof}

Now let $\theta(x_1, \ldots, x_n)$ be any word. If $H_1, \ldots, H_n$ are subgroups of any group $G$, we define the generalized verbal subgroup $\theta(H_1, \ldots, H_n)$ to be the group generated by all elements of the form $\theta(h_1, \ldots, h_n)$ with $h_i \in H_i$ for each $i=1, 2, \ldots, n$. Note that if $x \to x^*$ is any homomorphic mapping of $G$, we have $$\theta(H_1^*, \ldots, H_n^*) = (\theta(H_1, \ldots, H_n))^*.$$

\begin{thm}\label{4.4}
Let $G$ be a finitely generated nilpotent group; let $H = \theta(H_1, \ldots, H_n)$ where the $H_i$ are subgroups of $G$ and let $K = \theta(K_1, \ldots, K_n)$ where, for each $i$, $K_i$ is a subgroup of finite index $m_i$ in $H_i$. Then $[H:K]$ is also finite and divides some power of $m=m_1m_2 \ldots m_n.$
\end{thm}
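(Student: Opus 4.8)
The plan is to reduce the statement to Lemma~\ref{lemma4.3} by an induction on a measure of the ``size'' of $G$ that strictly decreases when one passes to a proper quotient. Concretely, attach to each finitely generated nilpotent group $G$ the pair $\bigl(h(G),\,|\tau(G)|\bigr)$, ordered lexicographically, where $h(G)$ is the Hirsch number and $\tau(G)$ is the torsion subgroup --- a finite characteristic subgroup, by Lemma~\ref{lemma1.9} and Theorem~\ref{theorem1.10}. For any $1\neq M\nsub G$ the quotient $\overline{G}=G/M$ is again finitely generated nilpotent, and it is strictly smaller in this order: if $M$ is infinite then $h(\overline G)<h(G)$, while if $M$ is finite and nontrivial then $h(\overline G)=h(G)$ but the preimage $N$ in $G$ of $\tau(\overline G)$ is finite, hence lies in $\tau(G)$, so $|\tau(\overline G)|=|N|/|M|\le|\tau(G)|/|M|<|\tau(G)|$. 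The case $G=1$ is trivial, so we may assume $G\neq 1$ and that the theorem holds for every strictly smaller group.

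Next I record, for every $1\neq M\nsub G$, that $[MH:MK]$ is finite and divides a power of $m$. Writing $\overline{x}$ for the image of $x$ in $\overline G=G/M$ and $\overline{A}$ for the image of a subset $A$, the remark preceding the theorem gives $\theta(\overline{H}_1,\dots,\overline{H}_n)=\overline{H}=MH/M$ and $\theta(\overline{K}_1,\dots,\overline{K}_n)=\overline{K}=MK/M$; moreover, since $K_i\le H_i\cap MK_i\le H_i$, the index $[\overline{H}_i:\overline{K}_i]=[MH_i:MK_i]=[H_i:H_i\cap MK_i]$ is finite and divides $m_i$. Applying the inductive hypothesis to $\overline G$ with these data, $[MH:MK]=[\overline{H}:\overline{K}]$ is finite and divides a power of $\prod_{i}[\overline{H}_i:\overline{K}_i]$, hence a power of $m$.

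Now suppose the theorem's conclusion fails for $G$. Then either $[H:K]$ is infinite, or $[H:K]$ is finite and has a prime divisor $p$ with $p\nmid m$. Pick such a $p$: in the first case choose any of the infinitely many primes not dividing $m$, in the second keep the $p$ just found. Since $K=\theta(K_1,\dots,K_n)\le\theta(H_1,\dots,H_n)=H$, Lemma~\ref{lemma4.3} applies to the pair $H,K$: indeed $[H:K]$ is either infinite or divisible by $p$, and by the previous paragraph $[MH:MK]$ is finite and (as $p\nmid m$) prime to $p$ for every $1\neq M\nsub G$. Hence $G$ is a finite $p$-group. If $[H:K]$ were infinite this is absurd; otherwise $[H:K]>1$, so $K_i\neq H_i$ for some $i$, whence $p\mid m_i$ divides $m$, contradicting $p\nmid m$. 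Therefore $[H:K]$ is finite and each of its prime divisors divides $m$, i.e.\ $[H:K]$ divides a power of $m$.

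The substance of the argument is really the setup: verifying that proper quotients decrease the chosen measure (so the induction is well founded), and that the homomorphic image of the generalized verbal subgroup $\theta(H_1,\dots,H_n)$ behaves as stated, with the right index bounds on the factors. Once that is in place, Lemma~\ref{lemma4.3} does all the work, and the finite-$p$-group endgame is immediate --- the only observation needed there is that a nontrivial index inside a finite $p$-group forces $p$ to divide some $m_i$, and hence $m$. I expect no serious obstacle beyond getting this bookkeeping exactly right.
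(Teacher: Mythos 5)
Your proof is correct and takes essentially the same route as the paper: reduce to Lemma~\ref{lemma4.3} applied in a situation where every proper quotient already behaves well, then kill the resulting finite $p$-group using $p\nmid m$. The only difference is organizational --- the paper invokes \textbf{Max} to choose $M\nsub G$ maximal with $[MH:MK]$ bad and works in $G/M$, whereas you run an explicit well-founded induction on $\bigl(h(G),|\tau(G)|\bigr)$ and work in $G$ itself; these are the same idea in different clothing.
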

\begin{proof}
Suppose not. Since $G$ satisfies \textbf{Max}, we can choose $M \nsub G$ to be maximal subject to the condition that $[MH: MK]$ is either infinite or else has a prime divisor $p$ which does not $m$. Let $x^* = xM$ for $x \in G$. Applying (\ref{lemma4.3}) shows that $G^* = G/M$ is in fact a finite $p$-group. But 
$$
|H_i^*: K_i^*| = |H_i : H_i(M \cap H_i)|
$$ 
divides $m_i$. Since $(m,p) = 1,$ it follows that $H_i^* = K_i^*$ for all $i = 1,2, \ldots, n$. Hence 
$$
H^* = \theta(H_1^*, \ldots, H_n^*) = \theta(K_1^*, \ldots, K_n^*) = K^*,
$$ i.e. $MH = MK,$ a contradiction. 
\end{proof}

\begin{thm}\label{theorem4.5}
Let $G$ be a locally nilpotent group. For any subgroup $H$ of $G$, write $J_{\bar{\omega}, G}(H) = \bar{H}$. \footnote{Let $\bar{\omega}$ denote a set of (rational) primes. We make the following definitions:
    \begin{itemize}
        \item[(i)] A positive integer $n$ is $\bar{\omega}$-number if either $n=1$ or all the prime divisor of $n$ are in $\bar{\omega}.$ 
        \item[(ii)] An element $x$ of a group $G$ is a $\bar{\omega}$-element if it is periodic and has order a $\bar{\omega}$-number.
        \item[(iii)] A group $G$ is a $\bar{\omega}$-group if all its elements are $\bar{\omega}$-elements.
        \item[(iv)] A group $G$ is $\bar{\omega}$-torsion-free if the identity is the only $\bar{\omega}$-element of $G$.
        \item[(v)] Let $H, K$ be subgroups of a group $G$. We say that $H$ is $\bar{\omega}$-equivalent to $K(H \sim_{\bar{\omega}} K)$ for any $x \in H, y\in K$ there exist $\bar{\omega}$-numbers $m$ and $n$ such that $x^m \in K, y^n \in H.$
        \item[(vi)] A subgroup $H$ of a group $G$ is $\bar{\omega}$-isolated in $G$ if $x \in G$ and $x^n \in H$ for some $\bar{\omega}$-number $m$ imply $x \in H.$
        \item[(vii)] Let $H$ be a subgroup of $G$, then $J_{\bar{\omega},G}(H)$, the $\bar{\omega}-$isolator of $H$ in $G$, is the intersection of all subgroups containing $H$ which are $\bar{\omega}$-isolated in $G$.
        \item[(viii)] $\bar{\omega}'$ denotes the set of primes not in $\bar{\omega}.$
    \end{itemize}} Then 
\begin{itemize}
\item[(a)] $H \sim_{\bar{\omega}} \bar{H}$. 
\item[(b)] The elements $x$ of $G$ such that $x^r \in H$ for some $\bar{\omega}$-number $r$ form a subgroup (which is therefore $\bar{\omega}$-isolated in $G$, hence coincides with $\bar{H}).$
\item[(c)] Let $H_\lambda, K_\lambda$ $(\lambda \in \Lambda)$ be subgroups of $G$ such that $H_\lambda \sim_{\bar{\omega}} K_\lambda$ for all $\lambda$, and let $A = \left< H_\lambda ; \lambda \in \Lambda\right>$ be the group generated by the $H_\lambda$ and similarly let $B = \left<K_\lambda ; \lambda \in \Lambda\right>$. Then $A \sim_{\bar{\omega}} B$.
\item[(d)] The set $R_{\bar{\omega}}$ of all $\bar{\omega}$-elements of $G$ is a characteristic subgroup (the $\bar{\omega}$-radical) of $G$ and $G/ R_{\bar{\omega}}$ is $\bar{\omega}$-torsion-free.
\item[(e)] If $G$ is finitely generated, then $[\bar{H}:H]$ is a $\bar{\omega}$-number.
\end{itemize}
\end{thm}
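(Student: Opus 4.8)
The plan is to deduce all five parts from part (b), which is the heart of the matter. Once we know that
$S := \{x \in G : x^r \in H \text{ for some } \bar{\omega}\text{-number } r\}$ is a subgroup, it is automatically $\bar{\omega}$-isolated in $G$ (if $g^n \in S$ with $n$ a $\bar{\omega}$-number, then $g^{nr} \in H$ with $nr$ a $\bar{\omega}$-number), and any $\bar{\omega}$-isolated subgroup $T \supseteq H$ contains $S$; hence $S = \bar{H}$. Then (a) is immediate ($x \in H$ gives $x^1 \in \bar{H}$, and $y \in \bar{H} = S$ gives $y^n \in H$), while (d) is (b) applied to $H = 1$ together with the facts that automorphisms preserve orders and that $G/R_{\bar{\omega}}$ stays $\bar{\omega}$-torsion-free because a product of two $\bar{\omega}$-numbers is a $\bar{\omega}$-number. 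Parts (c) and (e) will follow as described below.

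For (b) I would first reduce to the finitely generated nilpotent case. Closure of $S$ under inverses is clear. Given $x,y \in S$ with $x^a, y^b \in H$ for $\bar{\omega}$-numbers $a,b$, put $N = \langle x,y\rangle$; since $G$ is locally nilpotent, $N$ is finitely generated nilpotent, and $x^a, y^b$ lie in $H_0 := H \cap N$, so it suffices to prove (b) for $N$ and $H_0$. The engine in the finitely generated nilpotent case is the claim: \emph{if $G$ is finitely generated nilpotent, $H \leq G$, $x \in G$, and $x^a \in H$ with $a$ a $\bar{\omega}$-number, then $[\langle H,x\rangle : H]$ is finite and a $\bar{\omega}$-number.} To see this, note that for the word $\theta(u_1,u_2) = u_1 u_2$ one has $\theta(A,B) = \langle A,B\rangle$ for any subgroups $A,B$; hence $\langle H,x\rangle = \theta(H,\langle x\rangle)$ and $H = \langle H, x^a\rangle = \theta(H,\langle x^a\rangle)$, with $[H:H] = 1$ and $[\langle x\rangle : \langle x^a\rangle]$ dividing $a$. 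Theorem~\ref{4.4} then forces $[\langle H,x\rangle : H]$ to divide a power of $a$, so it is a $\bar{\omega}$-number. Applying the claim twice — first to $(H,x)$, then to $(\langle H,x\rangle, y)$, legitimately since $y^b \in H \leq \langle H,x\rangle$ — shows $[\langle H,x,y\rangle : H] = n$ is a $\bar{\omega}$-number. Finally $\langle H,x,y\rangle$ is nilpotent, so $H$ is subnormal in it, and by the second corollary to Lemma~\ref{lemma2.6} the index $c := [\langle xy\rangle : H \cap \langle xy\rangle]$ divides $n$; thus $c$ is a $\bar{\omega}$-number and $(xy)^c \in H \cap \langle xy\rangle \subseteq H$, so $xy \in S$.

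With (b) established, (c) follows by working inside finitely generated subgroups. Given $x \in A = \langle H_\lambda\rangle$, write $x$ as a word in finitely many $h_1,\ldots,h_j$ with $h_i \in H_{\lambda_i}$, choose $\bar{\omega}$-numbers $m_i$ with $h_i^{m_i} \in K_{\lambda_i} \leq B$, and set $D = \langle h_1,\ldots,h_j\rangle$ (finitely generated nilpotent) and $E = \langle h_1^{m_1},\ldots,h_j^{m_j}\rangle \leq D \cap B$. By (b) inside $D$, the $\bar{\omega}$-isolator of $E$ is a subgroup containing every $h_i$, hence containing $x$, so some $\bar{\omega}$-power of $x$ lies in $E \leq B$; by symmetry $A \sim_{\bar{\omega}} B$. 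For (e), when $G$ is finitely generated it satisfies \textbf{Max} by Lemma~\ref{lemma1.9}, so $\bar{H} = \langle x_1,\ldots,x_k\rangle$ with $x_i^{r_i} \in H$ for $\bar{\omega}$-numbers $r_i$; taking $\theta(u_1,\ldots,u_k) = u_1\cdots u_k$, $H_i = \langle x_i\rangle$, $K_i = \langle x_i^{r_i}\rangle$ in Theorem~\ref{4.4} shows $[\bar{H} : \langle x_1^{r_1},\ldots,x_k^{r_k}\rangle]$ divides a power of $r_1\cdots r_k$, and since $\langle x_1^{r_1},\ldots,x_k^{r_k}\rangle \leq H \leq \bar{H}$, the index $[\bar{H}:H]$ is a $\bar{\omega}$-number.

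The main obstacle is proving (b) in the finitely generated nilpotent case. The naive route — induction on nilpotency class by passing to $G/Z(G)$ — breaks down: one obtains $(xy)^r \in H\,Z(G)$, i.e. $(xy)^r = hz$ with $z$ central, but nothing forces a power of $z$ into $H$, so the root set does not visibly pull back to a subgroup. The device that rescues the argument is to recognize $\langle H,x\rangle$ and $H$ as generalized verbal subgroups for the multiplication word, so that Theorem~\ref{4.4} controls the index $[\langle H,x\rangle : H]$ directly; after that, everything is routine bookkeeping with $\bar{\omega}$-numbers and the subnormality of subgroups of nilpotent groups.
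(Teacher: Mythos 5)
Your proof is correct and follows essentially the same route as the paper: the key part (b) is settled exactly as Hall does it, by feeding the multiplication word into Theorem (\ref{4.4}) to get a $\bar{\omega}$-number index and then invoking the index-divisibility for subnormal subgroups of nilpotent groups (Corollary 2 to (\ref{lemma2.6})) to pull a $\bar{\omega}$-power of the product back into $H$. The only divergences are cosmetic: for (c) the paper argues more briefly via $A \le \bar{B}$ and $B \le \bar{A}$, and for (e) it uses the normalizer chain of (\ref{lemma2.6}) together with (\ref{theorem1.10}) rather than a second application of (\ref{4.4}), but your versions of both steps are equally valid.
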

\begin{proof}
The statements (a) and (b) are equivalent, and (b) follows from the corollary to Theorem \ref{4.4}. For if $g_1, g_2$ are elements of $G$ such that $g_1^{n_1}$ and $g_2^{n_2}$ lie in $H$, where $n_1$ and $n_2$ are $\bar{\omega}$-numbers, then $[\left<g_1, g_2\right> : \left<g_1^{n_1}, g_2^{n_2}\right>] = n$ is also a $\bar{\omega}$-number. Since $\left<g_1^{n_1}, g_2^{n_2}\right>$ is contained in $H$ and is subnormal in $\left<g_1, g_2\right>$, it follows that $(g_1 g_2^{-1})^n \in H.$

(c) Since $H_\lambda \sim_{\bar{\omega}} K_\lambda,$ we have $H_\lambda \leq \bar{K}_\lambda.$ Hence $A \leq \bar{B}.$ Similarly $B \leq \bar{A};$ and so $A \sim_{\bar{\omega}} B.$

(d) Evidently $R_{\bar{\omega}} = J_{\bar{\omega}, G}(1)$ by \textit{(b)}. So $R_{\bar{\omega}}$ is $\bar{\omega}$-isolated characteristic subgroup of $G$. If $K \nsub G$, then clearly $G/K$ is $\bar{\omega}$-torsion-free if and only if $K$ is $\bar{\omega}$-isolated in $G$.

(e) We may suppose by (\ref{lemma2.6}) that $H = H_0 \nsub H_1 \nsub \ldots \nsub H_r = \bar{H}$ and each of the factors $H_i / H_{i-1}$ is a finitely generated nilpotent $\bar{\omega}$-group. By (\ref{theorem1.10}), then, each $H_i / H_{i-1}$ is a finite $\bar{\omega}$-group. Thus $[\bar{H}: H]$ is a $\bar{\omega}$-number, as required.
\end{proof}

\begin{thm}\label{4.6}
Let $G$ be a locally nilpotent group and let $\theta$ be any word in $n$ variables. Let $H_i, K_i$ be subgroups of $G$ such that $H_i \sim_{\bar{\omega}} K_i$ for each $i = 1,2, \ldots, n$. Then
$$
H = \theta(H_1, \ldots, H_n) \sim_{\bar{\omega}} K = \theta(K_1, \ldots, K_n).
$$
\end{thm}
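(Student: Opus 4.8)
The statement we must prove is Theorem~\ref{4.6}: if $G$ is locally nilpotent, $\theta$ is a word in $n$ variables, and $H_i \sim_{\bar\omega} K_i$ for each $i$, then $\theta(H_1,\dots,H_n) \sim_{\bar\omega} \theta(K_1,\dots,K_n)$. The idea is to reduce this to Theorem~\ref{4.4} (finitely generated case) by a local argument, exactly as the proof of Theorem~\ref{theorem4.5}(b) reduced isolator questions to the finitely generated setting. By the definition of $\sim_{\bar\omega}$, what we need is: for each $h = \theta(h_1,\dots,h_n)$ with $h_i \in H_i$, there is a $\bar\omega$-number $s$ with $h^s \in K = \theta(K_1,\dots,K_n)$, together with the symmetric statement obtained by swapping the roles of the $H_i$ and $K_i$. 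By symmetry it suffices to establish one direction.

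\smallskip

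\textbf{Key steps.} First I would fix generators $h_1,\dots,h_n$ with $h_i \in H_i$ and form $h = \theta(h_1,\dots,h_n)$. Since $H_i \sim_{\bar\omega} K_i$, for each $i$ there is a $\bar\omega$-number $m_i$ with $h_i^{m_i} \in K_i$. Let $L_i = \langle h_i \rangle$ and $L_i' = \langle h_i^{m_i}\rangle \le L_i$; then $[L_i : L_i']$ divides $m_i$ (or is $1$), hence is a $\bar\omega$-number. Now work inside the finitely generated subgroup $F = \langle h_1,\dots,h_n\rangle \le G$, which is nilpotent because $G$ is locally nilpotent. Apply Theorem~\ref{4.4} to $F$ with the word $\theta$, the subgroups $L_1,\dots,L_n$ of $F$, and their finite-index subgroups $L_1',\dots,L_n'$: we conclude that $[\theta(L_1,\dots,L_n) : \theta(L_1',\dots,L_n')]$ is finite and divides some power of $m = m_1 m_2\cdots m_n$, hence is a $\bar\omega$-number. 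Since $h = \theta(h_1,\dots,h_n) \in \theta(L_1,\dots,L_n)$, there is a $\bar\omega$-number $s$ (a divisor of that index) with $h^s \in \theta(L_1',\dots,L_n')$. But each generator $h_i^{m_i}$ of $L_i'$ lies in $K_i$, so $\theta(L_1',\dots,L_n') \le \theta(K_1,\dots,K_n) = K$, giving $h^s \in K$ as desired. Running the symmetric argument with the roles of $H$ and $K$ interchanged completes the proof that $H \sim_{\bar\omega} K$.

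\smallskip

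\textbf{Main obstacle.} The only subtle point is checking that Theorem~\ref{4.4} genuinely applies in the form needed: $\theta(L_1,\dots,L_n)$ must be interpreted as the \emph{generalized verbal subgroup} defined just before Theorem~\ref{4.4} (generated by all $\theta(a_1,\dots,a_n)$ with $a_i \in L_i$), not as the image of any single element, and one must be careful that $h$ itself lies in it --- which it does, since $h_i \in L_i$. One should also make sure the reduction to one direction is legitimate: the relation $\sim_{\bar\omega}$ as defined in footnote (v) of Theorem~\ref{theorem4.5} is visibly symmetric in $H$ and $K$, so establishing ``for all $x \in H$, some $\bar\omega$-power of $x$ lies in $K$'' and its mirror image is exactly what is required, and each is handled by the same Theorem~\ref{4.4} argument with the data $(H_i, K_i, m_i)$ replaced by $(K_i, H_i, n_i)$ where $h_i^{n_i} \in H_i$. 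Everything else is bookkeeping with $\bar\omega$-numbers (products and divisors of $\bar\omega$-numbers are $\bar\omega$-numbers) and the observation that $\theta$ is monotone under inclusion of the argument subgroups.
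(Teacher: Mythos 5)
Your proposal is correct and follows essentially the same route as the paper: reduce by symmetry to showing each $\theta(h_1,\dots,h_n)$ has a $\bar\omega$-power in $K$, pass to the finitely generated nilpotent subgroup $\langle h_1,\dots,h_n\rangle$, apply Theorem \ref{4.4} to the cyclic subgroups $\langle h_i\rangle \geq \langle h_i^{m_i}\rangle$, and conclude via $\theta(\langle h_1^{m_1}\rangle,\dots,\langle h_n^{m_n}\rangle)\leq K$. The only point worth making explicit (as the paper does) is that extracting a $\bar\omega$-power from the finite $\bar\omega$-index uses the subnormality of $\theta(\langle h_1^{m_1}\rangle,\dots,\langle h_n^{m_n}\rangle)$ in $\theta(\langle h_1\rangle,\dots,\langle h_n\rangle)$, i.e.\ the Corollary 2 to Lemma \ref{lemma2.6}.
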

\begin{proof}
It is sufficient to show, by (\ref{theorem4.5}), that $H \leq \bar{K}.$ So we must prove that for any $u = \theta(h_1, \ldots, h_n),$ where $h_i \in H_i,$ $u^r$ lies in $K$ for some $\bar{\omega}$-number $r$.

Let $P = \left<h_1, h_2, \ldots, h_n\right>,$ which is finitely generated nilpotent. Now $h_i^{n_i} \in K$ for certain $\bar{\omega}$-numbers $n_i$ $(i = 1,2, \ldots, n)$. Define $P_i = \left<h_i\right>$, $Q_i = \left<h_i^{n_i}\right>,$ then $|P_i : Q_i| = m_i$ is a $\bar{\omega}$-number for each $i= 1, \ldots, n.$ It follows from (\ref{4.4}) that $$|\theta(P_1, \ldots, P_n) : \theta(Q_1, \ldots, Q_n)|$$ is a $\bar{\omega}$-number. $u$ is in $\theta(P_1, \ldots, P_n)$ and $\theta(Q_1, \ldots, Q_n)$ is subnormal in $\theta(P_1, \ldots, P_n)$, so $u^r = \theta(Q_1, \ldots, Q_n)$ for some $\bar{\omega}$-number $r$. This completes the proof since $\theta(M_1, \ldots, M_n) \leq K.$
\end{proof}

\noindent \textbf{\underline{Example}} Let $\theta = [x_1, x_2]$. Then we may deduce that $[H_1, H_2] \sim_{\bar{\omega}} [K_1, K_2]$ whenever $H_1 \sim_{\bar{\omega}} K_1$ and $H_2 \sim_{\bar{\omega}} K_2.$

\begin{cor*} If $U$ and $V$ are arbitrary subgroups of $G$, then by (\ref{theorem4.5}(a)), 
$$
[\bar{U}, \bar{V}] \leq \overline{[U,V]}.
$$
\end{cor*}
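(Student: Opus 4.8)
The plan is to read off the statement as the special case $\theta = [x_1,x_2]$ of Theorem~\ref{4.6}, which is precisely the content of the Example immediately preceding the corollary. First I would set $H_1 = \bar U$, $K_1 = U$, $H_2 = \bar V$, $K_2 = V$. By Theorem~\ref{theorem4.5}(a) we have $U \sim_{\bar{\omega}} \bar U$ and $V \sim_{\bar{\omega}} \bar V$, and since $\sim_{\bar{\omega}}$ is symmetric in its two arguments, also $\bar U \sim_{\bar{\omega}} U$ and $\bar V \sim_{\bar{\omega}} V$. Hence the hypotheses of the Example (equivalently of Theorem~\ref{4.6} with $\theta=[x_1,x_2]$) are satisfied, and we conclude
$$[\bar U, \bar V] \sim_{\bar{\omega}} [U,V].$$

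The remaining step is to convert this $\bar{\omega}$-equivalence into the asserted inclusion. Unwinding the definition, $[\bar U,\bar V]\sim_{\bar{\omega}}[U,V]$ says in particular that for every $x \in [\bar U,\bar V]$ there is a $\bar{\omega}$-number $m$ with $x^m \in [U,V]$. By Theorem~\ref{theorem4.5}(b), the set of all elements of $G$ admitting such a $\bar{\omega}$-power inside $[U,V]$ is exactly $\overline{[U,V]}=J_{\bar{\omega},G}([U,V])$. Therefore $[\bar U,\bar V] \leq \overline{[U,V]}$. (This is the same implication ``$H\sim_{\bar{\omega}}K \Rightarrow H\leq \bar K$'' that is already invoked inside the proof of Theorem~\ref{theorem4.5}(c), so one may alternatively cite it directly rather than re-deriving it.)

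There is essentially no obstacle: the substantive work has been done in Theorems~\ref{4.4}, \ref{theorem4.5}, and~\ref{4.6}. The only points to watch are that $G$ is assumed locally nilpotent (the standing hypothesis of this section, needed for those theorems to apply), and that one should appeal to part~(b) of Theorem~\ref{theorem4.5}, rather than to the bare definition of the isolator, in order to identify ``having a $\bar{\omega}$-power in $[U,V]$'' with ``lying in $\overline{[U,V]}$''.
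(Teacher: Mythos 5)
Your proposal is correct and follows exactly the route the paper intends: apply Theorem \ref{4.6} with $\theta=[x_1,x_2]$ (the Example) to the equivalences $U\sim_{\bar{\omega}}\bar U$ and $V\sim_{\bar{\omega}}\bar V$ from Theorem \ref{theorem4.5}(a), then pass from $[\bar U,\bar V]\sim_{\bar{\omega}}[U,V]$ to the inclusion via Theorem \ref{theorem4.5}(b). The paper leaves this unwinding implicit, but your argument is the same one.
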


A more elementary proof of part (d) of the following lemma was given in (\ref{theorem1.10}).

\begin{lemma}\label{4.7}
Let $G$ be a locally nilpotent group, $S$ a normal subgroup of $G$, $R$ a subgroup of $G$ containing $S$. Let $C = C_G(R/S).$ Then we have 
\begin{itemize}
\item[(a)] $\bar{C} \leq C_G(\bar{R} / \bar{S})$.
\item[(b)] $S = \bar{S}$ implies $C = \bar{C}$.
\item[(c)] $\bar{S} \geq R$ implies $C$ is $\bar{\omega}'-$isolated in $G$. 
\item[(d)] Periodic elements of coprime order in $G$ commute.
\item[(e)] If, in addition, $R \nsub G$ and $R/S$ is a finite $\bar{\omega}$-group, then $G/C$ is also a finite $\bar{\omega}$-group.
\end{itemize}
\end{lemma}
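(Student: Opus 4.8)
Parts (a) and (b) are formal consequences of the isolator machinery of \S 4; part (d) I would deduce from (c); and (c) and (e) carry the real content, which I would handle by a single device: an element that is a $\bar\omega$-element modulo $S$ and also has some $\bar\omega'$-power lying in $S$ must itself lie in $S$ (its order modulo $S$ being at once a $\bar\omega$-number and a divisor of a $\bar\omega'$-number, hence $1$).

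\textbf{Parts (a) and (b).} For (a), apply the corollary to Theorem~\ref{4.6}, $[\bar U,\bar V]\le\overline{[U,V]}$, with $U=R$ and $V=C$; since $[R,C]\le S$ by the very definition of $C=C_G(R/S)$, and the isolator is monotone, this gives $[\bar R,\bar C]\le\overline{[R,C]}\le\bar S$, i.e.\ $\bar C\le C_G(\bar R/\bar S)$. (Throughout, $S\nsub G$ forces $\bar S\nsub G$, so all the ``centraliser of a factor'' notation is legitimate.) For (b): $C\le\bar C$ always; conversely (a) gives $\bar C\le C_G(\bar R/\bar S)=C_G(\bar R/S)$ since $\bar S=S$, and as $R\le\bar R$ every element centralising $\bar R$ mod $S$ centralises $R$ mod $S$, so $C_G(\bar R/S)\le C_G(R/S)=C$; hence $\bar C=C$.

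\textbf{Part (c) (and (d)).} One checks first that $C$ is a subgroup of $G$ with $[R,C]\le S$. Let $x\in G$ with $x^n\in C$, $n$ a $\bar\omega'$-number; I must show $[r,x]\in S$ for every $r\in R$. Fix $r$. As $r^i\in R$ and $(x^n)^j\in C$ for all $i,j$, every generator $[r^i,(x^n)^j]$ of $[\langle r\rangle,\langle x^n\rangle]$ lies in $[R,C]\le S$, so $[\langle r\rangle,\langle x^n\rangle]\le S$. Since $\langle x\rangle\sim_{\bar\omega'}\langle x^n\rangle$, the Example following Theorem~\ref{4.6} (applied with prime set $\bar\omega'$) gives $[\langle r\rangle,\langle x\rangle]\sim_{\bar\omega'}[\langle r\rangle,\langle x^n\rangle]\le S$, whence $[r,x]\in J_{\bar\omega',G}(S)$ and $[r,x]^t\in S$ for some $\bar\omega'$-number $t$ by Theorem~\ref{theorem4.5}(b). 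On the other hand, $\bar S\ge R$ gives $r^m\in S$ for some $\bar\omega$-number $m$; so in the nilpotent group $\langle r,r^x\rangle/(\langle r,r^x\rangle\cap S)$ both generators have order dividing $m$, so by Theorem~\ref{theorem1.10} this group is a finite $\bar\omega$-group, and it contains the image of $[r,x]=r^{-1}r^x$. Thus the order of $[r,x]$ modulo $S$ is both a $\bar\omega$-number and a divisor of $t$, so it is $1$ and $[r,x]\in S$; hence $x\in C$ and $C$ is $\bar\omega'$-isolated. Part (d) follows: for periodic $a,b$ of coprime orders, put $\bar\omega=\pi(\mathrm{ord}(a))$, apply (c) with $S=1$ and $R=\langle a\rangle$ (so $R\le\overline{1}$, the $\bar\omega$-radical), and note $b^{\mathrm{ord}(b)}=1\in C_G(a)$ with $\mathrm{ord}(b)$ a $\bar\omega'$-number; $\bar\omega'$-isolation forces $b\in C_G(a)$. (The more elementary route is the corollary to Theorem~\ref{theorem1.10} inside the finitely generated nilpotent group $\langle a,b\rangle$.)

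\textbf{Part (e) and the main obstacle.} Here $R\nsub G$ forces $C\nsub G$, and $G/C$ embeds in $\Aut(R/S)$, which is finite, so $G/C$ is finite. If some prime $q\in\bar\omega'$ divided $|G/C|$, Cauchy's theorem gives $g$ with $g^q\in C$, $g\notin C$, and the argument of (c) applies verbatim: for each $r\in R$ one gets $[\langle r\rangle,\langle g^q\rangle]\le[R,C]\le S$, hence $[r,g]^t\in S$ for a $\bar\omega'$-number $t$; but $[r,g]\in[R,G]\le R$ (as $R\nsub G$), so $[r,g]S$ lies in the finite $\bar\omega$-group $R/S$, and the same coprimality step forces $[r,g]\in S$, so $g\in C$ — a contradiction. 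Hence $G/C$ is a $\bar\omega$-group. The crux is exactly this step in (c)/(e): one must spot that the single element $[r,x]$ (resp.\ $[r,g]$) is \emph{simultaneously} $\bar\omega$-torsion modulo $S$ (via $R\le\bar S$, resp.\ finiteness of $R/S$, together with Theorem~\ref{theorem1.10}) and has a $\bar\omega'$-power in $S$ (via $x^n\in C$ and $\langle x\rangle\sim_{\bar\omega'}\langle x^n\rangle$, together with the Example to Theorem~\ref{4.6}); the only delicate verification is that $[\langle r\rangle,\langle x^n\rangle]\le S$, which is precisely the defining property $[R,C]\le S$ of $C=C_G(R/S)$.
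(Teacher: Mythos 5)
Your proof is correct and follows essentially the same route as the notes: (a) and (b) from the corollary to (\ref{4.6}); (c) by squeezing $[r,x]$ between having a $\bar{\omega}'$-power in $S$ (via $\left<x\right> \sim_{\bar{\omega}'} \left<x^n\right>$ and the Example to (\ref{4.6})) and having $\bar{\omega}$-order modulo $S$; and (d), (e) as consequences of (c) together with finiteness of $\Aut(R/S)$. The only variation is in the second half of (c), where the notes apply (\ref{4.6}) a second time to $R \sim_{\bar{\omega}} S$ to get $[X,R] \sim_{\bar{\omega}} [X,S] \leq S$, whereas you invoke (\ref{theorem1.10}) directly on $\left<r, r^x\right>$ modulo $S$; both are valid.
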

\begin{proof}
(a) By (\ref{4.6}) corollary, $[\bar{C}, \bar{R}] \leq \overline{[C,R]}.$ But $[C,R] \leq S.$ 

(b) If $\bar{S} = S$, $[\bar{C}, \bar{R}] \leq S$. Hence $[\bar{C}, R] \leq S,$ so that $\bar{C} = C.$

(c) Let $x^m \in C$ for some $\bar{\omega}'$-number $m$. If $X = \left<x\right>,$ $Y = \left<x^m\right>$, we have, by (\ref{4.6}), $[X, R] \sim_{\bar{\omega}} [Y,R]$. Hence, $[X,R] \leq J_{\bar{\omega}', G}(S)$ as $[Y,R] \leq S.$ But by hypothesis, $R \sim_{\bar{\omega}} S$, so, again by (\ref{4.6}), $[X,R] \sim_{\bar{\omega}} [X,S]$. As $S \nsub G,$ we conclude that $[X,R] \leq J_{\bar{\omega}, G}(S).$ Hence $[X,R] \leq S$ and so $x \in C.$

(d) Let $x^m =1, y^n =1$ and $\bar{\omega}$ such that $m \in \bar{\omega}, n \in \bar{\omega}'$. In \textit{(c)} put $S =1,$ $R = \left<x\right>$, so that $C$ is $\bar{\omega}'$-isolated in $G$. But $y^n \in C$ so $y \in C,$ i.e. $x$ and $y$ commute. 

(e) Here $G/C$ is finite as it is a group of automorphisms of $R/S.$ By \textit{(c)}, $C$ is $\bar{\omega}'$-isolated in $G$, so $G/C$ is a finite $\bar{\omega}-$group.
\end{proof}

\begin{lemma}\label{4.8}
Let $G$ be a locally nilpotent group which is also $\bar{\omega}$-torsion-free. Then 
\begin{itemize}
\item[(a)] All centralizer of subsets of $G$ are $\bar{\omega}$-isolated in $G$. 
\item[(b)] All terms of the upper central series are $\bar{\omega}$-isolated in $G$. 
\item[(c)] If $H$ is a subgroup such that $\bar{H} = G,$ then $\zeta_i(H) = \zeta_i(G) \cap H.$
\end{itemize}
\end{lemma}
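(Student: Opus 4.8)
The plan is to treat the three parts in the order given, using (a) to establish (b) and the base case of (c), and then (b) together with that base case to run an induction in (c).

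For (a), I would read it off Lemma~\ref{4.7}(b). Given a subset $A\subseteq G$, put $R=\langle A\rangle$ and $S=1$; then $C_G(R/S)=C_G(R)=C_G(A)$, and $R\geq S\nsub G$, so Lemma~\ref{4.7} applies. Since $G$ is $\bar{\omega}$-torsion-free, $\bar{S}=J_{\bar{\omega},G}(1)=R_{\bar{\omega}}(G)=1=S$ by Theorem~\ref{theorem4.5}(d), so Lemma~\ref{4.7}(b) yields $C_G(A)=\overline{C_G(A)}$; that is, $C_G(A)$ is $\bar{\omega}$-isolated. (One could also argue directly from Theorem~\ref{4.6} with $\theta=[x_1,x_2]$: if $x^n\in C_G(a)$ for a $\bar{\omega}$-number $n$ then $\langle x\rangle\sim_{\bar{\omega}}\langle x^n\rangle$ forces $[\langle x\rangle,\langle a\rangle]\sim_{\bar{\omega}}1$, hence $[x,a]=1$ by $\bar{\omega}$-torsion-freeness, and centralizers of subsets are intersections of such single-element centralizers.)

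For (b) I would induct on $i$, the case $i=0$ being $\zeta_0=1$. Assume $\zeta_i:=\zeta_i(G)$ is $\bar{\omega}$-isolated. Then $G^\ast:=G/\zeta_i$ is a quotient of a locally nilpotent group, hence locally nilpotent, and is $\bar{\omega}$-torsion-free by Theorem~\ref{theorem4.5}(d). By part (a) applied inside $G^\ast$, its centre $\zeta_1(G^\ast)=C_{G^\ast}(G^\ast)$ is $\bar{\omega}$-isolated in $G^\ast$; and the full preimage in $G$ of any $\bar{\omega}$-isolated subgroup of $G^\ast$ is $\bar{\omega}$-isolated in $G$ (immediate from $\pi(x)^n=\pi(x^n)$). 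Since that preimage is exactly $\zeta_{i+1}(G)$, the induction goes through. (If one allows the transfinite upper central series, one adds that a union of an ascending chain of $\bar{\omega}$-isolated subgroups is again $\bar{\omega}$-isolated.)

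For (c) I would first record that $\zeta_i(G)\cap H\leq\zeta_i(H)$ holds for \emph{every} subgroup $H$: the terms $N_i:=\zeta_i(G)\cap H$ satisfy $[N_i,H]\leq[\zeta_i(G),G]\cap H\leq\zeta_{i-1}(G)\cap H=N_{i-1}$, and an easy induction gives $N_i\leq\zeta_i(H)$. So it remains to prove $\zeta_i(H)\leq\zeta_i(G)$ when $\bar{H}=G$, which I would do by induction on $i$. For $i=1$: if $z\in\zeta_1(H)$ then $H\leq C_G(z)$, and by (a) the subgroup $C_G(z)$ is $\bar{\omega}$-isolated, hence contains $\bar{H}=G$, so $z\in\zeta_1(G)$. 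For $i\to i+1$: by the inductive hypothesis in the sharp form $\zeta_i(H)=\zeta_i(G)\cap H$, the natural map identifies $H/\zeta_i(H)$ with $H^\ast:=H\zeta_i(G)/\zeta_i(G)\leq G^\ast:=G/\zeta_i(G)$, carrying $\zeta_{i+1}(H)/\zeta_i(H)$ onto $\zeta_1(H^\ast)$ and $\zeta_{i+1}(G)/\zeta_i(G)$ onto $\zeta_1(G^\ast)$. By part (b), $G^\ast$ is $\bar{\omega}$-torsion-free, and from $\bar{H}=G$ and Theorem~\ref{theorem4.5}(b) one checks $\overline{H^\ast}=G^\ast$ (every $g\in G$ has $g^r\in H$ for some $\bar{\omega}$-number $r$, so $(g\zeta_i)^r\in H^\ast$). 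Applying the already-proved case $i=1$ of (c) to $H^\ast\leq G^\ast$ gives $\zeta_1(H^\ast)\leq\zeta_1(G^\ast)$, i.e. $\zeta_{i+1}(H)\leq\zeta_{i+1}(G)$; combined with the always-true inclusion this gives $\zeta_{i+1}(H)=\zeta_{i+1}(G)\cap H$.

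The only genuine obstacle is the packaging of (c): one wants the inductive step to collapse exactly into the base case applied to the quotient $G/\zeta_i(G)$, and this requires all three auxiliary facts at once — part (b) to keep the quotient $\bar{\omega}$-torsion-free, the inductive hypothesis in equality form to embed $H/\zeta_i(H)$ into $G/\zeta_i(G)$, and Theorem~\ref{theorem4.5}(b) to see that the hypothesis $\bar{H}=G$ descends to $\overline{H^\ast}=G^\ast$. Everything else — the always-true inclusion, the preimage and union remarks, and the reduction of (a) to Lemma~\ref{4.7}(b) — is routine.
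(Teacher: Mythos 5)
Your proposal is correct. Parts (a) and (b) follow the paper's route essentially verbatim: (a) is exactly the reduction to Lemma~\ref{4.7}(b) with $S=1$, and (b) is the same induction via centralizers of central factors (the paper applies Lemma~\ref{4.7}(b) directly with $S=\zeta_{i-1}(G)$ rather than passing to the quotient, but this is a cosmetic difference). Part (c) is where you genuinely diverge. The paper proves $\zeta_i(H)\leq\zeta_i(G)$ by a direct isolator--commutator computation: $[\zeta_i(H),G]\leq[\overline{\zeta_i(H)},\bar{H}]\leq\overline{[\zeta_i(H),H]}\leq\overline{\zeta_{i-1}(H)}\leq\zeta_{i-1}(G)$, the key ingredient being the corollary to Theorem~\ref{4.6} ($[\bar{U},\bar{V}]\leq\overline{[U,V]}$) together with (b) and the inductive equality. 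You instead reduce the inductive step to the $i=1$ case applied to $H^\ast\leq G^\ast=G/\zeta_i(G)$, which requires only part (a) (an isolated centralizer containing $H$ must contain $\bar{H}=G$) plus the bookkeeping you identify: (b) to keep $G^\ast$ $\bar{\omega}$-torsion-free, the inductive hypothesis in equality form to identify $H/\zeta_i(H)$ with $H^\ast$, and Theorem~\ref{theorem4.5}(b) to descend $\bar{H}=G$ to $\overline{H^\ast}=G^\ast$. Your version avoids the corollary to Theorem~\ref{4.6} entirely at the cost of more structural scaffolding; the paper's version is shorter but leans on that compatibility of isolators with verbal subgroups. Both are sound, and all the auxiliary facts you invoke (preimages and intersections of isolated subgroups are isolated, quotients of locally nilpotent groups are locally nilpotent, the always-true inclusion $\zeta_i(G)\cap H\leq\zeta_i(H)$) check out.
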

\begin{proof}
By hypothesis $\bar{1} = 1.$
(a) Let $X$ be a subset of $G$, $K =\left<X\right>$, then $C_G(X) = C_G(K).$ The result now follows from (\ref{4.7}(b)).

(b) We prove $\zeta_i(G) = \overline{\zeta_i(G)}$ by induction on $i$. If $i = 0$ it is true by hypothesis. Assume then that $\zeta_{i-1}(G)$ is $\bar{\omega}$-isolated in $G$. As $\zeta_i(G) = C_G(G / \zeta_{i-1}(G))$ it is $\bar{\omega}$-isolated in $G$ by (\ref{4.7}(b)).

(c) The result is true for $i=0$. Assume that $\zeta_{i-1}(H) = \zeta_{i-1}(G) \cap H.$ Then
\begin{eqnarray*}
[\zeta_i(H), G] &\leq& [\overline{\zeta_i(H)}, \bar{H}] \quad \text{ by hypothesis;}\\
\: &\leq&  \overline{[\zeta_{i}(H), H]} \quad \text{ by (\ref{4.6}) corollary;}\\
\: &\leq& \zeta_{i-1}(H); \\
\: &\leq& \zeta_{i-1}(G) \quad \text{ by induction and part \textit{(b)}}.
\end{eqnarray*}
Thus $\zeta_i(H) \leq \zeta_i(G).$ The reverse inequality is immediate, and the result follows from induction on $i$.
\end{proof}

\begin{lemma}\label{4.9}
Let $H$ be a subgroup of the locally nilpotent group $G$. Then, writing $N= N_G(H)$,
\begin{itemize}
\item[(a)] $\bar{N} \leq N_G(\bar{H})$
\item[(b)] If $G$ is finitely generated, $\bar{N} = N_G(\bar{H})$
\end{itemize}
\end{lemma}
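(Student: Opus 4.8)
The plan is to establish the inclusion (a) in full generality and then to promote it to an equality in (b) using finite generation.

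\emph{Part (a).} It suffices to prove $[\bar H,\bar N]\le\bar H$. Granting this, for $x\in\bar N$ and $h\in\bar H$ we get $h^x=h[h,x]\in\bar H\cdot\bar H=\bar H$, so $\bar H^x\subseteq\bar H$; applying the same to $x^{-1}\in\bar N$ gives $\bar H\subseteq\bar H^x$, hence $\bar H^x=\bar H$ and $x\in N_G(\bar H)$. To obtain $[\bar H,\bar N]\le\bar H$, I apply the corollary to Theorem 4.6 (the case $\theta=[x_1,x_2]$) with $U=H$ and $V=N$: this yields $[\bar H,\bar N]\le\overline{[H,N]}$. Since $N=N_G(H)$ normalises $H$ we have $[H,N]\le H\le\bar H$, and as $\bar H$ is $\bar\omega$-isolated it contains the isolator of any of its subsets, so $\overline{[H,N]}\le\bar H$. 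This proves (a).

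\emph{Part (b).} By (a) it remains to show $N_G(\bar H)\le\bar N$, and by Theorem 4.5(b) applied to $N$ this amounts to showing that every $x\in N_G(\bar H)$ satisfies $x^m\in N$ for some $\bar\omega$-number $m$. Since $G$ is finitely generated and locally nilpotent it is nilpotent, hence satisfies \textbf{Max} and all its subgroups are finitely generated. Write $T=N_G(\bar H)$, which is nilpotent and so locally nilpotent, and set $H_*=\bigcap_{g\in T}H^g$. Each $g\in T$ normalises $\bar H$, so $H^g\le\bar H$ and conjugation by $g$ gives $[\bar H:H^g]=[\bar H:H]$, which is a $\bar\omega$-number by Theorem 4.5(e), in particular finite; since the finitely generated group $\bar H$ has only finitely many subgroups of that index, $H_*$ is a \emph{finite} intersection. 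Moreover each $H^g$ is $\bar\omega$-equivalent to $(\bar H)^g=\bar H$ (by Theorem 4.5(a), $H\sim_{\bar\omega}\bar H$, and $\sim_{\bar\omega}$ is preserved under conjugation), and a finite intersection of subgroups each $\bar\omega$-equivalent to the $\bar\omega$-isolated group $\bar H$ is again $\bar\omega$-equivalent to $\bar H$ (given $y\in\bar H$, the least common multiple of finitely many $\bar\omega$-numbers shows a single $\bar\omega$-power of $y$ lies in every $H^g$; here one uses Theorem 4.5(b)). Hence $\overline{H_*}=\bar H$, so by Theorem 4.5(e) the index $[\bar H:H_*]=[\overline{H_*}:H_*]$ is a $\bar\omega$-number; note also $H_*\le H$ and $H_*\nsub T$.

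Now apply Lemma 4.7(e) inside the locally nilpotent group $T$, taking $R=\bar H\nsub T$ and $S=H_*\nsub T$: as $\bar H/H_*$ is a finite $\bar\omega$-group, the centraliser $C=C_T(\bar H/H_*)$ is normal in $T$ and $T/C$ is a finite $\bar\omega$-group. On one hand $C\le N$: if $g\in C$ then $[\bar H,g]\le H_*$, whence $[H,g]\le H_*\le H$ and $H^g=H$. On the other hand, for $x\in T$ the element $xC$ of the finite $\bar\omega$-group $T/C$ has order a $\bar\omega$-number $m$, so $x^m\in C\le N$ and therefore $x\in\bar N$. Combined with (a) this gives $\bar N=N_G(\bar H)$. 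The routine work is the isolator bookkeeping (stability of $\bar\omega$-equivalence under conjugation and under finite intersection with an isolated subgroup, and checking the hypotheses of Theorems 4.5(e) and 4.7(e)); the two real ideas are the reduction in (a) to the commutator inequality $[\bar H,\bar N]\le\bar H$ and, in (b), the construction of the $T$-invariant subgroup $H_*\le H$ of $\bar\omega$-index in $\bar H$ which makes Lemma 4.7(e) applicable. I expect the main obstacle to be the finiteness of the orbit $\{H^g:g\in T\}$, which is what keeps $[\bar H:H_*]$ a $\bar\omega$-number and is precisely where finite generation is used.
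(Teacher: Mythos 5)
Your proof is correct. Part (a) is exactly the paper's argument: the corollary to Theorem \ref{4.6} gives $[\bar H,\bar N]\le\overline{[H,N]}\le\bar H$, whence $\bar N$ normalises $\bar H$. Part (b) follows the same strategy as the paper --- sandwich a normal subgroup $S$ with $S\le H\le\bar H$ and $\bar H/S$ a finite $\bar\omega$-group, apply Lemma \ref{4.7}(e) to get that $C=C(\bar H/S)$ has finite $\bar\omega$-index, and observe $C\le N$ --- but your choice of $S$ differs. The paper first reduces to $G=\langle\bar N,x\rangle$, so that $\bar H\nsub G$ by part (a), and then takes $S=\bar H^{\,n}$ with $n=[\bar H:H]$: this is characteristic in $\bar H$, hence normal, and lies in $H$ automatically because $H$ is subnormal of index $n$ in $\bar H$ (the same power trick as in Theorem \ref{theorem4.5}(b)). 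You instead take $S=H_*=\bigcap_{g\in T}H^g$ with $T=N_G(\bar H)$, which forces you to verify that the orbit $\{H^g\}$ is finite (finitely many subgroups of a fixed finite index in the finitely generated group $\bar H$) and that $\overline{H_*}=\bar H$ via an lcm argument; all of that checks out, but it is extra bookkeeping that the power subgroup sidesteps. The one compensating advantage of your route is that it avoids the paper's slightly terse reduction to $G=\langle\bar N,x\rangle$ and works with all of $N_G(\bar H)$ at once.
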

\begin{proof}
(a) By (\ref{theorem4.5}(a)) and (\ref{4.6}), $[\bar{N}, \bar{H}] \leq \overline{[N, H]}$. But from the definition of $N$, $[N,H] \leq H$ where $[\bar{N}, \bar{H}] \leq \bar{H}$. This means that $\bar{N}$ normalises $\bar{H}.$

(b) Let $x \in N_G(\bar{H}):$ we must prove that $x \in \bar{N}.$ It is evident that for this purpose we may assume $G = \left<\bar{N}, x\right>.$ By (\ref{theorem4.5}(e)), $n = [\bar{H}:H]$ is a $\bar{\omega}$-number, and $\bar{H}^n \leq H$ by subnormality. From the first part of this lemma, $\bar{H} \nsub G,$ hence also $\bar{H}^n \nsub G.$ Let $C = C_G(\bar{H} / \bar{H}^n)$, then by (\ref{4.7}(e)), $G/C$ is a finite $\bar{\omega}$-group and so $G = \bar{C}$. But $C$ normalizes $H$, so $C \leq N$ and hence $\bar{N} = G.$
\end{proof}

\begin{defn*}If $H$ is a subgroup of a group $G$, we define the series of successive normalizers $H_\alpha$ of $H$ by rules 
\begin{itemize}
\item[(i)] $H_0 = H,$
\item[(ii)] for any ordinal $\alpha$, $H_{\alpha + 1} = N_G(H_\alpha)$,
\item[(iii)] for limit ordinals $\mu$, $$H_\mu = \bigcup_{\alpha < \mu} H_\alpha.$$
\end{itemize}
\end{defn*}

\begin{defn*} Let $K$ be a subgroup of a group $G$, and $H$ a subgroup of $K$. Then we say that $H$ is an ascendant subgroup of $K$ if there exists a series of subgroups $(A_\alpha)_{\alpha \leq \rho},$ indexed by the ordinal numbers, satisfying the conditions \begin{itemize}
\item[(i)] $A_0 = H, A_\rho = K,$
\item[(ii)] for all $\alpha \leq \rho$, $A_\alpha \nsub A_{\alpha + 1}$
\item[(iii)] for limit ordinals $\mu,$ $$A_\mu = \bigcup_{\alpha < \mu} A_\alpha.$$
\end{itemize}
If this is the case, we write $H \text{ asc } K$, and note that if $\rho$ is finite, $H$ is subnormal in $K$.
\end{defn*}

\begin{lemma}\label{4.10}
Let $K$ be a subgroup of the locally nilpotent group $G$, and $H$ a subgroup of $K$. Let $H_\alpha$ denote the $\alpha^{th}$ normalizer of $H$, as in the above definition. Then 
\begin{itemize}
\item[(a)] $H = \bar{H}$ implies $H_\alpha = \bar{H}_\alpha$ for all ordinals $\alpha.$ 
\item[(b)] $H \text{ asc } K$ implies $\bar{H} \text{ asc } \bar{K}.$
\end{itemize}
\end{lemma}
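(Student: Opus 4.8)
The plan is to prove (a) by transfinite induction on $\alpha$ and then to deduce (b) by applying the isolator term-by-term to a given ascending series. Throughout I will use the elementwise description of the isolator from Theorem \ref{theorem4.5}(b): $\overline{S} = J_{\bar{\omega},G}(S)$ consists of exactly those $x \in G$ with $x^r \in S$ for some $\bar{\omega}$-number $r$. Two consequences I will lean on are \emph{monotonicity} ($S \leq T \Rightarrow \overline{S} \leq \overline{T}$, immediate from the description) and the fact that forming isolators commutes with unions of chains: if $(S_\alpha)$ is a chain of subgroups then $\overline{\bigcup_\alpha S_\alpha} = \bigcup_\alpha \overline{S_\alpha}$, since any single power $x^r$ lying in the union already lies in some $S_\alpha$. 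In particular a union of a chain of $\bar{\omega}$-isolated subgroups is again $\bar{\omega}$-isolated.

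For (a), assume $H = \overline{H}$. The base case $\alpha = 0$ is the hypothesis. At a successor step, suppose $H_\alpha = \overline{H_\alpha}$; applying Lemma \ref{4.9}(a) to $H_\alpha$ gives $\overline{N_G(H_\alpha)} \leq N_G(\overline{H_\alpha}) = N_G(H_\alpha)$, and since trivially $N_G(H_\alpha) \leq \overline{N_G(H_\alpha)}$ we get that $H_{\alpha+1} = N_G(H_\alpha)$ is $\bar{\omega}$-isolated, i.e. $H_{\alpha+1} = \overline{H_{\alpha+1}}$. At a limit ordinal $\mu$, $H_\mu = \bigcup_{\alpha<\mu} H_\alpha$ is a union of a chain of $\bar{\omega}$-isolated subgroups, hence $\bar{\omega}$-isolated. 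This completes the transfinite induction.

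For (b), let $(A_\alpha)_{\alpha\leq\rho}$ be an ascending series with $A_0 = H$, $A_\rho = K$, $A_\alpha \nsub A_{\alpha+1}$, and $A_\mu = \bigcup_{\alpha<\mu}A_\alpha$ at limits $\mu$; note $H \leq A_\alpha \leq K$ for all $\alpha$. Set $B_\alpha = \overline{A_\alpha}$. Then $B_0 = \overline{H}$, $B_\rho = \overline{K}$, and by monotonicity the $B_\alpha$ form a chain with $\overline{H} \leq B_\alpha \leq \overline{K}$. To check $B_\alpha \nsub B_{\alpha+1}$: from $A_\alpha \nsub A_{\alpha+1}$ we have $A_{\alpha+1} \leq N_G(A_\alpha)$, so by monotonicity and Lemma \ref{4.9}(a), $B_{\alpha+1} = \overline{A_{\alpha+1}} \leq \overline{N_G(A_\alpha)} \leq N_G(\overline{A_\alpha}) = N_G(B_\alpha)$, i.e. $B_\alpha \nsub B_{\alpha+1}$. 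At a limit $\mu$, the remark above gives $B_\mu = \overline{\bigcup_{\alpha<\mu}A_\alpha} = \bigcup_{\alpha<\mu}\overline{A_\alpha} = \bigcup_{\alpha<\mu}B_\alpha$. Hence $(B_\alpha)_{\alpha\leq\rho}$ witnesses $\overline{H} \text{ asc } \overline{K}$.

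There is no serious obstacle here once these ingredients are assembled; the one point requiring care is the limit-ordinal bookkeeping, where one must know that forming isolators commutes with unions of chains. This is precisely what makes both the limit step of the induction in (a) and the limit clause of the series in (b) go through, and it is exactly the content supplied by the elementwise description of $\overline{H}$ in Theorem \ref{theorem4.5}(b); everything else is a direct invocation of Lemma \ref{4.9}(a) together with the (trivial) monotonicity of the isolator.
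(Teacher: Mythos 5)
Your proposal is correct and follows essentially the same route as the paper's proof: transfinite induction for (a) with Lemma \ref{4.9}(a) at successor steps, applying the isolator term-by-term to the ascending series for (b), and using the elementwise description from Theorem \ref{theorem4.5}(b) to handle limit ordinals in both parts. The only cosmetic difference is that you isolate the limit-ordinal argument as a general principle (isolators commute with unions of chains) rather than running it inline as the paper does.
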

\begin{proof}
(a) By induction on $\alpha.$ By hypothesis the result is true if $\alpha = 0.$ Assume that $H_\alpha = \bar{H}_\alpha,$ then by (\ref{4.9}(a)), 
$$
\bar{H}_{\alpha + 1} = \overline{N_G(H_\alpha)} \leq N_G(\bar{H}_\alpha) = N_G(H_\alpha) = H_{\alpha + 1}.
$$
If $\mu$ is a limit ordinal, assume $H_\beta = \bar{H}_\beta$ for all $\beta < \mu,$ and let $x \in \bar{H}_\mu.$ Then for some $\bar{\omega}$-number $n$, $x^n$ lies in $H_\mu$ and hence in $H_\beta$ for some $\beta < \mu$. $H_\beta$ is $\bar{\omega}$-isolated by assumption and so $x \in H_\beta$, a fortiori $x \in H_\mu$.

(b) By definition of $H \text{ asc } K,$ there exists a series $(A_\alpha)_{\alpha \leq \rho}$ satisfying the conditions set out above. Consider the series $(\bar{A}_{\alpha})_{\alpha \leq \rho}$. $\bar{A}_0 = \bar{H},$ $\bar{A}_\rho = \bar{K}.$ For all $\alpha$, $\bar{A}_{\alpha} \leq \bar{A}_{\alpha+1}$ and in addition, by (\ref{4.9}(a)), 
$$
N_G(\bar{A}_\alpha) \geq \overline{N_G(A_\alpha)} \geq \bar{A}_{\alpha + 1},
$$
which show that $\bar{A}_{\alpha} \nsub \bar{A}_{\alpha + 1}$. Let $\mu$ be a limit ordinal, then if $x$ is any element of $\bar{A}_\mu$, $x^n \in A_{\mu}$ for some $\bar{\omega}$-number $n$. By definition of $A_\mu,$ $x^n \in A_\alpha$ for some $\alpha < \mu,$ and so $x \in \bar{A}_\alpha.$ Hence $$\bar{A}_\mu = \bigcup_{\alpha < \mu} \bar{A}_\alpha$$ and this proves $\bar{H} \text{ asc } \bar{K}$, as required.
\end{proof}

\section{Basic Commutators}
If $G = \left<a_1, a_2, \ldots, a_q\right>$ is a finitely generated Abelian group, then every element of $G$ is expressible in the form $a_1^{r_1} a_2^{r_2} \ldots a_q^{r_q}.$ If $G$ is nilpotent, we know from (\ref{lemma1.7}) that a similar result holds, provided the generators $a_1, \ldots, a_q$ are supplemented by sufficiently many commutators. The object of the present section is to discover which commutators are really sufficient for this purpose. The main contributions to this problem are due to Magnus, Witt, and Marshall Hall.

\begin{defn*} Let $A$ be a set of elements of any group. For any element $b \in A$, let $A*b$ be the set of all elements $a= a_0, a_1, a_2, \ldots$ where $a_{k+1} = [a_k,b]$ and $a$ runs through all elements of $A$ other than $b$.
\end{defn*}

\begin{lemma}\label{5.1}
If $\left<A\right>$ is nilpotent, then $\left<A\right> = \left<b\right>\left<A*b\right>.$
\end{lemma}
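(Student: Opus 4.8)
The claim is that for a set $A$ generating a nilpotent group, $\left<A\right> = \left<b\right>\left<A*b\right>$, where $A*b$ consists of $b$ together with all iterated commutators $[a,b,b,\ldots,b]$ (wait—re-reading the definition, $a=a_0$ and $a_{k+1}=[a_k,b]$, and $a$ runs through $A\setminus\{b\}$; so $A*b = \{a, [a,b], [a,b,b], \ldots : a\in A, a\neq b\}$). The natural plan is to show that the right-hand side $\left<b\right>\left<A*b\right>$ is actually a subgroup containing all of $A$, from which equality follows since $A$ generates the group. Since the group is nilpotent, $\left<A*b\right>$ will turn out to be normalized by $b$, which is exactly what makes $\left<b\right>\left<A*b\right>$ a subgroup.

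**Key steps.** First I would observe that $A \subseteq \left<b\right>\left<A*b\right>$ trivially: $b\in\left<b\right>$ and every other $a\in A$ lies in $A*b\subseteq\left<A*b\right>$. So it suffices to prove $\left<b\right>\left<A*b\right>$ is a subgroup, and for that it is enough to show $b$ normalizes $H := \left<A*b\right>$, i.e. $H^b = H$ (equivalently $H^{b}\le H$ and $H^{b^{-1}}\le H$). The central point is the identity from the notation section: $a^b = a[a,b]$. So for a generator $a_k = [a,b,\ldots,b]$ ($k$ times) of $H$, we have $a_k^{\,b} = a_k[a_k,b] = a_k a_{k+1}$, and $a_{k+1}\in A*b\subseteq H$ as well, so $a_k^b\in H$. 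Thus conjugation by $b$ sends every generator of $H$ into $H$, giving $H^b\le H$. For the reverse inclusion $H^{b^{-1}}\le H$, I would use nilpotence: the descending chain $H \ge H^{b^{-1}}$ ... actually the cleanest route is to note that since $G=\left<A\right>$ is nilpotent, $b$ has the property that the sequence $a_0, a_1, a_2,\ldots$ eventually becomes trivial (each $a_{k+1}=[a_k,b]\in\gamma_{k+1}$ roughly, so it terminates by Lemma~\ref{lemma1.1}); hence $A*b$ is finite-per-$a$ and $H$ is generated by these finitely-many-per-generator elements. Then $a^{b^{-1}} = a[a,b^{-1}]$ and $[a,b^{-1}] = ([a,b]^{-1})^{b^{-1}}$ via the Note after Lemma~\ref{lemma2.1}; unwinding, $a^{b^{-1}}$ is expressed in terms of the $a_k$ conjugated by $b^{-1}$, and one finishes by a downward induction using that the highest $a_k$ is central (so conjugation by $b^{\pm1}$ fixes it), then the next, etc. This shows $H^{b^{-1}}\le H$, so $H^b=H$, $b$ normalizes $H$, and $\left<b\right>H$ is a subgroup containing $A$, hence equals $\left<A\right>$.

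**Main obstacle.** The forward direction $H^b\le H$ is immediate from $a^b = a[a,b]$. The genuine work is the reverse inclusion $H^{b^{-1}}\le H$: one cannot simply run the same identity since $a^{b^{-1}} = a[a,b^{-1}]$ introduces $[a,b^{-1}]$ rather than $[a,b]$, and this is not obviously a product of the $a_k$'s. The resolution must exploit nilpotence — specifically that the tower $a_0,a_1,a_2,\ldots$ terminates, so $\left<a, [a,b], [a,b,b],\ldots\right>$ is a finitely generated subgroup on which $b$ acts, and within this finitely generated nilpotent context the subgroup generated by $\{a_k\}_{k\ge0}$ is $b$-invariant (it is $\left<a^{\left<b\right>}\right>$ modulo higher terms, handled by induction on class exactly as in Lemma~\ref{lemma1.6} and Lemma~\ref{lemma1.7}). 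I expect the proof to proceed by induction on the nilpotency class of $G$ (or of $\left<a,b\right>$), reducing mod $\gamma_c(G)$, applying the inductive hypothesis there, and then lifting — the class-$1$ (abelian) base case being trivial since then $A*b = A\setminus\{b\}$ and the statement is obvious.
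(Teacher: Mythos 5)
Your proof is correct, but it is organized differently from Hall's. You prove the identity structurally: $H=\left<A*b\right>$ is normalized by $b$, hence $\left<b\right>H$ is a subgroup containing $A$ and therefore equals $\left<A\right>$. The forward inclusion $H^{b}\le H$ comes from $a_k^{\,b}=a_k a_{k+1}$, and the reverse inclusion $H^{b^{-1}}\le H$ from $a_k^{\,b^{-1}}=a_k\bigl(a_{k+1}^{\,b^{-1}}\bigr)^{-1}$ (a consequence of $1=[a_k,bb^{-1}]=[a_k,b^{-1}][a_k,b]^{b^{-1}}$) together with downward induction on $k$, the base case being $a_k=1$ for $k$ at least the class since $a_k\in\gamma_{k+1}(\left<A\right>)$. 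That induction is sound and correctly isolates the only place nilpotence is needed. Hall instead argues by an explicit collection process: he takes an arbitrary word in $A^{\pm1}$ and pushes every occurrence of $b^{\pm1}$ to the left, the correction terms staying inside $\left<A*b\right>$; moving $b^{-1}$ past $c$ forces him to expand $[c,b^{-1}]$ as the finite product $c_2c_4\cdots c_5^{-1}c_3^{-1}c_1^{-1}$, which is exactly the same unwinding you perform. So the two proofs share their only nontrivial computation but package it differently: yours is shorter and cleaner as a pure existence statement, while Hall's algorithmic version is the one that carries over to the formal setting of the rest of \S 5, where the collection process itself (not just its output) is what Lemma \ref{lemma5.2} and the basic-product machinery imitate. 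Your closing speculation that the argument runs by induction on class modulo $\gamma_c(G)$ is not needed and is not how either proof actually finishes; the downward induction on $k$ you already described is the complete argument.
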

\begin{proof}
Let $x \in \left<A\right>.$ Then $x$ is expressible as a product of elements of $A$ and their inverses. Among the factors in this expression, $b$ and $b^{-1}$ may occur, since $b \in A.$ We prove (\ref{5.1}) by transforming the expression for $x$ so that all the factors $b$ and $b^{-1}$ are moved step by step to the left where they will coalesce into a power of $b$, i.e  an element of $\left<b\right>.$ This is the process of \underline{collecting} $b$. A typical step takes us from $\ldots cb \ldots$ to $\ldots bc[c,b] \ldots \:$ . If $c \in A*b$, then so does $[c,b].$ However we must take into account the possible presence of inverses. Thus $\ldots c^{-1}b \ldots$ will be replaced by $\ldots b[c,b]^{-1}c^{-1} \ldots \: \:$. The nilpotence of $\left<A\right>$ is important only when we are moving $b^{-1}$. This involves changing $\ldots cb^{-1} \ldots$ into $\ldots b^{-1} c[c,b^{-1}] \ldots$ and $\ldots c^{-1}b^{-1} \ldots$ into $\ldots b^{-1}[c,b^{-1}]^{-1}c^{-1}\ldots \: .$ If we write $c= c_0$ and define $c_{k+1} = [c_k, b],$ then $c_k$ will belong to $A*b$ if $c$ does. But $c_k \in \gamma_{k+1}(\left<A\right>)$ so that $c_k = 1$ whenever $k$ is not less than the class of $\left<A\right>.$ Let $c_k^* = [c_k, b^{-1}].$ Then $1= [c_k, bb^{-1}] = c_k^* c_{k+1}b^{-1}$ by (\ref{lemma2.1}) so that $c_k^* = (c_{k+1}^*)^{-1}c_{k+1}^{-1}$. Using this formula repeatedly, and remembering that $c_k^*$ is also $1$ for large $k$, we obtain a finite expression for 
$$
c_0^* = [c, b^{-1}] = c_2 c_4 c_6 \ldots \quad \quad \ldots c_5^{-1}c_3^{-1}c_1^{-1}
$$ 
in terms of elements of $A*b.$ Thus, after a finite number of steps, the process ends with an expression for $x$ of the form $b^rx'$, where $r$ is an integer and $x' \in \left<A*b\right>.$
\end{proof}

The next part of the argument is purely formal. For the sake of simplicity let us suppose we are dealing with a finite number of symbols forming the set $X = x_1, x_2, \ldots, x_q$ which can be combined into more complicated expressions by a binary operation written $[u,v].$ We assume $q>1.$

\begin{defn*} $C = C(X)$ is the smallest set of expressions such that
\begin{itemize}
\item[(i)] $x_i \in C$ for each $i=1,2, \ldots, q;$ and
\item[(ii)] if $u$ and $v$ belong to $C$, so does $[u,v].$
\end{itemize}
\noindent The \underline{weight} of an element $u$ of $C$ is the total number of occurrences of the variables $x_i$ in $u$. Thus $\text{wt}([u,v]) = \text{wt}(u) + \text{wt}(v).$
\end{defn*}

\noindent A \underline{basic sequence} in the variables $x_1, \ldots, x_q$ is any sequence
\begin{equation}\label{(1)section4}
    b_1, b_2, b_3, \ldots \tag{1}
\end{equation}
of elements of $C$ which can be generated according to the following rules:
\begin{itemize}
\item[(i)] $b_1 \in X = X_0$ and, for $k>0,$ $b_k \in X_{k-1}$ and $X_k = X_{k-1}*b;$
\item[(ii)] if $i < j$, then $\text{wt}(b_i) \leq \text{wt}(b_j)$.
\end{itemize}
Thus, if $b_1, \ldots, b_{k-1}$ have already been chosen so as to satisfy \textit{(i)} and \textit{(ii)}, then $X_{k-1}$ is a well-defined subset of $C$ and $b_k$ can be chosen to be an arbitrary element of least possible weight in $X_{k-1}.$ Without loss of generality, we may suppose that the first $q$ terms in \textit{(i)} are $x_1, \ldots, x_q$. Next there will come the $\frac{1}{2}q(q-1)$ terms $[x_i, x_j]$ with $i > j$ in some order. Then a batch of terms of weight $3$ and so on.

\begin{defn*} Let $P_i \: (i \geq 0)$ denote the set of all formal products $p_i$ of the form $$p_i = b_1^{r_1}b_2^{r_2} \ldots b_i^{r_i}q_i,$$ where each $r_i$ is an integer $r_i \geq 0$ and $q_i$ is any product (possibly empty) of elements of $X_i$. We define the weight of such a product $p_i$ to be the sum of the weights of its factors.
\end{defn*}

\begin{lemma}\label{lemma5.2}
	For each $i > 0$, there is a $1-1$ weight-preserving correspondence $\theta_i$ which maps $P_{i-1}$ onto $P_i$.
\end{lemma}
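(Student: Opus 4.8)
The plan is to construct the correspondence $\theta_i \colon P_{i-1} \to P_i$ explicitly by ``collecting $b_i$''. An element of $P_{i-1}$ has the form $p_{i-1} = b_1^{r_1} \cdots b_{i-1}^{r_{i-1}} q_{i-1}$, where $q_{i-1}$ is a word in the elements of $X_{i-1}$. Recall that $b_i$ is chosen to be an element of $X_{i-1}$ of least weight, and $X_i = X_{i-1} * b_i$. So the idea is to take the tail word $q_{i-1}$ over the alphabet $X_{i-1}$, collect all occurrences of $b_i$ (and $b_i^{-1}$) to the front exactly as in the proof of Lemma~\ref{5.1}, thereby rewriting $q_{i-1}$ as $b_i^{r_i} q_i$ where $r_i \in \Z$ and $q_i$ is a word in $X_i = X_{i-1} * b_i$. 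One must first check $r_i \geq 0$; this follows because the collecting moves only ever produce $b_i$ on the left with a nonnegative total exponent in the formal (free nilpotent-type) setting — more precisely, since $b_i$ is never reintroduced, the exponent of $b_i$ in the collected word equals the signed count of $b_i$-letters among those that were ``free'' generators, and in the formal product $P_{i-1}$ we are free to insist $r_i \geq 0$ after absorbing sign conventions, or one simply restricts attention to the relevant monomials. Then set $\theta_i(p_{i-1}) = b_1^{r_1}\cdots b_{i-1}^{r_{i-1}} b_i^{r_i} q_i \in P_i$.

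The key steps, in order, are: (1) make precise the collection process of Lemma~\ref{5.1} as a purely formal rewriting on words over the finite alphabet $X_{i-1}$, noting that every collecting move replaces a letter by letters of the same total weight (since $\mathrm{wt}([c,b_i]) = \mathrm{wt}(c) + \mathrm{wt}(b_i)$), so the process is weight-preserving; (2) argue termination — each collecting move strictly decreases some well-founded quantity (e.g. the number of $b_i$-occurrences to the right of a non-$b_i$ letter, ordered lexicographically with attention to the iterated brackets $c_k = [c_{k-1}, b_i]$ which eventually have weight exceeding that of the target and so may be discarded once we fix a bound, or simply work in the free group on $X_{i-1}$ where the identity $c_k^* = (c_{k+1}^*)^{-1} c_{k+1}^{-1}$ terminates the expansion as in Lemma~\ref{5.1}); (3) check that the output genuinely lies in $P_i$, i.e. after collecting, the remaining tail is a word in $X_i = X_{i-1}\setminus\{b_i\}$ together with all $[c, b_i], [c,b_i,b_i],\dots$ for $c \in X_{i-1}\setminus\{b_i\}$ — which is exactly the generating set defining $X_i$; (4) construct the inverse map $\theta_i^{-1} \colon P_i \to P_{i-1}$ by ``uncollecting'' — formally, $P_i \subseteq P_{i-1}$ as sets of formal products once one observes $X_i \subseteq \langle X_{i-1}\rangle$ in the obvious syntactic sense, and the collection map is a bijection because it can be inverted move-by-move; (5) conclude $\theta_i$ is a weight-preserving bijection.

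The main obstacle I expect is the bookkeeping in step (2)–(4): the collection process as described in Lemma~\ref{5.1} is carried out in an actual nilpotent group, where $c_k = 1$ for $k$ large forces the rewriting to terminate, but here we are working with \emph{formal} products in $C(X)$ with no nilpotence available, so I must instead order the terms of a basic sequence by weight and argue that, relative to any fixed weight bound $w$, only finitely many basic commutators of weight $\leq w$ exist, and the collection process either produces letters of bounded weight (which get collected in finitely many steps by a descent argument) or letters of weight $> w$ which, for the purpose of establishing the bijection at ``level $i$'', can be treated as formal symbols of $X_i$ that are simply left in place in $q_i$. Making the well-ordering for termination airtight — and simultaneously verifying that the resulting exponent $r_i$ of $b_i$ is nonnegative so that the image really lands in $P_i$ as defined — is the delicate part; everything else (weight preservation, membership in $X_i$, invertibility) is essentially formal once the rewriting is set up correctly. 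A clean way to handle invertibility is to exhibit the map directly on products and note that distinct $p_{i-1}$ yield distinct collected forms (the collection is deterministic and reversible), and that every element of $P_i$ arises, since a product $b_1^{r_1}\cdots b_i^{r_i} q_i$ with $q_i$ a word in $X_i$ is visibly also a formal product in $P_{i-1}$ on which collection of $b_i$ returns it to itself up to reassembling $b_i^{r_i}$ into the tail.
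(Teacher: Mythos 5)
Your proposal goes down a genuinely different road from the paper, and that road has a real obstruction. Lemma \ref{lemma5.2} is a purely \emph{combinatorial} statement about formal products: $\theta_i$ is not induced by any identity between group elements. The paper's map simply re-parses the tail $q_{i-1}$ (a product of letters from $X_{i-1}$, with no inverses and with the exponents $r_j \geq 0$ built into the definition of $P_{i-1}$) as an initial run $b_i^{r_i}$ followed by segments of the form $c_a b_i^{s_a}$, where $c_a \neq b_i$ and $b_i^{s_a}$ is the maximal run of $b_i$'s immediately following $c_a$; each such segment is then replaced by the single left-normed commutator $c_a^{(s_a)} = [c_a, b_i, \ldots, b_i] \in X_i$. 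This re-encoding is trivially weight-preserving, since $\mathrm{wt}(c_a) + s_a\,\mathrm{wt}(b_i) = \mathrm{wt}\bigl(c_a^{(s_a)}\bigr)$, and trivially invertible, since every letter of $X_i$ other than those of $X_{i-1}\setminus\{b_i\}$ is uniquely of the form $c^{(s)}$ and can be expanded back to $c\, b_i^{s}$. No termination argument, no nilpotence, and no sign bookkeeping are needed.

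Your version instead runs the group-theoretic collection process of Lemma \ref{5.1} on $q_{i-1}$, and this fails at the first step you list: the move $\ldots c\, b_i \ldots \to \ldots b_i\, c\, [c,b_i] \ldots$ does not replace a letter by letters of the same total weight — it keeps both $c$ and $b_i$ and \emph{inserts} the extra factor $[c,b_i]$, so the formal weight of the product increases by $\mathrm{wt}(c) + \mathrm{wt}(b_i)$ at every move. (The identity $cb = bc[c,b]$ holds between group elements, but $P_{i-1}$ and $P_i$ are sets of formal expressions, and the weight of a formal product is by definition the sum of the weights of its factors.) The downstream difficulties you flag — termination without nilpotence, nonnegativity of $r_i$, the handling of $b_i^{-1}$, and injectivity of a map that realizes a group identity and hence identifies distinct formal products representing the same element — are all artifacts of this wrong starting point rather than genuine features of the lemma. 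The fix is to abandon collection entirely and define $\theta_i$ syntactically as above.
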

\begin{proof} For, let $$p_{i-1} = b_1^{r_1} \ldots b_{i-1}^{r_{i-1}}q_{i-1}$$ be any element of $P_{i-1}$, so that $q_{i-1}$ is a product of elements of $X_{i-1}.$ Hence $q_{i-1}$ has the form
\begin{equation}\label{(2)section5}
    q_{i-1} = b_i^{r_i} \ldots c_1 b_i^{s_1} \ldots c_2 b_i^{s_2} \ldots \quad \quad \ldots c_n b_i^{s_n} \ldots, \tag{2}
\end{equation}
where $r_i \geq 0,$ $n \geq 0$ and $s_1, s_2, \ldots, s_n$ are positive integers, while $c_1, c_2, \ldots, c_n$ are elements $\neq b_i$ from $X_{i-1}$ (so the $c_j$ all belong to $X_i)$ and the $\ldots$ stand for possibly empty products of such elements. Define $p_{i-1}\theta_i= p_i$ to be the element of $P_i$ which is obtained by replacing each of the $n$ products $c_a b_i^{s_a}$ by the "corresponding commutator" $[\ldots[[c_a, b_i], b_i] \ldots, b_i]$ with $s_a$ terms $b_i.$ Denote this commutator by $c_a^{(s_a)}$ for short.

Conversely, if $p_i = b_1^{r_1} \ldots b_i^{r_i} q_i$ is any element of $P_i,$ then $q_i$ is a product of elements of $X_i$ and therefore has the form 
$$
\ldots c_1^{(s_1)} \ldots c_2^{(s_2)} \ldots \quad \ldots c_n^{(s_n)} \ldots,
$$
where the $s_a$ are positive integers, $n \geq 0,$ and $c_1, c_2, \ldots, c_n$ are elements of $X_{i-1}$ different from $b_i$, while the $\ldots$ stand for possible empty such products. Define $p_i \theta_i^{'} = p_{i-1}$ to be that element of $P_{i-1}$ which is obtained by replacing each $c_a^{(s_a)}$ by the corresponding product $c_a b_i^{s_a}.$ Clearly $\theta_i$ and $\theta_i'$ are mutually inverse and so $\theta_i$ maps $P_{i-1}$ onto $P_i$ and is $1-1.$ It is equally clear that the $\theta_i$ are weight preserving.
\end{proof}

\begin{defn*}	A basic product is any finite product of the form $b_1^{r_1}b_2^{r_2} \ldots b_N^{r_N}$ for some $N$, where the $r_i$ are integers $\geq 0$. Since the basic sequence contains only a finite number of terms of given weight $w$, there exists for each $w$ a number $N_w$, such that all elements of $P_i$ of weight $w$ are basic products, provided $i \geq N_w$.
\end{defn*}

\begin{cor*}
	The number of distinct basic products of weight $w$ in the variable $x_1,x_2, \ldots, x_q$ is precisely $q^w$.
	\end{cor*}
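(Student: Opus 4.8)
The plan is to count basic products of weight $w$ by transporting the count back to $P_0$, where it is transparent, and then identifying the weight-$w$ part of some $P_i$ with the basic products of weight $w$. First I would observe that $P_0$ is nothing but the free monoid on the symbols $x_1, \ldots, x_q$: an element of $P_0$ is an arbitrary (possibly empty) product of elements of $X_0 = \{x_1, \ldots, x_q\}$, and by definition its weight is exactly its length as a word in these symbols. Hence the number of elements of $P_0$ of weight $w$ equals the number of words of length $w$ in an alphabet of $q$ letters, which is $q^w$.

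Next I would iterate Lemma \ref{lemma5.2}. Each $\theta_k \colon P_{k-1} \to P_k$ is a weight-preserving bijection, so the composite $\theta_i \circ \theta_{i-1} \circ \cdots \circ \theta_1$ is a weight-preserving bijection $P_0 \to P_i$. Restricting to the elements of weight $w$, this gives a bijection between the weight-$w$ elements of $P_0$ and the weight-$w$ elements of $P_i$; in particular, for every $i \geq 0$ there are exactly $q^w$ elements of $P_i$ of weight $w$.

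Finally I would fix $i \geq N_w$, where $N_w$ is the integer introduced in the paragraph preceding the statement. For such $i$, every element of $P_i$ of weight $w$ is a basic product: once $i$ is large enough all of the remaining basic-sequence terms $b_{i+1}, b_{i+2}, \ldots$ have weight exceeding $w$, so the tail factor $q_i$ of a weight-$w$ element of $P_i$ must be the empty product, leaving $b_1^{r_1} \cdots b_i^{r_i}$. Conversely any basic product of weight $w$ involves only basic-sequence terms of weight $\leq w$, of which there are only finitely many, so (after discarding trailing zero exponents) it lies in $P_i$ as soon as $i \geq N_w$. Therefore the weight-$w$ elements of $P_i$ are exactly the basic products of weight $w$, and combining with the previous paragraph their number is $q^w$. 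The whole argument is essentially bookkeeping once Lemma \ref{lemma5.2} is available; the only step needing a little care is the last identification — checking that for $i \geq N_w$ a weight-$w$ member of $P_i$ has empty tail $q_i$ and so is genuinely a basic product — but this is precisely the content of the remark that defines $N_w$, so no real obstacle remains.
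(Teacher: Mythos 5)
Your proposal is correct and follows exactly the paper's argument: count the $q^w$ weight-$w$ words in $P_0$, transport them via the weight-preserving bijections $\theta_1,\ldots,\theta_i$ of Lemma \ref{lemma5.2}, and invoke the defining property of $N_w$ to identify the weight-$w$ elements of $P_i$ with the basic products of weight $w$. The extra care you take at the last step is just an unpacking of what the definition of $N_w$ already guarantees, so nothing is missing.
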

\begin{proof}
	For the elements of weight $w$ in $P_0$ are just products $x_{i_1} x_{i_2} \ldots x_{i_w}$ and these are mapped $1-1$ by $\theta_1, \theta_2, \ldots, \theta_i$ onto the set of all basic products of weight $w$, provided $i \geq N_{w}$.
\end{proof}

Consider now the set $R_w$ of all linear combinations with integer coefficients of the $q^w$ products $x_{i_1} x_{i_2}\ldots x_{i_w};$ and interpret $R_0$ to mean the ring of rational integers. With the obvious (distributive and associative) multiplication, the direct sum $R$ of the additive groups  $R_0, R_1, \ldots$ becomes the \underline{free associative ring} generated by $x_1, \ldots, x_q$. Define $[u,v]$ for $u$ and $v$ in $R$ to mean $uv - vu.$ The elements of $C = C(X)$ are then called the \underline{Lie elements} of $R$. The $q^w$ basic products of weight $w$ are then all in $R_w$ and we have
\begin{lemma}\label{5.3}
	The basic products of weight $w$ form a basis for the free additive abelian group $R_w$.
\end{lemma}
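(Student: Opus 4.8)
The plan is to read the bijections $\theta_i\colon P_{i-1}\to P_i$ of Lemma~\ref{lemma5.2} as changes of generating set inside the free associative ring $R$, and to track a $\Z$-basis of $R_w$ through the successive steps. Fix $w$, and for each $i$ let $B_i$ be the set of weight-$w$ formal products occurring in $P_i$; since the $\theta_i$ are weight-preserving bijections, $|B_i|$ is independent of $i$, and for $i\ge N_w$ the counting corollary above identifies $B_i$ with the set of all basic products of weight $w$, so $|B_i|=q^w$. For a formal product $p$ write $\wh p$ for the element of $R$ it represents. I claim that $(\wh p)_{p\in B_i}$ is a $\Z$-basis of $R_w$ for every $i$; taking $i\ge N_w$ then proves the lemma. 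The case $i=0$ holds by definition: the weight-$w$ elements of $P_0$ are exactly the monomials $x_{i_1}\cdots x_{i_w}$, which are the defining basis of $R_w$.

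For the inductive step I would compute the effect of $\theta_i$ inside $R$. Recall $\theta_i$ replaces each block $c_ab_i^{s_a}$ in $q_{i-1}$ by $c_a^{(s_a)}=[c_a,b_i,\dots,b_i]$ (with $s_a$ entries $b_i$), and expanding with $[u,v]=uv-vu$ gives
\[
c_a^{(s_a)}=\sum_{t=0}^{s_a}(-1)^t\tbinom{s_a}{t}\,b_i^{\,t}c_a b_i^{\,s_a-t}=c_a b_i^{s_a}+(\text{terms in which at least one }b_i\text{ stands to the left of }c_a),
\]
with integer coefficients. Multiplying these expansions out over the several blocks shows that, in $R_w$,
\[
\wh{\theta_i(p)}=\wh p+\bigl(\text{a }\Z\text{-linear combination of }\wh{p'}\text{, with }p'\in B_{i-1}\bigr),
\]
where each $p'$ is obtained from $p$ by permuting the letters of the word $q_{i-1}$ (letters of $X_{i-1}$, a set containing $b_i$ and all the $c_a$), and where every $p'\ne p$ that occurs has strictly fewer ``inversions'' than $p$, the inversion number of a word being the number of pairs made of a non-$b_i$ letter and a $b_i$ letter lying to its right. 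Ordering $B_{i-1}$ by increasing inversion number, the matrix expressing $(\wh{\theta_i(p)})_{p\in B_{i-1}}=(\wh{p'})_{p'\in B_i}$ in terms of $(\wh p)_{p\in B_{i-1}}$ is triangular with $1$'s on the diagonal, hence unimodular, so it carries one $\Z$-basis of $R_w$ to another. This completes the induction.

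The part needing the most care is the inductive step, specifically two points: that every term produced by expanding $\theta_i(p)$ in $R$ is again a weight-$w$ element of $P_{i-1}$ — which uses that $b_i\in X_{i-1}$ and that a rearrangement of a word over $X_{i-1}$ is still a word over $X_{i-1}$ — and that among the terms appearing, $\wh p$ is the unique one of maximal inversion number and occurs with coefficient exactly $1$, so that the transition matrix is genuinely \emph{uni}triangular and not merely integral. Granting Lemma~\ref{lemma5.2} and the counting corollary, everything else is formal. One could alternatively prove only that the basic products of weight $w$ generate $R_w$ and then invoke the fact that a generating set of a free abelian group of finite rank, of cardinality equal to the rank, is automatically a basis; but the unitriangular-matrix argument gives spanning and $\Z$-independence simultaneously, so I would run it directly.
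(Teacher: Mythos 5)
Your proof is correct, and it runs Hall's key computation in the opposite direction from the paper's. Both arguments rest on the same identity $[u,v,\dots,v]=uv^{s}-svuv^{s-1}+\binom{s}{2}v^{2}uv^{s-2}-\cdots$ and the same descent on the number of pairs in which a letter $c\ne b_i$ precedes a letter $b_i$; the difference is in the bookkeeping. The paper solves that identity for $uv^{s}$ and uses it to rewrite each weight-$w$ element of $P_{i-1}$ as an integral combination of weight-$w$ elements of $P_i$, so it establishes only that the basic products \emph{span} $R_w$ over $\Z$, and then finishes by exactly the route you mention in your closing sentence: a generating set of a free abelian group whose cardinality equals its rank (here $q^{w}$, by the corollary to Lemma~\ref{lemma5.2}) is automatically a basis. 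You instead expand $\theta_i(p)$ in terms of the elements of $P_{i-1}$ and observe that the transition matrix is unitriangular for the inversion order, which yields independence and spanning simultaneously and avoids the final appeal to the cardinality count. The two points you flag as delicate are the right ones, and both check out: a rearrangement of the letters of $q_{i-1}$ is still a word in $X_{i-1}$ (the prefix $b_1^{r_1}\cdots b_{i-1}^{r_{i-1}}$ being untouched), hence gives an element of $P_{i-1}$ of the same weight; and moving $t_a$ of the letters $b_i$ leftward past $c_a$ in each block lowers the inversion count by exactly $\sum_a t_a$, so $p$ itself arises only from the choice of all $t_a=0$ and carries coefficient $\prod_a\binom{s_a}{0}=1$, while every other term lies strictly lower in the order. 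Either organization is a complete proof; yours is marginally more self-contained, the paper's marginally shorter.
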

\begin{proof}
Since the number of such basic products is equal to the rank of $R_w$, it is enough to show that, for $i>0$, every $p_{i-1} \in P_{i-1}$ is expressible as an integral linear combination of elements of the same weight in $P_i.$ For then, every product $x_{i_1} x_{i_2} \ldots x_{i_w} \in P_0$ will be expressible as an integral linear combination of elements of weight $w$ in $P_i,$ for each value of $i$; and these elements are basic products for $i \geq N_w.$ Take $p_{i-1}$ as in proof of (\ref{lemma5.2}) with $q_{i-2}$ given by (\ref{(2)section5}). For any $U,v$ in $R$, we have
\begin{equation}\label{(3)section5}
[u, \underbrace{v,v, \ldots, v}_s] = uv^s - svuv^{s-1} + {s \choose 2} v^2 uv^{s-2} - \ldots.\tag{3}
\end{equation}

If $u \in X_i$ for $v = b_i,$ then $[u, v, v, \ldots, v]$ also belongs to $X_i$. We may use (\ref{(3)section5}) to replace the products $\ldots c_a b_i^{s_a} \ldots$ which occur in $q_{i-1}$ by integral linear combinations of other products in which the number of pairs of factors $\ldots c \ldots b_i \ldots$ (where some $c$ in $X_i$ precedes some factor $b_i$) is smaller than before. In this way, after a finite number of steps, $q_{i-1}$ will be expressed as a linear integral combination of terms in which no factor $c \in X_i$ precedes any $b_i$. These terms all have the form $b_i^{r_i}$ product of elements of $X_i,$ and the result follows.
\end{proof}

\begin{cor*} The terms of the basic sequence $b_1,b_2,\ldots$ are linearly independent.\end{cor*}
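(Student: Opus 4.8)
The plan is to read this off Lemma \ref{5.3} by exploiting the fact that every term of a basic sequence is a \emph{homogeneous} element of the free associative ring $R$. First I would record that, by construction, each $b_i$ is a Lie element of $R$ of weight $w_i = \text{wt}(b_i)$, so $b_i$ lies in the graded component $R_{w_i}$. Hence any (finite) $\Z$-linear relation $\sum_i c_i b_i = 0$ decomposes into its homogeneous parts, giving $\sum_{\,w_i = w} c_i b_i = 0$ in $R_w$ for every weight $w$. It therefore suffices to prove that for each fixed $w$ the finitely many $b_i$ with $\text{wt}(b_i)=w$ are linearly independent in $R_w$.

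Next I would observe that each such $b_i$ is itself one of the basic products of weight $w$: it is the product $b_1^{r_1}\cdots b_i^{r_i}$ with $r_i=1$ and all other exponents $0$. Moreover the terms of the basic sequence are pairwise distinct, since in forming $X_k = X_{k-1}*b_k$ the element $b_k$ is removed from the set and never returns; so distinct indices yield distinct $b_i$, and hence distinct basic products. Thus $\{\,b_i : \text{wt}(b_i)=w\,\}$ is a subset of the set of basic products of weight $w$, which by Lemma \ref{5.3} is a $\Z$-basis of the free abelian group $R_w$. A subset of a basis is linearly independent, so the homogeneous relation above forces $c_i=0$ for all $i$ with $w_i=w$; letting $w$ vary, all $c_i$ vanish. (Independence over $\Q$ then follows at once, a subset of a $\Z$-basis of a free abelian group staying independent after tensoring with $\Q$.)

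I do not anticipate any genuine difficulty here: once homogeneity reduces everything to a single weight, the statement is an immediate consequence of Lemma \ref{5.3} together with the trivial remark that the $b_i$ occur among the basic products. The only point deserving a second look is the distinctness of the terms $b_1, b_2, \ldots$ as elements of $C(X)$ — needed so that two superficially different $b_i$ cannot secretly coincide and conceal a cancellation — and this is built into the deletion step of the operation $X_{k-1}*b_k$.
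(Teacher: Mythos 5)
Your argument is correct and is exactly the justification the paper intends: the corollary is stated as an immediate consequence of Lemma \ref{5.3}, since each $b_i$ is itself a basic product (take $r_i=1$ and all other exponents $0$) and terms of different weights lie in different graded components $R_w$. Your extra remark on the pairwise distinctness of the $b_i$ is a sensible check but adds nothing beyond what the paper takes for granted.
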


We now relate the interpretations $uv - vu$ and $u^{-1}v^{-1}uv$ of the operation $[u,v]$. It is convenient for this purpose to select an arbitrary positive integer $n$ and to pass from $R$ to the ring $U= R / \Fr{k}^n$, where $\Fr{k}$ is the ideal $R_1 + R_2 + \ldots$ of $R$. By  (\ref{theorem3.6}), the set $1 + \Fr{k} / \Fr{k}^n$ is then a nilpotent group of class $<n$. Let $H$ be the subgroup generated by the $q$ elements $y_1,y_2,\ldots,y_q$, where $y_i = 1+ x_i$. Corresponding to the basic sequence $b_1,b_2,\ldots$ in the variables $x_i$, formed using the operation $uv - vu$, we shall have a parallel basic sequence $c_1,c_2,\ldots$ in the variables $y_i$ formed with the operation $u^{-1}v^{-1}uv$. Each $c_j$ is obtained from the corresponding $b_j$ by replacing every $x_i$ by $y_i$ and reinterpretation. Suppose that $b_N$ is the last term of weight $< n$. Then $b_{N+1},b_{N+2},\ldots$ all belong to $\Fr{k}^n$ and hence vanish in $U$; while $c_{N+1},c_{N+2},\ldots$ all belong to $\gamma_n(H) \leq 1 + \Fr{k}^n / \Fr{k}^n = 1$.

\begin{lemma}\label{5.4}
	For $1 \leq j < n$, we have 
    $$
    c_j \equiv 1 + b_j \text{ mod } \Fr{k}^{w+1}
    $$ 
    where $w$ is the weight of $b_j$ (and $c_j$).
\end{lemma}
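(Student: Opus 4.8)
The plan is to prove, by induction on the weight $w$, a statement about \emph{all} Lie elements $e\in C(X)$ — not merely the terms $b_j$ of the basic sequence — namely: if $\wh{e}$ denotes the element obtained from $e$ by substituting $y_i = 1+x_i$ for each $x_i$ and reinterpreting every formal bracket $[u,v]=uv-vu$ as the group commutator $u^{-1}v^{-1}uv$, then, regarding $e$ as an element of $R$ (hence of $U=R/\Fr{k}^n$) via its Lie interpretation,
$$
\wh{e}\equiv 1+e \pmod{\Fr{k}^{w+1}}\quad\text{in }U,\qquad w=\mathrm{wt}(e).
$$
Since $c_j=\wh{b_j}$ by the very construction of the parallel basic sequence, Lemma~\ref{5.4} is the special case $e=b_j$ (for $j<n$ the weight $w$ is automatically $<n$, though the argument below does not use this).

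The base case $w=1$ is immediate: there $e=x_i$ and $\wh{e}=y_i=1+x_i=1+e$. For the inductive step, let $w>1$ and write $e=[u,v]$ with $u,v\in C(X)$ and $a=\mathrm{wt}(u)\geq 1$, $b=\mathrm{wt}(v)\geq 1$, $a+b=w$; such a decomposition exists by the defining property of $C(X)$. By the inductive hypothesis we may put $\wh{u}=1+\al$ and $\wh{v}=1+\be$ with $\al\in\Fr{k}^{a}$, $\be\in\Fr{k}^{b}$, and $\al\equiv u\pmod{\Fr{k}^{a+1}}$, $\be\equiv v\pmod{\Fr{k}^{b+1}}$. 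The key identity is
$$
\wh{e}-1 = \wh{u}^{-1}\wh{v}^{-1}\wh{u}\wh{v} - \wh{u}^{-1}\wh{v}^{-1}\wh{v}\wh{u} = \wh{u}^{-1}\wh{v}^{-1}\bigl(\wh{u}\wh{v}-\wh{v}\wh{u}\bigr) = \wh{u}^{-1}\wh{v}^{-1}\bigl(\al\be-\be\al\bigr),
$$
the last step because $(1+\al)(1+\be)-(1+\be)(1+\al)=\al\be-\be\al$. Now $\al\be-\be\al\in\Fr{k}^{a}\Fr{k}^{b}\subseteq\Fr{k}^{a+b}$, while $\wh{u}^{-1}\wh{v}^{-1}\in 1+\Fr{k}$, so $\wh{e}\equiv 1+(\al\be-\be\al)\pmod{\Fr{k}^{a+b+1}}$. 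Finally, writing $\al=u+r$ and $\be=v+s$ with $r\in\Fr{k}^{a+1}$, $s\in\Fr{k}^{b+1}$, one has $\al\be-uv = us+rv+rs\in\Fr{k}^{a+b+1}$ and likewise $\be\al\equiv vu$, so $\al\be-\be\al\equiv uv-vu = e \pmod{\Fr{k}^{w+1}}$. Hence $\wh{e}\equiv 1+e\pmod{\Fr{k}^{w+1}}$, completing the induction; taking $e=b_j$ yields the lemma.

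Modulo this, the whole thing is routine degree counting in $U=R/\Fr{k}^n$, using only $\Fr{k}^{i}\Fr{k}^{j}\subseteq\Fr{k}^{i+j}$ together with $a,b\geq 1$ (so that $\Fr{k}\cdot\Fr{k}^{a+b}=\Fr{k}^{a+b+1}$ absorbs all the unwanted terms). The one point that genuinely requires care is the \emph{formulation} of the induction: it must be carried out over all Lie elements of $C(X)$, because the sub-expressions $u$ and $v$ of a basic commutator $b_j=[u,v]$ need not themselves be terms of the basic sequence, so an induction confined to the $b_j$ would have no hypothesis available for $u$ and $v$. (Alternatively one could first show that every $b_j$ of weight $>1$ has the form $[b_s,b_t]$ with $s,t<j$ among the basic sequence terms and then induct on the $b_j$ alone; but widening the induction to $C(X)$ is the cleaner route.)
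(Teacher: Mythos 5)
Your proof is correct and takes essentially the same approach as the paper: the identical induction on weight, driven by the identity $[1+\alpha,1+\beta]=1+\alpha\beta-\beta\alpha+(\text{terms in }\Fr{k}^{a+b+1})$ combined with $\alpha\equiv u$, $\beta\equiv v$ modulo one degree higher. The only difference is organizational: the paper inducts directly over the basic sequence, writing $b_j=[b_k,b_\ell]$ with $b_k,b_\ell$ earlier basic terms (a structural fact about basic sequences it leaves implicit), whereas you widen the induction to all of $C(X)$, which sidesteps the need for that fact.
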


\begin{proof}
	This is clear for $w =1$ since then $c_j = y_j = 1+x_j = 1+b_j$. For $w > 1$, we have $b_j = [b_k,b_\ell]$ and $c_j = [c_k,c_\ell]$, where the weights $r$ and $s$ of $b_k$ and $b_\ell$ add up to $w$. We may assume that 
    $$
    c_k \equiv 1 + b_k \text{ mod } \mathfrak{k}^{r+1} \quad \text{ and } \quad c_\ell \equiv 1 + b \text{ mod } \mathfrak{k}^{s+1}.$$ If $c_k = 1 + \beta_k$, $c_\ell = 1 + \beta_\ell$, we  have
	$$
	c_k c_\ell = 1 + \beta_k + \beta_\ell + \beta_k \beta_\ell = \pr{1 + \beta_k + \beta_\ell + \beta_\ell \beta_k} \pr{1 + \beta_k \beta_\ell - \beta_\ell \beta_k +} \:	\text{ terms in } \: \mathfrak{k}^{r+s + 1}),$$ so that $$[c_k,c_\ell] = 1 + \beta_k \beta_\ell - \beta_\ell \beta_k \equiv 1 +  [b_k,b_\ell] \text{ mod } \mathfrak{k}^{r+s+1}
	$$
	since 
	$$
	\beta_k \equiv b_k \text{ mod } \mathfrak{k}^{r+1} \quad \text{ and } \quad \beta_\ell \equiv b_\ell \text{ mod } \mathfrak{k}^{s+1}.
	$$\end{proof}

\begin{lemma}\label{5.5}
	The elements of $H = \left<y_1,\ldots,y_q\right>$ are uniquely expressible in the form $$c_1^{\mu_1} c_2^{\mu_2} \ldots c_n^{\mu_n}$$ with integers $\mu_1,\ldots,\mu_n$.
\end{lemma}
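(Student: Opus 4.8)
The plan is to establish existence and uniqueness separately. For existence I would iterate the collecting process of Lemma~\ref{5.1}. Write $X_0=\{y_1,\dots,y_q\}$, and recall that the parallel basic sequence $c_1,c_2,\dots$ in the $y_i$ is generated by the rule $c_k\in X_{k-1}$, $X_k=X_{k-1}*c_k$, with weights non-decreasing. Since $H=\langle X_0\rangle$ is nilpotent, Lemma~\ref{5.1} gives $H=\langle c_1\rangle\langle X_1\rangle$; and as $\langle X_1\rangle$ is again nilpotent, being a subgroup of $H$, a further application gives $\langle X_1\rangle=\langle c_2\rangle\langle X_2\rangle$, and so on. After $N$ steps one obtains $H=\langle c_1\rangle\langle c_2\rangle\cdots\langle c_N\rangle\langle X_N\rangle$, where $c_1,\dots,c_N$ are exactly the basic commutators of weight $<n$ (so $c_{N+1},c_{N+2},\dots$ vanish in $H$). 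By the defining rules of the basic sequence every element of $X_N$ has weight $\geq n$, hence is a commutator lying in $\gamma_n(H)$; but $H\leq 1+\mathfrak{k}/\mathfrak{k}^n$ has class $<n$ by Theorem~\ref{theorem3.6}, so $\gamma_n(H)=1$ and $\langle X_N\rangle=1$. Therefore every element of $H$ has the form $c_1^{\mu_1}\cdots c_N^{\mu_N}$.

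For uniqueness I would filter $H$ by the terms of its lower central series $H=\gamma_1(H)\geq\gamma_2(H)\geq\cdots\geq\gamma_n(H)=1$ and induct on the weight $m$, proving that $c_1^{\mu_1}\cdots c_N^{\mu_N}\equiv c_1^{\nu_1}\cdots c_N^{\nu_N}\pmod{\gamma_{m+1}(H)}$ forces $\mu_j=\nu_j$ for all $j$ with $w_j:=\mathrm{wt}(c_j)\leq m$. Since each $c_j$ is a commutator of weight $w_j$, it lies in $\gamma_{w_j}(H)$, so modulo $\gamma_{m+1}(H)$ the product depends only on those $\mu_j$ with $w_j\leq m$; grouping the factors by weight, and assuming $\mu_j=\nu_j$ already known for $w_j<m$, one may left-cancel the common lower-weight part (legitimate because $\gamma_{m+1}(H)$ is normal) and pass to the abelian group $\gamma_m(H)/\gamma_{m+1}(H)$. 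The problem then reduces to showing that
$$\sum_{j:\,w_j=m}(\mu_j-\nu_j)\,\bar{c}_j=0\quad\text{in}\quad\gamma_m(H)/\gamma_{m+1}(H)$$
implies $\mu_j=\nu_j$ for every $j$ with $w_j=m$.

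The heart of the matter is therefore the linear independence over $\mathbb{Z}$ of the images $\bar{c}_j$ of the weight-$m$ basic commutators in $\gamma_m(H)/\gamma_{m+1}(H)$. For this I would use the homomorphism $1+\mathfrak{k}^m/\mathfrak{k}^n\to\mathfrak{k}^m/\mathfrak{k}^{m+1}=R_m$ given by $1+u\mapsto u+\mathfrak{k}^{m+1}$ — it is a homomorphism because $uu'\in\mathfrak{k}^{2m}\subseteq\mathfrak{k}^{m+1}$ — with kernel $1+\mathfrak{k}^{m+1}/\mathfrak{k}^n$. Restricted to $\gamma_m(H)$, which is contained in $1+\mathfrak{k}^m/\mathfrak{k}^n$, it kills $\gamma_{m+1}(H)$, hence induces a map $\gamma_m(H)/\gamma_{m+1}(H)\to R_m$, and by Lemma~\ref{5.4} this map sends $\bar{c}_j\mapsto b_j$ for each weight-$m$ basic commutator. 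Since the $b_j$ are terms of the basic sequence in the free associative ring, they are $\mathbb{Z}$-linearly independent in $R_m$ by the corollary to Lemma~\ref{5.3}; this yields the required independence and completes the inductive step. The base case $m=1$ is handled directly: $\gamma_1(H)/\gamma_2(H)=H^{\mathrm{ab}}$ is generated by the $q$ images of $y_1=c_1,\dots,y_q=c_q$ and surjects onto $R_1\cong\mathbb{Z}^q$, hence is free abelian of rank $q$ on those images. Carrying the induction up to $m=\mathrm{wt}(c_N)$ gives $\mu_j=\nu_j$ for all $j$.

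I expect the main obstacle to be the bookkeeping in the uniqueness argument — correctly tracking how factors of a given weight interact modulo $\gamma_{m+1}(H)$ and justifying the cancellation — rather than any single difficult estimate; once the question is localized to the graded piece $\gamma_m(H)/\gamma_{m+1}(H)$, Lemmas~\ref{5.4} and~\ref{5.3} supply the essential input. A secondary point requiring care in the existence step is the verification that $X_N$ consists solely of commutators of weight $\geq n$, so that $\langle X_N\rangle=1$; this follows because the weights along the basic sequence are non-decreasing and $c_N$ is the last term of weight $<n$.
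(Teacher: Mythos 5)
Your proof is correct and follows essentially the same route as the paper: existence by iterating Lemma~\ref{5.1} until only weight-$\geq n$ commutators remain, and uniqueness by reducing to the $\mathbb{Z}$-linear independence of the weight-$w$ basic Lie elements $b_j$ in $R_w$ via Lemma~\ref{5.4} and the corollary to Lemma~\ref{5.3}. The only difference is packaging — the paper cancels the common initial segment at the least index of disagreement and computes directly modulo $\mathfrak{k}^{w+1}$, whereas you organize the same computation through the graded quotients $\gamma_m(H)/\gamma_{m+1}(H)$ — which changes nothing of substance.
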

\begin{proof}
	Since $H$ is nilpotent we may apply Lemma \ref{5.1} repeatedly to obtain $$H = \left<c_1\right> \left<c_2\right> \ldots \left<c_n\right>  \left<Y_n\right>$$ where $Y = y_1,\ldots,y_q$ and the $Y_i$ are defined analogously to the $X_i$. But $Y_n$ consists exclusively of commutators of weight $\geq n$. Thus $Y_n \leq \gamma_n(H) = 1$. Thus every element $u$ of $H$ is expressible in the form $c^\mu$ (to use a vectorial notation). Suppose if possible that $u = c^\lambda$ also, with $\lambda \neq \mu$. Let the least suffix where $\lambda$ differs from $\mu$ be $i$ and let $c_i$ have weight $w$. Suppose that $c_{i+1},\ldots,c_{i+\ell}$ are the later terms which also have weight $w$. We then obtain 
	$$
	v = c_i^{\lambda_i} c_{i+1}^{\lambda_{i+1}} \ldots = c_i^{\mu_i} c_{i+1}^{\mu_{i+1}}\ldots
	$$
	with $\lambda_i \neq \mu_i$. By (\ref{5.4}), $v$ is congruent $\text{ mod } \mathfrak{k}^{w+1}$ to both $$\sum_{a=0}^\ell \lambda_{i+a} b_{i+a} \quad \text{and} \quad \sum_{a=0}^\ell \mu_{i+a} b_{i+a}.$$ But this is impossible by  (\ref{5.3}), since $b_i,b_{i+1},\ldots,b_{i+\ell}$ are linearly independent elements of $v_w$.
\end{proof}

\begin{thm}\label{5.6}
	Let $F$ be the free group on $\bar{y}_1,\ldots,\bar{y}_q$. Then $H \equiv F / \ga_n(F)$.
\end{thm}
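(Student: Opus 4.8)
The plan is to construct the evident surjection $F/\gamma_n(F) \twoheadrightarrow H$ and to prove it is injective by matching up the basic-commutator normal forms on the two sides.

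Let $\phi\colon F \to H$ be the unique homomorphism with $\phi(\bar y_i) = y_i$; it is onto since the $y_i$ generate $H$. As remarked above, $H$ is a subgroup of the group $1 + \mathfrak{k}/\mathfrak{k}^n$, which is nilpotent of class $< n$ by Theorem~\ref{theorem3.6}; hence $H$ has class $< n$ by Lemma~\ref{lemma1.3}, so $\gamma_n(H) = 1$. Since homomorphisms carry commutators to commutators, $\phi(\gamma_n(F)) \leq \gamma_n(H) = 1$, so $\gamma_n(F) \leq \ker\phi$ and $\phi$ induces a surjection $\psi\colon F/\gamma_n(F) \twoheadrightarrow H$. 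Everything now reduces to showing $\ker\psi = 1$.

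By Lemma~\ref{lemma1.2}, $\gamma_n(F/\gamma_n(F)) = 1$, so $F/\gamma_n(F)$ is nilpotent of class $< n$. Construct the basic sequence $\bar c_1, \bar c_2, \dots$ in the generators $\bar y_1, \dots, \bar y_q$ using the group-commutator operation $u^{-1}v^{-1}uv$, by the very same combinatorial recipe that produced $c_1, c_2, \dots$ from the $y_i$; then $\bar c_j$ is literally the same iterated-commutator word in the $\bar y_i$ that $c_j$ is in the $y_i$, so $\psi(\bar c_j) = c_j$ for all $j$. Now run the existence half of the argument of Lemma~\ref{5.5} inside $F/\gamma_n(F)$: repeated use of Lemma~\ref{5.1} writes an arbitrary element as a product of powers of the $\bar c_j$ times a tail lying in the subgroup generated by commutators of weight $\geq n$, and that tail is trivial because every such commutator lies in $\gamma_n(F/\gamma_n(F)) = 1$. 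Hence every $g \in F/\gamma_n(F)$ admits an expression $g = \bar c_1^{\mu_1} \bar c_2^{\mu_2} \cdots \bar c_n^{\mu_n}$ with integer exponents.

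Finally, take $g \in \ker\psi$ and fix such an expression. Applying $\psi$ gives $c_1^{\mu_1} c_2^{\mu_2} \cdots c_n^{\mu_n} = 1$ in $H$; since $1 = c_1^{0} c_2^{0} \cdots c_n^{0}$ is also an expression of this shape, the uniqueness clause of Lemma~\ref{5.5} forces every $\mu_j = 0$, i.e.\ $g = 1$. Thus $\psi$ is an isomorphism, proving $H \cong F/\gamma_n(F)$. I anticipate no serious obstacle: the only points demanding care are that the collecting argument of Lemma~\ref{5.5} uses nothing about $H$ beyond its nilpotence, so it transfers verbatim to $F/\gamma_n(F)$, and that $\psi$ sends the basic commutators $\bar c_j$ to the $c_j$ — both of which are immediate once the two parallel basic sequences are set up by the same construction.
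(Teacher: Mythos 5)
Your proposal is correct and follows essentially the same route as the paper: build the surjection $F/\gamma_n(F)\twoheadrightarrow H$, use the nilpotence of $H$ (class $<n$) to see $\gamma_n(F)$ lies in the kernel, collect via Lemma \ref{5.1} to put every element of $F/\gamma_n(F)$ in the form $\bar c_1^{\mu_1}\cdots\bar c_n^{\mu_n}$, and invoke the uniqueness clause of Lemma \ref{5.5} to kill the kernel. No substantive differences from the paper's argument.
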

\begin{proof}
	For there is a unique homomorphism $\theta$ which maps $F$ onto $H$ and each $\bar{y}_i$ to $y_i$. Let $K$ be the kernel of $\theta$. Then $F / K \cong H$. But $H$ is of nilpotent class $< n$. Hence $\gamma_n(F) \leq K$. Apply (\ref{5.1}) to the group $G = F / \gamma_n(F)$ which is nilpotent of class $< n$. In the obvious notation, we obtain $F = \left<\bar{c}_1\right> \left<\bar{c}_2\right> \ldots \left<\bar{c}_n\right>\gamma_n(F)$, since $\left<\bar{Y}_n\right> \leq \gamma_n(F).$ Therefore, if $K > \gamma_n(F),$ there would exist integers $\mu_1, \ldots, \mu_n$ not all zero such that $c_1^{\mu_1} c_2^{\mu_2} \ldots c_n^{\mu_n} = 1$. This is contrary to (\ref{5.5}).
\end{proof}

Theorem \ref{5.6} may be supplemented by two further remarks. Since $n$ was arbitrary, we have at once the following.

\begin{cor*}
	Let $F$ be freely generated by $\bar{y}_1,\ldots,\bar{y}_q$ and let $\bar{c}_1,\bar{c}_2,\ldots$ be any basic sequence in $\bar{y}_i$'s. For every $w > 0$, $\gamma_w(F) / \gamma_{w+1}(F)$ is a free abelian group with a basis represented by those $\bar{c}_j$ which have weight $w$.
	\end{cor*}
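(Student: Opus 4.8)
The plan is to exploit Theorem~\ref{5.6} and Lemma~\ref{5.5} for the two consecutive values $n=w$ and $n=w+1$ of the nilpotency parameter and to compare the two resulting normal forms through the canonical projection $\pi\colon F/\gamma_{w+1}(F)\to F/\gamma_w(F)$.

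Fix $w\geq 1$ and a basic sequence $\bar c_1,\bar c_2,\ldots$ in $\bar y_1,\ldots,\bar y_q$. Since the rules defining a basic sequence force the weights to be non-decreasing, and since those rules (and hence the terms of each given weight) do not depend on the parameter $n$, I may list the terms of weight $<w$ first, say $\bar c_1,\ldots,\bar c_N$, and then the terms of weight exactly $w$, say $\bar c_{N+1},\ldots,\bar c_M$; the prefix $\bar c_1,\ldots,\bar c_N$ is itself a basic sequence through weight $w-1$, and $\bar c_1,\ldots,\bar c_M$ one through weight $w$. Theorem~\ref{5.6} together with Lemma~\ref{5.5} applied with $n=w+1$ then says that every element of $F/\gamma_{w+1}(F)$ is uniquely of the form $\bar c_1^{\mu_1}\cdots\bar c_M^{\mu_M}$, while applied with $n=w$ it says every element of $F/\gamma_w(F)$ is uniquely $\bar c_1^{\mu_1}\cdots\bar c_N^{\mu_N}$.

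Next I would trace $\pi$ through these normal forms. For $j\leq N$ the word $\bar c_j$ in the generators maps under $\pi$ to the corresponding basic term of $F/\gamma_w(F)$, whereas for $N<j\leq M$ it maps to $1$, since a basic commutator of weight $w$ is a product of $w$ generators and hence lies in $\gamma_w(F)$. Applying $\pi$ factorwise therefore sends $\bar c_1^{\mu_1}\cdots\bar c_M^{\mu_M}$ to $\bar c_1^{\mu_1}\cdots\bar c_N^{\mu_N}$, which is already written in the standard order and so is the normal form in $F/\gamma_w(F)$. By the uniqueness there, this image is trivial exactly when $\mu_1=\cdots=\mu_N=0$; hence $\gamma_w(F)/\gamma_{w+1}(F)=\ker\pi$ is precisely the set of $\bar c_{N+1}^{\mu_{N+1}}\cdots\bar c_M^{\mu_M}$ with arbitrary integer exponents, i.e.\ it is generated by the images of the weight-$w$ basic commutators. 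Because $[\gamma_w(F),F]=\gamma_{w+1}(F)$, this subgroup is central in $F/\gamma_{w+1}(F)$, hence abelian; and the uniqueness in Lemma~\ref{5.5} says exactly that the map $\Z^{M-N}\to\gamma_w(F)/\gamma_{w+1}(F)$, $(\mu_{N+1},\ldots,\mu_M)\mapsto\bar c_{N+1}^{\mu_{N+1}}\cdots\bar c_M^{\mu_M}$, is injective. Being also surjective and a homomorphism (the target is abelian), it is an isomorphism, so the $\bar c_j$ of weight $w$ form a free abelian basis of $\gamma_w(F)/\gamma_{w+1}(F)$, as claimed.

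The delicate point is the claim that $\pi$ transports normal forms to normal forms by simply deleting the weight-$w$ factors: this rests on the observation that one and the same basic sequence serves for every parameter $n$, so that the decomposition provided by Lemma~\ref{5.5} at level $w+1$ genuinely refines the one at level $w$ and the kernel of $\pi$ can be read off coordinatewise. Once that is in place, the remaining ingredients --- centrality of $\gamma_w(F)/\gamma_{w+1}(F)$ giving abelianness, and the passage from a bijective map out of $\Z^{M-N}$ to a free abelian basis --- are routine.
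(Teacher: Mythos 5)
Your proposal is correct and follows essentially the route the paper intends: the notes state the corollary follows "at once" from Theorem~\ref{5.6} "since $n$ was arbitrary," and your argument is precisely the fleshing-out of that remark, comparing the unique normal forms of Lemma~\ref{5.5} at the two levels $n=w$ and $n=w+1$ through the projection $F/\gamma_{w+1}(F)\to F/\gamma_w(F)$. The points you flag as delicate (the same basic sequence serving for every $n$, and centrality of $\gamma_w(F)/\gamma_{w+1}(F)$ supplying abelianness) are exactly the right ones, and both are handled correctly.
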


\begin{thm}[Witt]\label{5.7}
	The rank of $\gamma_w(F) / \gamma_{w+1}(F)$ is $n(w,q)$ where
	$$
	n(w,q) = \frac{1}{w}  \sum \mu(d) q^{\frac{w}{d}}.
	$$
	Here $\mu(d)$ is the M\"{o}bius function, $= (-1)^r$ if $d$ is a product of $r \: (\geq0)$ distinct primes and otherwise zero.
\end{thm}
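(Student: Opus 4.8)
The plan is to reduce Witt's formula to a purely formal counting identity and then solve it by M\"obius inversion. By the corollary to Theorem~\ref{5.6}, the group $\gamma_w(F)/\gamma_{w+1}(F)$ is free abelian of rank equal to the number of terms $\bar c_j$ of a basic sequence that have weight $w$; call this number $n(w,q)$, and observe it is finite because a basic sequence contains only finitely many terms of any given weight. Thus it suffices to show that these integers satisfy $q^w=\sum_{k\mid w}k\,n(k,q)$.

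First I would set up a generating-function identity. Every basic product of weight $w$ is an expression $b_1^{r_1}b_2^{r_2}\cdots b_N^{r_N}$ with integers $r_i\ge 0$ and $\sum_i r_i\,\mathrm{wt}(b_i)=w$, and distinct exponent sequences $(r_1,r_2,\dots)$ (of finite support) give distinct basic products; conversely the corollary to Lemma~\ref{lemma5.2} says the number of basic products of weight $w$ is exactly $q^w$. Reading off the coefficient of $t^w$ in the product $\prod_{k\ge 1}(1-t^k)^{-n(k,q)}=\prod_{k\ge 1}\bigl(\sum_{r\ge 0}t^{kr}\bigr)^{n(k,q)}$ (a well-defined element of $\Z[[t]]$, since only finitely many factors affect each coefficient, using finiteness of $n(k,q)$) counts precisely these exponent sequences. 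Hence
\[
\sum_{w\ge 0}q^w t^w \;=\; \prod_{k\ge 1}(1-t^k)^{-n(k,q)},
\qquad\text{equivalently}\qquad
1-qt \;=\; \prod_{k\ge 1}(1-t^k)^{n(k,q)}
\]
in $\Z[[t]]$. Applying $-\log$ to both sides (valid for these power series of constant term $1$) and using $-\log(1-s)=\sum_{m\ge 1}s^m/m$ gives
\[
\sum_{m\ge 1}\frac{q^m}{m}\,t^m \;=\; \sum_{k\ge 1}\sum_{j\ge 1}\frac{n(k,q)}{j}\,t^{kj}.
\]
Comparing coefficients of $t^w$ yields $\dfrac{q^w}{w}=\sum_{k\mid w}\dfrac{k}{w}\,n(k,q)$, i.e. $q^w=\sum_{k\mid w}k\,n(k,q)$, and M\"obius inversion of this relation (with $f(w)=q^w$, $g(k)=k\,n(k,q)$, $f=\sum_{d}g(d)$) produces $w\,n(w,q)=\sum_{d\mid w}\mu(d)\,q^{w/d}$, which is the assertion.

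The power-series manipulations are routine; the one step that genuinely needs care is the combinatorial bookkeeping in the second paragraph, namely that the number of basic products of weight $w$ is exactly the coefficient $[t^w]\prod_k(1-t^k)^{-n(k,q)}$. This rests on the uniqueness of the basic-product form, on the fact that distinct exponent vectors yield distinct basic products (so the count is genuinely $q^w$ by the corollary to Lemma~\ref{lemma5.2}), and on the finiteness of each $n(k,q)$ so the infinite product makes sense formally. Once that is in place the rest is mechanical; alternatively one may avoid the logarithm and simply induct on $w$ from $q^w=\sum_{k\mid w}k\,n(k,q)$, but the generating-function route is cleanest.
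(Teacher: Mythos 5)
Your argument is exactly the paper's: Hall derives the identity $\frac{1}{1-qt}=\prod_{w\ge 1}(1-t^w)^{-n(w,q)}$ from the corollary to Lemma \ref{lemma5.2} (basic products of weight $w$ number $q^w$), then takes logs and applies M\"obius inversion. Your write-up merely fills in the bookkeeping that the paper leaves implicit, so it is correct and follows the same route.
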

\begin{proof}
	 For by (\ref{lemma5.2}) corollary, we have the power series identity
	$$
	\frac{1}{1 - qt} = \prod_{w=1}^{\infty}\frac{1}{(1-t^w)^{n(w,q)}}.
	$$
	Taking logs, equating coefficients of powers of $t$ using the M\"{o}bius inversion formula gives (\ref{5.7}).
\end{proof}

\begin{defn*} Let $\mathcal{P}$ be any property of groups. We call a group $G$ \underline{residually}-$\mathcal{P}$ if, for every $x \neq 1$ in $G$, there exists a subgroup $K$ such that $K \nsub G$, $K/G$ is a $\mathcal{P}$-group, and $x \notin K$.
\end{defn*}

For example, a group $G$ is \underline{residually-finite-$p$}, where $p$ is a prime, if the intersection of the normal subgroups of $G$ of index a power of $p$ is the unit subgroup. Since finite $p$-groups are nilpotent, it follows that residually finite-$p$ groups are residually nilpotent. A group $G$ is residually nilpotent if and only if $$\bigcap_{n=1}^{\infty}\gamma_n(G) = 1.$$ For if $K \nsub G$ and $G/K$ is nilpotent, then $\gamma_n(G) \leq K$ for some $n$.

\begin{thm}\label{5.8}
	Every free group $F$ is residually finite-$p$ for all primes $p$. In particular $$\bigcap_{n=1}^{\infty}\gamma_n(F) = 1.$$
\end{thm}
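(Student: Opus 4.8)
The plan is: given $1\neq w\in F$, to produce a finite $p$-group $P$ and a homomorphism $F\to P$ whose kernel omits $w$; the assertion $\bigcap_n\gamma_n(F)=1$ then follows at once, since finite $p$-groups are nilpotent. First reduce to finite rank. A nontrivial $w$ is a nonempty reduced word in finitely many of the free generators, say $\bar y_1,\dots,\bar y_q$; sending the remaining generators to $1$ gives a retraction $\rho\colon F\to F_0$ onto the free group $F_0$ on $\bar y_1,\dots,\bar y_q$, with $\rho(w)=w\neq 1$. So it suffices to separate $w$ inside $F_0$ and then compose with $\rho$; henceforth $F$ is free of finite rank $q$.

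Out of the free associative ring $R$ on $x_1,\dots,x_q$ introduced above I build a family of finite $p$-group quotients. Let $\mathfrak{k}=R_1+R_2+\cdots$ be its augmentation ideal, set $\bar R=R/pR$, and let $\bar{\mathfrak{k}}$ be the image of $\mathfrak{k}$ in $\bar R$; then $\bar R/\bar{\mathfrak{k}}^N$ is a finite $\mathbb{F}_p$-algebra, its underlying vector space being $\bigoplus_{0\le j<N}\bar R_j$. By Theorem \ref{theorem3.6} the set $G_N=1+\bar{\mathfrak{k}}/\bar{\mathfrak{k}}^N$ is a finite nilpotent group of class $<N$, and it is in fact a $p$-group: for $1+u\in G_N$ and any $t$ with $p^t\ge N$ we have $(1+u)^{p^t}=\sum_j\binom{p^t}{j}u^j=1+u^{p^t}=1$, since $\binom{p^t}{j}$ is divisible by $p$ for $0<j<p^t$, only $1$ and $u$ need commute for this expansion, and $u^{p^t}\in\bar{\mathfrak{k}}^N$. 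As $F$ is free there is a homomorphism $\phi_N\colon F\to G_N$ with $\phi_N(\bar y_i)=1+x_i$; concretely it is the map $\bar y_i\mapsto 1+x_i$ into the power series ring $\mathbb{F}_p\langle\langle x_1,\dots,x_q\rangle\rangle$ followed by truncation at degree $N$. Since $G_N$ has class $<N$, $\gamma_N(F)\le\ker\phi_N$.

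The heart of the matter is that this ``mod $p$ Magnus map'' is injective, and this is the step I expect to be the real obstacle; everything else is formal. Write a given $w\neq 1$ in maximal blocks, $w=\bar y_{k_1}^{a_1}\bar y_{k_2}^{a_2}\cdots\bar y_{k_r}^{a_r}$ with $k_i\neq k_{i+1}$ and each $a_i\neq 0$, and put $a_i=p^{s_i}b_i$ with $p\nmid b_i$. In $\mathbb{F}_p\langle\langle x\rangle\rangle$ one has $(1+x_{k_i})^{a_i}=(1+x_{k_i}^{p^{s_i}})^{b_i}=1+b_ix_{k_i}^{p^{s_i}}+\cdots$, so the image of $w$ equals $\prod_{i=1}^r(1+x_{k_i}^{p^{s_i}})^{b_i}$. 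Now consider the monomial $M=x_{k_1}^{p^{s_1}}x_{k_2}^{p^{s_2}}\cdots x_{k_r}^{p^{s_r}}$: because consecutive $k_i$ differ, $M$ has exactly $r$ maximal blocks as a word. In the expansion of the product a monomial term is a concatenation $x_{k_1}^{d_1p^{s_1}}\cdots x_{k_r}^{d_rp^{s_r}}$ with $d_i\ge 0$ and coefficient $\prod_i\binom{b_i}{d_i}$; if every $d_i\ge 1$ this word has exactly $r$ blocks and equals $M$ only when each $d_i=1$, whereas if some $d_i=0$ that factor drops out and the word has at most $r-1$ maximal blocks, hence is not $M$. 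Therefore the coefficient of $M$ in the image of $w$ is $b_1b_2\cdots b_r\neq 0$ in $\mathbb{F}_p$, so that image is $\neq 1$; taking $N>\deg M=\sum_i p^{s_i}$, this coefficient survives in $G_N$ and $\phi_N(w)\neq 1$.

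Finally I collect the pieces. For each $w\neq 1$ we have found $N$ with $\ker\phi_N\nsub F$, with $F/\ker\phi_N$ a finite $p$-group, and with $w\notin\ker\phi_N$; as $p$ was an arbitrary prime this says $F$ is residually finite-$p$ for all $p$. Since $\gamma_N(F)\le\ker\phi_N$ we also get $w\notin\gamma_N(F)$, and hence $\bigcap_{n=1}^{\infty}\gamma_n(F)=1$.
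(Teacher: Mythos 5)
Your proof is correct and is essentially the paper's own argument: both separate $w$ via the truncated Magnus map $\bar y_i\mapsto 1+x_i$ into the degree-$<N$ quotient of the free associative algebra over $\mathbb{F}_p$, and both detect nontriviality by showing the monomial $x_{k_1}^{p^{s_1}}\cdots x_{k_r}^{p^{s_r}}$ has nonzero coefficient. The only differences are presentational: you derive the coefficient $b_1\cdots b_r$ via the identity $(1+x)^{p^s}=1+x^{p^s}$ and an explicit block-counting argument, where the paper writes the same coefficient as $\prod_a\binom{p^{s_a}m_a}{p^{s_a}}$ and simply observes these binomial coefficients are prime to $p$.
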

\begin{proof}
	For let $F$ be freely generated by $x_1,x_2,\ldots$ and let $y = x_{i_1}^{r_1} x_{i_2}^{r_2} \ldots x_{i_n}^{r_n}$ be any element $\neq 1$ in $F$. We may suppose that $n > 0$, $r_1,r_2,\ldots,r_n \neq 0$, and $i_a \neq i_{a-1}$ for $a = 2,3,\ldots,n$. Given the prime $p$, let $r_i = p^{s_i}m_i$, where $m_i$ is prime to $p$ (and possibly negative). Let $N = p^{s_1} + p^{s_2} \ldots + p^{s_n}$ and suppose without loss generality that $i_1,i_2,\ldots,i_n$ are positive integers $\leq q$. Let $\mathbb{K}$ be the field of $p$ elements and let $V$ be the vector space of dimension $1 + q + q^2 + \ldots + q^N$ over $\mathbb{K}$ having the basis given by the set of all formal products $u_{j_1}u_{j_2} \ldots u_{j_r}$ $(0 \leq r \leq N)$ of the $q$ non-commuting variables $u_1,\ldots,u_q$. Make $V$ an associative ring by defining a distributive multiplication in $V$ according to the rule
	\begin{align*}
    (u_{j_1} \ldots u_{j_r}) \cdot (u_{k_1} \ldots u_{k_s}) &= u_{j_1} \ldots u_{j_r} u_{k_1} \ldots u_{k_2} & \quad \: r+s \leq N\\
     &= 0 & \quad \text{ otherwise.}
	\end{align*}

	Then the elements of $V$ with "constant term" $1$ form a group $G$, the inverse of $1+v \in G$ being $1 - v + v^2 - \ldots \pm v^n$, $G$ is of order $p^m$ where $m = q + q^2 + \ldots + q^N$. Let $\theta$ be the homomorphism of $F$ into $G$ which maps $x_i$ onto $1+u_i$ for $1 \leq i \leq q$ and the remaining $x_j$ onto $1$. If $K$ is the kernel of $\theta$, then $F/K$ is a finite $p$-group since the image of $F$ under $\theta$ is a subgroup of $G$. But $$y' = \prod_{u=1}^n(1+u_{i_a})^{r_a}$$ and the coefficient $u_{i_1}^{p^{s_1}}\ldots u_{i_n}^{p^{s_n}}$ in $u^\theta$ is $\prod_a {p^{s_i } \: m_i \choose p^{s_i}}$, which is $\neq 0$ since all all these binomial coefficients are prime to $p$. Thus $y \notin K$.
\end{proof}

K. Gruenberg has shown among other similar results that every torsion-free and finitely generated nilpotent group is residually finite-$p$ for all primes $p$.

\section{Embedding Theorems}
Let $p = y_1y_2 \ldots y_n$ be any product of elements of a group. Suppose the $n$ factors of $p$ to be divided arbitrarily into $r$ mutually exclusive subsets $X_1,X_2,\ldots,X_r$, for example by labeling each $y_i$ with one of the labels $(1),(2),\ldots,(r)$. Denote by $X_{\alpha \beta}$, where $1 \leq \al < \beta \leq r$, the set of all commutators of the $y_i$'s whose components are drawn exclusively from the sets $X_\alpha$ and $X_\beta$, and have at least one component from each of these sets. More generally, let $S$ be any non-empty subset of the set $R = (1),(2), \ldots, (r)$ of labels; and denote by $X_S$ the set of all commutators of the $y's$ whose components are drawn exclusively from the sets $X_a$ with $(a) \in S$ and which have at least one component in each of these sets. Order the non-empty subsets $S$ of $R$ in order of increasing magnitude $|S|$ and, for given $|S|$, lexicographically.

\begin{lemma}\label{6.1}
	$p = \prod_{S \leq R} q_S$, where $q_S$ is a product of elements of $X_S$ and the $2^r - 1$ factors occur in the given ordering of the subsets $S$.
\end{lemma}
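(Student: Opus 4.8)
The proof is the collection process --- the same device used in the proof of Lemma~\ref{5.1} --- but organized one label set at a time. At every stage I think of $p$ as an ordered product of \emph{terms}, each term being an iterated commutator in the original factors $y_1,\dots,y_n$, and I attach to a term $t$ its \emph{label set} $\lambda(t)\subseteq R$, namely the set of labels carried by the factors $y_i$ that occur in $t$. Initially every term is one of the $y_i$, so $\lambda$ is a singleton; an elementary collection move replaces $\dots t\,u\dots$ by $\dots u\,t\,[t,u]\dots$, and the new term satisfies $\lambda([t,u])=\lambda(t)\cup\lambda(u)$. An immediate induction on the construction of a term shows that every term occurring anywhere in the process lies in $X_{\lambda(t)}$, i.e. is a commutator whose factors are drawn exactly from the sets $X_a$ with $a\in\lambda(t)$, with at least one factor from each; in particular $\lambda(t)$ is always nonempty.

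Now process the $2^r-1$ nonempty subsets of $R$ in the prescribed order $S_1,S_2,\dots$ (increasing cardinality, then lexicographically). I maintain the invariant that the current word has the form $q_{S_1}q_{S_2}\cdots q_{S_k}\cdot(\mathrm{tail})$, where each $q_{S_j}$ is the product of the terms whose label set is $S_j$ (in the order they currently sit), the tail consists of uncollected terms, and no uncollected term has label set equal to any of $S_1,\dots,S_k$. In the phase for $S:=S_{k+1}$ I repeatedly select the leftmost uncollected term $\alpha$ with $\lambda(\alpha)=S$ and shuttle it leftward by successive elementary moves until it abuts the collected prefix, then append it to $q_S$.

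Two observations make this terminate and preserve the invariant. First, when $\alpha$ passes a term $t$ on its left during a shuttle, $t$ is uncollected with $\lambda(t)\neq S$ (else $\alpha$ was not leftmost); moreover $\lambda(t)$ is not among $S_1,\dots,S_k$ by the invariant, and it is not a proper subset of $S$ either, since every nonempty proper subset of $S$ has smaller cardinality and hence already occurs among $S_1,\dots,S_k$; so $\lambda(t)\not\subseteq S$, whence $\lambda([t,\alpha])=\lambda(t)\cup S\supsetneq S$. The correction term $[t,\alpha]$ is thus placed to the right of $\alpha$ and never obstructs it again, so each shuttle ends after finitely many moves. Second, because no move in this phase ever creates a term with label set exactly $S$, the (finite) number of uncollected $S$-terms strictly decreases at each shuttle and the phase ends; and every term created in the phase has label set strictly containing $S$, hence of cardinality $>|S|$, hence equal to none of $S_1,\dots,S_{k+1}$, so the collected prefix is undisturbed and the invariant is restored for the next index. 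After all $2^r-1$ phases the word has become $q_{S_1}q_{S_2}\cdots q_{S_{2^r-1}}$ with each $q_S$ a (possibly empty) product of elements of $X_S$, the factors occurring in the stated ordering, which is the assertion. The only real obstacle is termination, and the point of ordering the label sets by cardinality-then-lexicographically is precisely that it forces every commutator correction to have a strictly larger label set, which keeps each phase finite and leaves earlier blocks untouched.
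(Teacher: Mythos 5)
Your proposal is correct and is essentially the paper's own argument: the paper proves the lemma by the same collection process, moving the terms with a given label set leftward one subset $S$ at a time in the prescribed order, and noting that each elementary move $\ldots uv\ldots \to \ldots vu[u,v]\ldots$ produces a correction term whose label set is the union and hence strictly larger. You have simply made explicit the termination argument and the invariant that the paper leaves implicit.
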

\begin{proof}
	This is proved by transforming the product $p=y_1 y_2 \ldots y_n$ step by step to the required form. First we \underline{collect} those $y_i$ which belong to $X_1$ by moving them step by step to the left, to yield the factor $q_1$ in (\ref{6.1}). A typical move is from $\ldots uv \ldots$ to $\ldots vu[u,v] \ldots$ where $v$, the term being collected, belongs to $X_1$ and $u$ does not. Therefore $[u,v]$ does not belong to $X_1.$ If $u \in X_S$, then $[u,v]$ will belong to $X_T$ where $T$ is the union of $S$ and $(1)$. After all terms in $X_1$ have been collected in this way, we obtain $p = q_1p',$ where $p'$ is a product of factors from the set $X_2, \ldots, X_r, X_{12}, \ldots, X_{1r}$. We then collect the factors in $p'$ which belong to $x_2$ and so on.
\end{proof}

Let $p_S$ be the product $y_{i_1} y_{i_2} \ldots y_{i_m}$ where $i_1 < i_2 < \ldots < i_m$ and the $y_{i_a}$ are precisely those $y's$ which belong to some $X_a$ with $(a) \in S$. Thus $p_S$ is obtained from $p$ by equating to $1$ all the remaining $y's$. But a commutator which has a component equal to $1$ is itself equal to $1$. Hence
\begin{lemma}\label{6.2}
	$p_S = \prod_{T \leq S} q_T$, where the factors $q_T$ are ordered in the given ordering of subsets.
\end{lemma}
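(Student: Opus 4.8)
The plan is to deduce Lemma \ref{6.2} from Lemma \ref{6.1} by applying a single substitution homomorphism, rather than by re-running the collection process. The key observation is that the factorisation $p=\prod_{U\leq R}q_U$ supplied by Lemma \ref{6.1} is produced purely by rewriting the word $y_1y_2\ldots y_n$ via the identity $uv=vu[u,v]$, which holds in every group; consequently the displayed equality in Lemma \ref{6.1} is an identity in the free group $\Phi$ on the symbols $y_1,\ldots,y_n$, and therefore it is preserved under any homomorphism out of $\Phi$.

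First I would introduce, for the fixed non-empty $S\subseteq R$, the endomorphism $\varphi_S$ of $\Phi$ which sends $y_i\mapsto 1$ whenever the label of $y_i$ lies outside $S$ and fixes $y_i$ otherwise. Applying $\varphi_S$ to $p=y_1y_2\ldots y_n$ and discarding the trivial factors gives exactly $y_{i_1}y_{i_2}\ldots y_{i_m}$ with $i_1<i_2<\ldots<i_m$ running over the indices whose label lies in $S$; that is, $\varphi_S(p)=p_S$.

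Next I would determine the effect of $\varphi_S$ on the right-hand side. Being a homomorphism, $\varphi_S$ commutes with the commutator operation, so it carries any commutator $c$ of the $y$'s to the commutator obtained by performing the same substitution on the components of $c$. If $U\not\leq S$, choose a label $(a)\in U\setminus S$: every commutator in $X_U$ has a component drawn from $X_a$, that component is killed by $\varphi_S$, and a commutator with a trivial component is trivial, so $\varphi_S$ annihilates each factor of $q_U$ and hence $\varphi_S(q_U)=1$. If instead $U\leq S$, all components of all commutators occurring in $q_U$ carry labels in $S$, so $\varphi_S$ fixes $q_U$ pointwise and $\varphi_S(q_U)=q_U$. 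Applying $\varphi_S$ to $p=\prod_{U\leq R}q_U$ and deleting the trivial factors therefore yields $p_S=\prod_{T\leq S}q_T$, the surviving factors appearing in precisely the order — increasing $|T|$, then lexicographic — in which they sat inside the product for $p$. That is the claim.

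The only points requiring care — and where a careless argument could go wrong — are the two bookkeeping facts: that $\varphi_S$, being an honest homomorphism, really does transport each commutator to the commutator of the substituted entries (so the "trivial component $\Rightarrow$ trivial commutator" principle invoked in the excerpt applies to every factor of $q_U$), and that deleting the factors with $U\not\leq S$ from the product for $p$ leaves the remaining ones in the prescribed ordering of the subsets of $S$. Both are immediate once $\varphi_S$ has been set up explicitly; there is no genuine combinatorial obstacle here beyond recognising that Lemma \ref{6.1} is to be read as a free-group identity to which a homomorphism may be applied.
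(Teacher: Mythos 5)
Your proof is correct and is essentially the paper's own argument: Hall also obtains \ref{6.2} by observing that $p_S$ arises from $p$ by setting the $y$'s with labels outside $S$ equal to $1$, that a commutator with a trivial component is trivial (so the factors $q_U$ with $U\not\leq S$ disappear while those with $U\leq S$ survive unchanged), and that the surviving factors keep their order. Your only addition is to package this substitution explicitly as an endomorphism $\varphi_S$ of the free group, which is a harmless formalisation of the same idea.
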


(\ref{6.2}) demonstrates that the $2^r - 1$ elements $q_S$ are expressible in terms of the $2^r - 1$ elements $p_s$ by recurrence. For example, $q_a = p_a$ and, for $a < \beta$, $q_{a_\beta} = p_{\beta}^{-1} p_a^{-1} p_{a_\beta}$ and so on.

Now take $p = x_1^r x_2^r \ldots x_n^r$ with $r>0$, and label the factors according the scheme
$$
x_1^{(1)} x_1^{(2)} \ldots x_1^{(r)} x_2^{(1)} \ldots x_2^{(r)} \ldots x_n^{(1)} \ldots x_n^{(r)}.
$$
If $|S| = w$, we have $p_S = x_1^w x_2^w \ldots x_n^w$ which depends only on $|S|$. Hence $q_S$ also depends only on $|S|$ and when $|S| = w$, we may write $q_s = \tau_w(x_1,\ldots,x_n) = \tau_W(x)$. By (\ref{6.2}), the words $\tau_w$ are defined by the recurrence relations
$$
x_1^w x_2^w \ldots x_n^w = \tau_1(x)^w \tau_2(x)^{w \choose 2} \ldots \tau_k(x)^{w \choose k} \ldots \tau_w(x).
$$
We call these \underline{Petresco words.} Note that $\tau_1(x) = x_1 x_2 \ldots x_n$.

\begin{thm}\label{6.3}
	For any group $G$, $\tau(G) \leq \gamma_w(G)$. 
\end{thm}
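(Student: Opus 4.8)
The plan is to prove the equivalent assertion that every value $\tau_w(g_1,\dots,g_n)$ with $g_1,\dots,g_n\in G$ lies in $\gamma_w(G)$; since the verbal subgroup $\tau_w(G)$ is generated by such values, this gives $\tau_w(G)\leq\gamma_w(G)$. The idea is to realise the word $\tau_w$ as one of the factors produced by Lemma \ref{6.1}. I would apply that lemma to $p=x_1^{w}x_2^{w}\cdots x_n^{w}$ using exactly $r=w$ labels, attached as in the displayed labelling scheme; then the full label set $R$ has $|R|=w$, so the last factor in $p=\prod_{S\leq R}q_S$ is $q_R$. By the definition of the Petresco words --- equivalently, by the recurrence $x_1^{w}x_2^{w}\cdots x_n^{w}=\tau_1(x)^{w}\tau_2(x)^{{w\choose 2}}\cdots\tau_w(x)$ obtained from Lemma \ref{6.2} --- this last factor is exactly $q_R=\tau_w(x_1,\dots,x_n)$, and Lemma \ref{6.1} tells us that $q_R$ is a product of elements of $X_R$.

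Next I would record the elementary fact that an iterated commutator of weight $m$, when its variables are specialised to arbitrary elements of a group $G$, takes a value lying in $\gamma_m(G)$. This is a straightforward induction on $m$: for $m=1$ the value is one of the chosen elements, hence lies in $G=\gamma_1(G)$; for $m>1$ the commutator has the form $[u,v]$ with $\text{wt}(u)=r$, $\text{wt}(v)=s$ and $r+s=m$, so by induction its value lies in $[\gamma_r(G),\gamma_s(G)]\leq\gamma_{r+s}(G)=\gamma_m(G)$ by Theorem \ref{theorem3.4}(i). Since the lower central series is descending, any such value of weight $m\geq w$ lies in $\gamma_w(G)$.

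The one substantive point is then a weight count. By construction in Lemma \ref{6.1}, every element of $X_R$ is an iterated commutator of the labelled factors of $p$ having at least one component in each of the $w$ label-classes $X_1,\dots,X_w$; hence its weight is at least $w$. By the previous paragraph each such element evaluates into $\gamma_w(G)$, so the product $q_R$ of finitely many of them lies in $\gamma_w(G)$. Therefore $\tau_w(g_1,\dots,g_n)=q_R\in\gamma_w(G)$ for every choice of $g_1,\dots,g_n\in G$, and hence $\tau_w(G)\leq\gamma_w(G)$.

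I do not anticipate a genuine obstacle: the argument is essentially bookkeeping already implicit in the preceding pages. The two points requiring a little care are that, in the collection argument proving Lemma \ref{6.1}, each commutator that enters the set $X_S$ really does pick up a component from every label-class of $S$ (so that taking $|S|=w$ forces weight $\geq w$), and that the factor indexed by the full label set is genuinely the word $\tau_w$; both are immediate from the construction of the sets $X_S$ and from the recurrence that defines the Petresco words.
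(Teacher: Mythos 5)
Your argument is correct and is essentially the paper's own proof, which simply observes that $\tau_w(x)=q_S$ for $|S|=w$ and that $q_S$ is a product of commutators each having at least $w$ components; you have merely spelled out the two supporting facts (the identification $\tau_w=q_R$ for $r=w$, and that an arbitrarily bracketed commutator of weight $m$ evaluates into $\gamma_m(G)$ via Theorem \ref{theorem3.4}(i)). No gaps.
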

\begin{proof}
	For $\tau_{w}(x) = q_s$ with $|S| = w$ and $q_s$ is a product of commutators each of which has at least $w$ components.
\end{proof}

\begin{thm}\label{6.4}
	Let $G$ be a finite group which is not nilpotent. Then there exists elements $x_1,x_2$ in $G$ such that $\tau_{n}(x_1,x_2) \neq 1$ for every $n=1,2,\ldots \: .$
\end{thm}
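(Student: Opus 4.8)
The plan is to pass from $G$ to a completely explicit metabelian quotient of a subgroup and to read off the Petresco words there directly from their defining recurrence. Two preliminary remarks guide the choice of that quotient. First, by (\ref{6.3}) we have $\tau_n(x_1,x_2)\in\gamma_n(\langle x_1,x_2\rangle)$ for $n\ge 2$, so a pair can satisfy $\tau_n(x_1,x_2)\ne 1$ for all $n$ only if $\langle x_1,x_2\rangle$ is non-nilpotent; we must therefore work inside a $2$-generated non-nilpotent section. Second, $\tau_n$ is a group word, so if $\nu\colon G\to\bar G$ is a homomorphism and the images $\bar x_1,\bar x_2$ satisfy $\tau_n(\bar x_1,\bar x_2)\ne1$ for all $n$, then so do any preimages $x_1,x_2\in G$. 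Hence it suffices to exhibit a suitable $2$-generated non-nilpotent section of $G$ together with a good generating pair in it.

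For the reduction I would invoke the classical structure theorem of Schmidt and R\'edei: a subgroup $H\le G$ minimal with respect to being non-nilpotent has the form $H=P\rtimes\langle b\rangle$ with $P$ a normal Sylow $p$-subgroup and $\langle b\rangle$ a cyclic Sylow $q$-subgroup, $p\ne q$. Passing to $H/\Phi(P)$ — still non-nilpotent, since a $p'$-automorphism trivial modulo $\Phi(P)$ is trivial — gives a split extension of the elementary abelian group $P/\Phi(P)$ by $\langle\bar b\rangle$ on which $\bar b$ acts non-trivially; by Maschke (as $p\nmid|\langle\bar b\rangle|$) we may then cut down further to a quotient $\bar H=V\rtimes\langle\bar b\rangle$ with $V$ an irreducible $\mathbb F_p\langle\bar b\rangle$-module and $\bar b$ still acting non-trivially. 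Since $\langle\bar b\rangle$ is cyclic, $V$ is irreducible over the commutative ring $\mathbb F_p[\langle\bar b\rangle]$, hence $V\cong\mathbb F_{p^e}$ with $\bar b$ acting as multiplication by some $\omega\in\mathbb F_{p^e}$; non-triviality of the action forces $\omega\ne1$, so $1-\omega^{-1}\ne0$ in the \emph{field} $\mathbb F_{p^e}$.

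Now work in $\bar H=V\rtimes\langle\bar b\rangle$, writing elements as pairs with $(v,\bar b^{i})(w,\bar b^{j})=(v+\omega^{i}w,\bar b^{i+j})$, and take $x_1=(0,\bar b)$ and $x_2=(u,1)$ for a fixed $u\ne0$; then $x_1^{\,w}x_2^{\,w}=(\omega^{w}wu,\bar b^{w})$ and $\tau_1=x_1x_2=(\omega u,\bar b)\ne1$. For $k\ge2$, (\ref{6.3}) gives $\tau_k(x_1,x_2)\in\gamma_k(\bar H)\le\gamma_2(\bar H)=V$, so write $\tau_k=(\psi_k,1)$ with $\psi_k\in V$. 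Substituting $x_1,x_2$ into the defining recurrence $x_1^{\,w}x_2^{\,w}=\tau_1^{\,w}\tau_2^{\binom{w}{2}}\cdots\tau_w$ and comparing the $V$-components yields, for every $w$, an identity $\sum_{k\ge 2}\binom{w}{k}\psi_k=c_w\,u$ with $c_w=w-\omega^{1-w}\tfrac{\omega^{w}-1}{\omega-1}$ (and $c_0=c_1=0$). Inverting this finite-difference relation by Newton's formula, and using that $\sum_k(-1)^{n-k}\binom{n}{k}$ and $\sum_k(-1)^{n-k}\binom{n}{k}k$ vanish for $n\ge2$, gives $\psi_n=(-1)^{n}(1-\omega^{-1})^{n-1}u$ for $n\ge 2$. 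As $1-\omega^{-1}\ne0$ and $u\ne0$, this is non-zero; thus $\tau_n(x_1,x_2)\ne1$ for all $n$, and lifting $x_1,x_2$ to preimages in $H\le G$ finishes the proof.

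The main obstacle — and the point where a naive attempt collapses — is that one must choose not only the right section but also the right generating pair \emph{and its order}: taking instead $x_1=(u,1)$, $x_2=(0,\bar b)$, the same computation produces $\psi_n\propto(n-1)(1-\omega^{-1})^{n-1}u$, which vanishes whenever $p\mid n-1$ (concretely, a $3$-cycle followed by a transposition in $S_3$ already gives $\tau_4=1$). So the substance of the argument is (i) reducing to the irreducible-module case, where the recurrence is solvable in closed form, and (ii) listing the element of $q$-power order first; the structure-theory input and the finite-difference bookkeeping are then routine.
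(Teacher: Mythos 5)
The notes state Theorem \ref{6.4} without proof (only the corollary is recorded), so there is no argument of Hall's to measure yours against; judged on its own, your proof is correct and complete. The reduction chain is sound: a finite non-nilpotent group contains a subgroup minimal with respect to being non-nilpotent, Schmidt--R\'edei gives it the form $P\rtimes\langle b\rangle$, factoring out $\Phi(P)$ preserves non-nilpotence because a $p'$-automorphism of a $p$-group that is trivial modulo the Frattini subgroup is trivial, Maschke cuts the elementary abelian quotient down to an irreducible $\mathbb{F}_p\langle\bar b\rangle$-module, and commutativity of the group algebra of a cyclic group identifies that module with a field $\mathbb{F}_{p^e}$ on which $\bar b$ acts as multiplication by some $\omega\neq 1$. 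The computation in $V\rtimes\langle\bar b\rangle$ also checks out: with $x_1=(0,\bar b)$, $x_2=(u,1)$ the defining recurrence for the Petresco words, read off on $V$-components, gives
$$\sum_{k\ge 2}\binom{w}{k}\psi_k=\Bigl(w-\omega^{1-w}\tfrac{\omega^{w}-1}{\omega-1}\Bigr)u,$$
binomial inversion is legitimate over $\mathbb{F}_{p^e}$ because the two unipotent integer matrices involved are mutually inverse already over $\mathbb{Z}$, and the closed form $\psi_n=(-1)^n(1-\omega^{-1})^{n-1}u$ is nonzero for every $n\ge 2$ precisely because $1-\omega^{-1}$ lives in a field; I verified the formula directly in $S_3$, where it gives $\psi_n=-u$ for all $n\ge 2$. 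Your cautionary remark about the opposite ordering of the generators is also accurate (there the inversion produces an extra factor of $n-1$, which dies modulo $p$), and it correctly identifies why a careless choice of pair makes the statement fail.

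The only comments worth making are stylistic. First, your proof leans on three classical facts external to these notes (Schmidt's theorem, the Burnside basis argument, Maschke); that is consistent with Hall's own practice elsewhere in the notes (Sylow, Schur's lemma, Dirichlet units), but it is worth flagging that the minimal non-nilpotent structure theorem is the one genuinely substantial input. Second, the appeal to (\ref{6.3}) in your opening paragraph is purely motivational and could be deleted; the proof proper only needs that $\bar H/V$ is abelian, so that $\tau_k(x_1,x_2)\in V$ for $k\ge 2$, which you in effect re-derive when you write $\tau_k=(\psi_k,1)$.
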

\begin{cor*} $G$ finite and $\tau_n(G) = 1$ for some $n$ implies $G$ nilpotent.\end{cor*}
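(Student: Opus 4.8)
I would prove this by induction on $|G|$, using the induction to reduce to the case where $G$ is minimal non-nilpotent and then computing the Petresco words explicitly on a cleverly chosen pair. If $G$ has a proper subgroup $H$ that is not nilpotent, then by induction $H$ contains $x_1,x_2$ with $\tau_n(x_1,x_2)\neq 1$ for all $n$, and the same pair works in $G$. If $G$ has a non-trivial normal subgroup $N$ with $G/N$ not nilpotent, then by induction pick $\bar x_1,\bar x_2\in G/N$ with $\tau_n(\bar x_1,\bar x_2)\neq 1$ for all $n$, lift them to $x_1,x_2\in G$, and use that $\tau_n$ is a word, so that the homomorphism $G\to G/N$ carries $\tau_n(x_1,x_2)$ onto $\tau_n(\bar x_1,\bar x_2)\neq 1$. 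Hence I may assume every proper subgroup and every proper quotient of $G$ is nilpotent while $G$ itself is not; in particular $G$ is a minimal non-nilpotent (Schmidt) group. By the classical structure theorem for such groups, $G=P\rtimes\langle y\rangle$ with $P$ a normal Sylow $p$-subgroup and $\langle y\rangle$ a cyclic Sylow $q$-subgroup, $p\neq q$ (such a $q$ exists, else $G$ is a $p$-group, hence nilpotent by Lemma~\ref{lemma1.4}). From the hypothesis that proper quotients are nilpotent one reads off two refinements: first $\Phi(P)=1$, since otherwise $G/\Phi(P)$ is nilpotent, forcing $y$ to centralise $P/\Phi(P)$ and hence, by coprimality, all of $P$, so $G=P\times\langle y\rangle$ is nilpotent; thus $P$ is elementary abelian. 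Second $C_P(y)=1$: as $P$ is abelian, $C_P(y)$ is centralised by $P$ and by $y$, hence lies in $Z(G)$, so if it were non-trivial then $G/C_P(y)$ is nilpotent and $G$ is a central extension of a nilpotent group, hence nilpotent.

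Now I would take $x_1=y$ and $x_2=a$, where $a$ is any non-identity element of $P$, and let $\sigma$ be the automorphism $c\mapsto c^{y}=y^{-1}cy$ of $P$; by the previous paragraph $1-\sigma$ is injective, hence bijective, on the finite abelian group $P$. Because the normal closure of $a$ in $\langle y,a\rangle$ lies in $P$ and the quotient by it is cyclic, $\langle y,a\rangle'\leq P$, so every $\tau_k(y,a)$ with $k\geq 2$ lies in $P$. A short induction gives $(ya)^w=y^w a^{1+\sigma+\cdots+\sigma^{w-1}}$; writing $N_w=1+\sigma+\cdots+\sigma^{w-1}$ and substituting $\tau_1(y,a)=ya$ into the defining identity of the Petresco words (the display just before Theorem~\ref{6.3}) turns it into $y^w a^{w}=y^w a^{N_w}\prod_{k\geq 2}\tau_k(y,a)^{\binom{w}{k}}$, where the product now lives in the abelian group $P$. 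Cancelling $y^w a^{N_w}$ leaves $\prod_{k\geq 2}\tau_k(y,a)^{\binom{w}{k}}=a^{\,w-N_w}$ for every $w$. Binomial inversion (finite differences in $w$) then yields $\tau_k(y,a)=a^{c_k}$ with $c_k=\sum_{j}(-1)^{k-j}\binom{k}{j}(j-N_j)=-(\sigma-1)^{k-1}$ for $k\geq 2$, using that the $k$-th difference at $0$ of $j\mapsto j$ vanishes for $k\geq 2$ while that of $j\mapsto N_j$ equals $(\sigma-1)^{k-1}$. Since $(\sigma-1)^{k-1}$ is invertible on $P$ and $a\neq 1$, this gives $\tau_k(y,a)\neq 1$ for all $k\geq 2$; and $\tau_1(y,a)=ya\neq 1$ because $y\neq 1$ has $q$-power order while $a$ has $p$-power order. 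Hence $\tau_n(y,a)\neq 1$ for every $n$, completing the induction.

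The main obstacle is the structural input: the reduction produces a minimal non-nilpotent group, and asserting that such a group is $P\rtimes\langle y\rangle$ with $P$ a normal Sylow subgroup is Schmidt's theorem, which is not developed in these notes and would have to be cited or proved; the refinements $\Phi(P)=1$ and $C_P(y)=1$ then come cheaply once proper quotients are also assumed nilpotent. Everything after that is bookkeeping, the one delicate point being that $\tau_1(y,a)=ya$ does not lie in $P$ and does not commute with the remaining $\tau_k$, so one must peel off the factor $(ya)^w$ by hand before inverting inside the abelian group $P$.
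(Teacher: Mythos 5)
Your argument is correct, but it takes a much longer route than the notes intend. In the notes the corollary is simply the contrapositive of Theorem~\ref{6.4}: if $G$ is finite and not nilpotent, Theorem~\ref{6.4} supplies a pair $x_1,x_2$ with $\tau_n(x_1,x_2)\neq 1$ for every $n$, so $\tau_n(G)\neq 1$ for every $n$; that one line is the whole proof. What you have written is, in effect, a proof of Theorem~\ref{6.4} itself, which the notes state without proof. Your reduction by induction on $|G|$ to a group all of whose proper subgroups and proper quotients are nilpotent is sound (the value of a word on a tuple does not depend on the ambient group, and words commute with quotient maps), and the subsequent computation is right: with $G=P\rtimes\langle y\rangle$, $P$ elementary abelian, $C_P(y)=1$, $\sigma-1$ invertible on $P$, the identity $y^wa^w=(ya)^w\prod_{k\geq 2}\tau_k(y,a)^{\binom{w}{k}}$ together with $(ya)^w=y^wa^{N_w}$ gives $\prod_{k\geq 2}\tau_k(y,a)^{\binom{w}{k}}=a^{w-N_w}$, and binomial inversion yields $\tau_k(y,a)=a^{-(\sigma-1)^{k-1}}\neq 1$ (one can check this against $S_3$: $\tau_2(y,a)=a^2$, $\tau_3(y,a)=a^{-1}$). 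The trade-off is exactly the one you flag: your proof rests on Schmidt's classification of minimal non-nilpotent groups as $P\rtimes\langle y\rangle$ with $P$ a normal Sylow subgroup, a classical fact nowhere developed in these notes, so it would have to be cited or proved separately. If all you want is the corollary as stated, quoting Theorem~\ref{6.4} suffices; your argument is the right shape for filling in the proof of Theorem~\ref{6.4} that the notes omit.
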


Now let $G$ be a torsion-free, finitely generated nilpotent group. By (\ref{4.7}), each of the groups $G/Z_i$ is also torsion-free. By (\ref{lemma1.9}), $Z_{i+1}$ is finitely generated. Hence each $Z_{i+1} / Z_i$ is a free Abelian group of finite rank. Thus we can refine the upper central series of $G$ to obtain a central series
$$
G = G_0 > G_1 > \ldots > G_n = 1
$$
for which each $G_{i-1} / G_i$ is an infinite cyclic group. Choose $u_i$ so that $G_{i-1} = \left<G_i,u_i\right>$ for each $i=1,2, \ldots, n$. Then every element $x \in G$ is uniquely expressible in the form
$$
x = u_1^{\xi_1}u_2^{\xi_2} \ldots u_n^{\xi_n} = u^\xi \quad \text{ for short, }
$$
with integers $\xi_1,\ldots, \xi_n$. Call $u = \pr{u_1,\ldots,u_n}$ as defined in this way a \underline{canonical basis} of $G$ and $\xi = \pr{\xi_1,\ldots,\xi_n}$ the \underline{canonical parameters} of $x$ with respect to the basis $u$.

Let $y = u^\eta$ be any other element of $G$ and let $xy = u^\zeta$. Then each $\zeta_i$ is a function of the $2n$ integer variables $\xi_i,\eta_k$ if $x$ and $y$ are allowed to vary in $G$. Again, let $\lambda$ be any integer and let $x^\lambda = u^{\bar{\omega}} = u_1^{\bar{\omega}_1} \ldots u_n^{\bar{\omega}_n}$. Then each $\bar{\omega}_i$ is a function of the $n+1$ integer variables $\xi_i$ and $\lambda$.

\begin{thm}\label{6.5}
	The functions $\zeta_i$ and $\bar{\omega}_i$ ($i=1,2, \ldots, n)$ are polynomials.
\end{thm}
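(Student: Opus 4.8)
The plan is to deduce both statements from the collection process: multiplication by reducing it to right multiplication by a power of a single generator, and raising to a power by reducing it to iterating the map ``multiply on the right by $x$'', which is controlled by an elementary lemma on iterates of triangular polynomial maps. Throughout, ``polynomial'' is meant in the sense of integer-valued polynomials, i.e. $\Q$-coefficient polynomials carrying integer tuples to integers, since binomial coefficients $\binom{t}{k}$ occur unavoidably.

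\emph{Multiplication.} Writing $u^\xi u^\eta = u^\xi u_1^{\eta_1}u_2^{\eta_2}\cdots u_n^{\eta_n}$, it suffices to show that right multiplication by $u_k^{m}$ is an integer-valued polynomial map $(\alpha,m)\mapsto\zeta$ of $\Z^n\times\Z$ into $\Z^n$, and then to compose the $n$ maps obtained for $m=\eta_1,\dots,\eta_n$, substituting at each stage the polynomial output of the previous one. To bring $u^\alpha u_k^{m}$ to canonical form I collect the block $u_k^{m}$ leftwards past $u_n^{\alpha_n},u_{n-1}^{\alpha_{n-1}},\dots,u_{k+1}^{\alpha_{k+1}}$ into its slot (as in Lemma~\ref{5.1}); a typical step replaces $u_i^{\alpha_i}u_k^{m}$ by $u_k^{m}u_i^{\alpha_i}[u_i^{\alpha_i},u_k^{m}]$, and since $u_i\in G_{i-1}$ we have $[u_i^{\alpha_i},u_k^{m}]\in[G_{i-1},G]\le G_i$, a subgroup generated by $u_{i+1},\dots,u_n$, so this commutator gets reabsorbed among the higher-index factors, a process that terminates because those factors carry strictly larger indices. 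By repeated use of the identities of Lemma~\ref{lemma2.1} together with the finiteness of the lower central series, $[u_i^{\alpha_i},u_k^{m}]$ is a finite product of fixed iterated commutators in $u_i,u_k$ raised to powers that are integer-valued polynomials in $(\alpha_i,m)$ --- the leading term being $[u_i,u_k]^{\alpha_i m}$ by the bilinearity of Lemma~\ref{lemma4.1} in the appropriate central sections, the remaining terms supplying the binomial-coefficient corrections. Tracking all exponents through the finitely many steps exhibits $\zeta$ as a polynomial in $(\alpha,m)$; hence each $\zeta_i(\xi,\eta)$ is a polynomial.

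\emph{Powering.} For a fixed $x=u^\xi$, right multiplication by $x$ is a polynomial self-map $\rho$ of $\Z^n$ by what has just been proved, and the collection description shows it is \emph{unitriangular}: in $u^\alpha x=u^\zeta$ the only contribution of $\alpha_j$ to $\zeta_j$ is the direct summand $\alpha_j$, because every commutator formed with the factor $u_j^{\alpha_j}$ lies in $[G_{j-1},G]\le G_j$ and so affects only coordinates of index $>j$; thus $\rho(\alpha)_j-\alpha_j$ is a polynomial in $\alpha_1,\dots,\alpha_{j-1}$ (with $\xi$ as parameters). The lemma I then invoke is: a polynomial self-map $\rho$ of $\Z^n$ with this unitriangular property has, for every $k\in\Z$, a $k$-fold iterate $\rho^{[k]}$ which is a polynomial in $(\alpha,k)$. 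This is an induction on the coordinate index: $\rho^{[k+1]}(\alpha)_j-\rho^{[k]}(\alpha)_j$ is a polynomial in $(\alpha,k)$ by the inductive hypothesis applied to the coordinates of index $<j$, and $\sum_{i=0}^{k-1}p(i)$ is again a polynomial in $k$ for any polynomial $p$; negative $k$ is covered since a unitriangular polynomial map has a unitriangular polynomial inverse (triangular back-substitution). As $x^\lambda$ is $\rho^{[\lambda]}$ evaluated at the identity $(0,\dots,0)$, each $\bar{\omega}_i$ is a polynomial in $(\xi,\lambda)$.

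The main obstacle is the collection step in the multiplication argument: one must keep careful account, as the block $u_k^{m}$ is moved to the left, of which term $G_i$ of the chosen series each newly created commutator falls into, and then verify that the resulting canonical exponents are integer-valued polynomials in the original exponents --- although every commutator identity needed for this is already available in \S\S2--4. With that in hand, the observation about $u_j$, the iteration lemma, and the final assembly are all straightforward.
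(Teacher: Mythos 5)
Your powering half is sound once multiplication is granted: the unitriangularity of right multiplication by a fixed element (a consequence of the series being central, so that the factor $u_j^{\alpha_j}$ only disturbs coordinates of index $>j$) and the iteration lemma for unitriangular polynomial self-maps of $\Z^n$ are both correct, and together they give a genuinely different route from the paper's, which instead invokes the Petresco identity $y_1^{\lambda}\cdots y_n^{\lambda}=\tau_1(y)^{\lambda}\tau_2(y)^{\binom{\lambda}{2}}\cdots$ of \S 6 and pushes the correction terms $\tau_i$ into $G'\leq G_1$, where $(E_{n-1})$ applies. The multiplication half, however, has a real gap, and it is exactly the one you flag at the end. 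The assertion that $[u_i^{\alpha_i},u_k^{m}]$, written in the canonical coordinates of $G_i$, has exponents that are integer-valued polynomials in $(\alpha_i,m)$ does not follow from Lemma \ref{lemma2.1} together with ``bilinearity plus binomial corrections'': it is itself an instance of the theorem being proved. To put that commutator in canonical form you must already be able to multiply and to raise to $m$-th and $\alpha_i$-th powers inside subgroups of $G$; and invoking your iteration lemma here (say, to iterate conjugation by $u_k$ a total of $m$ times) presupposes that conjugation is a polynomial map in coordinates, which is again the multiplication statement. As written, the argument is circular.

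The missing idea is the induction on the length $n$ of the canonical basis that organizes the paper's proof: one proves $(M_n)$ and $(E_n)$ simultaneously, assuming both for all strictly shorter bases, and arranges every commutator and conjugation computation to take place in a subgroup whose canonical basis is strictly shorter. Concretely, the paper peels off only $u_1^{\eta_1}$, so the conjugations required are $u_1^{-\eta_1}u_i^{-1}u_1^{\eta_1}$ with $i\geq 2$; these are computed inside $\langle u_1\rangle G_i=\langle u_1,u_{i+1},\ldots,u_n\rangle$, which has canonical basis of length $n-i+1\leq n-1$, where $(E_{n-i+1})$ and $(M_{n-i+1})$ are available, and everything else happens inside $G_1$ (length $n-1$) and is absorbed by $(M_{n-1})$. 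Your scheme of collecting a general block $u_k^{m}$ leftwards produces commutators $[u_i^{\alpha_i},u_k^{m}]$ whose natural ambient subgroup $\langle u_k\rangle G_{i-1}$ is not always shorter (for $k=1$, $i=2$ it is all of $G$), so the recursion does not visibly terminate. If you reorganize the collection so that only the $k=1$ step requires conjugation (for $k\geq 2$ the product being rearranged already lies in $G_1$ and is handled wholesale by $(M_{n-1})$), conjugate single generators $u_i$ rather than blocks $u_i^{\alpha_i}$, and state the double induction explicitly, your proof closes up, with your iteration lemma serving as a clean substitute for the Petresco words in the powering step.
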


That means, strictly speaking, that there exist polynomials $f_i(\xi,\eta)$ of the $2n$ arguments $\xi_j$,$\eta_k$ such that, when these arguments are given integer values, $\zeta_i = f_i(\xi,\eta)$. Similarly, for $\bar{\omega}_i$. But a polynomial is uniquely determined by its values for integer values of the arguments, so that no ambiguity can arise if we identify the function $\zeta_i$ with the polynomial $f_i$. Clearly, $\zeta_i$ and $\bar{\omega}_i$ must be integer-valued polynomials. Thus $\zeta_i$, for example, can be expressed as a linear combination with integer coefficients of binomial products of the form
$$
{\xi_i \choose r_1}{\xi_2 \choose r_2} \ldots {\xi_i \choose r_i}{\eta_1 \choose s_1} \ldots {\eta_i \choose s_i}
$$
where $r_j$ and $s_k$ are integers $\geq 0$. For $G_i \nsub G$ and so $\zeta_i$ can only depend on $\xi_1,\ldots,\xi_i,\eta_1,\ldots,\eta_{i}$. Similar remarks apply to $\bar{\omega}_i$.

\begin{proof}[Proof of Theorem \ref{6.5}]
If $n = 1$, we have $\zeta_1 = \xi_1 + \eta_1$ and $\bar{\omega}_1 = \lambda \xi_1$. Let us call the part of (\ref{6.5}) which concerns the $\zeta_i$ the statement $(M_n)$ and the part which concerns the $\bar{w}_i$ the statement $(E_n)$. Suppose that $n>1$, and assume that $(M_i)$ and $(E_i)$ have been proved for all $i < n$. We first deduce $(M_n)$.
\begin{equation}\label{(1)section6}
x y = u^{\xi} \cdot u^\eta = u_1^{\eta_1 + \eta_2} \prod_{i=2}^{n}\pr{u_1^{-\eta_1} u_i^{-1} u_1^{\eta_1}}^{\xi_i} u_2^{\eta_2} \ldots u_n^{\eta_n}. \tag{1}
\end{equation}
But $$u_1^{-\eta_1} u_i^{-1} u_1^{\eta_1} = u_1^{-\eta_1} \pr{u_i^{-1} u_1 u_i}^{\eta_1} u_i^{-1}$$ and $$u_i^{-1} u_1 u_i = u_1 u_{i+1}^{c_{i,1}} \ldots u_n^{c_{i,n-i}}$$ with certain constants $c_{ij}$. The group $H_i = \left<u_1,u_{i+1},\ldots,u_n\right>$ has the canonical basis $\pr{u_1,u_{i+1},\ldots,u_n}$ with $n-i+1$ terms and so, if $i > 1$, we have $$\pr{u_i^{-1} u_1 u_i}^{\eta_1} = u_1^{\eta_1} u_{i+1}^{\phi_{i,1}} \ldots u_n^{\phi_{i,n-1}}$$ where by $(E_{n-i+1})$, the $\phi_{ij}$ are polynomials in $\eta_1$. Then $(M_{n-i+1})$ gives $$u_1^{-\eta_1} u_i^{-1} u_1^{\eta_1} = u_{i+1}^{\phi_{i,1}} \ldots u_n^{\phi_{i,n-1}} \cdot u_i^{-1} = u_{i}^{-1} u_{i+1}^{\psi_{i,1}} \ldots u_n^{\psi_{i,n-1}}$$ where the $\psi_{ij}$ are again polynomials in $\eta_1$. Hence 
$$
\pr{u_1^{-\eta_1}u_i^{-1}u_1^{\eta_1}}^{-\xi_i} = u_i^{\eta_i} u_{i+1}^{\theta_{i,1}} \ldots u_n^{\theta_{i,n-1}}
$$
where by $(E_{n-i+1})$ again the $\theta_{ij}$ are polynomials in $\eta_1$ and $\xi_i$. Substituting in (\ref{(1)section6}) and using $(M_{n-1})$ $n-1$ times, we obtain $(M_n)$.

Let $(E_n^+)$ mean $(E_n)$ for $\lambda > 0$. Let $v_i = \tau_i(u_1^{\eta_1}, \ldots, u_n^{\eta_n})$. Each $\tau_i$ is a certain word in its arguments. Hence repeated applications of $(M_n)$ give $v_i = u_{1}^{a_{i1}} \ldots u_n^{a_{in}}$ where $a_{ij}$ are polynomials in the $\eta$'s. Also if $c$ is the class of $G$, we have $v_{c+1} = v_{c+2} = \ldots = 1$ by (\ref{6.3}) since $\tau_{c+1}(G) = 1$. But $v_1 = u_1^{\eta_1} u_2^{\eta_2} \ldots u_n^{\eta_n} = x$ and so, from the definition of the $\tau$'s for $\lambda > 0$,
\begin{equation}\label{(2)section6}
x^\lambda = u_1^{\lambda \eta_1} u_2^{\lambda \eta_2} \cdot u_n^{\lambda  \eta_n} v_c^{- {\lambda \choose c}} v_{c-1}^{-{\lambda \choose c - 1}} \ldots v_2^{-{\lambda \choose 2}}. \tag{2}
\end{equation}
But $v_2, v_3, \ldots, v_c$ all lie in $G^\prime = \tau_2(G)$ by (\ref{6.3}) and $G^\prime \leq G_1$. So, by $(E_{n-1})$, each $v_i^{-{\lambda \choose i}} = u_2^{b_{i2}} \ldots u_n^{\beta_{in}}$, where the $b_{ij}$ are polynomials in $\lambda$ and the $\eta$'s. Substituting (\ref{(2)section6}) and applying $(M_{n-1})$ several times now yields $(E_n^+)$, with uniquely determined polynomials $\bar{\omega}_i = \bar{\omega}_{i}(\lambda,\eta)$.

By $(M_n)$, $x^{-1} = u_n^{-\eta_n} \ldots u_1^{-\eta_1} = u^\delta$, where the $\delta_i$ are polynomials in the $\zeta$'s. By $(E_n^+)$, we have, if $\lambda > 0$, $x^{-\la} = u^{\eps}$ where $\eps_i$ are polynomials in the $\eta$'s and $\lambda$. Hence, if $\lambda > 0$ and $\mu > 0$, $(M_n)$ and $(E_n^+)$ give $x^{\mu-\la} = u^k$ where $k_i = k_k(\la,\mu,\eta)$ is a polynomial in $\lambda$, $\mu$, and the $\eta$'s. Therefore, if $\lambda > 0$, and $\mu > 0$, we have that $\bar{\omega}_i(\mu,\eta) = k_i(\la, \lambda + \mu, \eta)$. This must therefore be a polynomial identity. Given any integer $\mu$ choose $\lambda$ so that $\lambda + \mu$ and $\lambda$ are both positive. Then 
$$
x^\mu= x^{\lambda + \mu - \lambda} = u_1^{k_1(\lambda,\lambda + \mu,\eta)} \ldots u_n^{k_n(\lambda, \lambda + \mu,\eta)} = u_1^{\bar{\omega}_1(\mu,\eta)} \ldots u_n^{\bar{\omega}_n(\nu,\eta)}.
$$
and $(E_n)$ is proved.
\end{proof}

Theorem \ref{6.5} allows us to embed $G$ in various larger groups with more or less interesting properties.

Let $\nu$ be any field of characteristic $0$, or, more generally, any domain of integrity which contains the rational integers and is such that for $\lambda \in \nu$ implies that
$$
{\lambda \choose n} = \frac{\lambda (\lambda-1) \ldots (\lambda - n+1)}{n!} \in \nu
$$
for all $n = 1,2 \ldots \: .$ For example, $\nu$ could be the ring of $p$-adic integers where $p$ is a rational prime. Let such a ring be called a binomimal ring.

Define $G^\nu$ to consist of all form products $u^\eta$ with exponents $\eta_1, \ldots, \eta_n$ in $\nu$ and let multiplication be defined in $G^\nu$ and also exponentiation by an arbitrary $\lambda$ of $\nu$ by means of the polynomials $\zeta_i(\eta,\mu)$ and $\bar{\omega}(\la,\eta)$ in the obvious way. If $\nu$ is a binonomial ring, the values of these polynomials for arguments in $\nu$ themselves belong to $\nu$. With this definition of multiplication $G^\nu$ becomes a group, for the associative law and the other group axioms which hold in $G$ express themselves by certain polynomial identities which must be satisfied by the $\zeta_i$ and $\bar{\omega}_i$. For the same reason, if we define $G_i^\nu$ to consist of all $u^\eta$ for which $\zeta_1 = \zeta_2 = \ldots = \zeta_i = 0$, then $$G^\nu= G_0^\nu > G_1^\nu > \ldots > G_n^\nu = 1$$ is a central series of $G^\nu$ and each $G_{i-1}^{\nu} / G_{i}^\nu$ is isomorphic to the additive group of $\nu$. Again, $G^\nu$ is nilpotent of the same class as $G$.

The most interesting cases are as follows.

(i) Let $\nu$ be the field of rationals. Then $x \in G^\nu$ has some positive integral power $x^\lambda$ in $G$. Also, given $x$ and an integer $m > 0$, there exist in $G^\nu$ a uniquely determined element $y$ such that $y^m = x$. We express this by saying that $G^\nu$ is \underline{radicable}. In this case, it can be shown that every automorphism of $G$ can be extended in one and only one way to an automorphism of $G^\nu$.

(ii) Let $\nu$ be the field of real numbers. Then $G^\nu$ is a Lie group of dimension $n$, which is topologically an $n$-dimensional Euclidean space.

(iii) Let $\nu$ be the ring of $p$-adic integers, where $p$ is a rational prime. In this case, $G^\nu$ is the \underline{$p$-adic completion} of $G$, that is, the completion of $G$ is obtained by taking as neighborhoods of the unit element the set of all normal subgroups $K$ of $G$ such that $|G:K|$ is a power of $p$. It has been shown by Gruenberg that, if $G$ is a torsion free, finitely generated nilpotent group, then the common intersection of all these subgroups $K$ is $1$.

These embedding theorems for torsion free, finitely generated nilpotent groups were first obtained by Malcev, using the theory of Lie rings. That any such group can be embedded in a Lie group of the "correct" dimension has also been proved by Jennings, using a different method, but also appealing to the theory of Lie algebras. By standard procedures, the result for case $(i)$, where $\nu$ is the field of rationals , can be extended to arbitrary torsion free, locally nilpotent groups $G$: every such $G$ can be embedded in a radicable torsion-free locally nilpotent group $\bar{G} = G^\nu$ such that if $\bar{\omega}$ is the set of all primes, then $\bar{G} = G_{\bar{\omega}}$. This group $\bar{G}$, the \underline{absolute isolator of $G$}, is determined by $G$ to within isomorphism. More precisely, every automorphism of $G$ extends to a uniquely determined automorphism of $\bar{G}$. This result was first obtained by Malcev.

Since the groups $G^\nu$ admit exponentiation by arbitrary elements of the binomial ring $\nu$, they are somewhat analogous to $\nu$-modules, though with multiplicative instead of the usual additive notation. The principal differences are two: First, $G$ may be any suitable locally nilpotent group; it need not be Abelian, as it is in the module case. Secondly, $\nu$ is restricted to be a binomial ring in general, though this restriction may be weakened considerably, especially when $G$ is nilpotent with a given bound for the class.

This situation suggests an axiomatic approach. Let $G$ be any locally nilpotent group and let $\nu$ be a binomial ring such that $x^\lambda$ is a uniquely determined element of $G$ for any $x$ in $G$ and $\lambda$ in $\nu$.

\begin{defn*} $G$ is a $\nu$-\underline{powered} group if the following axioms hold:
\begin{itemize}
\item[(I)] $x^1 = x; x^{\lambda + \mu} = x^{\lambda} \cdot  x^{\mu}, \text{ and } x^{\lambda \mu} = (x^\lambda)^{\mu}.$
\item[(II)] $y^{-1} x^\lambda y = (y^{-1} x y)^\lambda$
\item[(III)] $$x_1^{\lambda} x_2^{\lambda} \ldots x_n^{\lambda} = t_1^{\lambda} t_2^{\lambda \choose 2} \ldots t_c^{\lambda \choose c},$$ where c is the class of the nilpotent group $\left<x_1, \ldots, x_n\right>$ and $t_k = t_k(x_1,\ldots, x_n)$.
\end{itemize}
\end{defn*}

In (I), $1$ is the unit of $\nu$. The $x's$ and $y$ are arbitrary elements of $G$; $\lambda$ and $\mu$ arbitrary elements of $\nu$. Finally, (III) is supposed to hold for all finite $n$. 

If $G$ happens to be Abelian, these axioms reduce to the usual axioms for $R$-modules. For by (\ref{6.3}), $t_1, t_2, \ldots$ all lie in $G^\prime = \left<1\right>$. And (II), which states that exponentiation by any element of $\mu$ commutes with every inner automorphism of $G$, becomes vacuous, since the only inner automorphism of an abelian group is the identity. 

It follows from (I), (II), (III) that
\begin{equation}\label{banana}
1^\lambda = 1; \quad x^0 = 1; \quad x^{\lambda} = (x^\lambda)^{-1}\tag{IV}
\end{equation}
where $1$ is the unit of $G$ and $0$ is the zero of $\nu$. It is also clear that, if $n$ is an integer considered as an element of $\nu$, then $x^n$ has the usual meaning. For a given $a \in G$, the mapping $\lambda \to a^\lambda$ maps the additive group of $\nu$ homomorphically onto the subgroup $a^\nu$ of $G$ and the kernel $\mathcal{P}$ of this map is an ideal of $R$, the \underline{order-ideal} of $a$. $G$ may be called $\nu$-\underline{torsion-free} if this order ideal is $0$ for all $a \neq 1$ in $G$.

Define an $\nu$-homomorphic mapping (of one $\nu$-powered group into another) and $\nu$-powered subgroup in the obvious wasy.
\begin{lemma}\label{6.6}
The kernel of an $\nu$-homomorphism is a normal $\nu$-powered subgroup. If $K$ is any normal $\nu$-powered subgroup $G$, then $$x \equiv y \text{ mod } K \quad \text{ implies that } \quad x^{\lambda} = y^{\lambda} \text{ mod } K$$ for all $\lambda \in \nu$. $G/K$ can be interpreted as an $\nu$-powered group. The natural homomorphism of $G$ onto $G/K$ is a $\nu$-homomorphism. The usual isomorphism theorems of group theory hold.
\end{lemma}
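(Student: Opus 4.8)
The plan is to dispose of the five assertions of the lemma in turn. Four of them are formal transcriptions of familiar facts about ordinary groups; the one with genuine content is the congruence statement, since it is exactly what makes the quotient construction legitimate.

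First I would handle the kernel. If $\phi$ is an $\nu$-homomorphism with kernel $K$, then $K \nsub G$ because $\phi$ is in particular a group homomorphism; and $K$ is closed under $\nu$-powers, since for $x \in K$ and $\la \in \nu$ one has $\phi(x^{\la}) = \phi(x)^{\la} = 1^{\la} = 1$ by the identities \eqref{banana}, so $x^{\la} \in K$. Hence $K$ is a normal $\nu$-powered subgroup.

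Next comes the crux, the congruence. Let $K$ be a normal $\nu$-powered subgroup and suppose $x \equiv y \bmod K$, say $x = ky$ with $k = xy^{-1} \in K$. I would apply axiom (III) to the two elements $x_1 = k$ and $x_2 = y$. Put $P = \left<k,y\right>$, a finitely generated and therefore nilpotent group, of class $c$ say, and let $t_j = t_j(k,y)$ be the corresponding Petresco words, so that
$$
k^{\la} y^{\la} = t_1^{\la}\, t_2^{{\la \choose 2}} \cdots t_c^{{\la \choose c}},
$$
with $t_1 = ky$. By Theorem \ref{6.3}, $t_j \in \gamma_j(P) \leq \gamma_2(P)$ for every $j \geq 2$. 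Since $k \in K$ and $K \nsub G$, every conjugate of $k$ in $P$ lies in $K$, so the normal closure of $k$ in $P$ is contained in $K$; and $P$ modulo this normal closure is cyclic, hence $\gamma_2(P) = P'$ is contained in it. Therefore $t_j \in K$ for all $j \geq 2$, and since ${\la \choose j} \in \nu$ and $K$ is $\nu$-powered, the element $w = t_c^{-{\la \choose c}} \cdots t_2^{-{\la \choose 2}}$ lies in $K$. Rearranging the displayed identity gives $(ky)^{\la} = k^{\la} y^{\la} w$, whence
$$
(ky)^{\la} (y^{\la})^{-1} = k^{\la}\,\bigl(y^{\la}\, w\, (y^{\la})^{-1}\bigr) \in K
$$
by normality of $K$. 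Thus $x^{\la} \equiv y^{\la} \bmod K$, as required.

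The remaining assertions now follow formally. Given the congruence statement one may define $(xK)^{\la} = x^{\la}K$ unambiguously; axioms (I)--(III) for $G/K$ are obtained by applying the natural homomorphism $\pi \co G \to G/K$ to the corresponding identities in $G$, and $G/K$ is again locally nilpotent because every finitely generated subgroup of $G/K$ is the image under $\pi$ of a finitely generated (hence nilpotent) subgroup of $G$, and quotients of nilpotent groups are nilpotent. By construction $\pi$ is then an $\nu$-homomorphism. The usual isomorphism theorems carry over since the underlying maps are the classical group isomorphisms and, on each quotient, the $\la$-power is the one induced through the natural projections from the relevant subgroup of $G$; so one only has to observe, directly from the definitions, that these classical isomorphisms respect $\nu$-powers. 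The main obstacle is the congruence step: one must see that axiom (III), together with Theorem \ref{6.3} and the normality of $K$, forces all the correction terms $t_j$ with $j \geq 2$ into $K$. Once that is in place, the rest is bookkeeping.
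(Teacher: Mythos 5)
Your proof is correct, and the overall strategy coincides with the paper's: invoke axiom (III) for a suitable pair of elements and show that the Petresco correction terms lie in $K$, so that $K$ being normal and $\nu$-powered absorbs them. The key sub-argument, however, is genuinely different. The paper applies (III) to the pair $(x,y^{-1})$, so that $t_1=\tau_1(x,y^{-1})=xy^{-1}\in K$, and then forces the remaining $t_i=\tau_i(x,y^{-1})$ into $K$ by induction on $i$: specializing to positive integer exponents gives $x^n y^{-n}=t_1^n t_2^{n\choose 2}\cdots t_n$, and since $x^n\equiv y^n \bmod K$ for every integer $n$ by ordinary normality, each $t_n$ lands in $K$ in turn. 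You instead apply (III) to $(k,y)$ with $k=xy^{-1}$ and get all $t_j$ with $j\geq 2$ into $K$ in one stroke, via Theorem \ref{6.3} combined with the observation that $\gamma_2\pr{\left<k,y\right>}$ is contained in the normal closure of $k$ in $\left<k,y\right>$ (the quotient being cyclic), hence in $K$. Your route avoids the integer-exponent induction and makes visible \emph{why} the correction terms lie in $K$ — they are commutators built from an element of $K$ — at the cost of invoking Theorem \ref{6.3}; the paper's route uses only the defining recurrence of the $\tau_n$. Your final bookkeeping is also sound: $k^{\la}\in K$ because $K$ is $\nu$-powered, the conjugate $y^{\la}w(y^{\la})^{-1}$ lies in $K$ by normality, and the kernel statement and the transfer of axioms (I)--(III) to $G/K$ are, as you and the paper both say, routine.
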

\begin{proof}
For suppose that $$x \equiv y \text{ mod } K.$$ Let $c$ be the class of the group $\left<x,y\right>$, and let $t_k = \tau_k(x,y^{-1})$. For integers $n > 0$, we have the identities
$$
x^n y^{-n} = t_1^n t_2^{n \choose 2} \ldots t_n.
$$
This identity is valid in any group, in particular in $G/K$. But $$x^n \equiv y^n \text{ mod } K$$ for all $n = 1, 2, \ldots \: .$ Hence $x^n y^{-n} \in K$, and so by induction, each $t_n \in K$. By axiom (III), we have also
$$
x^\lambda y^{-\lambda} = t_1^{-\lambda} t_2^{\lambda \choose 2} \ldots t_c^{\lambda \choose c}.
$$
Since $K$ admits exponentiation by the elements $\lambda \choose i$ of $\nu$, and each $t_i \in K$, we conclude that $x^\lambda y^{-\lambda} \in K$, $$x^\la \equiv y^\la \text{ mod } K.$$ The rest is clear.
\end{proof}

Examples of $\nu$-powered groups.
\begin{enumerate}
\item Let $G$ be a torsion free, finitely generated nilpotent group. Then $G^\nu$ is a $R$-powered group.
\item Let $R$ be any ring (associative, with unit element) which contains $\nu$ in its centre. Let $\Fr{k}$ be a nilpotent ideal of $\nu$, say $\Fr{k}^{n+1} = 0$. Then $G = 1 + \Fr{k}$ is nilpotent of class $\leq n$, by (\ref{theorem3.6}). For $\lambda \in \nu$ and $u \in \Fr{k}$, we define 
$$
(1 + u)^\lambda = 1 + \lambda u + {\lambda \choose 2}u^2 + \ldots + {\lambda \choose n}u^n \in G.
$$ This definition makes $G$ a $\nu$-powered group.
\end{enumerate}

Another cases is the group $T_n(\nu)$ of all upper triangular matrices with $1$'s on the diagonal and coefficients in $\nu$.

Another interesting case is obtained by considering the group ring $\nu G$ of an arbitrary group $G$ over the ring $\nu$. $\nu G$ consists of linear forms $$u = \sum_{x \in G} \lambda_x x,$$ with coefficients $\lambda_x$ in $\nu$ and all but a finite number of them zero, addition and multiplication being defined in the natural way. Those $u$ in $\nu G$ for which $$\sum_{x \in G} \lambda_x = 0$$ form an ideal $\Delta$, the \underline{difference ideal.} \footnote{or augmentation ideal} Take $R = \nu G / \Delta^{n+1}.$ Every $x$ in $G$ is $\equiv 1 \! \! \! \mod \Delta.$ If there should exist an integer $n$ such that $G \cap 1 + \Delta^{n+1} = 1,$ then $G$ is embedded in a natural way in $R$ and more precisely in the $\nu$-powered group $1 + \Delta/\Delta^{n+1}.$ It can be shown that if $G$ is any torsion-free nilpotent group of class $\leq n,$ then in fact $G \cap 1 + \Delta^{n+1} = 1.$ Thus we obtain an embedding of any such group in a nilpotent $\nu$-powered group of the same class by a new method.

\begin{thm}\label{6.7}
Let $G$ be a torsion free, finitely generated and nilpotent. Let $H$ be an arbitrary $\nu$-powered group. Then any homomorphic mapping of $G$ into $H$ can be extended to an $\nu$-homomorphic mapping of $G^\nu$ into $H$.
\end{thm}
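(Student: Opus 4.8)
The plan is to build $\Phi$ explicitly from a canonical basis of $G$ and to verify it is a $\nu$-homomorphism by transporting the collection procedure already used in the proof of Theorem \ref{6.5}. Fix a canonical basis $u=(u_1,\dots,u_n)$ of $G$, so that every element of $G^\nu$ is a unique formal product $u^\eta=u_1^{\eta_1}\cdots u_n^{\eta_n}$ with $\eta_i\in\nu$, and set $h_i=\phi(u_i)\in H$. Since $H$ is $\nu$-powered the powers $h_i^{\eta_i}$ are defined, so we may put $\Phi(u^\eta)=h_1^{\eta_1}\cdots h_n^{\eta_n}$. For integer exponents this reads $\phi(u_1^{\eta_1})\cdots\phi(u_n^{\eta_n})=\phi(u^\eta)$, so $\Phi$ extends $\phi$; what remains is to show $\Phi$ respects products and $\nu$-powers, i.e. that
\[
\Bigl(\prod_i h_i^{\alpha_i}\Bigr)\Bigl(\prod_i h_i^{\beta_i}\Bigr)=\prod_i h_i^{\zeta_i(\alpha,\beta)}
\qquad\text{and}\qquad
\Bigl(\prod_i h_i^{\alpha_i}\Bigr)^{\lambda}=\prod_i h_i^{\bar{\omega}_i(\lambda,\alpha)}
\]
for all $\alpha,\beta\in\nu^n$ and $\lambda\in\nu$, where $\zeta_i$ and $\bar{\omega}_i$ are the multiplication and exponentiation polynomials of Theorem \ref{6.5} (they describe $G$, and by construction also $G^\nu$).

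The polynomials $\zeta_i,\bar{\omega}_i$ were manufactured, in the proof of Theorem \ref{6.5}, by a purely formal rewriting process whose only inputs are (a) the integer structure constants of the canonical basis, recorded in the relations $u_i^{-1}u_1u_i=u_1u_{i+1}^{c_{i,1}}\cdots u_n^{c_{i,n-i}}$ and their consequences, and (b) the identities (I), (II), (III) valid in $G^\nu$; it never uses uniqueness of the normal form or any cancellation peculiar to $G^\nu$. Now $h_1,\dots,h_n$ satisfy the same structure-constant relations because $\phi$ is a homomorphism, and $H$ satisfies (I), (II), (III) because it is $\nu$-powered. Hence the identical rewriting process, run inside $H$ with each $u_i$ replaced by $h_i$, carries $(\prod h_i^{\alpha_i})(\prod h_i^{\beta_i})$ into the canonical form $\prod h_i^{\gamma_i}$ with $\gamma_i=\zeta_i(\alpha,\beta)$ and similarly yields the power identity; these are precisely the identities needed.

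To organize this rigorously I would induct on the length $n$ of the canonical series, mirroring the inductive skeleton of the proof of Theorem \ref{6.5}. For $n=1$, $G^\nu$ is the additive group of $\nu$ acting on the single element $h_1$ and both identities are instances of axiom (I). For the step, collecting the letters $u_1$ in $(\prod h_i^{\alpha_i})(\prod h_i^{\beta_i})$ — the stage where axiom (II) is used — reduces the multiplication identity to the corresponding statements for the subgroups $H_i=\langle u_1,u_{i+1},\dots,u_n\rangle$ $(i\ge 2)$ and $G_1=\langle u_2,\dots,u_n\rangle$ (equivalently, to the decomposition $G^\nu=G_1^\nu\langle u_1\rangle^\nu$), which one checks are again torsion-free finitely generated nilpotent with canonical bases $(u_1,u_{i+1},\dots,u_n)$ and $(u_2,\dots,u_n)$ of shorter length, so the inductive hypothesis applies to the tuples $(\phi(u_1),\phi(u_{i+1}),\dots)$ and $(\phi(u_2),\dots,\phi(u_n))$; the power identity then reduces, via axiom (III) with $t_k=\tau_k(h_1^{\alpha_1},\dots,h_n^{\alpha_n})$ and axiom (I), to the multiplication identity together with the case $n-1$, exactly as before.

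The main obstacle is precisely this bookkeeping: one must confirm that every rewriting step in the proof of Theorem \ref{6.5} is licensed purely by (I)--(III) and the commutation relations, and in particular that nilpotence enters only through the vanishing of high-weight Petresco words — which does transfer, since $\tau_k$ is an ordinary group word and any $\tau_k(h_1,\dots,h_n)$ with $k$ exceeding the class of $\phi(G)$ lies in $\gamma_k(\phi(G))=1$ by Theorem \ref{6.3}, $\phi(G)$ being a homomorphic image of $G$. Granting this, the two displayed identities, and with them the $\nu$-homomorphism property of $\Phi$, follow formally.
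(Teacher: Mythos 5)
The paper states Theorem \ref{6.7} without proof (it defers to the work of Duguid and Mal'cev), so there is nothing to compare against; judged on its own, your strategy --- define $\Phi(u^\eta)=h_1^{\eta_1}\cdots h_n^{\eta_n}$ with $h_i=\phi(u_i)$ and re-run the collection argument of Theorem \ref{6.5} inside $H$, using axioms (I)--(III) and the integer structure constants in place of the corresponding facts about $G$ --- is the natural and essentially standard route, and the inductive skeleton you describe (reduction to the shorter canonical bases of $G_1$ and of the subgroups $\langle u_1,u_{i+1},\dots,u_n\rangle$) is the right one. Your observation that the resulting exponents must be the polynomials $\zeta_i,\bar{\omega}_i$ of $G^\nu$, because the same formal rewriting produces them and they are pinned down by their integer values, is also correct.

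The one step whose justification as written does not work is the vanishing of the high Petresco terms. In the transfer of $(E_n)$ the terms that must vanish are $v_k=\tau_k(h_1^{\alpha_1},\dots,h_n^{\alpha_n})$ for $k>c$ with $\alpha_i\in\nu$ arbitrary; these arguments do not lie in $\phi(G)$ when the $\alpha_i$ are not integers, so the appeal to $\gamma_k(\phi(G))=1$ proves nothing about them. This matters, because axiom (III) terminates at the class $c'$ of $\left<h_1^{\alpha_1},\dots,h_n^{\alpha_n}\right>$, and if $c'$ could exceed $c$ the formula would acquire extra factors and the polynomials would no longer match those of $G^\nu$. The repair is short but should be made explicit: at the stage where $(E_n)$ is being transferred, $(M_n)$ has already been transferred, so the set $\left\{h_1^{\eta_1}\cdots h_n^{\eta_n}:\eta\in\nu^n\right\}$ is a subgroup of $H$ and $u^\eta\mapsto h^\eta$ is a group homomorphism of $G^\nu$ onto it; since $G^\nu$ is nilpotent of the same class $c$ as $G$, this image has class at most $c$, hence $c'\leq c$ and $\tau_k(h_1^{\alpha_1},\dots,h_n^{\alpha_n})\in\gamma_k$ of a group of class $\leq c$ vanishes for $k>c$ by (\ref{6.3}). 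With that insertion, and the bookkeeping you already acknowledge, the argument is complete.
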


\begin{cor*} Let $X$ be any subgroup of the $\nu$-powered group $H$. Then $\left<X^H\right>$ is the smallest $\nu$-powered subgroup of $H$ that contains $X$.\end{cor*}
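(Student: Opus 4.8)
The plan is first to isolate the real content of the corollary: it asserts that $\langle X^H\rangle$ is a $\nu$-powered subgroup of $H$. Once that is known, $\langle X^H\rangle$, being by construction the smallest normal subgroup of $H$ containing $X$, is automatically the smallest normal $\nu$-powered subgroup containing $X$. I would read the hypothesis as demanding that $X$ itself be $\nu$-powered (for a completely arbitrary subgroup the assertion already fails, e.g.\ $X=\Z\le H=\Q$); granting this, axiom~(II) shows each conjugate $X^h$ is a $\nu$-powered subgroup, since $(x^h)^\lambda=(x^\lambda)^h$. So it suffices to establish the general fact, and then apply it to the family $\{X^h:h\in H\}$: \emph{a subgroup $S=\langle X_\alpha:\alpha\in A\rangle$ generated by $\nu$-powered subgroups $X_\alpha$ of a $\nu$-powered group $H$ is again $\nu$-powered.}

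To prove this I would fix $g\in S$ and $\lambda\in\nu$ and show $g^\lambda\in S$. Since $\nu$-powered groups are locally nilpotent, $g$ lies in a nilpotent subgroup $Q=\langle y_1,\dots,y_m\rangle$ with each $y_i$ in one of the $X_\alpha$; the argument is by induction on the class $c$ of $Q$. When $c=1$, axiom~(III) degenerates to $(uv)^\lambda=u^\lambda v^\lambda$ for commuting $u,v$, so $g^\lambda$ is a product of factors $(y_i^{n_i})^\lambda$ with $n_i\in\Z$, each lying in the $X_\alpha$ containing $y_i$, whence $g^\lambda\in S$. For $c>1$, after rewriting $g$ as a word in the $y_i^{\pm1}$ and relabelling (each $X_\alpha$ being closed under inversion), we may assume $g=y_1y_2\cdots y_m$ with each $y_i$ in some $X_\alpha$ and $\langle y_1,\dots,y_m\rangle$ of class $c$. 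Axiom~(III) together with Theorem~\ref{6.3} then gives
\[
y_1^\lambda y_2^\lambda\cdots y_m^\lambda=t_1^\lambda\,t_2^{\binom{\lambda}{2}}\cdots t_c^{\binom{\lambda}{c}},\qquad t_k=\tau_k(y_1,\dots,y_m)\in\gamma_k(Q),\quad t_1=g,
\]
so that $g^\lambda=(y_1^\lambda\cdots y_m^\lambda)\,t_c^{-\binom{\lambda}{c}}\cdots t_2^{-\binom{\lambda}{2}}$. The factor $y_1^\lambda\cdots y_m^\lambda$ lies in $S$, so it remains only to show $t_k^{\mu}\in S$ for $2\le k\le c$ and $\mu\in\nu$.

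This last point is the heart of the matter, and the one genuine obstacle: expanding $g^\lambda$ via axiom~(III) produces exactly the kind of element one is trying to control, so naive induction is circular. It is broken as follows. Each $t_k$ lies in $\gamma_2(Q)$, a group of class $\le c-1$ which, by repeated application of Lemma~\ref{lemma1.7}, is generated by the commutators $w=[y_{i_1},\dots,y_{i_s}]$ with $s\ge2$. A subsidiary induction on $s$ — using the identity $[u,y]=u^{-1}u^{y}$ together with axiom~(II), which makes a conjugate of a $\nu$-powered subgroup again $\nu$-powered, and the fact that $y_{i_s}\in S$ normalises $S$ — shows that every such $w$ is a product of elements drawn from finitely many $\nu$-powered subgroups of $H$, all contained in $S$. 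Hence $t_k$ lies in a finitely generated subgroup of class $\le c-1$ generated by elements of $\nu$-powered subgroups contained in $S$, and the inductive hypothesis on $c$ gives $t_k^{\mu}\in S$, completing both inductions. (One could instead map a free nilpotent group onto $Q$ and invoke Theorem~\ref{6.7} to run the computation inside $\Phi^{\nu}$ before transporting it back to $H$, but the commutator bookkeeping is identical, so arguing directly from axioms~(I)--(III) seems cleaner.)
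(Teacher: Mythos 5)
The paper states this corollary (like Theorem~\ref{6.7} itself) without proof, so there is no argument of Hall's to compare against; but two things go wrong in your proposal. First, the reading of the statement. It is printed as a corollary of Theorem~\ref{6.7}, yet your argument never uses that theorem, which should be a warning sign. The exponent $H$ in $\left<X^H\right>$ is almost certainly a transcription slip for the ring $\nu$ (this document carries stray $R$'s from an incomplete renaming $R\to\nu$, e.g.\ ``$G^\nu$ is a $R$-powered group''): read as $\left<X^\nu\right>=\left<x^\lambda : x\in X,\ \lambda\in\nu\right>$, the statement is literally true for an \emph{arbitrary} subgroup $X$, involves no normal closure, and genuinely follows from \ref{6.7}. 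Indeed, for $g=x_1^{\lambda_1}\cdots x_m^{\lambda_m}$ put $Q=\left<x_1,\ldots,x_m\right>\leq X$ (this is where the hypothesis that $X$ is a subgroup is used); a canonical basis of $Q$ then consists of elements of $X$, so the $\nu$-homomorphism $Q^\nu\to H$ supplied by Theorem~\ref{6.7} has image contained in $\left<X^\nu\right>$, and that image is a $\nu$-powered subgroup containing $g$ (the periodic case needs a separate, easier remark). Your reading instead forces you to add a hypothesis ($X$ itself $\nu$-powered) and to weaken ``smallest'' to ``smallest normal'' --- that is, to prove a different statement.

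Second, and independently of the interpretation, the induction in your key lemma does not close. You reduce to showing $t_k^{\mu}\in S$ for $t_k\in\gamma_2(Q)$ and propose to apply the class-$(c-1)$ hypothesis to ``a finitely generated subgroup of class $\le c-1$ generated by elements of $\nu$-powered subgroups contained in $S$.'' But the subgroup you actually produce is generated by the factors occurring in the decompositions $[u,y]=u^{-1}u^{y}$, and those factors include the original generators: already $[y_1,y_2]=y_1^{-1}\cdot y_1^{y_2}$ contributes $y_1^{-1}$. The group these factors generate is therefore in general all of $Q$, of class $c$, not $\le c-1$; whereas $\gamma_2(Q)$, which does have class $\le c-1$, is not generated by elements of $\nu$-powered subgroups in the sense your inductive hypothesis requires. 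This is precisely the circularity you yourself flag as ``the heart of the matter,'' and the proposed fix does not break it. Any direct repair must control $\nu$-powers of iterated commutators themselves (Hall--Petrescu-type identities such as $[u^\mu,v]\equiv[u,v]^\mu$ modulo higher terms of the lower central series); the clean route is the parenthetical one you set aside, namely deriving everything from Theorem~\ref{6.7} as above.
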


This and many other properties of $\nu$-powered groups have been developed recently by A. Duguid. In particular, the theory of isolators of $\S4.$ may be largely extended from the case $\nu =$ ring of rational integers (as in $\S4.$) to the more general case.

\section{Dimension Subgroups}
\begin{defn*} Let $G$ be any group, $\mathbb{K}$ any field of characteristic zero and $\Delta = \Delta(\mathbb{K},G)$ the difference ideal of the group ring $\mathbb{K}G$ of $G$ over $\mathbb{K}$. Then  $$\delta_n(G) = G \cap 1 + \Delta^n$$ is called the $n^{th}$ \underline{dimension subgroup} of $G$ (with respect to $\mathbb{K}$). Clearly $\delta_1(G) = G.$ Also $\delta_n(G) \nsub G$ since it is the kernel of the homomorphism induced on $G$ by the natural homomorphism of $\mathbb{K}G$ onto $\mathbb{K}G / \Delta^n.$
\end{defn*}

In fact, $\delta_n(G)$ is independent of the choice of $\mathbb{K}$, because of
\begin{thm}[Jennings]\label{7.1}
An element $x$ of $G$ belongs to $\delta_n(G)$ if and only if $x^m \in \gamma_n(G)$ for some $m>0$
\end{thm}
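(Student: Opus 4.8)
The plan is to prove the two implications separately. The implication $x^m \in \gamma_n(G) \Rightarrow x \in \delta_n(G)$ is the easier one and rests on two observations. First, $\gamma_n(G) \le \delta_n(G)$: by induction on $n$ one checks that $y \in \gamma_i(G)$ forces $y-1 \in \Delta^i$, the inductive step using the identity
$$[y,g]-1 = y^{-1}g^{-1}\bigl((y-1)(g-1)-(g-1)(y-1)\bigr)$$
together with the fact that $\{\,w \in \mathbb{K}G : w-1 \in \Delta^{i+1}\,\}$ is closed under products and inverses. Second, $\delta_n(G)$ is isolated in $G$, i.e.\ if $x \in G$ and $x^m \in \delta_n(G)$ for some $m>0$ then $x \in \delta_n(G)$: reducing modulo $\Delta^n$, the image $\bar u$ of $x-1$ is nilpotent (indeed $\bar u^{\,n}=0$) and $(1+\bar u)^m = 1$, so $\bar u\bigl(m + {m \choose 2}\bar u + \cdots\bigr)=0$; since $\mathbb{K}$ has characteristic zero the bracketed factor is a unit (a nonzero scalar plus a nilpotent element), whence $\bar u = 0$ and $x \in \delta_n(G)$. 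This single use of the hypothesis on the characteristic is exactly why the statement fails in positive characteristic. Combining the two observations, $x^m \in \gamma_n(G) \subseteq \delta_n(G)$ yields $x \in \delta_n(G)$.

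For the converse I would pass to the quotient $N = G/\gamma_n(G)$, which is nilpotent of class at most $n-1$ by Lemma~\ref{lemma1.2}. The natural surjection $\mathbb{K}G \to \mathbb{K}N$ carries $\Delta = \Delta(\mathbb{K},G)$ onto $\Delta(\mathbb{K},N)$, hence $\Delta^n$ into $\Delta(\mathbb{K},N)^n$, so any $x \in \delta_n(G)$ has image $\bar x \in \delta_n(N)$. It therefore suffices to show that $\delta_n(N)$ is a periodic group, for then $\bar x^{\,m}=1$ for some $m$, i.e.\ $x^m \in \gamma_n(G)$. To that end: the torsion elements of the nilpotent group $N$ form a characteristic subgroup $T$ — any two of them generate a subgroup which is nilpotent (Lemma~\ref{lemma1.3}) and finitely generated, hence finite by Theorem~\ref{theorem1.10} — and $N/T$ is torsion-free nilpotent of class at most $n-1$. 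The induced map $\mathbb{K}N \to \mathbb{K}(N/T)$ sends $\delta_n(N)$ into $\delta_n(N/T)$, and by the result recorded in \S6 (a torsion-free nilpotent group of class $\le c$ meets $1+\Delta^{c+1}$ only in the identity), applied with $c=n-1$, we get $\delta_n(N/T)=1$. Hence $\delta_n(N)\le T$, which is periodic, and the converse follows.

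The hard part is the appeal to the \S6 fact (Malcev's embedding of a torsion-free nilpotent group into a radicable one); everything else is formal manipulation inside the group ring. If one prefers to avoid the general, not-necessarily-finitely-generated form of that statement, one can first reduce to the finitely generated case by a support argument — the relation $x-1\in\Delta^n$ involves only finitely many elements of $G$, so $x$ already lies in $\delta_n(H)$ for a finitely generated subgroup $H\le N$, which is finitely generated nilpotent of class $\le n-1$ with finite torsion subgroup — and then invoke Malcev's theorem applied to $H/T(H)$. I expect the isolation step and the verification that the various group-ring maps respect the $\Delta$-filtration to be entirely routine; the structural content of the proof is concentrated in the torsion-free case.
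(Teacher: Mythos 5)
Your treatment of the implication $x^m \in \gamma_n(G) \Rightarrow x \in \delta_n(G)$ is correct and agrees in substance with the paper's step (1): the commutator identity gives $\gamma_n(G) \leq 1 + \Delta^n$ (the paper instead quotes Theorem \ref{theorem3.6} to see that $G/\delta_n(G)$ has class $< n$), and your isolation argument for $\delta_n(G)$ is the same computation as the paper's $1 - x^m \equiv m(1-x) \bmod \Delta^{k+1}$, using characteristic zero in exactly the same place. Likewise your reductions for the converse --- to the finitely generated case by a support argument, and then to the torsion-free case by passing to $N/T$ where $T$ is the torsion subgroup of $N = G/\gamma_n(G)$ --- coincide with the paper's steps (2) and (3) (the paper quotients by $\bar{\gamma}_n(G)$ in one stroke, which amounts to the same thing).

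There is, however, a genuine gap at the crux. The statement you invoke from \S 6 --- that a torsion-free nilpotent group of class $\leq c$ meets $1 + \Delta^{c+1}$ only in the identity --- is not proved in \S 6; it occurs there only as the remark ``It can be shown that \ldots'', and it is not a consequence of Mal'cev's embedding results (Theorem \ref{6.5} and its sequels), contrary to your parenthetical attribution. Within these notes, the proof of that assertion \emph{is} the body of \S 7: steps (4)--(6) together with Lemmas \ref{7.2}--\ref{7.4} build an integral basis of $\mathbb{K}G$ out of a canonical basis of the $\mathcal{J}$-group $G$, introduce the weight function $\mu$, and establish $\Delta^s = E_s$ for $s \leq n$, from which $\delta_n(G) = 1$ and indeed the stronger linear-independence statement in step (4) follow. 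By citing the \S 6 remark you have deferred the entire substance of Jennings' theorem to an unproved claim; what you have actually established is the (correct, but routine) reduction of Theorem \ref{7.1} to the torsion-free finitely generated case. To complete the proof you must supply that core argument; note that appealing instead to the unitriangular embedding of Theorem \ref{7.5} would also be circular here, since in the notes that embedding is itself extracted from the \S 7 filtration.
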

Since $G / \gamma_n(G)$ is nilpotent, the periodic elements of this factor group form a subgroup $\bar{\gamma}_n(G) / \gamma_n(G)$, by (\ref{theorem4.5}(d)). The theorem of Jennings is that $\delta_n(G) = \bar{\gamma}_n(G)$ for all groups $G$ and all integers $n > 0.$\\

\noindent \emph{Proof.}
(1) $\bar{\gamma}_n(G) \leq \delta_n(G)$. Let $x$ be any element of $G$ not in $\delta_n(G)$. Then $u = 1-x$ belongs to $\Delta$ but not to $\Delta^n$. Suppose $u$ belongs to $\Delta^k$ but not to $\Delta^{k+1}$. Here $1 \leq k < n.$ For $m>0,$ $$x^m = 1- mu + {m \choose 2}u^2 \ldots$$ and $u^2, u^3, \ldots$ all lie in $\Delta^{k+1}.$ Hence 
$$
1- x^m \equiv mu \text{ mod } \Delta^{k+1}
$$
and, since $\mathbb{K}$ has characteristic zero, it follows that $x^m \notin \delta_n(G)$ for any positive $m$. But by (\ref{theorem3.6}), $G / \delta_n(G)$ is nilpotent of class $< n$, and so $\gamma_n(G) \leq \delta_n(G).$ Hence $\bar{\gamma}_n(G) \leq \delta_n(G).$

(2) We may assume that $G$ is finitely generated. For $\bar{\gamma}_n(G)$ is the union of all the $\bar{\gamma}_n(H)$ where $H$ runs over the finitely generated subgroups of $G$; and the same remark applies to $\delta_n(G)$. For $\Delta$ is a vector space over $\mathbb{K}$ with a basis consisting of all $1-x$ with $x \neq 1$ in $G$. Hence $\Delta^n$ is the $\mathbb{K}$-space spanned by all products of the form $(1-y_1)(1-y_2)\ldots (1-y_n)$ with $y$'s in $G$. Thus $x \in \delta_n(G)$ if and only if $1-x$ is expressible as a (finite) linear combination of such products, and such an expression can only involve a finite number of elements $y$.

\begin{defn*} 
A $\mathcal{J}$-\underline{group} is a torsion-free and finitely generated nilpotent group.
\end{defn*}

(3) We may assume that $G$ is a $\mathcal{J}$-group. For let $K = \bar{\gamma}_n(G)$. Then $G/K$ is torsion free and by (\ref{lemma1.2}), $\gamma_n(G/K) = 1$. Hence $\bar{\gamma}_n(G/K) = 1$. But the natural homomorphism of $G$ onto $G/K$ induces a homomorphism of $\mathbb{K}G$ onto $\mathbb{K} (G/K)$ in which $\Delta^n(\mathbb{K},G)$ maps onto $\Delta^n(\mathbb{K}, G/K).$ Hence $\delta_n(G/K) = K \delta_n(G) / K$. But $K \leq \delta_n(G)$ by (1). Thus we may assume $K =1$ and have to prove that $\delta_n(G) = 1.$ In conjunction with (2), this means that $G$ is a $\mathcal{J}$-group. 

\begin{defn*} A basis $(u_i)$ of the vector space $\mathbb{K}G$ is called an \underline{integral basis} if
\begin{itemize}
\item[(i)] each $u_i$ has the form $\sum_{x \in G} n_x x$ with rational integers $n_x$ and conversely
\item[(ii)] every $x \in G$ has the form $\sum_i m_i u_i$ with integers $m_i$.
\end{itemize}
\end{defn*}

Let $u_i u_j = \sum_k c_{ij}^k u_k$. If $(u_i)$ is an integral basis of $\mathbb{K}G$, then the multiplication constants $c_{ij}^k$ are all integers.

\begin{lemma}\label{7.2}
Let $G = \left<x\right>$ be an infinite cyclic group, $n$ any positive integer and $u = 1-x$. Then $1, u, u^2, \ldots$ together with $u^n x^{-1}, u^n x^{-2}, \ldots$ form an integral basis of $\mathbb{K}G.$
\end{lemma}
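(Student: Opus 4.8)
The plan is to compute directly in the concrete realisation $\mathbb{K}G = \mathbb{K}[x,x^{-1}]$, the Laurent polynomial ring, whose standard $\mathbb{K}$-basis is $\{x^m : m \in \mathbb{Z}\}$, and to recognise the proposed set as the image of this basis under a triangular unimodular change of coordinates. Write $B = \{b_m : m \in \mathbb{Z}\}$ where $b_k = u^k$ for $k \geq 0$ and $b_{-j} = u^n x^{-j}$ for $j \geq 1$, with $u = 1-x$. The first step is to record the supports: from $b_k = (1-x)^k = \sum_{i=0}^{k} \binom{k}{i}(-1)^i x^i$, the element $b_k$ is supported on $x^0, x^1, \dots, x^k$ and has coefficient $(-1)^k$ on its top term $x^k$; from $b_{-j} = (1-x)^n x^{-j} = \sum_{i=0}^{n}\binom{n}{i}(-1)^i x^{\,i-j}$, the element $b_{-j}$ is supported on $x^{-j}, x^{-j+1},\dots,x^{n-j}$ and has coefficient $\binom{n}{0}=1$ on its bottom term $x^{-j}$.

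Next I would prove $\mathbb{Z}$-linear independence of $B$ (which also gives $\mathbb{K}$-linear independence). Given a finite relation $\sum_k a_k b_k + \sum_j c_j b_{-j} = 0$, first consider the most negative power of $x$ occurring: only the terms $b_{-j}$ contribute negative powers, $b_{-j}$ reaches down exactly to $x^{-j}$, and no $b_{-j'}$ with $j' < j$ (nor any $b_k$) involves $x^{-j}$; hence if $j_0$ is the largest index with $c_{j_0}\neq 0$, the coefficient of $x^{-j_0}$ in the relation is $c_{j_0}\neq 0$, a contradiction. So all $c_j = 0$, and then the same argument applied to the highest power of $x$, using the nonzero leading coefficients $(-1)^k$ of the $b_k$, forces all $a_k = 0$.

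Then I would prove that $B$ spans $\mathbb{Z}G$ over $\mathbb{Z}$ (condition (ii) of the definition of integral basis; condition (i) being obvious since the $b_m$ have integer coefficients). Argue by induction on $|m|$ that $x^m$ lies in the $\mathbb{Z}$-span of $B$: for $m \geq 0$, $x^0 = b_0$ and $x^k = (-1)^k b_k + (\text{a }\mathbb{Z}\text{-combination of } x^0,\dots,x^{k-1})$, so all of $\mathbb{Z}[x]$ is covered; for $m = -j$ with $j \geq 1$, the support description gives $b_{-j} = x^{-j} + (\text{a }\mathbb{Z}\text{-combination of } x^{-j+1},\dots,x^{n-j})$, and the powers appearing there are either non-negative (already handled) or of the form $x^{-j'}$ with $1\leq j' < j$ (handled by the inductive hypothesis), so $x^{-j}$ too lies in the $\mathbb{Z}$-span of $B$; the base case $j=1$ works because then $b_{-1} = x^{-1} + (\text{element of }\mathbb{Z}[x])$. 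Extending scalars from $\mathbb{Z}$ to $\mathbb{K}$ then shows $B$ is an integral basis of $\mathbb{K}G$.

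The only point requiring care --- the ``main obstacle'', though it is minor --- is the bookkeeping of supports and of leading/trailing coefficients: one must check that $x^{-j}$ occurs in $b_{-j}$ with coefficient exactly $1$ and in none of the other basis candidates, so that the change of coordinates is genuinely triangular with $\pm 1$ on the diagonal; once that is pinned down, both the independence and the spanning arguments are immediate inductions.
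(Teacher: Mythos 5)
Your proof is correct and rests on the same underlying idea as the paper's: the proposed set is a triangular, unimodular change of coordinates from the standard basis $\{x^m : m\in\Z\}$ of the Laurent polynomial ring, verified via the binomial expansions of $u^k$ and $u^n x^{-j}$. The only organizational difference is that the paper packages the negative-power part as an induction on $n$ using the recurrence $u^{n+1}x^{-r} = u^n x^{-r} - u^n x^{-r+1}$, whereas you expand $u^n x^{-j}$ directly and run the extremal-coefficient argument in one pass; both are sound.
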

\begin{proof}
For 
$$
u^r = \sum_{s=0}^r(-1)^s {r \choose s}x^s \quad (r=1,2,3, \ldots),
$$
which shows that $1, u, u^2, \ldots$ together with $x^{-1}, x^{-2}, x^{-3}, \ldots$ for an integral basis. This is the case $n=1$. But $$u^{n+1}x^{-1} = u^n x^{-r} - u^nx^{-r+1}$$ which gives the induction from $n$ to $n+1$.
\end{proof}

Now let $G$ be a $\mathcal{J}$-group and write $\bar{\Gamma}_i = \bar{\gamma}_i(G)$. We have to show that $\bar{\Gamma}_n = 1$ implies that $\delta_n(G) = 1$ and so we may assume without loss of generality that $n = c +1$ where $c$ is the class of $G$. Thus $$G = \bar{\Gamma}_1 > \bar{\Gamma}_2 > \ldots > \bar{\Gamma}_{c+1} = 1$$ and each $\bar{\Gamma}_{i-1} / \bar{\Gamma}_i$ is a free Abelian group of finite rank $m_i.$ Also, by (\ref{4.6}) corollary, the $\bar{\Gamma}$-series is a central series of $G$. We may therefore refine the $\bar{\Gamma}$ to a finite series $$G = G_0 > G_1 > \ldots > G_m = 1$$ of $G$ with each $G_{i-1} / G_i$ infinite cyclic; $M = m_1 + m_2 + \ldots + m_c$. Choose $x_i$ so that $G_{i-1} = \left<G_i, x_i\right>$. Then $x_1, x_2, \ldots, x_M$ will be a canonical basis of $G$ and every element of $G$ is uniquely expressible in the form $x_1^{r_1} x_2^{r_2} \ldots x_M^{r_M}$. Let $u_i = 1-x_i.$ Then (\ref{7.2}) gives

\begin{lemma}\label{7.3}
The set of all products $v = v_1 v_2 \ldots v_M,$ in which each $v_i$ has one of the forms $u_i^{r_i}$ $(r_i \geq 0)$ or else $(u_i^{n}x_i^{-s_i})$ $(s_i > 0),$ is an integral basis of $\mathbb{K}G.$
\end{lemma}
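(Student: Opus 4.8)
The plan is to prove the lemma by induction on $M$, at each stage splitting off the first generator $x_1$ and invoking Lemma~\ref{7.2} for a rank-one change of basis. The base case $M=1$ is immediate: then $G=\left<x_1\right>$ is infinite cyclic and the assertion is precisely Lemma~\ref{7.2}, applied with the given integer $n$.

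For the inductive step, set $N=G_1=\left<x_2,\dots,x_M\right>$. The first point is that $N$ is again a $\mathcal{J}$-group and that $(x_2,\dots,x_M)$ is a canonical basis of $N$ relative to the chain $N=G_1>G_2>\dots>G_M=1$: each factor $G_{i-1}/G_i$ is infinite cyclic, the chain is a central series of $N$ because it is one of $G$, and $G_{i-1}=\left<G_i,x_i\right>$ by construction. Hence the inductive hypothesis applies to $N$ with the same $n$, and the products $v_2v_3\cdots v_M$ form an integral basis of $\mathbb{K}N$, equivalently a $\mathbb{Z}$-basis of the integral group ring $\mathbb{Z}N$.

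Since $G/N$ is infinite cyclic generated by the image of $x_1$, the left cosets of $N$ in $G$ are exactly the $x_1^{r}N$ for $r\in\mathbb{Z}$, so $\mathbb{Z}G=\bigoplus_{r\in\mathbb{Z}}x_1^{r}\,\mathbb{Z}N$ is a free right $\mathbb{Z}N$-module with basis $\{x_1^{r}:r\in\mathbb{Z}\}$. Lemma~\ref{7.2} applied to $\left<x_1\right>$ tells us that $B=\{u_1^{r}:r\ge 0\}\cup\{u_1^{n}x_1^{-s}:s>0\}$ is a $\mathbb{Z}$-basis of $\mathbb{Z}\left<x_1\right>$, i.e.\ that $B$ and $\{x_1^{r}\}$ are carried to one another by a pair of mutually inverse (row- and column-finite) integer matrices. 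From this I would deduce that $B$ is also a right $\mathbb{Z}N$-module basis of $\mathbb{Z}G$, so that $\mathbb{Z}G=\bigoplus_{b\in B}b\,\mathbb{Z}N$. Combining this with the inductive description of $\mathbb{Z}N$ exhibits $\mathbb{Z}G$ as $\mathbb{Z}$-free on the products $b\cdot v_2\cdots v_M=v_1v_2\cdots v_M$, with $v_1=b$ ranging over $B$, which is exactly the integral basis of $\mathbb{K}G$ asserted in the lemma.

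The only step here that is not purely formal is the passage from the statement that $\{x_1^{r}\}$ is a free $\mathbb{Z}N$-basis of $\mathbb{Z}G$ to the statement that $B$ is one: one has to push the infinite integer transition matrices between $B$ and $\{x_1^{r}\}$ through the $\mathbb{Z}N$-module structure and check, using that the two matrices are inverse to one another, both that $\sum_{b\in B}b\,\mathbb{Z}N=\mathbb{Z}G$ and that this sum is direct. I expect this bookkeeping to be the main (and essentially the only) obstacle; the remaining verifications are routine.
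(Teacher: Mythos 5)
Your proof is correct and follows the route the paper intends: the paper offers no argument for Lemma~\ref{7.3} beyond ``Then (\ref{7.2}) gives\dots'', i.e.\ one iterates Lemma~\ref{7.2} along the canonical basis using the unique normal form $x_1^{r_1}\cdots x_M^{r_M}$, which is exactly your induction on $M$ via the free right $\mathbb{Z}G_1$-module decomposition $\mathbb{Z}G=\bigoplus_r x_1^r\,\mathbb{Z}G_1$. The change-of-basis step you flag is indeed the only content, and it goes through because the transition matrices between $B$ and $\{x_1^r\}$ are column-finite, mutually inverse, and have entries in $\mathbb{Z}$, which is central in $\mathbb{Z}G_1$.
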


\begin{defn*}
$\mu(u_i) = k \Leftrightarrow x_i$ is in $\bar{\Gamma}_k$ but not in $\bar{\Gamma}_{k+1}$. For any of the products $v$, define $\mu(v) \geq n$ whenever at least one of the factors $v_i$ has the form $u_i^n x_i^{-s_i}$; but otherwise let 
$$
\mu(v) = \mu(u_1^{r_1} u_2^{r_2} \ldots u_M^{r_M}) = \sum_{i=1}^M r_i \mu_i
$$ where $\mu_i = \mu(u_i).$
\end{defn*}

Denote by $E_s,$ where $s \leq n,$ the $\mathbb{K}$-space spanned by all those products $v$ which have $\mu(v) \geq s$. For $s>n$, define $E_s = E_n$.

(4) It is sufficient to prove that $\Delta^s \leq E_s$ if $s \leq n.$ For $\bar{\Gamma}_k \leq \delta_k(G)$ by (1). Hence those $u_i$ for which $\mu_i = \mu(u_i) = k$ all belong to $\Delta^k.$ If $s \leq n$ and $\mu(v) \geq s$, it follows that $v \in \Delta^s$, and so $E_s \leq \Delta^s$. Combined with $\Delta^s \leq E_s$; this gives $\Delta^s = E_s$ for $s \leq n$ and therefore those $u_i$ for which $\mu_i = k$ are linearly independent $\text{mod } \Delta^{k+1}.$ Suppose these $u_i$ are precisely $u_j, u_{j+1}, \ldots, u_{j+ \ell};$ and let $y \in \bar{\Gamma}_k - \bar{\Gamma}_{k+1},$ so that $k < n.$ Then $y = x_j^{r_0}x_{j+1}^{r_1} \ldots x_{j+\ell}^{r_\ell z}z$ with $z \in \bar{\Gamma}_{k+1}$ and $r_0, r_1, \ldots, r_\ell$ not all zero. For $0 \leq \alpha \leq \ell,$ we have 
$$
1- x_{j+\alpha}^{r_\alpha} \equiv r_\alpha u_{j + \alpha} \text{ mod } \Delta^{k+1}
$$
and $1-z \in \Delta^{k+1}.$ Hence 
$$
1 - y = \sum_{\alpha} r_{\alpha} u_{j+\alpha} \neq 0 \text{ mod } \Delta^{k+1},
$$ and so $y \notin \delta_{k+1}(G).$ In particular, no element $y \neq 1$ of $G$ lies in $\delta_n(G)$.

(5) It is enough to prove $E_r E_s \leq E_{r+s}$. For $E_1 = \Delta$ and (5) then gives $\Delta^s \leq E_s$, for all $s$, by induction.

For $1 \leq i \leq M,$ let $E_{s,i}$ be the subspace of $E_s$ spanned by all those products $v = v_1 v_2 \ldots v_M$ which lie in $E_s$ and have $v_1 = v_2 = \ldots = v_{i-1} = 1$. For $s>n$, we have $E_{s,i} = E_{n,i}$. In any case $E_{s,i}$ is contained in $\mathbb{K}G_{i-1}.$ Also $E_{s,1} = E_s.$ Hence

(6) It is enough to prove $E_{r,i} E_{s,i} \leq E_{r+s,i}.$ This is immediate when $i=M.$ Let $i<M$ and use induction on $M-i$. Thus we may assume that $E_{r,j} E_{s,j} \leq E_{r+s,j}$ for $j > i.$ From this we first deduce

\begin{lemma}\label{7.4}
Let $v_1 v_2 \ldots v_m \in E_p$. Then $$vx_i \equiv v \text{ mod } E_{p + \mu_i,i}.$$
\end{lemma}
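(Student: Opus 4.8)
The plan is to read the hypothesis as saying that $v = v_i v_{i+1}\cdots v_M$ is a basis product of $\mathbb{K}G_{i-1}$ (so its first $i-1$ factors are trivial) with $\mu(v) = \sum_{j\ge i}\mu(v_j)\ge p$, to peel off the leading factor $v_i$, and to reduce the statement to an analysis of how conjugation by $x_i$ disturbs a basis product of $\mathbb{K}G_i$. Write $v = v_i v'$ with $v' = v_{i+1}\cdots v_M\in\mathbb{K}G_i$ and $q = \mu(v')$, so $\mu(v) = \mu(v_i)+q$. Two preliminary observations carry most of the weight. First, since the weights $\mu_1\le\mu_2\le\cdots$ are non-decreasing, $\mu_t>\mu_i$ forces $t>i$; hence $\bar{\Gamma}_{\mu_i+1} = \langle x_t : \mu_t\ge\mu_i+1\rangle \le \langle x_{i+1},\ldots,x_M\rangle = G_i$, and a one-line commutator estimate then gives $[G_{i-1},G_i]\le\bar{\Gamma}_{\mu_i+1}\le G_i$, so conjugation by $x_i$ is an automorphism of $G_i$ and of $\mathbb{K}G_i$. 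Second, if $g\in G_i$ lies in $\bar{\Gamma}_k$ with $\mu_i<k$ (and $k\le c$, the commutators below being trivial otherwise), then writing $g$ in its canonical form --- which involves only generators $x_t$ with $\mu_t\ge k$, hence $t>i$ --- and expanding it in the integral basis of Lemma \ref{7.3} shows $g-1\in E_{k,i+1}$.

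The next step is to compute $\delta := x_i^{-1}v'x_i - v'$ and show $\delta\in E_{q+\mu_i,i+1}$. Conjugation being multiplicative, $x_i^{-1}v'x_i = \prod_{j>i}(x_i^{-1}v_jx_i)$, and for $j>i$ one has $x_j^{x_i} = x_j[x_j,x_i]$ with $[x_j,x_i]\in[\bar{\Gamma}_{\mu_j},\bar{\Gamma}_{\mu_i}]\le\bar{\Gamma}_{\mu_i+\mu_j}$, by the corollary to Theorem \ref{4.6} together with Theorem \ref{theorem3.4}(i). If $\mu_i+\mu_j>c$ this commutator is $1$; otherwise the second observation gives $[x_j,x_i]-1\in E_{\mu_i+\mu_j,i+1}$. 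Feeding this into $x_i^{-1}v_jx_i$ (treating the $u_j^n x_j^{-s}$-type factors via the corresponding expansion of $x_j^{-1}$ from Lemma \ref{7.2}) and collecting the pieces by means of the inductive hypothesis $E_{r,j}E_{s,j}\le E_{r+s,j}$ (for $j>i$) of step (6) --- available because step (6) is an induction on $M-i$ --- one obtains $x_i^{-1}v_jx_i - v_j\in E_{\mu(v_j)+\mu_i,i+1}$; multiplying these over all $j>i$, again by the step-(6) hypothesis at $i+1$, yields $\delta\in E_{q+\mu_i,i+1}\subseteq E_{q+\mu_i,i}$.

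To finish, write $vx_i = v_i(v'x_i) = v_i x_i\,(x_i^{-1}v'x_i) = v_i x_i\,(v'+\delta)$. If $v_i = u_i^{r_i}$ then $v_i x_i = u_i^{r_i}(1-u_i) = u_i^{r_i}-u_i^{r_i+1}$, and since $u_i^{r_i}v' = v$ this gives $vx_i - v = -u_i^{r_i+1}v' + (u_i^{r_i}-u_i^{r_i+1})\delta$. The point is now purely formal: multiplying $u_i^{r}$ by any basis product of $\mathbb{K}G_i$ yields, by the very definition of basis products, a basis product of $\mathbb{K}G_{i-1}$ whose weight is the sum of the two weights. Hence $u_i^{r_i+1}v'$ is a basis product of weight $(r_i+1)\mu_i+q = \mu(v)+\mu_i\ge p+\mu_i$, and every basis product occurring in $(u_i^{r_i}-u_i^{r_i+1})\delta$ has weight $\ge r_i\mu_i+(q+\mu_i) = \mu(v)+\mu_i\ge p+\mu_i$; both therefore lie in $E_{p+\mu_i,i}$. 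If instead $v_i = u_i^n x_i^{-s_i}$, then $v_i x_i = u_i^n x_i^{-(s_i-1)}$ (read as $u_i^n$ when $s_i=1$), and every basis product occurring in $v$ and in $vx_i$ then has $\mu\ge n$, whence $vx_i - v\in E_{n,i}\subseteq E_{p+\mu_i,i}$ irrespective of $q$ and $p$. In either case $vx_i\equiv v$ mod $E_{p+\mu_i,i}$; the case $i=M$ is the degenerate instance $v'=1$, $\delta=0$.

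I expect the difficulty to be bookkeeping rather than conceptual. Two points genuinely require care: (i) avoiding circularity with step (6) --- this is precisely why $\delta$ must be controlled using only the step-(6) hypothesis at the strictly larger index $i+1$, and why the endgame invokes the elementary fact that $u_i^{r}\cdot(\text{basis product of }\mathbb{K}G_i)$ is again a basis product rather than the not-yet-established $E_{\cdot,i}E_{\cdot,i}\le E_{\cdot,i}$; and (ii) tracking the $\bar{\Gamma}$-weights of $[x_j,x_i]$ and of the correction terms arising from the $u^n x^{-s}$ generators accurately through the expansions.
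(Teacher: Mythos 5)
Your argument is correct and is essentially the paper's own proof: the same conjugation decomposition $vx_i = v_1\cdots v_i\,x_i\,\prod_{j>i}(x_i^{-1}v_jx_i)$, the same key estimate $[x_j,x_i]\in\bar{\Gamma}_{\mu_i+\mu_j}$ via the corollary to (\ref{4.6}) giving $1-[x_j,x_i]\in E_{\mu_i+\mu_j,i+1}$, and the same appeal to the step-(6) inductive hypothesis at indices $>i$ to absorb the error terms. You merely package the tail as a single correction $\delta$ and spell out the two cases for the leading factor $v_i$ more explicitly than Hall does, but the route is identical.
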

\begin{proof}
For 
$$
x_i^{-1} u_j x_i = 1- x_j[x_j,x_i] = u_j +z - u_j,
$$
where $z = 1 - [x_j, x_i].$ But 
$$
[x_j, x_i] \in [\bar{\Gamma}_{\mu_j}, \bar{\Gamma}_{\mu_i}] \leq \bar{\Gamma}_{\mu_i + \mu_j}
$$ 
by (\ref{4.6}) corollary. Hence $z$ belongs to the difference ideal of $\mathbb{K} \bar{\Gamma}_{\mu_i + \mu_j}$ and so $z \in E_{\mu_i + \mu_j, i+1}.$ By our assumption, if $j>i$, we also have $u_j z \in E_{\mu_i + \mu_{j,i+1}}.$ Hence 
$$x_i^{-1} \mu_j x_i \equiv u_j \text{ mod } E_{\mu_i + \mu_{j, i+1}}.
$$ It follows that 
$$
x_i^{-1} \mu_j^r x_i \equiv u_j \text{ mod } E_{\mu_i + \mu_{j,i+1}};
$$
in particular, $x_i^{-1}u_j^{n}x_i \in E_{n, i+1};$ also $x_i^{-1} x_j^{-s} x_i \in E_{0,i+1}$ and so 
$$
x_i^{-1} u_j^{n}x_j^{-s} x_i \equiv u_j x_j^{-s} \text{ mod } E_{n,i+1},
$$  since both terms are $\equiv 0.$ Thus 
$$
x_i^{-1}v_j x_i \equiv v_j \text{ mod } E_{\mu_i + \mu(v_j),i+1}
$$
for each $j>i.$ Put 
$$
vx_i = v_1 v_2 \ldots v_i(1-u_i) \prod_{j=i+1}^M (x_i^{-1}v_j x_i),
$$ and (\ref{7.4}) now follows, since $\mu(u_i) = \mu_i.$

By (\ref{7.4}), $vu_i = v(1-x_i) \in E_{p + u_i,i}$ if $v \in E_{p,i}.$ Hence $vu_i^r \in E_{p + r\mu_i,i}$ for any $r>0$. In particular, $vu_i^n \in E_{n,i}$. And so, by an argument similar to the proof of (\ref{7.4}) but with $x_i^{-1}$ for $x_i$, $vu_i^nx_i^{-s} \in E_{n,i}.$ Hence if $v' =  v_i' v_{i+1}' \ldots v_M' \in E_{q,i},$ we have $v v_i' \in E_{p + \mu(v_i'),i}$ and therefore by the induction hypothesis $vv' \in E_{p+q,i}$ as required. Thus (6) holds for all $i$ and the proof of (\ref{7.1}) is complete.
\end{proof}

Another result now appears incidentally. Let $G$ be any $\mathcal{J}$-group of class $c$, and let $m_i$ be the rank of $\bar{\gamma}_i(G) / \bar{\gamma}_{i+1}(G),$ $i=1,2, \ldots, c$. Then the linear space $\Delta^j / \Delta^{j+1},$ where $\Delta = \Delta(\mathbb{K},G)$ and $\mathbb{K}$ is a field of characteristic zero, has dimension $d_j$ equal to the number of products $v = u_1^{r_1}u_2^{r_2} \ldots u_M^{r_M}$ of weight $\mu(v) = j.$  Hence the identity
$$
\sum_{j=0}^{\infty} d_jt^j = \frac{1}{(1-t)^{m_1}(1-t^2)^{m_2} \ldots (1-t^c)^{m_c}}.
$$
Here $d_0 = 1 = \dim \mathbb{K}G / \Delta$ if we define $R = \mathbb{K}G / \Delta^{c+1}$, then $G$ is mapped isomorphically into $R$ in the obvious way. $\text{dim} \: R = d_0 + d_1 + \ldots d_c = d$ say. Let $b_1, b_2, \ldots, b_d$ be the $d$ products $v$ with weight $\leq c$ arranged in order of increasing weight, but otherwise arbitrarily. By (\ref{7.4}), $$b_ix_j \equiv b_i + \text{ terms in } b_{i+1}, \ldots, b_d \text{ mod } \Delta^{c+1}.$$ But $x_j = 1-u_j$ and the products $v$ form an integral basis of $\mathbb{K}G$. Hence the coefficients which occur in this expression for $b_i x_j$ are all integers. Thus we obtain a faithful representation of $G$ by unitriangular $d \times d$ matrices with rational integer coefficients giving
\begin{thm}\label{7.5}
Every $\mathcal{J}$-group $G$ is isomorphic with a subgroup of $T_d(\nu_0)$, where $\nu_0$ is the ring of rational integers $d = d(G)$ is chosen suitably.
\end{thm}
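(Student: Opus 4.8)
The plan is to realize $G$ by right multiplications on a finite-dimensional quotient of its rational group ring, using the integral basis produced in the proof of Jennings' theorem.

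First I would fix a field $\mathbb{K}$ of characteristic zero, form the group ring $\mathbb{K}G$ with difference ideal $\Delta$, let $c$ be the class of $G$, and set $R=\mathbb{K}G/\Delta^{c+1}$. Since $G$ is a $\mathcal{J}$-group of class $c$ we have $\gamma_{c+1}(G)=1$, hence $\bar\gamma_{c+1}(G)=1$, so Theorem \ref{7.1} gives $\delta_{c+1}(G)=G\cap(1+\Delta^{c+1})=1$; thus the natural map $G\to R$ is injective and I may view $G$ inside the unit group $1+\Delta/\Delta^{c+1}$ of $R$. To see $R$ is finite-dimensional, take the canonical basis $x_1,\dots,x_M$ of $G$ adapted to the $\bar\Gamma$-series (so $x_i\in\bar\Gamma_{\mu_i}\smallsetminus\bar\Gamma_{\mu_i+1}$) and put $u_i=1-x_i$: by Lemma \ref{7.3} the products $v_1\cdots v_M$ of that lemma form an integral basis of $\mathbb{K}G$, and modulo $\Delta^{c+1}$ only the monomials $v=u_1^{r_1}\cdots u_M^{r_M}$ with weight $\mu(v)=\sum_i r_i\mu_i\le c$ survive, so their images span $R$. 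Counting them by weight gives $\dim_{\mathbb{K}}\Delta^j/\Delta^{j+1}=d_j$ with $\sum_j d_j t^j=\prod_{i=1}^c(1-t^i)^{-m_i}$ (where $m_i=\operatorname{rank}\bar\gamma_i(G)/\bar\gamma_{i+1}(G)$), and $d:=\dim_{\mathbb{K}}R=d_0+d_1+\cdots+d_c$.

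Next I would order these surviving monomials as $b_1,\dots,b_d$ by non-decreasing weight (so $b_1=1$) and define $\rho\colon G\to\operatorname{GL}_{\mathbb{K}}(R)$ by letting $\rho(g)$ be right multiplication on $R$ by the image of $g$. This is a homomorphism into a group, since $\rho(g^{-1})$ inverts $\rho(g)$, and it is faithful because $\rho(g)\cdot 1$ is the image of $g$ in $R$, which determines $g$. It remains to check that, in the basis $(b_i)$, every $\rho(g)$ is a unitriangular integer matrix; as these form the group $T_d(\nu_0)$ with $\nu_0=\mathbb{Z}$, it suffices to verify this for the generators $x_1,\dots,x_M$. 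Integrality: $x_j=1-u_j$, so $b_ix_j$ is, by Lemma \ref{7.3}, an integral combination of the basis products of $\mathbb{K}G$, which reduces mod $\Delta^{c+1}$ to an integral combination of $b_1,\dots,b_d$. Unitriangularity: Lemma \ref{7.4} shows $b_ix_j\equiv b_i$ modulo the span of monomials of weight $>\mu(b_i)$; since the $b$'s are weight-ordered, $b_ix_j-b_i$ lies in the span of $b_{i+1},\dots,b_d$, so $\rho(x_j)$ is unitriangular and lands in $T_d(\nu_0)$. Hence $\rho$ embeds $G$ in $T_d(\nu_0)$ with $d=d(G)$ as above.

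The substantive ingredients — that the monomials $v$ form an \emph{integral} basis of $\mathbb{K}G$, and that right multiplication by a generator only pushes one up the weight filtration — are precisely Lemmas \ref{7.3} and \ref{7.4}, already at hand, so the main obstacle is not any computation but the organization: picking the weight ordering of the whole basis, checking that the right regular action on $R$ is a faithful homomorphism, and reducing the triangularity and integrality statements from arbitrary group elements to the generators $x_j$. The one delicate point is that faithfulness must be extracted from the embedding $G\hookrightarrow R$ and the identity $\rho(g)\cdot 1=\bar g$, not from the matrices themselves.
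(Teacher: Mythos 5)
Your proposal is correct and follows essentially the same route as the paper: Hall also forms $R=\mathbb{K}G/\Delta^{c+1}$, notes $G$ embeds by Jennings' theorem, orders the weight-$\le c$ monomials $b_1,\dots,b_d$ by increasing weight, and uses Lemma \ref{7.4} for unitriangularity and the integrality of the basis of Lemma \ref{7.3} for integer entries of the right-multiplication matrices. The only difference is that you spell out the faithfulness and the reduction to the generators $x_j$ slightly more explicitly than the original text, which leaves these as "obvious".
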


\begin{note*} A finite $p$-group $G$ can be embedded in $T_g(v_p)$ where $g = |G|$ and $v_p$ is the field of $p$ elements. For if $\underline{M}$ is the Magnus ideal of the group algebra $\nu_p G,$ we have $\underline{M}^{c+1} = 0$ and so the regular representation of $G$ over $\nu_p$ is unitriangular.
\end{note*}

Let $U_d$ be the unimodular group of degree $d$, that is the group of all $d \times d$ matrices with integer coefficients and determinant $\pm 1.$ It seems probable that every polycylic group is isomorphic with a subgroup of some $U_d$. \footnote{That this is so was proved by L. Auslander: On a problem of Philip Hall, Annals of Math. 86 (1967), 11-116. A simpler proof was then given by R. G. Swan: Representations of polycyclic groups, Proc. American Math. Soc. 18 )1967), 513.}

\begin{lemma}\label{7.6}
Let $H$ be a subgroup of $G$ such that $|G:H| = m$ is finite. Then, if $H$ can be embedded in $U_d$,  $G$ can be embedded in $U_{md}$.
\end{lemma}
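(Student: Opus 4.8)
The plan is to use the induced representation. Regard the hypothesis $H \hookrightarrow U_d = \GL_d(\Z)$ as a faithful $\Z H$-module structure on $M = \Z^d$ in which every $h \in H$ acts by an integer matrix $\rho(h)$ of determinant $\pm 1$. First I would form the induced module $V = \Z G \otimes_{\Z H} M$. Since $\Z G$ is free as a right $\Z H$-module of rank $m = |G:H|$, the underlying abelian group of $V$ is free of rank $md$, and $G$ acts on $V$ $\Z$-linearly by left multiplication on the first tensor factor.

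Next I would make this action explicit in coordinates. Choose left coset representatives $g_1 = e, g_2, \dots, g_m$ for $H$ in $G$, so that $V = \bigoplus_{i=1}^m g_i \otimes M$, each $g_i \otimes M$ being a copy of $\Z^d$. For $x \in G$ there is a permutation $\sigma = \sigma_x$ of $\{1,\dots,m\}$ and elements $h_i = h_i(x) \in H$ determined by $x g_i = g_{\sigma(i)} h_i$, and then $x \cdot (g_i \otimes v) = g_{\sigma(i)} \otimes (h_i v)$. In the basis of $V$ assembled from a fixed $\Z$-basis of $M$ on each of the $m$ blocks, the matrix $\rho^G(x)$ of $x$ is the $md \times md$ block-monomial matrix whose $(\sigma(i), i)$ block equals $\rho(h_i)$ and all of whose other blocks vanish. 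This defines a map $\rho^G \colon G \to \GL_{md}(\Z)$; it is a homomorphism because $\sigma_{xy} = \sigma_x\sigma_y$ and $h_i(xy) = h_{\sigma_y(i)}(x)\,h_i(y)$, which is exactly the block-monomial multiplication rule.

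Then I would verify the two properties that finish the proof. \emph{Faithfulness}: if $x$ lies in $\ker \rho^G$, then $\sigma_x = \mathrm{id}$ and every $\rho(h_i) = I$, hence $h_i = 1$ for all $i$ by faithfulness of $\rho$; choosing $i$ with $g_i = e$ gives $x = x g_i = g_{\sigma(i)} h_i = e$. \emph{Determinant}: expanding the block-monomial matrix along its blocks gives $\det \rho^G(x) = \varepsilon \prod_{i=1}^m \det \rho(h_i)$ for a sign $\varepsilon = \pm 1$ (the sign of the permutation $\sigma_x$ raised to the $d$-th power, times $(-1)$'s from moving blocks), and each $\det \rho(h_i) = \pm 1$ since $\rho(h_i) \in U_d$. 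Hence $\det \rho^G(x) = \pm 1$, so $\rho^G(x) \in U_{md}$, and $\rho^G$ embeds $G$ into $U_{md}$.

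I do not expect a serious obstacle here: the content is the standard fact that inducing a faithful representation up a finite-index subgroup preserves faithfulness, together with the observation that the block-monomial shape of an induced matrix keeps its determinant in $\{\pm 1\}$ as soon as the inducing matrices have determinant $\pm 1$. The only point requiring care is the bookkeeping with left versus right cosets and the side on which $H$ acts on $M$, so that the multipliers $h_i(x)$ genuinely land in $H$ and the cocycle identity $h_i(xy) = h_{\sigma_y(i)}(x)\,h_i(y)$ comes out in the form that makes $\rho^G$ multiplicative.
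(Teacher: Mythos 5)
Your induced-representation argument is correct: the block-monomial form gives determinant $(\operatorname{sgn}\sigma_x)^d\prod_i\det\rho(h_i)=\pm 1$, the cocycle identity makes $\rho^G$ a homomorphism, and faithfulness follows as you say. The paper states Lemma \ref{7.6} without proof, and your construction is exactly the standard argument it implicitly relies on, so there is nothing to compare beyond confirming it works.
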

\begin{lemma}\label{7.7}
A finitely generated and periodic soluble group is finite.
\end{lemma}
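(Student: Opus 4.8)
The plan is to induct on the derived length $d$ of the soluble group $G$, the one substantial external ingredient being the fact that a subgroup of finite index in a finitely generated group is again finitely generated (Reidemeister--Schreier).

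For the base case $d \le 1$ the group $G$ is finitely generated and abelian, hence a direct sum of finitely many cyclic groups; since $G$ is periodic, no summand is infinite cyclic, so $G$ is finite. For the inductive step I would assume $d \ge 2$ and that the assertion holds for smaller derived length, and set $A = G^{(d-1)}$, the last non-trivial term of the derived series. This $A$ is characteristic (hence normal) and abelian, because $G^{(d)} = [A,A] = 1$. The quotient $G/A$ is a homomorphic image of $G$, so it is finitely generated and periodic, and its derived length is at most $d-1$ since $(G/A)^{(d-1)} = G^{(d-1)}A/A = 1$; by the inductive hypothesis $G/A$ is therefore finite. Thus $[G:A]$ is finite, and the Schreier property applied to the finitely generated group $G$ shows that $A$ is finitely generated. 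Being finitely generated, abelian and periodic, $A$ is finite by the base case, so $|G| = [G:A]\,|A|$ is finite, which closes the induction.

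The only step that is not completely routine is the appeal to the Schreier property; in the setting of these notes it would be quoted as standard (explicitly: if $G = \langle a_1,\dots,a_q\rangle$ and one fixes a Schreier transversal for $A$ in $G$, then $A$ is generated by the finitely many associated Schreier generators built from the $a_i$ and that transversal). Everything else is just the structure theorem for finitely generated abelian groups together with elementary properties of the derived series. An essentially identical argument runs the induction on $G' = G^{(1)}$ instead of on $A$: the quotient $G/G'$ is finite for the same reason, hence $G'$ is finitely generated and of derived length $d-1$, so finite by induction, whence $G$ is finite; no new difficulty arises this way.
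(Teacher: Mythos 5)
Your proof is correct and follows exactly the route the paper indicates: induction on the derived length, with the key input being that a finite-index subgroup of a finitely generated group is finitely generated. The paper gives only this one-line sketch, and your write-up is a faithful and complete filling-in of it.
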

\begin{proof}
Proof by induction on the length of the derived series, using the fact that a subgroup of finite index in a finitely generated group is itself finitely generated.
\end{proof}

\begin{thm}[Hirsch]\label{7.8}
Let $G$ be polycyclic and let $x$ be any element $\neq 1$ in $G$. Then, for some $m > 0$, $G^m$ is torsion-free and does not contain $x.$ 
\end{thm}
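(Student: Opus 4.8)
The plan is to separate the statement into two parts. Write $G^{m}$ for the verbal subgroup $\langle g^{m}:g\in G\rangle$ of $m$-th powers. Since $g^{ab}=(g^{b})^{a}$ we have $G^{ab}\le G^{a}$, so the family $\{G^{m}\}_{m\ge 1}$ is directed downwards under divisibility and $\{G^{m_{0}k}\}_{k\ge 1}$ is cofinal in it for any fixed $m_{0}$. Also $G/G^{m}$ is finitely generated, soluble and of finite exponent, hence finite by Lemma~\ref{7.7}, so each $G^{m}$ has finite index in $G$. I would deduce the theorem from: (A) there exists $m_{0}>0$ with $G^{m_{0}}$ torsion-free; and (B) $\bigcap_{m\ge 1}G^{m}=1$. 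Indeed, if $x$ has finite order then the torsion-free subgroup $G^{m_{0}}$ cannot contain $x\neq 1$, so $m=m_{0}$ works; and if $x$ has infinite order then (B) together with cofinality yields $k$ with $x\notin G^{m_{0}k}$, while $G^{m_{0}k}\le G^{m_{0}}$ is torsion-free.

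For (A) I would induct on the Hirsch number $h(G)$ (the ``rank'' of \S1). If $h(G)=0$ then $G$ is finite and $G^{|G|}=1$. If $h(G)\ge 1$, one uses the standard fact that an infinite polycyclic group has a nontrivial free abelian normal subgroup $N$; then $h(G/N)<h(G)$, so by induction some $(G/N)^{m_{0}}$ is torsion-free. Now $G^{m_{0}}$ is torsion-free: a torsion element of $G^{m_{0}}$ maps onto a torsion element of $(G/N)^{m_{0}}$, hence is trivial there, hence lies in $N$, hence (as $N$ is torsion-free) equals $1$.

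For (B), having (A) in hand I would pass to the torsion-free polycyclic group $G^{m_{0}}$, which reduces (B) to showing $\bigcap_{m}H^{m}=1$ for an arbitrary torsion-free polycyclic $H$. When $H$ is moreover nilpotent this comes cleanly out of Theorem~\ref{7.5}: embed $H\hookrightarrow T_{d}(\nu_{0})$ and reduce matrix entries modulo $p^{k}$; the kernels $K_{k}$ of $H\to T_{d}(\nu_{0}/p^{k}\nu_{0})$ intersect in $1$, each $K_{k}$ contains $H^{e_{k}}$ where $e_{k}$ is the ($p$-power) exponent of the finite $p$-group $T_{d}(\nu_{0}/p^{k}\nu_{0})$, and the $e_{k}$ are unbounded powers of $p$, so $\bigcap_{j}H^{p^{j}}\le\bigcap_{k}H^{e_{k}}\le\bigcap_{k}(H\cap K_{k})=1$. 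For a general torsion-free polycyclic $H$ one can still induct on $h(H)$, splitting off a free abelian normal subgroup $N$ and treating separately the case where the relevant element lies in $N$; but that case comes down to finding, for $0\neq v\in N$, a $G$-invariant subgroup of $N$ of finite index omitting $v$ — i.e. residual finiteness of $N$ as a module over $\mathbb{Z}[H/N]$ — which is exactly Hirsch's theorem that polycyclic groups are residually finite.

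The main obstacle is therefore this last input: the residual finiteness of general polycyclic groups, or its refinement to finitely generated modules over polycyclic group rings (one efficient route being the Noetherian-ness of the polycyclic group ring). Everything else — the reduction to (A) and (B), the induction for (A), and the nilpotent case of (B) via Theorem~\ref{7.5} — is routine bookkeeping with verbal subgroups, Lemma~\ref{7.7}, and the structure of finitely generated abelian groups.
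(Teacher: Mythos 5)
Your reduction to (A) (some $G^{m_0}$ is torsion-free) and (B) ($\bigcap_m G^m = 1$) is coherent, and your arguments for (A) and for the nilpotent case of (B) would go through. The genuine gap is in the general case of (B), where you propose to invoke ``residual finiteness of general polycyclic groups'' (or the Noetherian property of polycyclic group rings) as an external input. For a polycyclic $H$ the statement $\bigcap_m H^m = 1$ is \emph{equivalent} to residual finiteness of $H$: one direction is Lemma \ref{7.7} (each $H/H^m$ is finite), and conversely if $K \nsub H$ has finite index and $x \notin K$, then $H^e \leq K$ for $e$ the exponent of $H/K$, so $x \notin H^e$. Since Theorem \ref{7.8} is precisely these notes' (strengthened) form of Hirsch's residual finiteness theorem, you would be assuming the essential content of what is to be proved; nothing earlier in the notes supplies it, and group-ring Noetherianness is far heavier machinery than anything available here.

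The gap is repairable, and the repair shows the module-theoretic step you single out as the main obstacle is in fact elementary: for $0 \neq v$ in a free abelian normal subgroup $N$, the subgroup $pN$ (with $p$ a prime not dividing all coordinates of $v$) is characteristic in $N$, hence normal in $H$, of finite index, and omits $v$; one then applies the theorem inductively to $H/pN$, which has smaller Hirsch number, and pulls back via $H^m \leq H^m(pN)$. But at that point you are simply running one induction that proves torsion-freeness and avoidance of $x$ simultaneously --- which is what the paper does directly. It inducts on the length of a polycyclic series $G \trianglerighteq G_1 \trianglerighteq \cdots \trianglerighteq 1$, obtains by induction an $n$ with $G_1^n$ torsion-free and missing $x$, and when $G/G_1$ is infinite cyclic generated by $aG_1$ writes $x = a^{\lambda}y$ with $y \in G_1$ and sets $H = \left<a^u, G_1^n\right>$ for a suitable multiple $u$ of the order of the automorphism induced by $a$ on the finite group $G_1/G_1^n$ with $u \nmid \lambda$; this $H$ is normal, torsion-free, of finite index $m$, omits $x$, and contains $G^m$. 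That argument is entirely self-contained and avoids the split into (A) and (B) altogether.
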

\begin{proof}
Note that $G^m$ is a characteristic subgroup of $G$. Also, since $G$ is soluble and finitely generated, $[G:G^m]$ is finite by (\ref{7.7}). Suppose $$G = G_0 \trianglerighteq G_1 \trianglerighteq \ldots \trianglerighteq G_r = 1,$$ with cyclic factors $G_{i-1} /G_i$. When $r=1,$ $G$ is cyclic and the result is immediate.

Let $r>1$. Then we may assume that $G_1^n$ is torsion-free for some $n>0$ and does not contain $x$. If $[G:G_1]$ is finite, then $|G: G_1^n| = m$ is also finite. Also $G_1^n \trianglelefteq G$ and so $G^M \leq G_1^n$. Then $G^m$ is torsion-free and doesn't contain $x$. If $[G:G_1]$ is finite, let $G = \left<G_1, a\right>$. Then $a$ induces in the finite group $G_1 / G_1^n$ an automorphism of some finite order $t$.

Also $x=a^\lambda y$ with $y$ in $G_1$. In case $\lambda \neq 0,$ choose a multiple $u$ of $t$ such that $\lambda$ is not a multiple of $u$. Then, since $[G_1, a^u] \leq G_1^n$ by our choice of $u$, the group $H= \left<a^u, G_1^n\right>$ is normal in $G$ and of index $u|G_1 : G_1^n| = m$, say. Also $x \notin H;$ and $H$ is torsion-free since $G_1^n$ is, and $H/G_1^n$ is infinite cyclic. Then $G^m$ is contained in $H$ and therefore satisfies the requirements of Hirsch's theorem.
\end{proof}

\begin{cor*} Every polycyclic group $G$ which contains a nilpotent subgroup $H$ of finite index can be embedded in some $U_d$. In particular, every supersoluble group can be embedded.\end{cor*}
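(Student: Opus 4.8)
The plan is to reduce to a torsion-free, finitely generated nilpotent subgroup of finite index, and then chain together Theorem \ref{7.5} and Lemma \ref{7.6}. So let $G$ be polycyclic with a nilpotent subgroup $H$ of finite index. Since $G$ is polycyclic it satisfies \textbf{Max} (Lemma \ref{lemma1.9}(i)), so $H$ is finitely generated, hence a finitely generated nilpotent group, hence itself polycyclic. Picking any $x \neq 1$ in $H$ (if $H = 1$, set $m = 1$) and applying Hirsch's theorem (\ref{7.8}) to $H$, we obtain an integer $m > 0$ such that $K := H^m$ is torsion-free. The quotient $H/K$ is finitely generated, soluble, and of finite exponent dividing $m$, hence finite by Lemma \ref{7.7}; therefore $[G:K] = [G:H]\,[H:K] =: \ell$ is finite. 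Thus $K$ is a $\mathcal{J}$-group of finite index $\ell$ in $G$.

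Next I would embed $K$. By Theorem \ref{7.5}, $K$ is isomorphic to a subgroup of $T_d(\nu_0)$ for some $d = d(K) \geq 1$, where $\nu_0$ is the ring of rational integers. Every unitriangular integer matrix has determinant $1$, so $T_d(\nu_0) \leq U_d$, and hence $K$ embeds in $U_d$.

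Now apply Lemma \ref{7.6} with the pair $(G,K)$ playing the role of $(G,H)$: since $[G:K] = \ell$ is finite and $K$ embeds in $U_d$, we conclude that $G$ embeds in $U_{\ell d}$. This proves the first assertion. For the final sentence, observe that a supersoluble group $G$ is polycyclic by Lemma \ref{lemma1.9}(i) and contains a nilpotent subgroup of finite index (in fact one containing $G'$) by Lemma \ref{lemma1.9}(ii); so the first assertion applies to $G$.

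The one step that carries any content is the passage to $K$: one needs a torsion-free nilpotent subgroup of finite index, and the point to check is that replacing $H$ by $H^m$ (to kill torsion, via Hirsch's theorem \ref{7.8}, or equivalently via the finiteness of the torsion subgroup of a finitely generated nilpotent group, a consequence of Theorem \ref{theorem1.10}) keeps the index finite --- which is Lemma \ref{7.7}. Everything downstream is a mechanical concatenation: Theorem \ref{7.5} gives the integral unitriangular embedding of $\mathcal{J}$-groups, the containment $T_d \leq U_d$ lifts it into the unimodular group, and Lemma \ref{7.6} propagates it from the finite-index subgroup $K$ up to $G$.
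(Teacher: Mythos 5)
Your proof is correct and follows essentially the same route as the paper: produce a torsion-free finitely generated nilpotent subgroup of finite index via Hirsch's theorem (\ref{7.8}) and Lemma \ref{7.7}, then combine Theorem \ref{7.5} with Lemma \ref{7.6}. The only (immaterial) difference is that the paper applies (\ref{7.8}) to $G$ and takes $K = H \cap G^m$, whereas you apply it to $H$ and take $K = H^m$; both yield the required $\mathcal{J}$-group of finite index.
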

\begin{proof}
For if we choose $m$ so that $G^m$ is torsion-free, then $K = H \cap G^m$ is a $\mathcal{J}$-group and $[G:K]$ is finite. The corollary now follows from (\ref{7.5}) and (\ref{7.6}).
\end{proof}

\begin{thm}\label{7.9}
The holomorph of any $\mathcal{J}$ can be embedded in some $U_d.$
\end{thm}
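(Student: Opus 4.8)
The plan is to run the construction behind Theorem~\ref{7.5} one level higher, letting automorphisms act alongside the right translations that already appear there. Write $\mathrm{Hol}(G) = G \rtimes \Aut(G)$, with $\Aut(G)$ acting on $G$ in the usual way, and let $c$ be the class of $G$. Fix a field $\mathbb{K}$ of characteristic zero, a canonical basis $x_1,\dots,x_M$ of $G$, and form $R = \mathbb{K}G/\Delta^{c+1}$ as in the proof of Theorem~\ref{7.5}. Let $b_1,\dots,b_d$ be the images in $R$ of the basic products $v$ of weight $\le c$ from Lemma~\ref{7.3}, and put $R_0 = \Z b_1 \oplus \dots \oplus \Z b_d$. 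Because the family of all products $v$ is an integral basis of $\mathbb{K}G$ (Lemma~\ref{7.3}) and precisely those of weight $>c$ (and those involving a factor $u_i^{c+1}x_i^{-s}$) lie in $\Delta^{c+1}$, the set $\{b_1,\dots,b_d\}$ is simultaneously a $\mathbb{K}$-basis of $R$ and a $\Z$-basis of $R_0$; moreover $G$ embeds in $R$ with image inside $R_0$, the multiplication constants of the $b_j$ are integers (so $R_0$ is a subring of $R$), and Theorem~\ref{7.5} already identifies $G$ with a group of unitriangular integer matrices through the action of $G$ on $R_0$ by translations $r\mapsto rg$.

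Next I would lift each $\alpha\in\Aut(G)$ to a linear map on $R_0$. Extending $\alpha$ coefficientwise produces a $\mathbb{K}$-algebra automorphism $\widetilde\alpha$ of $\mathbb{K}G$; since $\widetilde\alpha$ preserves the augmentation it preserves $\Delta$, hence $\Delta^{c+1}$, and so induces a ring automorphism $\overline\alpha$ of $R$. The essential observation is that $\overline\alpha$ preserves $R_0$: the subring $R_0$ is generated over $\Z$ by the images of $x_1^{\pm1},\dots,x_M^{\pm1}$, and $\overline\alpha$ carries each such generator to the image of the element $\alpha(x_i^{\pm1})\in G\subseteq R_0$; running the same argument with $\alpha^{-1}$ shows $\overline\alpha$ restricts to a $\Z$-module automorphism of $R_0\cong\Z^d$, that is, to an element of $U_d$. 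Thus $\alpha\mapsto\overline\alpha|_{R_0}$ is a homomorphism $\Aut(G)\to U_d$.

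Finally I would glue the two homomorphisms together. The group $G$ acts on $R_0$ by right translations and $\Aut(G)$ by ring automorphisms, and these combine into an action of $G\rtimes\Aut(G)$ because $\overline\alpha$ conjugates translation by $g$ into translation by $\alpha(g)$: for $r\in R$ one has $\overline\alpha\bigl(\overline\alpha^{-1}(r)\,g\bigr) = r\,\alpha(g)$. After fixing order conventions (replacing right translation by $g$ with the honest homomorphism $g\mapsto(r\mapsto rg^{-1})$, which is still unitriangular) this yields a homomorphism $\Phi\colon\mathrm{Hol}(G)\to U_d$. To see $\Phi$ is injective, evaluate at the image of $1\in G$: if $\Phi(g\alpha)$ is the identity, comparing values at $1$ forces $g=1$ in $R$, hence $g=1$ in $G$ since $G$ embeds in $R$, and then $\overline\alpha$ fixes $R$ pointwise, in particular fixes $G$, so $\alpha=1$.

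The conceptual content here is light; the work is in the bookkeeping. The step I expect to be most delicate is verifying that $\Aut(G)$ acts by \emph{integer} matrices, i.e.\ that the integral basis of Lemma~\ref{7.3} descends to a $\Z$-basis of the $\overline\alpha$-stable lattice $R_0$, together with keeping the left/right conventions for the two actions consistent so that $G$ and $\Aut(G)$ genuinely assemble into a faithful representation of $G\rtimes\Aut(G)$ on $R_0$.
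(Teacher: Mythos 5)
The paper states Theorem~\ref{7.9} without any proof at all (the section ends immediately after the statement), so there is no argument of Hall's to compare yours against. Your construction is correct and is the natural way to upgrade Theorem~\ref{7.5}: the one genuinely delicate point, the $\Aut(G)$-stability of an integral lattice, is handled soundly. Indeed, by the paper's definition of an integral basis, the $\Z$-span of the products $v$ of Lemma~\ref{7.3} is exactly $\Z G$, and since $\Delta^{c+1}=E_{c+1}$ is the $\mathbb{K}$-span of the products of weight $\ge c+1$, the lattice $R_0$ you form is the image of $\Z G$ in $R=\mathbb{K}G/\Delta^{c+1}$ with $\Z$-basis $b_1,\dots,b_d$; any $\alpha\in\Aut(G)$ permutes $G$, hence preserves $\Z G$, the augmentation, $\Delta^{c+1}$, and therefore $R_0$, landing in $\GL_d(\Z)=U_d$. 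The remaining ingredients are all supplied by the paper: $G$ embeds in $R$ (Jennings' theorem, via $\bar\gamma_{c+1}(G)=1$ for a $\mathcal{J}$-group of class $c$), acts on $R_0$ unitriangularly by Theorem~\ref{7.5}, and the relation $\overline\alpha\circ T_{g}\circ\overline\alpha^{-1}=T_{\alpha(g)}$ is exactly the semidirect-product relation, while evaluation at $1\in R$ gives faithfulness. Your convention fix ($g\mapsto(r\mapsto rg^{-1})$) is fine since the inverse of a unitriangular integer matrix is again one. I see no gap.
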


\section{Theorems of Schur and Baer}
\begin{thm}[Schur]\label{8.1}
Let $|G:Z_1| = m$ be finite. Then, for $x$ in $G$, the map $x \to x^m$ is homomorphic and $|G'|$ is an $m$-number
\end{thm}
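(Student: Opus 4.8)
The plan is to construct the transfer homomorphism from $G$ into its centre $Z_1$, to identify it with the map $x \mapsto x^{m}$, and then to read off the structure of $G'$.

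First I would record the trivial preliminaries. Since $[G:Z_1]=m$, the coset $xZ_1$ has order dividing $m$ in $G/Z_1$, so $x^{m}\in Z_1$ for every $x\in G$. Moreover, as conjugation by a central element is trivial, Lemma \ref{lemma2.1} shows that $[x,y]$ depends only on the cosets $xZ_1$ and $yZ_1$; hence, fixing a transversal $t_1,\dots,t_m$ of $Z_1$ in $G$, every commutator is one of the at most $m^{2}$ elements $[t_i,t_j]$, and in particular $G'=\langle [t_i,t_j]:1\le i,j\le m\rangle$ is finitely generated.

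The heart of the matter is the map $V\colon G\to Z_1$ defined as follows. For $x\in G$ write $xt_i=t_{\pi_x(i)}z_i(x)$ with $z_i(x)\in Z_1$ and $\pi_x$ a permutation of $\{1,\dots,m\}$, and set $V(x)=\prod_i z_i(x)$ (the order of the factors being irrelevant, since $Z_1$ is abelian). A direct check gives $\pi_{xy}=\pi_x\pi_y$ and $z_i(xy)=z_{\pi_y(i)}(x)z_i(y)$, whence $V(xy)=V(x)V(y)$, so $V$ is a homomorphism. To evaluate it: left multiplication by $x$ on the $m$ cosets of $Z_1$ has all of its orbits of the common length $\ell=\mathrm{ord}(xZ_1)$ in $G/Z_1$ (they are the cosets of $\langle xZ_1\rangle$), so there are $m/\ell$ cycles; telescoping the factors $z_i(x)$ around one cycle produces a conjugate $t^{-1}x^{\ell}t$ of $x^{\ell}$, and since $x^{\ell}\in Z_1$ this conjugate is just $x^{\ell}$. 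Multiplying over all $m/\ell$ cycles gives $V(x)=(x^{\ell})^{m/\ell}=x^{m}$. Thus $x\mapsto x^{m}$ is a homomorphism $G\to Z_1$. I expect this identification of the transfer with the $m$-th power map to be the only genuinely delicate step; the rest is bookkeeping.

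Finally, to bound $|G'|$: since $V$ is a homomorphism into the abelian group $Z_1$ it annihilates every commutator, so $V(G')=1$, i.e.\ $g^{m}=1$ for all $g\in G'$, and in particular $G'$ has exponent dividing $m$. Also $G'/(G'\cap Z_1)\cong G'Z_1/Z_1\le G/Z_1$, so $[G':G'\cap Z_1]$ divides $m$. Hence $G'\cap Z_1$ has finite index in the finitely generated group $G'$, so it too is finitely generated; being abelian of exponent dividing $m$, it is therefore finite with order dividing a power of $m$. Combining, $|G'|=[G':G'\cap Z_1]\cdot|G'\cap Z_1|$ divides a power of $m$, so $|G'|$ is an $m$-number, as claimed.
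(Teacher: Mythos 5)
Your proposal is correct and follows essentially the same route as the paper: both prove the result by the transfer into $Z_1$, identify it with $x\mapsto x^m$ (you simply construct the transfer from scratch where the paper invokes it), deduce that $G'$ has exponent dividing $m$, observe that $G'$ is finitely generated because commutators depend only on cosets mod $Z_1$, and finish via the finitely generated abelian subgroup $G'\cap Z_1$ of $m$-number index in $G'$.
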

\begin{proof}
Proof by transfer. Let $\tau$ be the transfer of $G$ into its centre $Z_1$. For any $x$ in $G$, $\tau(x)$ has the form 
$$
\prod_i s_i x^{\rho_i} s_i^{-1} \quad \text{ with } \quad \sum_i \rho_i = m
$$
and each $s_i x^{\rho_i}s_i^{-1} \in Z_1.$ Hence $s_i x^{\rho_i} s_i^{-1} = x^{\rho_i}$ and $\tau(x) = x^m$. But $\tau$ is homomorphic.

Since $\tau(G) \leq Z_1$, it is Abelian and the kernel $K$ of $\tau$ contains $G'$. Hence $y^m = 1$ for all $y \in G'.$ Let $L = G' \cap Z_1.$ Since $[x_1, x_2] = [x_1', x_2']$ whenever 
$$
x_1 \equiv x_1' \quad \text{ and } \quad x_2 \equiv x_2' \mod Z_1,
$$ 
$G'$ is finitely generated. But $|G'/L| = |Z_1 G' / Z_1|$ which is finite and an $m$-number. Hence $L$ is a finitely generated Abelian group whose elements $z$ all satisfy $z^m = 1.$ Therefore $|L|$ is an $m$-number, and so $|G'|$ is an $m$-number.
\end{proof}

\begin{lemma}[Baer]\label{8.2}
Let $H \nsub G$, let $K$ be a subgroup of $H$ such that $[K,G]=1$ and let $|H:K| = m$ be finite. Then $[H,G]^m =1.$
\end{lemma}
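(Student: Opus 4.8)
The plan is to imitate the proof of Theorem~\ref{8.1} (Schur), with $H$ in place of $G$ and $K$ in place of the centre. First I would record the observations that set this up. Since $[K,G]=1$ and $H\le G$, we also have $[K,H]=1$, so $K$ is contained in the centre of $H$; and of course $K\le\zeta_1(G)$. Because $[H:K]=m$ is finite and $K$ is abelian, the transfer $\tau\colon H\to K$ is defined, and it is a homomorphism.

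The second step is to show, exactly as in the proof of Theorem~\ref{8.1}, that $\tau(x)=x^m$ for every $x\in H$. Picking a transversal $s_1,\dots,s_m$ of $K$ in $H$, the transfer of $x$ can be written $\prod_i s_i x^{\rho_i} s_i^{-1}$ with $\sum_i\rho_i=m$ and each factor $s_i x^{\rho_i} s_i^{-1}$ lying in $K$; since $K$ is central in $H$, each such factor equals $x^{\rho_i}$, so $\tau(x)=x^{\sum_i\rho_i}=x^m$. In particular $x^m\in K\le\zeta_1(G)$ for every $x\in H$, and $x\mapsto x^m$ is a homomorphism of $H$.

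The third step applies this to commutators. Fix $h\in H$ and $g\in G$. Since $H\nsub G$ we have $h^g\in H$, and $(h^g)^m=(h^m)^g=h^m$ because $h^m\in\zeta_1(G)$. Hence, using that $\tau$ is a homomorphism with $\tau(y)=y^m$ on all of $H$,
\[
\tau([h,g])=\tau(h^{-1}h^g)=\tau(h)^{-1}\tau(h^g)=(h^m)^{-1}(h^m)^g=1 .
\]
Thus every generator $[h,g]$ of $[H,G]$ lies in $\ker\tau$, so $[H,G]\le\ker\tau$. As $[H,G]\le H$ and $\tau$ acts on $H$ by $y\mapsto y^m$, this gives $w^m=\tau(w)=1$ for every $w\in[H,G]$, i.e.\ $[H,G]^m=1$.

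I do not expect a genuine obstacle here: the only thing to notice is that centrality of $K$ in $H$ --- immediate from $[K,G]=1$ --- is precisely what is needed to run Schur's transfer computation verbatim, and that the conjugation-invariance $h^m\in\zeta_1(G)$ is what drives the commutators into $\ker\tau$. The one point requiring a little care is the standard form of the transfer, $\tau(x)=\prod_i s_i x^{\rho_i} s_i^{-1}$ with $\sum_i\rho_i=m$, but this is already invoked in the proof of Theorem~\ref{8.1}.
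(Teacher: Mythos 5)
Your proof is correct and follows essentially the same route as the paper: transfer $\tau\colon H\to K$, the Schur computation $\tau(x)=x^m$ using centrality of $K$ in $H$, the evaluation $\tau([x,y])=x^{-m}(x^y)^m=[x^m,y]=1$ from $[K,G]=1$, and the conclusion that $[H,G]\le\ker\tau$ forces $w^m=\tau(w)=1$ for all $w\in[H,G]$. Your explicit remark that $[H,G]\le H$ (via $H\nsub G$) and that $\tau(w)=w^m$ on all of $\ker\tau$ makes fully precise the final step that the paper states more tersely.
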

\begin{proof}
Let $\tau$ be the transfer of $H$ into $K$. Since $K$ is contained in the centre of $H$, $\tau(x) = x^m$ for $x \in H$, as in the proof of (\ref{8.1}). Let $y \in G$. Then $$\tau([x,y]) = \tau(x^{-1} \cdot x^y) = \tau(x^{-1}) \tau(x^y) = x^{-m}(x^y)^m = [x^m, y] = 1$$ since $x^m \in K$ and $[K,G]=1.$ But $\tau([x,y]) = [x,y]^m.$ Hence $[x,y]^m = 1.$ All generators $[x,y]$ of $[H,G]$ belong to the kernel of $\tau$, so $[H,G]^m = 1.$
\end{proof}

\begin{thm}[Baer]\label{8.3}
Let $H,$ $H_1,$ $K,$ $K_1$ be normal subgroups of $G$ such that $H \leq H_1,$ $K \leq K_1$ and $|H_1:H| = m$ and $|K_1:K| = n$ are finite. Then $|[H_1, K_1]:[H_1, K][H, K_1]|$ is a $m$-number.
\end{thm}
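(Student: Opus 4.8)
The plan is to pass to a quotient where the statement simplifies, reduce it to a finiteness-plus-order assertion about one commutator subgroup, and then exploit the three-subgroups lemma, Schur's theorem (\ref{8.1}), and Lemma \ref{lemma4.1}. First I would put $N=[H_1,K][H,K_1]$. Since $H,H_1,K,K_1$ are normal in $G$, Lemma \ref{lemma2.3} makes $[H_1,K]$, $[H,K_1]$, and hence $N$, normal in $G$, and clearly $N\le[H_1,K_1]$. Replacing $G$ by $G/N$, and then by $H_1K_1$ (harmless, as $H,H_1,K,K_1$ stay normal), the assertion becomes: if $[H_1,K]=[H,K_1]=1$ with $H,H_1,K,K_1$ normal in $G=H_1K_1$ and $|H_1:H|=m$, $|K_1:K|=n$ finite, then $A:=[H_1,K_1]$ is finite of order an $m$-number.

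For the structure of $A$: fix transversals $t_1,\dots,t_m$ of $H$ in $H_1$ and $s_1,\dots,s_n$ of $K$ in $K_1$. Expanding $[x,y]$ with the identities of Lemma \ref{lemma2.1} and using $[H,K_1]=[H_1,K]=1$, one finds $[x,y]=[t_i,s_j]$ for the relevant $i,j$, so $A$ is generated by the at most $mn$ elements $[t_i,s_j]$; in particular $A$ is finitely generated. By the corollary to Lemma \ref{lemma3.2} (the three-subgroups lemma), $[A,H]=[A,K]=1$; hence $A\cap H\le Z(A)$, and since $A\le H_1$ with $|H_1:H|=m$ we get $|A:Z(A)|\mid m$. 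Schur's theorem \ref{8.1} then gives that $|A'|$ is a finite $m$-number; consequently the preimage $T$ in $A$ of the torsion subgroup of the finitely generated abelian group $A/A'$ is finite, and, being characteristic in $A$, is normal in $G$.

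The crux is proving $A$ itself finite. Work in $\widetilde G=G/T$, where $\widetilde A=A/T$ is free abelian of finite rank. Because $[H,K_1]=1$, the subgroup $\widetilde K_1$ centralizes the finite-index subgroup $\widetilde A\cap\widetilde H$ of $\widetilde A$; but an automorphism of a free abelian group of finite rank that fixes a finite-index subgroup pointwise is the identity, so $\widetilde K_1$ centralizes all of $\widetilde A$, and symmetrically so does $\widetilde H_1$. Thus $\widetilde A=[\widetilde H_1,\widetilde K_1]$ is central in $\widetilde G=\widetilde H_1\widetilde K_1$, so by Lemma \ref{lemma4.1} the commutator $[x,y]$ is homomorphic in each argument on $\widetilde H_1\times\widetilde K_1$. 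Since $t_i^{m}\in H$, this yields $[\widetilde t_i,\widetilde s_j]^{m}=[\widetilde t_i^{\,m},\widetilde s_j]=1$; as the $[\widetilde t_i,\widetilde s_j]$ generate $\widetilde A$, the free abelian group $\widetilde A$ has exponent dividing $m$, forcing $\widetilde A=1$, i.e.\ $A=T$ is finite.

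It remains to see $|A|$ is an $m$-number. We already know $|A'|$ is; for $|A/A'|$ I would argue $p$-locally. Given a prime $p\nmid m$, the Sylow $p$-subgroup of $A$ lies inside $A\cap H\le Z(A)$ (its index in $A$ divides $m$, hence is prime to $p$), so it is central in $A$, therefore the unique, characteristic Sylow $p$-subgroup of $A$, hence normal in $G$; it is contained in $H$ and is centralized both by $H$ (as $[A,H]=1$) and by $K_1$ (as $[H,K_1]=1$). A coprime-action argument, again driven by $t_i^{m}\in H$ and Lemma \ref{lemma4.1}, then forces this subgroup to be trivial, so $|A/A'|$, and with it $|A|=|[H_1,K_1]:N|$, is an $m$-number; unwinding the reductions finishes the proof. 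I expect the finiteness step to be the main obstacle: the formal commutator calculus only pins down $A$ modulo central-type subgroups, and it is the tension between $A\le H_1$, the finiteness of $|H_1:H|$, and $[H,K_1]=1$ — made precise by the remark on automorphisms of free abelian groups — that makes $A$ genuinely finite.
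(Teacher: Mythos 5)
Your reduction, the generation of $A=[H_1,K_1]$ by the $mn$ commutators $[t_i,s_j]$, and your finiteness argument are all correct, and the finiteness step is genuinely different from Hall's. Hall first applies Lemma \ref{8.2} to $D=[H_1,K_1]$ with the finite-index central subgroup $E=D\cap H$ to make $[D,K_1]$ a finite $m$-number, kills it, and only then exploits bilinearity on the now-abelian $D$. You instead get $[A,H]=[A,K]=1$ from the three-subgroups lemma, deduce $|A:Z(A)|$ divides $m$, invoke Schur (\ref{8.1}) for $A'$, and dispose of the free abelian quotient $\widetilde{A}$ via the observation that an automorphism of a finitely generated free abelian group fixing a finite-index subgroup pointwise is trivial, so that $\widetilde{A}$ is central and Lemma \ref{lemma4.1} forces $\widetilde{A}^m=1$. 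That is a clean alternative route to finiteness.

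The gap is the last paragraph, and it cannot be repaired, because the assertion you are trying to prove there is false: $|A/A'|$ need not be an $m$-number. (The printed ``$m$-number'' in the statement is a slip for ``$mn$-number''; note that Hall's own proof produces an $m$-number factor $|[D,K_1]|$ and then, separately, an $n$-number factor $|D:[D,K_1]|$, and it is the latter your argument has no access to.) Concretely, let $G$ be nonabelian of order $pq$ with normal Sylow $p$-subgroup $P=\langle a\rangle$, and take $H_1=G$, $H=K_1=P$, $K=1$, so $m=q$ and $n=p$. Then $[H_1,K]=[H,K_1]=1$ while $[H_1,K_1]=[G,P]=P$ has order $p$, which is not a $q$-number. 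Here the Sylow $p$-subgroup of $A=P$ for $p\nmid m$ is all of $A$: it does lie in $A\cap H\leq Z(A)$, is normal in $G$, and is centralized by $H$ and by $K_1$ --- but not by $H_1$, which is exactly what your unstated ``coprime-action argument'' would need. Lemma \ref{lemma4.1} requires $[H_1,K_1,G]=1$, i.e.\ $A$ central in $G$, whereas you only have $[A,H]=[A,K]=1$; conjugation by the transversal elements $t_i$ can still move $A$ (as $b$ moves $\langle a\rangle$ above), so $[t_i^{\,m},s_j]\neq[t_i,s_j]^m$ in general. The correct finish is Hall's: once $A$ is abelian and centralized by $K_1$, Lemma \ref{lemma2.1} makes $y\mapsto[t_i,y]$ a homomorphism $K_1\to A$ that kills $K$, so each $[t_i,s_j]$ has order dividing $n$ and the remaining factor of the index is an $n$-number; the index is therefore an $mn$-number, not an $m$-number.
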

\begin{proof}
We may suppose $[H_1,K] = [H, K_1] = 1$ since these are normal subgroups of $G$. Choose representatives $u_\rho$, $v_\alpha$ for the cosets $\rho$ of $H$ in $H_1$ and the cosets $\alpha$ of $K$ in $K_1$. Every $y_1 \in K_1$ has the form $yv_\alpha$ for some $y \in K$ and some $\alpha.$ If $x_1 \in H_1$, then $[x_1,y] =1$ and so $[x_1, y_1] = [x_1, v_\alpha]$. But $x_1 = xu_\rho$ for some $x \in H$ and some $\rho$. Since $[x,v_\alpha] = 1$ we have $[x_1, v_\alpha] = [u_\rho, v_\alpha].$ Hence $D = [H_1, K_1]$ is finitely generated by the elements $[u_\rho, v_\alpha].$

Let $E = D \cap H.$ Since $D \leq H_1,$ $|D:E| = m'$ divides $|H_1: H| =m$. Hence $E$ is finitely generated. Since $E \leq H,$ $[E,K_1] = 1.$ Also $D \leq K_1$ (so $E$ is Abelian). By (\ref{8.2}) with $K_1, D, E$ for $G,H,K$, we have $[D,K_1]^{m'} = 1.$ But $[D,K_1]$ contains $E \cap [D,K_1]$ as a finitely generated Abelian subgroup of finite index. Therefore $|[D,K_1]|$ is an $m$-number. Since $[D, K_1] \nsub G,$ we may suppose $[D,K_1] = 1.$ Hence $D$ being $\leq K_1$ is Abelian.

Then $v_\beta$ commutes with $[u_\rho, v_\alpha]$ and hence $[u_\rho, v_\alpha v_\beta] = [u_\rho, v_\alpha][u_\rho, v_\beta]$. Therefore $\alpha \to [u_\rho, v_\alpha]$ is homomorphic and, for fixed $\rho \in H_1/H,$ the elements $[u_\rho, v_\alpha]$ form a group $D_\rho$ whose order divides $n$. But $D \leq K_1$ and $[D,K_1] =1$ so $D$ is Abelian. Since $D = \prod_\rho D_\rho$, $|D|$ is an $n$-number.
\end{proof}

\begin{defn*} Let $$\phi = \phi(x_1, \ldots, x_n)$$ be any word in the variables $x_i$, and $G$ any group. Then $\phi^*(G)$ is to consist of all elements $a$ such that 
$$
\phi(x_1, \ldots, x_{i-1}, x_ia, x_{i+1}, \ldots, x_n) = \phi(x_1, \ldots, x_n)
$$
for each $i = 1,2, \ldots, n$ and all choices $x_1, \ldots, x_n$ in $G$. We then call the subgroup $\phi^*(G)$ the \underline{marginal subgroup} of $G$ corresponding to the verbal subgroup $\phi(G)$ generated by all the values of $\phi$ in $G$.
\end{defn*}

\begin{lemma}\label{8.4}\:
\begin{itemize}
    \item[(i)] $\phi^*(G)$ is a characteristic subgroup of $G$. If $$x_i \equiv y_i \text{ mod } \phi^*(G)$$ for each $i=1,2, \ldots,n$, then $\phi(x) = \phi(y)$ and $\phi^*(G)$ is the largest normal subgroup of $G$ with this property.
    \item[(ii)] $\phi(\phi^*(G)) = 1$. In particular, $\phi(G) = 1 \Leftrightarrow \phi^*(G) = G.$
    \item[(iii)] If $H \nsub G$ and $H \cap \phi(G) = 1,$ then $H \leq \phi^*(G).$
    \item[(iv)] If $K / \phi^*(G)$ is the centre of $G / \phi^*(G)$, then $[K, \phi(G)]=1.$ In particular, $[\phi^*(G), \phi(G)] = 1.$
    \item[(v)] $\phi(G \times H) = \phi(G) \times \phi(H)$, $\phi^*(G \times H) = \phi^*(G) \times \phi^*(H)$.
    \item[(vi)] $H\phi^*(G) = G$ implies $\phi(H) = \phi(G).$
    \item[(vii)] If $K_\lambda \nsub G,$ $$\bigcap_{\lambda \in \Lambda} K_\lambda = 1,$$ then $\phi^*(G) = \bigcap_{\lambda \in \Lambda} \phi^*(G \text{ mod } K_\lambda)$.
\end{itemize}
\end{lemma}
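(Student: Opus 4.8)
The plan is to derive all seven parts straight from the definition: $a \in \phi^*(G)$ means that for each index $i$ and all $x_1,\dots,x_n \in G$, replacing $x_i$ by $x_i a$ leaves the value $\phi(x_1,\dots,x_n)$ unchanged. The single fact that powers everything is what I will call the \emph{substitution lemma}: if $a_1,\dots,a_n \in \phi^*(G)$, then $\phi(x_1 a_1,\dots,x_n a_n) = \phi(x_1,\dots,x_n)$ for all $x_i$. I would prove this by altering the arguments one at a time; the crucial point is that the marginal property of $a_j$ is quantified over \emph{all} choices of the remaining arguments, so it still applies after $x_1,\dots,x_{j-1}$ have already been replaced by $x_1 a_1,\dots,x_{j-1}a_{j-1}$. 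Combined with $\phi(1,\dots,1)=1$, this is the engine behind (i), (ii), (iv) and (vi).

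For (i): closure of $\phi^*(G)$ under multiplication and inversion is a one-line manipulation of the definition ($\phi(\dots,x_i(ab),\dots)=\phi(\dots,(x_ia)b,\dots)=\phi(\dots,x_ia,\dots)=\phi(\dots,x_i,\dots)$, and for inverses apply the property of $a$ to the tuple with $x_ia^{-1}$ in the $i$th slot); that $\phi^*(G)$ is characteristic follows because any endomorphism $\alpha$ satisfies $\alpha\phi(x_1,\dots,x_n)=\phi(\alpha x_1,\dots,\alpha x_n)$ and so transports the marginal property; the congruence statement is exactly the substitution lemma; and the maximality of $\phi^*(G)$ among normal subgroups with that property comes from specializing $y_j=x_j$ for $j\ne i$ and $y_i=x_i a$. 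For (ii), $\phi(\phi^*(G))$ is generated by values $\phi(a_1,\dots,a_n)$ with all $a_i\in\phi^*(G)$, and the substitution lemma collapses each such value to $\phi(1,\dots,1)=1$; the equivalence $\phi(G)=1\iff\phi^*(G)=G$ is then immediate in both directions.

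The remaining parts are short consequences. For (iii): if $a\in H$, $H\nsub G$, $H\cap\phi(G)=1$, then $\phi(\dots,x_i a,\dots)$ and $\phi(\dots,x_i,\dots)$ are congruent modulo $H$ (the residue of a word depends only on the residues of its arguments) and both lie in the subgroup $\phi(G)$, so they differ by an element of $H\cap\phi(G)=1$ and are equal, whence $a\in\phi^*(G)$. For (iv): if $a$ is central modulo $\phi^*(G)$ then $x_j^a=x_j[x_j,a]\equiv x_j\pmod{\phi^*(G)}$, so by (i) conjugation by $a$ fixes $\phi(x_1,\dots,x_n)=\phi(x_1^a,\dots,x_n^a)$, i.e.\ $a$ centralizes every value and hence $\phi(G)$; applying this with $K=\phi^*(G)$ gives $[\phi^*(G),\phi(G)]=1$. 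For (v): values of $\phi$ on $G\times H$ are computed coordinatewise, so they form the set $\{(\phi(g),\phi(h))\}$, which generates $\phi(G)\times\phi(H)$ (set $h=1$, resp.\ $g=1$, to get each factor), and the marginal identity splits into its two coordinates, giving $\phi^*(G\times H)=\phi^*(G)\times\phi^*(H)$. For (vi): writing $x_i=h_i a_i$ with $h_i\in H$, $a_i\in\phi^*(G)$, the substitution lemma gives $\phi(x_1,\dots,x_n)=\phi(h_1,\dots,h_n)\in\phi(H)$, so $\phi(G)\leq\phi(H)$, the reverse inclusion being trivial. For (vii), interpreting $\phi^*(G\bmod K_\lambda)$ as the full preimage in $G$ of $\phi^*(G/K_\lambda)$: one inclusion follows by pushing the marginal identity through the quotient map $G\to G/K_\lambda$; for the other, if $a$ is marginal modulo every $K_\lambda$ then for each $i$ and all $x$ the element $\phi(\dots,x_i a,\dots)^{-1}\phi(\dots,x_i,\dots)$ lies in every $K_\lambda$, hence in $\bigcap_\lambda K_\lambda=1$, so $a\in\phi^*(G)$.

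The content is almost entirely bookkeeping, so I do not expect a genuine obstacle; the one place to be careful is precisely the quantifier in the substitution lemma — one must keep track that the defining property of a marginal element may legitimately be applied to a tuple that has already been partially modified — together with, in (iii) and (vii), the elementary observation that a word's value modulo a normal subgroup is a function of the arguments modulo that subgroup.
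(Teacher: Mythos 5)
Your proposal is correct and follows essentially the same route as the paper: the paper declares (i) clear, proves (ii) by the same collapse $\phi(a_1,\ldots,a_n)=\phi(1,\ldots,1)=1$, proves (iii) via the identical element $u_i=(\phi(x))^{-1}\phi(x_1,\ldots,x_ia,\ldots,x_n)\in H\cap\phi(G)$, proves (iv) by the same conjugation computation, and omits (v)--(vii) entirely. Your explicit ``substitution lemma'' and the bookkeeping for the omitted parts are exactly the routine verifications the paper leaves to the reader, and they are carried out correctly.
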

\begin{proof}(i) is clear. (ii) If $a_1, \ldots, a_n$ belong to $\phi^*(G)$, we have $$\phi(a_1, \ldots, a_n) = \phi(1,1, \ldots, 1) = 1,$$ by (i). 
    
    (iii) Let $a \in H.$ Since $H \nsub G,$ the element $$u_i = (\phi(x))^{-1}\phi(x_1, \ldots, x_ia, \ldots, x_n)$$ belongs to $H$. But $u_i \in \phi(G).$ Hence $u_i = 1$ for all $i$ and all choices of $x_1, \ldots, x_n$ in $G$. Thus $H \leq \phi^*(G)$. 
    
    (iv) Let $y \in K$. Then 
    $$
    y^{-1} \phi(x) y = \phi(x_1[x_1,y], \ldots, x_n[x_n,y]) = \phi(x)$$ since each $[x_i, y] \in \phi^*(G).$ Hence $[K, \phi(G)]=1.$ \end{proof}

\begin{lemma}\label{8.5}
Let $\theta(x_1, \ldots, x_m)$ and $\phi(x_1, \ldots, x_n)$ be any two words and let $$\psi(x_1, \ldots, x_{m+n}) = [\theta(x_1, \ldots, x_m), \phi(x_{m+1}, \ldots, x_{m+n})].$$ Let $G$ be any group and let $U, V$ be the centralisers in $G$ of $\theta(G)$ and $\phi(G).$ Then $\psi^*(G) = H \cap K$ where $H/V = \theta^*(G/V)$ and $K/U = \phi^*(G/U).$
\end{lemma}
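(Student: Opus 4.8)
The plan is to compute $\psi^*(G)$ directly from the definition of the marginal subgroup, separating the $m+n$ defining "shift" conditions into the $m$ that only perturb the $\theta$-slot of $\psi=[\theta,\phi]$ and the $n$ that only perturb the $\phi$-slot, and showing that the first family is exactly the condition $\bar a\in\theta^*(G/V)$ and the second exactly $\bar a\in\phi^*(G/U)$.

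The single computational ingredient I would isolate first is the elementary remark that in any group $[s,p]=[t,p]$ holds if and only if $st^{-1}\in C_G(p)$; this is a one-line rearrangement of $[s,p]=s^{-1}p^{-1}sp$ (the same manipulation underlying Lemma~\ref{lemma2.1}). Next, fix $a\in G$ and an index $i$ with $1\le i\le m$. Since $\theta$ involves only $x_1,\dots,x_m$ and $\phi$ only $x_{m+1},\dots,x_{m+n}$, replacing $x_i$ by $x_ia$ turns $\psi(x)$ into $[\theta',\phi]$ with $\theta'=\theta(x_1,\dots,x_ia,\dots,x_m)$ and $\phi=\phi(x_{m+1},\dots,x_{m+n})$ unchanged. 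By the remark, $[\theta',\phi]=[\theta,\phi]$ for \emph{every} value of the word $\phi$ exactly when $\theta'\theta^{-1}$ centralises every $\phi$-value, and since $\{p:\theta'\theta^{-1}\in C_G(p)\}=C_G(\theta'\theta^{-1})$ is a subgroup this is the same as $\theta'\theta^{-1}\in C_G(\phi(G))=V$. Letting $i$ and the $x_j$ vary and passing to $G/V$ (onto which $G$ maps surjectively), this says precisely that the image $\bar a$ satisfies $\theta(\bar x_1,\dots,\bar x_i\bar a,\dots,\bar x_m)=\theta(\bar x_1,\dots,\bar x_m)$ for all $i$ and all $\bar x_j$, i.e. $\bar a\in\theta^*(G/V)$, i.e. $a\in H$.

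Then I would treat the indices $m<i\le m+n$ symmetrically: shifting $x_i$ by $a$ now turns $\psi(x)$ into $[\theta,\phi']$, and $[\theta,\phi']=[\theta,\phi]\iff[\phi',\theta]=[\phi,\theta]\iff\phi'\phi^{-1}\in C_G(\theta)$ (using $[x,y]^{-1}=[y,x]$ together with the remark). Running over all $\theta$-values gives $\phi'\phi^{-1}\in C_G(\theta(G))=U$, and then over all admissible $i$ and all $x_j$ this is the statement $\bar a\in\phi^*(G/U)$, i.e. $a\in K$. Since by definition $a\in\psi^*(G)$ precisely when both families of conditions hold, this yields $\psi^*(G)=H\cap K$.

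I do not expect a genuine obstacle; the argument is essentially bookkeeping once the displayed remark is in hand. The only two spots that call for a sentence of care are the passage from "all $\phi$-values" to "all elements of $\phi(G)$", which uses that $C_G(\theta'\theta^{-1})$ is a subgroup, and the verification that the images of the $x_j$ sweep out all of $G/V$ (resp. $G/U$), which is what makes the marginal-subgroup condition in the quotient genuinely equivalent to the family of word-identities in $G$.
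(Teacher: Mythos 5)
Your proposal is correct and follows essentially the same route as the paper's proof: both reduce the marginal condition for $\psi=[\theta,\phi]$ to the statement that the ``shift element'' $t_i=\theta(x_1,\ldots,x_ia,\ldots,x_m)\theta(x)^{-1}$ (resp.\ its $\phi$-analogue) lies in $V$ (resp.\ $U$), and then reinterpret this in the quotient $G/V$ (resp.\ $G/U$) as membership in $\theta^*(G/V)$ (resp.\ $\phi^*(G/U)$). The two points of care you flag (passing from $\phi$-values to all of $\phi(G)$ via the centraliser being a subgroup, and surjectivity onto the quotient) are exactly the implicit steps in the paper's argument, so nothing is missing.
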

\begin{proof}
Write $y_j$ for $x_{m +j}.$ Let $a \in \psi^*(G).$ Then, for all choices of $x_i's$ and $y_j's$ in $G$, we must have 
$$
[\theta(x), \phi(y)] = [t_i \theta(x), \phi(y)]
$$
where 
$$
t_i = \theta(x_1, \ldots, x_i a, \ldots, x_m) (\theta(x))^{-1}.
$$
Hence $[t_i, \phi(y)] = 1$ for all $y$; so $t_i \in V$ for all $i = 1,2, \ldots,m.$ Therefore $a \in H.$ Similarly $a \in K.$ Conversely if $a \in H \cap K,$ we have $t_i \in V$ for all $i$ and all choices of the $x_j$ in $G$. Hence $[\theta(x), \phi(y)]$ is unaffected when $x_i$ is replaced by $x_ia.$ Similarly, it is unaltered when $y_j$ is replaced by $y_ja.$ Thus $a \in \psi^*(G).$
\end{proof}
\begin{lemma}\label{8.6}
$\gamma_{k+1}^*(G) = \zeta_k(G)$ for all groups $G$.
\end{lemma}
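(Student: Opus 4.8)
The plan is to induct on $k$, using Lemma \ref{8.5} to strip off the last variable of the commutator word $\gamma_{k+1}(x_1,\dots,x_{k+1})=[x_1,\dots,x_{k+1}]$. For the base case $k=0$ the word $\gamma_1$ is just $x_1$, so $a\in\gamma_1^*(G)$ forces $x_1a=x_1$ for all $x_1$, giving $\gamma_1^*(G)=1=\zeta_0(G)$; more generally this observation (put $i=1$, $x_1=1$ in the marginal condition) shows the easy inclusion $\gamma_{k+1}^*(G)\le\zeta_k(G)$ for every $k$, since it forces $[a,x_2,\dots,x_{k+1}]=1$ for all $x_j$, which is exactly membership in $\zeta_k(G)$.

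For the inductive step I would write $\gamma_{k+1}(x_1,\dots,x_{k+1})=[\gamma_k(x_1,\dots,x_k),\,\iota(x_{k+1})]$, where $\iota$ is the identity word in one variable, and apply Lemma \ref{8.5} with $\theta=\gamma_k$ and $\phi=\iota$. Here $\theta(G)=\Gamma_k$ and $\phi(G)=G$, so in the notation of that lemma $U=C_G(\Gamma_k)$ and $V=C_G(G)=\zeta_1(G)$. Since $\iota^*(Q)=1$ for every group $Q$, the factor $K$ collapses: $K=U=C_G(\Gamma_k)$. The other factor $H$ is the full preimage in $G$ of $\gamma_k^*(G/\zeta_1(G))$, and by the induction hypothesis $\gamma_k^*(G/\zeta_1(G))=\zeta_{k-1}(G/\zeta_1(G))$. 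By the standard correspondence for the upper central series, the preimage of $\zeta_{k-1}(G/\zeta_1(G))$ under $G\to G/\zeta_1(G)$ is exactly $\zeta_k(G)$, so $H=\zeta_k(G)$. Lemma \ref{8.5} then yields $\gamma_{k+1}^*(G)=H\cap K=\zeta_k(G)\cap C_G(\Gamma_k)$, and since $[\zeta_k(G),\Gamma_k]=1$ by Theorem \ref{theorem3.4}(ii) we get $\zeta_k(G)\le C_G(\Gamma_k)$, whence the intersection is simply $\zeta_k(G)$.

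The only slightly delicate point is the identification $H=\zeta_k(G)$: one should record that the preimage of $\zeta_{k-1}(G/\zeta_1(G))$ equals $\zeta_k(G)$, which is an easy secondary induction from the defining relation $\zeta_{i+1}(G)/\zeta_i(G)=Z(G/\zeta_i(G))$. Beyond that, the work is purely bookkeeping — verifying $\iota^*(Q)=1$, that $C_G(\phi(G))=\zeta_1(G)$ when $\phi=\iota$, and invoking Theorem \ref{theorem3.4}(ii) in the form $[\zeta_k(G),\gamma_k(G)]=1$. I do not anticipate a genuine obstacle: all the substance is already packaged into Lemma \ref{8.5} together with the commutator identity $[\zeta_k(G),\gamma_k(G)]=1$.
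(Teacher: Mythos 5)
Your proof is correct and follows essentially the same route as the paper: induction on $k$, applying Lemma \ref{8.5} with $\theta=\gamma_k$ and $\phi=\gamma_1$, identifying $V=\zeta_1(G)$ and $H=\zeta_k(G)$ via the inductive hypothesis applied to $G/\zeta_1(G)$. The only cosmetic difference is that you justify $\zeta_k(G)\le C_G(\Gamma_k)$ by citing Theorem \ref{theorem3.4}(ii), whereas the paper derives the same containment from Lemma \ref{8.4}(iv) together with the inductive hypothesis; both are valid.
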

\begin{proof}
This clear when $k=0.$ Take $\theta = \gamma_k$ and $\phi = \gamma_1$ in (\ref{8.5}). Then $\psi = \gamma_{k+1}.$ Suppose $\gamma_k^*(G) = \zeta_{k-1}(G).$ Then by (\ref{8.4}(iv)), $U \geq \zeta_k(G)$ and so $K \geq \zeta_k(G)$. But $V= \zeta_1(G)$ and hence $H = \zeta_k(G).$ Thus $\gamma_{k+1}^*(G) = H \cap K =\zeta_k(G).$ The result follow from induction on $k$.
\end{proof}

\begin{defn*} The word $\phi$ has the \underline{Schur-Baer property} if, in any group $G$ for which $|G:\phi^*(G)| = m$ is finite, $|\phi(G)|$ is always an $m$-number. 
\end{defn*}
Schur's theorem (\ref{8.1}) states that $\gamma_2$ has the Schur-Baer property. The conjecture is that all words have this property.

\begin{thm}[Baer]\label{8.7}
If $\theta$ and $\phi$ have the Schur-Baer property, so does the word $\psi$ of (\ref{8.5}).
\end{thm}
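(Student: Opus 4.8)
The plan is to combine Lemma \ref{8.5}, which describes $\psi^*(G)$, with the Schur-Baer hypotheses on $\theta$ and $\phi$ and with Theorem \ref{8.3} (Baer). Write $A = \theta(G)$ and $B = \phi(G)$, which are verbal, hence fully invariant, subgroups of $G$; let $U = C_G(A)$ and $V = C_G(B)$, which are normal in $G$ as centralisers of normal subgroups; and let $H, K$ be the subgroups of $G$ with $H/V = \theta^*(G/V)$ and $K/U = \phi^*(G/U)$, so that $\psi^*(G) = H \cap K$ by Lemma \ref{8.5}. Assume $|G : \psi^*(G)| = m$ is finite. Then $|G:H|$ and $|G:K|$ are finite divisors of $m$; call them $m_1$ and $m_2$.

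Next I would exploit the Schur-Baer hypotheses after passing to quotients. Applying the Schur-Baer property of $\theta$ to the group $G/V$: since $|(G/V) : \theta^*(G/V)| = |G:H| = m_1$ is finite, $|\theta(G/V)|$ is an $m_1$-number; and since $\theta(G/V) = AV/V \cong A/(A \cap V)$, this gives that $|A : A \cap V|$ is an $m_1$-number, in particular finite. Symmetrically, the Schur-Baer property of $\phi$ applied to $G/U$ shows $|B : B \cap U|$ is an $m_2$-number and finite. Set $A_0 = A \cap V$ and $B_0 = B \cap U$. These are normal in $G$, and directly from the definitions of $U$ and $V$ we have $[A_0, B] = 1$ and $[A, B_0] = 1$.

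Finally I would invoke Theorem \ref{8.3} with $H_1 = A$, $H = A_0$, $K_1 = B$, $K = B_0$: all four subgroups are normal in $G$, with $|A : A_0|$ and $|B : B_0|$ finite, so $|[A,B] : [A, B_0][A_0, B]|$ is an $|A:A_0|$-number, hence an $m$-number since $|A:A_0|$ is one. As $[A, B_0][A_0, B] = 1$, this forces $|[A,B]|$ itself to be an $m$-number; and $\psi(G) = [\theta(G), \phi(G)] = [A,B]$ by Lemma \ref{lemma1.5}, so $|\psi(G)|$ is an $m$-number, which is exactly the Schur-Baer property for $\psi$. I expect the only delicate point to be the bookkeeping in Lemma \ref{8.5}: one must notice that the quotient by $V = C_G(\phi(G))$ governs the $\theta$-marginal part and the quotient by $U = C_G(\theta(G))$ the $\phi$-part, so that $A_0$ ends up centralising $B$ and $B_0$ centralising $A$, making the two error terms in Theorem \ref{8.3} trivial. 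Once the configuration is set up correctly, Theorem \ref{8.3} carries the whole argument.
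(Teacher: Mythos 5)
Your proposal is correct and follows essentially the same route as the paper's proof: the same reduction via Lemma \ref{8.5}, the same application of the Schur--Baer hypotheses to $G/V$ and $G/U$ to make $|A:A\cap V|$ and $|B:B\cap U|$ finite $m$-numbers, and the same invocation of Theorem \ref{8.3} with the two error terms trivial. Your $A_0$ and $B_0$ are precisely the paper's $H=H_1\cap V$ and $K=K_1\cap U$, so nothing further is needed.
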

\begin{proof}
Let $G$ be any group, let $H_1 =  \theta(G), K_1 = \phi(G),$ let $U$ and $V$ be the centralisers of $H_1$ and $K_1$ in $G$, and let $H = H_1 \cap V$ and $K = K_1 \cap U.$ By (\ref{8.5}) $\psi^*(G) = L \cap M$ where $L/M = \theta^*(G/V)$ and $M/U = \phi^*(G/U).$ Suppose $|G:\psi^*(G)| = m$ is finite. Then $|G:L| = |G/V : \theta^*(G/V)|$ divides $m$. Since $\theta$ has the Schur-Baer property, it follows that $|\theta(G/V)|$ is an $m$-number. But $\theta(G/V) = V \theta(G)/V$ and so $|\theta(G/V)| = |H_1:H|$. Similarly, $|K_1:K|$ is an $m$-number. Therefore, by (\ref{8.3}), $|[H_1,K_1]: [H_1, K][H, K_1]]$ is an $m$-number. But $[H_1,K] = [H,K_1] = 1,$ and $[H_1,K_1] = \psi(G)$ by (\ref{lemma1.5}). Thus $|\psi(G)|$ is an $m$-number, as required.
\end{proof}

A more trivial result is
\begin{lemma}\label{8.8}
If $\theta(x_1, \ldots, x_m)$ and $\psi(x_1, \ldots, x_n)$ are any two words with the Schur-Baer property, then 
$$
\chi(x_1, \ldots, x_{m+n}) = \theta(x_1, \ldots, x_m) \phi(x_{m+1}, \ldots, x_{m+n})
$$ 
also has that property. Also, $\chi(G) = \theta(G) \phi(G)$ and $\chi^*(G) = \theta^*(G) \cap \phi^*(G)$.
\end{lemma}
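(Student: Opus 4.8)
The plan is to first establish the two structural identities $\chi(G)=\theta(G)\phi(G)$ and $\chi^*(G)=\theta^*(G)\cap\phi^*(G)$, and then obtain the Schur-Baer property for $\chi$ as a formal consequence of these together with the hypotheses on $\theta$ and $\phi$.

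For the first identity, substituting the identity for all arguments of a word yields the identity, so setting $x_{m+1}=\dots=x_{m+n}=1$ in $\chi$ exhibits every $\theta$-value as a $\chi$-value, and symmetrically every $\phi$-value is a $\chi$-value; hence $\theta(G),\phi(G)\le\chi(G)$. Conversely every $\chi$-value lies in the set product $\theta(G)\phi(G)$, and this set product is itself a subgroup because $\theta(G)$ and $\phi(G)$ are verbal, hence normal, subgroups of $G$. The two inclusions give $\chi(G)=\theta(G)\phi(G)$.

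For the marginal subgroup I would argue directly from the definition, exploiting the fact that $\theta$ uses only $x_1,\dots,x_m$ while $\phi$ uses only $x_{m+1},\dots,x_{m+n}$. If $1\le i\le m$, replacing $x_i$ by $x_i a$ changes only the left factor $\theta(x_1,\dots,x_m)$ of $\chi$, so after cancelling the common right factor $\phi(x_{m+1},\dots,x_{m+n})$ in $G$ the condition imposed on $a$ by this index is exactly the one coming from $\theta^*(G)$; if $m<i\le m+n$, only the right factor changes, and cancelling $\theta(x_1,\dots,x_m)$ on the left shows the condition is exactly the one coming from $\phi^*(G)$. Running over all $i$ yields $a\in\chi^*(G)\iff a\in\theta^*(G)\cap\phi^*(G)$.

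Finally, suppose $|G:\chi^*(G)|=m$ is finite. Since $\chi^*(G)\le\theta^*(G)$, the index $|G:\theta^*(G)|$ is finite and divides $m$, so by the Schur-Baer property of $\theta$ the order $|\theta(G)|$ is an $m$-number; likewise $|\phi(G)|$ is an $m$-number. Then $\chi(G)=\theta(G)\phi(G)$ is finite of order dividing $|\theta(G)|\,|\phi(G)|$, so $|\chi(G)|$ is an $m$-number, as required. I do not expect a genuine obstacle here: the only slightly delicate points are noticing that the product of the two normal verbal subgroups is again a subgroup, and keeping the bookkeeping straight so that the disjointness of the two variable blocks makes the marginal conditions decouple cleanly — which is exactly why this lemma is, as the text remarks, "more trivial" than Theorem \ref{8.7}.
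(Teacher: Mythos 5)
Your proof is correct; the paper states Lemma \ref{8.8} without proof (calling it ``a more trivial result''), and your argument supplies exactly the intended justification: the disjointness of the variable blocks decouples the marginal conditions, the product of the two normal verbal subgroups is a subgroup, and the index/divisibility bookkeeping for the Schur--Baer conclusion is right.
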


A more interesting result is
\begin{thm}\label{8.9}
Let the group $G$ have a polycyclic subgroup of finite index and let $\theta$ be any word such that $|G: \theta^*(G)| = m$ is finite. Then $|\theta(G)|$ is an $m$-number.
\end{thm}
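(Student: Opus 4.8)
The plan is to split the assertion into two parts: that $\theta(G)$ is finite, and that $|\theta(G)|$ is an $m$-number. Write $M=\theta^*(G)$, a characteristic subgroup of index $m$. By Lemma~\ref{8.4}(i) a value $\theta(g_1,\dots,g_n)$ depends only on the cosets $g_iM$, so $\theta(G)$ is generated by the at most $m^n$ distinct such values; thus $\theta(G)$ is finitely generated, and, being a verbal subgroup of a virtually polycyclic group, it satisfies \textbf{Max}. By Lemma~\ref{8.4}(iv) we have $[M,\theta(G)]=1$, so $C:=\theta(G)\cap M$ lies in the centre of $\theta(G)$ (hence is abelian), has index $m'$ dividing $m$ in $\theta(G)$, and by Lemma~\ref{8.4}(ii) satisfies $\theta(C)=1$. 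Since $\theta(G)$ is centre-by-finite, Schur's theorem~(\ref{8.1}) shows $\theta(G)'$ is finite with $|\theta(G)'|$ an $m$-number, so it remains to understand $\theta(G)/\theta(G)'$.

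Let $e$ be the g.c.d.\ of the exponent-sums of the variables occurring in $\theta$. If $e\neq 0$, then on the abelian group $C$ one has $\theta(C)=C^{e}$, so $C$ has exponent dividing $e$; moreover each generating value $t$ has image of order dividing $m'$ in $\theta(G)/C$, so $t^{m'}\in C$ and $t^{m'e}=1$. Hence $\theta(G)/\theta(G)'$ is a finitely generated abelian group generated by torsion elements, so finite, and $\theta(G)$ is finite. If $e=0$, then $\theta\in\gamma_2(F)$ with $F$ free on the variables; assuming $\theta\neq1$ (otherwise $\theta(G)=1$), let $c\ge2$ be largest with $\theta\in\gamma_c(F)$, which is finite by Theorem~\ref{5.8}. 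Then $\theta(G)\le\gamma_c(G)$, and a short computation from the identities $[\zeta_i(G),\gamma_j(G)]\le\zeta_{i-j}(G)$ of Theorem~\ref{theorem3.4}(ii) gives $\zeta_{c-1}(G)\le M$. Now $\gamma_c$ has the Schur--Baer property --- iterate Theorem~\ref{8.7} from the trivially Schur--Baer word $\gamma_1$ --- so $\gamma_c(H)$ is finite whenever $[H:\zeta_{c-1}(H)]$ is finite (using $\gamma_c^*(H)=\zeta_{c-1}(H)$, Lemma~\ref{8.6}). The remaining task is to use the virtually polycyclic hypothesis to force $\theta(G)$ finite: were it infinite, one could divide out the finite characteristic torsion subgroup of $\theta(G)$ and descend to a torsion-free abelian normal subgroup $A\le\theta(G)$ of $G$, arriving at $\theta(G)$ a non-trivial free abelian group normalised by $G$ with $M$ of finite index centralising it, and must then derive a contradiction by passing to a finite-index subgroup in which $\zeta_{c-1}$ itself has finite index, so that the $\gamma_c$ statement bites. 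I expect this reduction to be the main obstacle.

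With $\theta(G)$ finite, one shows $|\theta(G)|$ is an $m$-number by an induction (on the Hirsch number of $G$, and for finite $G$ on $|G|$). Fix a prime $p\nmid m$; we want $p\nmid|\theta(G)|$. Every finite $p$-subgroup of $G$ lies in $M$ (its image in $G/M$, of order $m$ prime to $p$, is trivial), so in any value $\theta(g_1,\dots,g_n)$ each $g_i$ may be replaced by its $p'$-part without changing the value; hence $\theta(G)=\theta(B)$, where $B$ is generated by the $p'$-parts that occur. If $B\neq G$ this reduces matters to the proper subgroup $B$ (note $[B:\theta^*(B)]$ divides $m$); in the residual case $B=G$, i.e.\ $G$ is generated by $p'$-elements, one instead shows that $O_p(\theta(G))$ --- central in $\theta(G)$ and contained in $\theta(G)\cap M$ by the first paragraph --- is trivial, since otherwise it contains a minimal normal subgroup $V$ of $G$ which is a non-trivial elementary abelian $p$-group inside $\theta(G)\cap M$, and passing to $G/V$ while using the $p'$-generation of $G$ yields a contradiction. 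Together with $|\theta(G)'|$ being an $m$-number this gives $p\nmid|\theta(G)|$. The two points calling for genuine work are the $e=0$ case above and the details of this final induction.
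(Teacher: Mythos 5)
Your opening reductions are correct and worth keeping: since values of $\theta$ depend only on cosets of $M=\theta^*(G)$ (Lemma \ref{8.4}(i)), $\theta(G)$ is generated by finitely many values; $C=\theta(G)\cap M$ is central in $\theta(G)$ of index dividing $m$ by (\ref{8.4}(iv)); Schur's theorem (\ref{8.1}) makes $\theta(G)'$ finite of $m$-number order; and the exponent-sum computation does make $\theta(G)$ finite when $e\neq 0$. But as a proof of the theorem the proposal has two genuine gaps, both of which you flag yourself. First, when $e=0$ you never establish finiteness of $\theta(G)$: knowing that $M$ has finite index and centralises $\theta(G)$ does not put $\zeta_{c-1}(G)$ (nor $\zeta_{c-1}$ of any finite-index subgroup) at finite index, so the combination of Lemma \ref{8.6} with the Schur--Baer property of $\gamma_c$ obtained from Theorem \ref{8.7} never applies, and the proposed descent through a torsion-free abelian normal subgroup is only announced, not carried out. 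Second, the closing induction is a plan rather than an argument: replacing each $g_i$ by its $p'$-part presupposes that $g_i$ is a torsion element, which fails for infinite virtually polycyclic $G$; how the Hirsch number is to be reduced is never said; and the $O_p$ step in the ``residual case'' is not executed. Since the $m$-number conclusion is the entire content of the theorem beyond finiteness, this second gap is fatal as the proof stands.

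For comparison, the paper's (itself only sketched) route avoids both difficulties at once and is quite different from yours. One first invokes Hirsch's Theorem \ref{7.8} to reduce the whole statement to the case of a finite group $G$. In the finite case one shows, using Sylow's theorem and a classical theorem of Schur, that there is a subgroup $H$ with $G=H\theta^*(G)$ and $|H|$ an $m$-number; then Lemma \ref{8.4}(vi) gives $\theta(G)=\theta(H)\leq H$, so $|\theta(G)|$ divides $|H|$ and is an $m$-number. No case split on the exponent sum, no transfer computation inside $\theta(G)$, and finiteness and the arithmetic conclusion come out together. If you wish to complete your own approach, the two items you must supply are exactly the reduction to finite $G$ (Theorem \ref{7.8} is the intended tool) and, for finite $G$, the existence of such a supplement $H$ to the marginal subgroup.
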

\begin{cor*} Let $\chi$ be any word such that $G / \chi(G)$ is nilpotent for all groups $G$. Then $\chi$ has the Schur-Baer property.\end{cor*}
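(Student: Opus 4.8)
The plan is to reduce the statement to the case of a \emph{finite} group and treat that case separately; the reductions are formal, and the finite case is the heart of the matter.

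\textbf{Reductions.} Since $G$ has a polycyclic subgroup of finite index, $G$ is finitely generated and satisfies \textbf{Max}, and so does every subgroup; in particular $\theta(G)$ is polycyclic-by-finite, hence finitely generated. By Lemma~\ref{8.4}(i) a value $\theta(x_1,\dots,x_n)$ depends only on the cosets $x_i\theta^*(G)$, so there are at most $m^n$ distinct $\theta$-values in $G$; by Lemma~\ref{8.4}(iv) we have $[\theta^*(G),\theta(G)]=1$, so the centre of $\theta(G)$ has index dividing $m$, and Schur's theorem~\ref{8.1} applied to $\theta(G)$ shows that $\theta(G)'$ is finite of order an $m$-number. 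Passing to $G/\theta(G)'$ — which is again polycyclic-by-finite and still has marginal index dividing $m$, since under a surjection marginal elements map to marginal elements — we may assume $\theta(G)$ abelian. It then remains to prove that $|\theta(G)|$ is an $m$-number.

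\textbf{Reduction to finite groups.} A polycyclic-by-finite group is residually finite, by Hirsch's theorem~\ref{7.8} applied to a polycyclic subgroup of finite index together with the fact that a finite extension of a residually finite group is residually finite. Suppose $|\theta(G)|$ is not an $m$-number, so that either $\theta(G)$ is infinite, or $\theta(G)$ is finite with some prime $p\nmid m$ dividing $|\theta(G)|$. In the first case $p\,\theta(G)$ is fully invariant in $\theta(G)$, hence $p\,\theta(G)\nsub G$, and $\theta(G)/p\,\theta(G)$ is a nontrivial finite normal subgroup of $G/p\,\theta(G)$ of order divisible by $p$; using residual finiteness of $G/p\,\theta(G)$ one finds a finite-index normal subgroup $N\nsub G$ with $N\cap\theta(G)=p\,\theta(G)$, so that $\theta(G)N/N\cong\theta(G)/p\,\theta(G)$. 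In the second case take $N$ of finite index with $N\cap\theta(G)=1$. Either way $\bar G=G/N$ is finite, $|\bar G:\theta^*(\bar G)|$ divides $m$, and $p$ divides $|\theta(G)N/N|=|\theta(\bar G)|$ with $p\nmid m$; so $\bar G$ contradicts the finite case. Hence it suffices to prove the theorem for $G$ finite.

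\textbf{The finite case.} Here $\theta(G)$ is an abelian normal subgroup of $G$, generated by the at most $m^n$ distinct $\theta$-values, on which $G$ acts through the finite group $G/C_G(\theta(G))$ of order dividing $m$, and with $Z:=\theta(G)\cap\theta^*(G)$ of index dividing $m$ in $\theta(G)$; moreover $\theta(G)$ is central in $C_G(\theta(G))$. The plan is to bound $|\theta(G)|$ by a transfer argument in the spirit of Schur's theorem~\ref{8.1} and Baer's theorem~\ref{8.3}: transfer $G$ into $C_G(\theta(G))$, a subgroup of index dividing $m$, and analyse the effect on the central subgroup $\theta(G)$, using that every $\theta$-value $v$ satisfies $v^m\in Z$ and has at most $m^n$ conjugates, to force $|\theta(G)|$ to divide a power of $m$. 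A second route is to collect (Lemma~\ref{6.1}) in the expression of a $\theta$-value of $G$ in terms of elements of a finite-index normal subgroup $M$ containing $\theta^*(G)$, bounding $|\theta(G):\theta(M)|$ by a $|G:M|$-number and inducting.

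\textbf{The main obstacle.} Everything up to the finite case is bookkeeping with marginal subgroups (Lemma~\ref{8.4}) and residual finiteness. The real difficulty is the finite case, and within it the quantitative point that an arbitrary word behaves well under passage to a subgroup of finite index \emph{once $\theta^*(G)$ has finite index}: this hypothesis is essential, not a convenience — for the free group $F$ of rank $2$ one has $\theta^*(F)$ of finite index only when $\theta(F)=1$ (any nontrivial $\theta(F)$ would be infinite and centralised by the finite-index subgroup $\theta^*(F)$, forcing $\theta^*(F)$ into a cyclic centraliser), while $|F':F_0'|$ is infinite for every $F_0\le F$ of finite index. So I expect the transfer/collection argument of the preceding paragraph, rather than the formal reductions, to be where the work lies.
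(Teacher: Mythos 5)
There is a genuine gap, and it lies in what you are actually proving: your argument assumes from its first sentence that $G$ has a polycyclic subgroup of finite index, which is the hypothesis of Theorem (\ref{8.9}), not of this corollary, and it never invokes the corollary's own hypothesis that $G/\chi(G)$ is nilpotent for \emph{all} groups $G$. The corollary asserts the Schur-Baer property, i.e.\ a statement about an arbitrary group $G$ with $|G:\chi^*(G)|=m$ finite; without the nilpotency hypothesis that assertion is exactly the open conjecture recorded after (\ref{8.1}), so no argument that ignores it can close. The missing reduction is precisely where the hypothesis enters: take a transversal $T$ of $\chi^*(G)$ in $G$ and set $H=\left<T\right>$. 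Then $H\chi^*(G)=G$, so $\chi(H)=\chi(G)$ by (\ref{8.4}(vi)) and $|H:\chi^*(H)|$ divides $m$. The quotient $H/\chi(H)$ is finitely generated and, \emph{by hypothesis}, nilpotent, hence polycyclic; while $\chi(H)=\chi(G)$ is generated by the at most $m^n$ values ($n$ the number of variables, by (\ref{8.4}(i))), has centre of index dividing $m$ by (\ref{8.4}(iv)), hence finite derived group by (\ref{8.1}), and so is polycyclic-by-finite. Thus $H$ satisfies the hypothesis of (\ref{8.9}), which then yields that $|\chi(G)|=|\chi(H)|$ is an $m$-number. This transfer of the problem from $G$ to $H$ is the entire content of the corollary, and it is absent from your write-up.

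Even read as a proof of (\ref{8.9}) itself, the argument is incomplete exactly where you say the work lies. Your reductions — to $\theta(G)$ abelian via (\ref{8.1}), and to finite quotients via the residual finiteness supplied by (\ref{7.8}) — are sound and agree with the paper's, but the finite case is left as ``a plan'' with two candidate routes, neither executed, and the first (transfer of $G$ into $C_G(\theta(G))$) does not obviously lead anywhere. The paper's finite case is different and is the key idea: using Sylow's theorem and Schur's splitting theorem one produces a subgroup $H$ with $G=H\theta^*(G)$ and $|H|$ an $m$-number; since $\theta$-values depend only on cosets of $\theta^*(G)$, (\ref{8.4}(vi)) gives $\theta(G)=\theta(H)\leq H$, so $|\theta(G)|$ divides $|H|$. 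Until that construction, or one of your proposed alternatives, is actually carried out, the central step remains unproved.
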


\begin{note*} Can now be extended with "Abelian-by-nilpotent" in place of "nilpotent" by our theorems about residual finiteness. \footnote{The whole question of words $\chi$ with the Schur-Baer property was discussed by P. W. Stroud in "On a property of verbal and marginal subgroups" (Proc. Cambridge Phil. Soc. (1965), 61, 41-48). He proves as his Theorem 2 that if $\chi$ defines an $LR\mathfrak{F}$-variety then $\chi$ has the Schur-Baer property. Since $L\frak{U}\frak{N}$ are LR$\mathfrak{F}$ by Hall's theorem on finitely generated Abelian by Nilpotent groups, it follows that $G / \chi(G)$ is Abelian by nilpotent for all groups $G$ then $\chi$ defines an $\frak{U}\frak{N}$-variety and so $\chi$ has the Schur-Baer property.}\end{note*}

For example, if in (\ref{8.8}) we take $\phi = \gamma_k$ for some $k$, then $\chi$ has the Schur-Baer property irrespective of the choice of $\theta.$

The proof of (\ref{8.9}) may be made to depend on Hirsch's Theorem (\ref{7.8}). We may use this to reduce the proof of (\ref{8.9}) to the case when $G$ is finite. In this case, one shows with the help of Sylow's theorem and a classical theorem of Schur that $G$ has a subgroup $H$ such that $G = H\theta^*(G)$ and $|H|$ is an $m$-number. But the values $\theta(a_1, \ldots, a_n)$ with $a_1, \ldots, a_n$ in $G$ depend only on the cosets of $\theta^*(G)$ from which the $a$'s are selected. Thus $\theta(G) \leq H$ and $|\theta(G)|$ is a $m$-number.

\section{Theorems of Malcev}
Besides his results on the embedding of $\mathcal{J}$-groups in Lie groups and the like, Malcev has made another important contribution by showing that polycyclic groups and also soluble linear groups, unlike soluble groups in general, are rather closely related to nilpotent groups. By a \underline{linear group}, we mean a group of non-singluar linear transformations of a finite-dimensional vector space $V$ over some commutative field $\mathbb{K}$. Equivalently, such a group may be regarded as a group of $n \times n$ matrices with coefficients in $\mathbb{K}$ and determinant $\neq 0.$ The number $n$ is called the \underline{degree} of the group.

\begin{lemma}\label{9.1}
Let $A$ be a maximal normal abelian subgroup of the nilpotent group $G$. Then $A$ coincides with its centraliser $C$ in $G$.
\end{lemma}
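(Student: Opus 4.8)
The plan is to argue by contradiction: assuming $A < C$, I will produce a normal abelian subgroup of $G$ strictly larger than $A$, contradicting maximality. First note the easy inclusion $A \leq C$, since $A$ is abelian. Also $C = C_G(A)$ is normal in $G$: if $c$ centralizes $A$ and $g \in G$, then $g^{-1}cg$ centralizes $g^{-1}Ag = A$ (using $A \nsub G$), so $C^g = C$. Hence, if $A < C$, then $C/A$ is a nontrivial normal subgroup of the nilpotent group $G/A$.

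The key step is the standard fact that a nontrivial normal subgroup of a nilpotent group meets the centre nontrivially, which I would prove inline in the present setting. Since $G/A$ is nilpotent, Lemma \ref{lemma1.1} gives that its upper central series $\zeta_i(G/A)$ reaches $G/A$; as $(C/A)\cap\zeta_0(G/A) = 1$ but $(C/A)\cap\zeta_c(G/A) = C/A \neq 1$, there is a least $i \geq 1$ with $(C/A)\cap\zeta_i(G/A) \neq 1$. For this $i$ one has $[(C/A)\cap\zeta_i(G/A),\, G/A] \leq (C/A)\cap\zeta_{i-1}(G/A) = 1$ by the defining property of the upper central series, so $(C/A)\cap\zeta_i(G/A) \leq \zeta_1(G/A)$. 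Thus we may choose $x \in C$ with $x \notin A$ and $xA$ central in $G/A$; equivalently, $[x,G] \leq A$.

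Finally, put $B = \langle A, x\rangle$. Since $x \in C = C_G(A)$ and $A$ is abelian, every element of $B$ has the form $ax^n$ with $a \in A$, and two such elements commute, so $B$ is abelian. Moreover $B \nsub G$: for $g \in G$ we have $x^g = x[x,g]$ with $[x,g] \in A \leq B$, so $x^g \in B$, while $A^g = A$; hence $B^g = B$. Since $x \notin A$, the subgroup $B$ is a normal abelian subgroup of $G$ properly containing $A$, contradicting the maximality of $A$. Therefore $C = A$. The only mildly delicate point is the intersection-with-the-centre fact used to locate $x$; once that is established, verifying that $\langle A,x\rangle$ is abelian and normal is immediate.
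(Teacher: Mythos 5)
Your proof is correct and follows essentially the same strategy as the paper's: both arguments locate an element $x \in C \setminus A$ with $[x,G] \leq A$ and then derive a contradiction from the normal abelian subgroup $\left<A,x\right> > A$. The only (cosmetic) difference is in how $x$ is found — you intersect $C/A$ with the upper central series of the nilpotent quotient $G/A$, while the paper descends through the series $C_0 = C$, $C_{i+1}=[C_i,G]$ and takes $x$ from the last term not yet contained in $A$.
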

\begin{proof}
For suppose if possible that $C = C_0 > A$ and define $C_{i+1} = [C_i, G]$ for $i \geq 0$. Since $G$ is nilpotent and $C \leq G,$ there will be a smallest value of $i$ for which $C_{i+1} \leq A$. Choose any element $x$ of $C_i$ which is not in $A$. Since $x \in C$, $\left<x, A\right> = \bar{A}$ will be Abelian. But $[\bar{A}, G] \leq C_{i+1} A \leq A < \bar{A}$ and so $\bar{A}$ is normal in $G$. This is contrary to the hypothesis that $A$ is a maximal normal Abelian subgroup of $G$.
\end{proof}

\begin{lemma}\label{9.2}
Let $G$ be any group and $Z_k = \zeta_k(G).$ If $Z_1^m = 1,$ then $Z_{k+1}^m \leq Z_k$ for all $k$.
\end{lemma}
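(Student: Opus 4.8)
The plan is to recognize that $Z_{k+1}^m \le Z_k$ says exactly that each factor $Z_{k+1}/Z_k$ has exponent dividing $m$, and to prove this by induction on $k$, the crux being the single step from $Z_1$ to $Z_2$. The base case $k=0$ is immediate, since $Z_0 = \zeta_0(G) = 1$ and the hypothesis is precisely $Z_1^m = 1$.

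For the inductive engine I would first prove, for an arbitrary group $H$ with $\zeta_1(H)^m = 1$, that $\zeta_2(H)^m \le \zeta_1(H)$. From the definition of the upper central series, $[\zeta_2(H),H] \le \zeta_1(H)$ and $[\zeta_1(H),H]=1$, so $[\zeta_2(H),H,H]=1$, and (\ref{lemma4.1}) applies with $X=\zeta_2(H)$ and $Y=G=H$: for $x\in\zeta_2(H)$ and $y\in H$ the map $x\mapsto[x,y]$ is a homomorphism, whence $[x^m,y]=[x,y]^m$. Since $[x,y]\in[\zeta_2(H),H]\le\zeta_1(H)$ and $\zeta_1(H)$ has exponent dividing $m$, this equals $1$ for every $y$, so $x^m\in\zeta_1(H)$.

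Then for the step from $k-1$ to $k$ (with $k\ge 1$) I would apply this claim to $\bar G=G/Z_{k-1}$: the definition of the upper central series gives $\zeta_1(\bar G)=Z_k/Z_{k-1}$ and $\zeta_2(\bar G)=Z_{k+1}/Z_{k-1}$, and the induction hypothesis $Z_k^m\le Z_{k-1}$ says exactly $\zeta_1(\bar G)^m=1$; hence $\zeta_2(\bar G)^m\le\zeta_1(\bar G)$, which lifted back to $G$ is $Z_{k+1}^m\le Z_k$. There is no real obstacle here; the only point needing a little care is the identification of the upper central series of the quotient $G/Z_{k-1}$. (Indeed, this lemma is a restatement of the first assertion of (\ref{lemma4.2}), and the argument above is the one given there.)
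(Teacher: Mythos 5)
Your proof is correct and follows essentially the same route as the paper: the crux is the case $Z_2^m \le Z_1$, obtained from the bilinearity of the commutator in (\ref{lemma4.1}) with $X=Z_2$, $Y=G$, and the general case then follows by induction via the quotients $G/Z_{k-1}$, exactly as the paper's terse "for $k>1$ it follows by induction" intends. Your identification of $\zeta_1(G/Z_{k-1})=Z_k/Z_{k-1}$ and $\zeta_2(G/Z_{k-1})=Z_{k+1}/Z_{k-1}$ is the right way to make that induction precise, and your observation that this is the first assertion of (\ref{lemma4.2}) is accurate.
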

\begin{proof}
When $k=1$, this is a corollary of (\ref{lemma4.1}) applied with $X = Z_2$, $Y=G.$ For $k>1$, it follows by induction.
\end{proof}

\begin{lemma}\label{9.3}
Let $G$ be a nilpotent group of $n \times n$ matrices of determinant $1$ with coefficients in a field $\mathbb{K}$. If $Z_1 \leq \mathbb{K} \cdot \text{Id}_{n \times n},$ then $G$ is finite.
\end{lemma}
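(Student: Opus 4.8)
The plan is to derive finiteness from Lemmas (\ref{9.1}) and (\ref{9.2}) together with an elementary diagonalization argument. First I would observe that the determinant hypothesis forces the centre $Z_1 = \zeta_1(G)$ to be \emph{finite}: by assumption $Z_1$ consists of scalar matrices $\lambda\cdot\text{Id}_{n\times n}$, and since every element of $G$ has determinant $1$, any such $\lambda$ satisfies $\lambda^n = 1$. The $n$-th roots of unity in the field $\mathbb{K}$ form a finite cyclic group whose order $m$ divides $n$ and is prime to $\operatorname{char}\mathbb{K}$; hence $Z_1$ is a finite cyclic group with $Z_1^{m}=1$.

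Next I would push this exponent bound up the upper central series using (\ref{9.2}). Since $G$ is nilpotent, $G=\zeta_c(G)$ where $c$ is its class. An induction on $k$ --- base case $\zeta_0(G)=1$, and for the step: if $\zeta_{k-1}(G)$ has exponent dividing $m^{k-1}$ then, for $x\in\zeta_k(G)$, (\ref{9.2}) gives $x^m\in\zeta_{k-1}(G)$, whence $x^{m^k}=1$ --- shows that $G=\zeta_c(G)$ has exponent dividing $e:=m^c$, and $e$ is again prime to $\operatorname{char}\mathbb{K}$. Now let $A$ be a maximal normal abelian subgroup of $G$ (such exist: a directed union of normal abelian subgroups is again normal abelian, so Zorn's lemma applies). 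By (\ref{9.1}), $A=C_G(A)$. Every element of $A$ has finite order prime to $\operatorname{char}\mathbb{K}$, so its minimal polynomial is separable and it is diagonalizable over $\overline{\mathbb{K}}$; being abelian, $A$ is \emph{simultaneously} diagonalizable over $\overline{\mathbb{K}}$, hence embeds in $(\overline{\mathbb{K}}^{\times})^{n}$. As $A$ has exponent dividing $e$, its image lies in a group of order at most $e^{n}$, so $A$ is finite. Finally the conjugation action gives an embedding $G/A=G/C_G(A)\hookrightarrow\Aut(A)$, and $\Aut(A)$ is finite since $A$ is; therefore $[G:A]$ is finite and $G$ is finite.

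The step I expect to be most delicate --- and the one that genuinely consumes the hypotheses --- is the passage from ``$Z_1$ finite'' to ``$A$ finite'': it is precisely the coprimality of $e$ with $\operatorname{char}\mathbb{K}$ (forced by the condition $\det=1$ on $G$ together with $Z_1$ being scalar) that excludes unipotent elements from $A$, making the simultaneous diagonalization, and hence the bound $|A|\le e^{n}$, available. Without the determinant restriction $Z_1$ could already be an infinite group of scalars and the statement would fail, so there is no route that avoids using $\det=1$ at this point; everything else is bookkeeping around the two preceding lemmas.
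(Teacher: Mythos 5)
Your proof is correct and follows essentially the same line as the paper's: bound the exponent of $G$ by $m^{c}$ via (\ref{9.2}), diagonalize a maximal normal abelian subgroup $A$ over $\overline{\mathbb{K}}$ to get $|A|\le m^{cn}$, and use (\ref{9.1}) to embed $G/A = G/C_G(A)$ into $\Aut(A)$. The only organizational difference is that the paper first reduces to finitely generated subgroups $H$ (finite by (\ref{theorem1.10}), so maximal normal abelian subgroups exist trivially) and deduces finiteness of $G$ from the resulting uniform bound on $|H|$, whereas you apply Zorn's lemma and simultaneous diagonalization of the a priori infinite commuting family $A$ directly to $G$; both are valid.
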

\begin{proof}
We may suppose that $\mathbb{K}$ is algebraically closed. By hypothesis, every element of $Z_1$ has the form $\lambda \cdot \text{Id}_{n \times n}$ where $\text{Id}_{n \times}$ is the unit matrix and $\lambda \in \mathbb{K}.$ Also $\lambda^n = 1$, so that $|Z_1| = m$ divides $n$ and, if the characteristic of $\mathbb{K}$ is a prime $p$, then $m$ is prime to $p$. If $c$ is the class of $G$, then $G = Z_c$ and so $G^{m^c} = 1$ by (\ref{9.2}). Let $H$ be any finitely generated subgroup of $G$. By (\ref{theorem1.10}) $H$ is finite. Let $A$ be any maximal normal Abelian subgroup of $H$. Since $\mathbb{K}$ is algebraically closed, and $|A|$ being an $m$-number is prime to the characteristic of $k$ if this is finite, it follows that $A$ may be transformed into a group of diagonal matrices. All the diagonal coefficients which occur are $m^c$-th roots of unity. Hence $|A|$ divides $m^{cn}$. By (\ref{9.1}), $A$ is its own centraliser in $H$. Therefore $H/A$ is isomorphic with a group of automorphisms an Abelian group of bounded order. Thus both $|A|$ and $|H:A|$ are bounded. Hence $G$ is finite.
\end{proof}

\begin{thm}[Malcev]\label{9.4}
Let $G$ be a soluble linear group of degree $n$ over the algebraically closed field $\mathbb{K}$. Then there exists a subgroup $H$ of $G$ and a non-singular $n \times n$ matrix $t$ over $\mathbb{K}$ such that $|G:H|$ is finite and that every matrix $x \in t^{-1}Ht$ is triangular (that is, $x_{ij} = 0$ for $i>j$).
\end{thm}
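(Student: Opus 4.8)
The plan is to induct on the degree $n$, the case $n=1$ being trivial. Write $V=\mathbb{K}^{n}$ for the natural $G$-module. If $G$ acts reducibly, pick a proper nonzero invariant subspace $W$; the inductive hypothesis applied to the $G$-actions on $W$ and on $V/W$ (both soluble linear groups of smaller degree) gives finite-index subgroups $H_{1},H_{2}\le G$ triangularising those actions, and then $H=H_{1}\cap H_{2}$ has finite index and, with respect to a basis of $V$ chosen to run first through $W$, a single change-of-basis $t=\operatorname{diag}(t_{1},t_{2})$ built from the two given ones makes $t^{-1}Ht$ triangular (each diagonal block upper triangular, the upper-right block unconstrained). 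So from now on assume $G$ acts irreducibly on $V$.

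Let $A$ be a maximal abelian normal subgroup of $G$; it is nontrivial, since it contains the last nontrivial term of the derived series. Because $A\nsub G$ and $V$ is irreducible, standard representation theory over the algebraically closed field $\mathbb{K}$ (Clifford restriction, then $A$ abelian) shows $V|_{A}$ is a direct sum of one-dimensional $A$-modules, i.e.\ $A$ is simultaneously diagonalisable: $V=\bigoplus_{i=1}^{r}V_{\chi_{i}}$ over the distinct characters $\chi_{i}\colon A\to\mathbb{K}^{\times}$ occurring, with $r\le n$. Conjugation permutes the $\chi_{i}$, hence the eigenspaces $V_{\chi_{i}}$; working in the diagonalising basis (where $A$ consists of block-scalar matrices) one sees that $g\in G$ fixes every $V_{\chi_{i}}$ iff $g$ is block-diagonal iff $g$ centralises $A$, so $C:=C_{G}(A)$ is precisely the kernel of the permutation homomorphism $G\to\operatorname{Sym}(r)$; thus $C\nsub G$ stabilises each $V_{\chi_{i}}$ and $[G:C]\le r!\le n!$.

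If $r\ge 2$, each $V_{\chi_{i}}$ has dimension $<n$ and is $C$-invariant; applying the inductive hypothesis to $C$ on each $V_{\chi_{i}}$, intersecting the resulting finitely many finite-index subgroups, and passing to a basis adapted to $\bigoplus V_{\chi_{i}}$ produces the required subgroup. (The same "permute-the-homogeneous-components" device applies to $V|_{M}$ for any normal $M\nsub G$, so we may henceforth also assume $V|_{M}$ is homogeneous for every $M\nsub G$; in particular every abelian normal subgroup of $G$, being diagonalisable and homogeneous, is scalar.) If instead $r=1$, then $A$ is scalar, hence $A\le Z(G)$, and maximality forces $A=Z(G)$. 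It now suffices to prove $[G:Z(G)]<\infty$: then $Z(G)$ itself, being scalar and therefore triangular, is the desired finite-index subgroup (with $t$ the identity).

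This last reduction is the heart of the matter, and is where Lemmas \ref{9.1}--\ref{9.3} and Schur's Theorem \ref{8.1} come in, through a secondary induction on the derived length $d$ of $G$ (with $d=1$ giving $n=1$ for an irreducible abelian group over $\mathbb{K}$). For $d\ge 2$ one looks at the nilpotent normal subgroup $N:=G^{(d-2)}$, of class $\le 2$ since $N'=G^{(d-1)}$ is scalar; by the homogeneity normalisation $V|_{N}$ is a sum of copies of a single irreducible $\mathbb{K}N$-module, so $Z(N)$ acts by scalars, i.e.\ $Z(N)\le\mathbb{K}\cdot\text{Id}$. After absorbing the determinant character (for instance via $\GL_{n}(\mathbb{K})\hookrightarrow\SL_{n+1}(\mathbb{K})$, $g\mapsto\operatorname{diag}(g,\det g^{-1})$, which disturbs neither the hypotheses nor the conclusion), Lemma \ref{9.3} forces $N/Z(N)$ to be finite; then $G$ acts by conjugation on this finite group with finite-index kernel, and combining that with the inductive hypothesis at smaller derived length — a triangularisable subgroup of the quotient pulls back to a triangularisable subgroup of $G$ because scalars are triangular — completes the argument. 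The main obstacle is exactly this endgame: one must manoeuvre the relevant nilpotent normal subgroup into a situation with scalar centre and determinant one so that Lemma \ref{9.3} (which itself rests on Lemmas \ref{9.1} and \ref{9.2}) can be invoked, and one must arrange the descent — carrying the induction on the pair (degree, derived length) — so that it terminates while triangularisability is preserved under passing to quotients by scalars and lifting back.
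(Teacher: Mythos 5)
Your opening reductions are sound and match the paper: induction on $n$, the reducible case via the two block representations, and the passage to the situation where $V|_{M}$ is homogeneous for every $M\nsub G$ (so that every normal abelian subgroup is scalar). The difficulty is entirely in the endgame, and there the proposal has two genuine gaps. First, the determinant-absorbing device $\GL_n(\mathbb{K})\hookrightarrow\SL_{n+1}(\mathbb{K})$, $g\mapsto\operatorname{diag}(g,\det g^{-1})$, does not do what you need: a scalar $\lambda\cdot\text{Id}_{n\times n}$ is sent to $\operatorname{diag}(\lambda\cdot\text{Id}_{n\times n},\lambda^{-n})$, which is not scalar, so the hypothesis $Z_1\leq\mathbb{K}\cdot\text{Id}$ of Lemma \ref{9.3} is destroyed by exactly the manoeuvre meant to enable it. The correct normalisation (the one the paper uses) is to adjoin all of $\mathbb{K}\cdot\text{Id}_{n\times n}$ to $G$ and observe that, $\mathbb{K}$ being algebraically closed, $G=(\mathbb{K}\cdot\text{Id})\cdot G_1$ with $G_1$ the subgroup of determinant-one matrices; one then proves the theorem for $G_1$ and recovers it for $G$ since scalars are triangular. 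After this, \emph{every} subgroup in sight has determinant one and Lemma \ref{9.3} applies cleanly.

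Second, the secondary induction on derived length is not actually carried out, and the step you describe --- ``a triangularisable subgroup of the quotient pulls back'' --- is not meaningful as stated, because $G/N$ is not presented as a linear group of degree $n$ and the inductive hypothesis cannot be applied to it. You correctly flag this as the main obstacle, but the obstacle dissolves if you aim for the right conclusion: in the remaining case $G$ is in fact \emph{finite}, not merely centre-by-finite. The paper's argument is: with the normalisations above, any normal nilpotent subgroup $N$ of $G$ has $\zeta_1(N)$ abelian and normal in $G$, hence scalar of determinant one, so $N$ is finite by Lemma \ref{9.3}. Then one climbs the derived series upwards: $G^{(r-1)}$ is abelian normal, hence finite; and if $G^{(s)}$ is finite, then $N=C_{G^{(s-1)}}(G^{(s)})$ has finite index in $G^{(s-1)}$, satisfies $N'\leq G^{(s)}$ and $[N',N]=1$, so is a normal nilpotent subgroup of $G$ of class $\leq 2$, hence finite, whence $G^{(s-1)}$ is finite. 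Iterating gives $G=G^{(0)}$ finite, which settles the last case outright. Replacing your $N/Z(N)$-finiteness claim and the unresolved descent with this centraliser argument closes the proof.
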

\begin{proof}
For $n=1$ this is clear. Let $n>1$ and suppose the theorem is true for groups of degree $<n$.

Consider first the case when $G$ is \underline{reducible}, that is, when there is a matrix $t_0$ such that, for all $y \in G,$ 
$$
t_0^{-1}yt_0 = \begin{bmatrix} y_1 & *\\ 0 & y_2 \end{bmatrix}
$$ where each $y_1$ is an $r \times r$ matrix and therefore each $y_2$ is an $s \times s$ matrix, with $s = n-r$ and $0 < r < n$. The mappings $y \to y_1$ and $y \to y_2$ are homomorphic, and the image groups are soluble linear groups of degrees $r, s$ both $<n.$ By the induction hypothesis, there exist subgroups $H_1$ and $H_2$ of finite index in $G$ such that, after further transformation by a matrix of the from 
$$
\begin{bmatrix} t_1 & 0\\ 0 &t_2 \end{bmatrix} = t_3,
$$
every $(t_0 t_3)^{-1}y (t_0 t_3)$ with $y \in H_1 \cap H_2$ has the triangular form. But $|G: H_1 \cap H_2|$ is then finite and the result follows.

Next let $G$ be irreducible but suppose $G$ has a reducible normal subgroup $K$ such that not all the irreducible components of $K$ are equivalent. This means the following. Imagine $G$ to act as a group of linear transformations of the $n$-dimensional vector space $V$ over $\mathbb{K}$. Let $V_1$ be a subspace $\neq 0$ which is invariant under $K$ and is minimal with respect to this property, so that $K$ transforms $V_1$ irreducibly. If $x \in G$, then $V_1 x$ is also a minimal invariant subspace for $K$, since $K \nsub G$. If $\bar{V}$ is the subspace spanned by all minimal invariant subspaces of $K$, it follows that $\bar{V}$ is an invariant subspace of $G$ and so $\bar{V} = V.$ Therefore $V$ is the direct sum of a certain number of subspaces $V_1, V_2, \ldots, V_r$ each of which is a minimal invariant subspace for $K$. If in $V_i$ we choose any basis $b_1, \ldots, b_s$ and define, for $y \in K$, the matrix $\rho_i(y) = \eta$ by the equations 
$$
\eta \cdot b_j = \sum_{\ell = 1}^s n_j \ell b_\ell,
$$ then $\rho_i$ is an irreducible representation of $K$. If among the $r$ representations $\rho_1, \ldots, \rho_r$ of $K$, the first $t$ are equivalent but the rest are all inequivalent to $\rho_1$, then it is easy to show that every minimal invariant subspace $U$ of $K$ which gives rise to an irreducible representation equivalent to $\rho_1$ must be contained in $V_1 + V_2 + \ldots = W_1.$ And hence that if we group the remaining $V_i$ into sums of $K$-equivalent spaces in the same way, $W_1, W_2, \ldots, W_m$, then $V$ is the direct sum of $W_1, W_2, \ldots, W_m$ and each $x \in G$ permutes $W_1, \ldots, W_m$ among themselves. Accordingly, if $m > 1$, $G$ has a subgroup $G_1$ of finite index which leaves each $W_i$ invariant. $G_1$ is therefore reducible, and by the preceding case, $G_1$ has a subgroup $H$ of finite index which can be transformed to triangular form, and the result follows.

We may therefore assume that for every normal subgroup $K$ irreducible components of $K$ are all equivalent. In particular, if $K$ is Abelian, these irreducible components are all of degree $1$ since the field $\mathbb{K}$ is algebraically closed. Therefore every normal Abelian subgroup $A$ of $G$ is contained in $\mathbb{K} \cdot \text{Id}_{n \times n}$.

We may clearly suppose that $\mathbb{K} \cdot \text{Id}_{n \times n} \leq G,$ for if the group obtained by adjoining the scalar matrices to $G$ has the requisite property, $G$ will also have it. Also, since $\mathbb{K}$ is algebraically closed, if $G \geq \mathbb{K} \cdot \text{Id}_{n \times n}$, then $G = (\mathbb{K} \cdot \text{Id}_{n \times n}) \cdot G_1$ where $G_1$ is the subgroup of all matrices of determinant $1$ in $G$. $G_1$ is soluble if $G$ is, and the result holds for $G$ if it holds for $G_1$. Thus we may assume that all matrices of $G$ have determinant $1$.

Now let $N$ be any normal nilpotent subgroup $G$. Then $\zeta_1(N) \nsub G$ and $\zeta_1(N)$ is Abelian. Hence $\zeta_1(N) \leq \mathbb{K}\cdot \text{Id}_{n \times n}$ and so $N$ is finite by (\ref{9.3}). Let $$C = G^{(0} > G^{(1)} > \ldots > G^{(r)} = 1$$ be the derive of series of $G.$ $G^{(r-1)}$ is Abelian and normal in $G$, hence finite. Suppose for some $s>1$ that we have proved $G^{(s)}$ finite. Let $N$ be the centraliser of $G^{(s)}$ in $G^{(s-1)}$. Then $|G^{(s-1)} : N|$ is finite, since $G^{(s)}$ is finite. Also $N' \leq G^{(s)}$ since $N \leq G^{(s-1)}.$ Thus $[N',N] =1$ and $N$ is nilpotent of class $\leq 2.$ Further $N \nsub G$. Hence $N$ is finite. Therefore $G^{(s-1)}$ is finite. And so by induction, $G = G^{(0)}$ is finite and we may proved in all cases.
\end{proof} 

\begin{defn*}
Let $\mathcal{R}$ be the property of groups defined as follows: a group $G$ has $\mathcal{R}$ if and only if it has a subgroup $H$ such that $|G:H|$ is finite and $H'$ is nilpotent.
\end{defn*}

\begin{thm}[Malcev]\label{9.5}
(i) Every soluble linear group and (ii) every polycyclic group has the property $\mathcal{R}.$
\end{thm}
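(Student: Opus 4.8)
The plan is to derive (i) directly from Mal'cev's triangularisation theorem (\ref{9.4}), and to reduce (ii) to it by an induction on derived length in which the bottom term of the derived series plays the role of the ``abelian part''. For (i): if $G$ is a soluble linear group of degree $n$ over $\mathbb{K}$, regard it as a soluble linear group over the algebraic closure $\bar{\mathbb{K}}$. By (\ref{9.4}) there is a subgroup $H$ of finite index in $G$ and a matrix $t$ with every element of $t^{-1}Ht$ upper triangular. Since the map sending an invertible upper triangular matrix to its diagonal is a homomorphism onto the abelian diagonal subgroup, $t^{-1}H't=(t^{-1}Ht)'$ lies in its kernel, the group of unitriangular matrices, which is nilpotent by the corollary to (\ref{3.5}). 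Hence $H'$ is nilpotent and $G$ has property $\mathcal{R}$.

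For (ii), first note that by (\ref{7.8}) a polycyclic group has a torsion-free polycyclic subgroup of finite index, and that property $\mathcal{R}$ passes up from a finite-index subgroup (the same $H$ serves); so I may assume $G$ is torsion-free polycyclic and argue by induction on its derived length $d$. If $d\le 1$, $G$ is abelian and $H=G$ serves. If $d\ge 2$, let $A=G^{(d-1)}$, a non-trivial characteristic subgroup which, being a torsion-free finitely generated abelian group, is free abelian of some rank $k\ge 1$. Conjugation gives $\rho\co G\to\Aut(A)\cong\GL_k(\Z)\le\GL_k(\bar{\Q})$ with $\rho(G)$ polycyclic, hence a soluble linear group over an algebraically closed field; applying (\ref{9.4}) to $\rho(G)$ and pulling back yields a finite-index subgroup $H_1$ of $G$ with $\rho(H_1)$ conjugate to a group of triangular matrices, so that each $\rho(h)-1$ with $h\in H_1'$ is a strictly upper triangular $k\times k$ matrix. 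A product of $k$ such matrices vanishes, which is precisely the statement $[A,\underbrace{H_1',\dots,H_1'}_{k}]=1$.

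Now $G/A$ is polycyclic of derived length $\le d-1$, so by induction it has a finite-index subgroup with nilpotent derived group; let $L\le G$ be its preimage, of finite index in $G$, and put $H=H_1\cap L$, also of finite index. Then $HA/A\le L/A$, so $H'A/A$ is nilpotent, of class $c$ say, whence $\gamma_{c+1}(H')\le A$; and since $[A,H']\le A$ and $H'\le H_1'$, repeatedly commuting with $H'$ gives $\gamma_{c+1+j}(H')\le[A,\underbrace{H',\dots,H'}_{j}]$ for all $j$, so that $\gamma_{c+1+k}(H')\le[A,\underbrace{H_1',\dots,H_1'}_{k}]=1$. Thus $H'$ is nilpotent of class at most $c+k$ and $[G:H]$ is finite, so $G$ has property $\mathcal{R}$. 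The delicate point, and the thing that makes the argument go, is the choice of induction variable: an induction on Hirsch length would force one to recombine nilpotency information coming separately from $C_G(A)$ and from $G/C_G(A)$, which is awkward, whereas inducting on derived length lets (\ref{9.4}) supply a genuinely unipotent action of the derived group on the abelian group $A$ once one passes to a finite-index subgroup; after that only the commutator bookkeeping of the last step remains.
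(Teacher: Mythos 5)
Your proof is correct. Part (i) is essentially identical to the paper's: apply (\ref{9.4}) over the algebraic closure and observe that the derived group of a triangular group is unitriangular, hence nilpotent by the corollary to (\ref{3.5}). Part (ii), however, takes a genuinely different route. The paper does not induct at all: it fixes a single characteristic series $G=G_0>G_1>\ldots>G_s=1$ whose factors are free abelian of rank $r_i$ or finite, applies (\ref{9.4}) to the unimodular action of $G$ on \emph{each} free abelian factor to get finite-index subgroups $H_i$ with $[G_{i-1},H_i',\ldots,H_i']\leq G_i$ ($r_i$ terms), and sets $H=\bigcap_i H_i$; chaining these containments down the whole series gives $\gamma_{t+1}(H')=1$ with $t=\sum r_i$ in one stroke. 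You instead induct on derived length, triangularize only the action on the bottom term $A=G^{(d-1)}$, and splice the resulting unipotence of $H_1'$ on $A$ with the inductive nilpotency of $H'A/A$ via the commutator bookkeeping $\gamma_{c+1+j}(H')\leq[A,H',\ldots,H']$. Both are valid; the paper's simultaneous version is slightly cleaner (no induction, and an explicit class bound by the Hirsch length), while yours isolates where (\ref{9.4}) is really used. Two small points to tidy in your write-up: your induction hypothesis should be stated for arbitrary polycyclic groups of smaller derived length (since $G/A$ need not be torsion-free even when $G$ is), which is harmless because you have already shown that $\mathcal{R}$ passes up through finite index and that any polycyclic group has a torsion-free finite-index subgroup by (\ref{7.8}); and the claim $[A,H_1',\ldots,H_1']=1$ deserves the one-line justification that, writing $A$ additively, this subgroup is generated by elements $a(\rho(h_1)-1)\cdots(\rho(h_k)-1)$ and a product of $k$ strictly upper triangular $k\times k$ matrices vanishes --- the same computation the paper leaves implicit.
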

\begin{proof}
(i) is an immediate corollary of (\ref{9.4}). Let $G$ be a soluble linear group of degree $n$ over the field $\mathbb{K}$. We may assume $\mathbb{K}$ algebraically closed. By (\ref{9.4}), $G$ can be transformed so that some subgroup $H$ of finite index becomes a group $t^{-1}Ht = H_1$ consisting exclusively of triangular matrices. But then $H_1' \leq T_n(\mathbb{K})$ and $T_n(\mathbb{K})$ is nilpotent of class $<n.$ Hence $H'$ is also nilpotent of class $<n.$
    
(ii) Let $G$ be polycyclic. Then its derived series terminates with some $G^{(r)} = 1$ and every subgroup of $G$ is finitely generated. Since a finitely generated Abelian group $A$ has its periodic subgroup $B$ finite and $A/B$ is free Abelian of finite rank, $G$ will have a series of characteristic subgroups $$G = G_0 > G_1 > \ldots > G_s = 1$$ such that each $G_{i-1} / G_i$ is free Abelian of finite rank $r_i$ or else finite. In the former case, each element $x$ of $G$ induces an automorphism $\rho_i(x)$ of $G_{i-1} / G_i$ and the $\rho_i(x)$ may be regarded as unimodular matrices of degree $r_i.$ By extending from the field of rationals to an algebraically closed field $\mathbb{K}$ and making suitable transformation by a matrix $t_i$, (\ref{9.4}) yields a subgroup $H_i$ such that $|G:H_i|$ is finite and each $t_i^{-1} \rho_i(x) t$ with $x$ in $H_i$ is triangular. Hence $$
[G_{i-1}, H_i', H_i', \ldots, H_i']\leq G_i$$ if we use at least $r_i$ terms $H_i'$. In case $G_{i-1} / G_i$ is finite, define $H_i$ to be the centraliser of $G_{i-1} / G_i$ in $G$ and $r_i = 1.$  Again $|G: H_i|$ is finite. Let 
$$
H = \bigcap_{i=1}^sH_i.
$$
Then $|G:H|$ is finite; and for all $i$ we have 
$$
[G_{i+1},H',H', \ldots, H']=1
$$ if we use $r_1 + r_2 + \ldots r_s = t$ terms $H'$ and in particular $\gamma_{t+1}(H) = 1.$ Thus $H'$ is nilpotent.
\end{proof}

Let $U_n$ be the unimodular group of degree $n$. Malcev has also proved the following important results.
\begin{lemma}\label{9.6}
Every Abelian subgroup $A$ of $U_n$ is finitely generated.
\end{lemma}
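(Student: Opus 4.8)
The plan is to pass from the abelian group $A$ to the commutative ring it generates inside $\Mat_n(\Z)$, and then reduce the statement to the finite generation of unit groups of orders in number fields.

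First I would let $\Lambda$ be the subring of $\Mat_n(\Z)$ generated by $A$ together with the identity matrix; since the product of two elements of $A$ again lies in $A$, the ring $\Lambda$ is just the $\Z$-span of $A$. As a submodule of the free $\Z$-module $\Mat_n(\Z)$ (which is Noetherian), $\Lambda$ is a finitely generated $\Z$-module, and it is commutative because $A$ is abelian. The essential point is that $A\subseteq\Lambda^{\times}$: by the Cayley--Hamilton theorem each $a\in A$ is a root of its characteristic polynomial, whose constant term is $\pm\det a=\pm1$, so $a^{-1}$ is an integer polynomial in $a$ and hence lies in $\Lambda$. Since $\Lambda$ is commutative, $\Lambda^{\times}$ is abelian, and a subgroup of a finitely generated abelian group is finitely generated; hence it suffices to prove that $\Lambda^{\times}$ itself is finitely generated.

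To do this I would split off the unipotent part of $\Lambda^{\times}$. Let $N$ be the nilradical of $\Lambda$. Since $\Lambda\otimes_{\Z}\Q$ is a finite-dimensional, hence Artinian, commutative $\Q$-algebra, its nilradical is nilpotent, and using that $\Lambda$ is torsion-free one obtains $N^{k}=0$ for some $k$. By Theorem~\ref{theorem3.6}, $1+N$ is then a nilpotent group with $1+N=K_1\geq K_2\geq\cdots\geq K_k=1$, where $K_i=1+N^{i}\nsub 1+N$, and each factor $K_i/K_{i+1}$ is isomorphic via $1+x\mapsto x$ to the finitely generated abelian group $N^{i}/N^{i+1}$; hence $1+N$ is polycyclic, in particular finitely generated (Lemma~\ref{lemma1.9}). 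On the other side, $\Lambda/N$ is a reduced ring, finitely generated and torsion-free as a $\Z$-module, so $(\Lambda/N)\otimes_{\Z}\Q$ is a finite product $F_1\times\cdots\times F_t$ of number fields, and $\Lambda/N$ embeds in $\mathcal{O}_{F_1}\times\cdots\times\mathcal{O}_{F_t}$ because all its elements are integral over $\Z$. By Dirichlet's unit theorem each $\mathcal{O}_{F_i}^{\times}$ is finitely generated, so $(\Lambda/N)^{\times}$, being a subgroup of $\prod_i\mathcal{O}_{F_i}^{\times}$, is finitely generated. The exact sequence $1\to 1+N\to\Lambda^{\times}\to(\Lambda/N)^{\times}$ then exhibits $\Lambda^{\times}$ as an extension of a finitely generated group by a finitely generated group, hence finitely generated, and since $A\leq\Lambda^{\times}$ the lemma follows.

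The Noetherian and Cayley--Hamilton steps and the identification of $1+N$ as polycyclic are routine, the latter being a direct application of Theorem~\ref{theorem3.6}. The real content, and the step I expect to be the main obstacle, is the arithmetic in the last paragraph: reducing $\Lambda/N$ to a product of number fields and invoking Dirichlet's unit theorem. This is the one ingredient genuinely outside the group theory developed so far, and some such input is unavoidable, since abelian subgroups of $\GL_n$ over larger rings need not be finitely generated.
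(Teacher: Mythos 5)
Your proof is correct, but it takes a genuinely different route from Hall's. The paper argues by induction on $n$ via the reducible/irreducible dichotomy: if $A$ is rationally reducible it conjugates into block-triangular form, the kernel of the map to the diagonal blocks sits inside the additive group of integral $r\times s$ matrices (hence is finitely generated), and induction handles the quotient; if $A$ is rationally irreducible, Schur's Lemma places $A$ inside the centre of a division algebra over $\Q$, i.e.\ in the unit group of a number field, and Dirichlet finishes. You instead pass to the commutative order $\Lambda=\Z[A]\subseteq \Mat_n(\Z)$, note $A\leq\Lambda^{\times}$ by Cayley--Hamilton, and split $\Lambda^{\times}$ along the nilradical: $1+N$ is polycyclic by Theorem~\ref{theorem3.6} applied to the nilpotent ideal $N$ (with $N^i/N^{i+1}$ finitely generated abelian), while $(\Lambda/N)^{\times}$ embeds in the unit group of a product of rings of integers. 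Both arguments ultimately rest on Dirichlet's unit theorem, but yours avoids the induction on degree and the representation-theoretic input entirely, replacing it with elementary commutative algebra (Noetherianity of $\Z^{n^2}$, nilpotence of the nilradical of an order, integrality); in exchange it proves the stronger and more structural statement that the full unit group of any commutative subring of $\Mat_n(\Z)$ is finitely generated, and it reuses Theorem~\ref{theorem3.6} from these notes, which is in the spirit of the text. The only cosmetic point worth tightening is that your sequence $1\to 1+N\to\Lambda^{\times}\to(\Lambda/N)^{\times}$ need not be exact on the right; you only use exactness at $\Lambda^{\times}$, i.e.\ that $\Lambda^{\times}/(1+N)$ embeds in $(\Lambda/N)^{\times}$, which is all you need and is true.
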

\begin{proof}
Proof by induction on $n$. If $A$ is rationally reducible, its matrices may be transformed simultaneously to the form 
$$
y = \begin{bmatrix} y_1 & * \\ 0 & y_2 \end{bmatrix}
$$
with $y_1 \in U_r,$ $y_2 \in U_s$, $r+s= n,$ $0 < r < n,$ where $*$ stands for an $r \times s$ matrix of integers. Those $y$ for which both $y_1$ and $y_2$ are the appropriate unit matrix form a normal subgroup $A_0$ of $A$ which effectively is a subgroup of the additive group of all integral $r \times s$ matrices. The latter is Abelian of rank $rs$ and so $A_0$ is finitely generated. By the induction hypothesis, $A / A_0$ is also finitely generated. Hence so is $A$.

Suppose then that $A$ is rationally irreducible. Then by Schur's Lemma, the rational $n \times n$ matrices $x$ which commute with all $y \in A$ form a division algebra $D$ of finite degree over the rational field $\mathbb{K}_1$ and $A$ is contained in the centre $\mathbb{K}*$ of $D$. $\mathbb{K}*$ is therefore an algebraic number field and, since each $y \in A$ satisfies its characteristic equation and has integer coefficients and determinant $\pm 1,$ it follows that $A$ is contained in the group of units of $\mathbb{K}*$. By a classical theorem of Dirichlet, this group of units, and therefore $A$ also, is finitely generated.
\end{proof}

\begin{cor*} let $A$ be a finitely generated Abelian group. Then every Abelian subgroup of the group of automorphisms of $A$ is also finitely generated.
\end{cor*}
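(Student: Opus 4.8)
The plan is to reduce the statement to Lemma~\ref{9.6} by letting $\Aut(A)$ act on the torsion-free quotient of $A$. Write $T$ for the torsion subgroup of $A$. Since $A$ is finitely generated, $T$ is finite and is characteristic in $A$, and $A/T$ is free abelian of some finite rank $n$, so that $\Aut(A/T)\cong U_n$. Restriction to the quotient therefore defines a homomorphism $\rho\colon \Aut(A)\to \Aut(A/T)\cong U_n$, and I write $K=\ker\rho$ for the subgroup of automorphisms of $A$ inducing the identity on $A/T$.

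The first step is to check that $K$ is finite. If $\phi\in K$, then $\phi(a)-a\in T$ for every $a\in A$, and the map $a\mapsto\phi(a)-a$ is a homomorphism $A\to T$ (because $A$ is abelian and $\phi$ is an endomorphism); moreover $\phi$ is recovered from this homomorphism, so $\phi\mapsto(a\mapsto\phi(a)-a)$ gives an embedding of $K$ into $\Hom(A,T)$. Since $A$ is finitely generated and $T$ is finite, $\Hom(A,T)$ is finite, hence so is $K$.

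Now let $B$ be any abelian subgroup of $\Aut(A)$. Then $\rho(B)$ is an abelian subgroup of $U_n$, so it is finitely generated by Lemma~\ref{9.6}. On the other hand $B\cap K\leq K$ is finite. Since $B/(B\cap K)\cong\rho(B)$ is finitely generated and $B\cap K$ is finitely generated, $B$ is finitely generated: a generating set is obtained by taking generators of $B\cap K$ together with lifts to $B$ of a finite generating set of $\rho(B)$. This proves the corollary.

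There is no genuine obstacle here; the substantive input is entirely contained in Lemma~\ref{9.6} (Dirichlet's unit theorem in the rationally irreducible case, with an induction on $n$ handling the reducible case). The remaining ingredients — that $T$ is finite and characteristic, that $A/T$ is free abelian of finite rank, and that $K\hookrightarrow\Hom(A,T)$ — are elementary facts about finitely generated abelian groups, and the passage from $\rho(B)$ and $B\cap K$ back to $B$ is the standard fact that an extension of a finitely generated group by a finitely generated group is finitely generated.
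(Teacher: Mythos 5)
Your proof is correct and follows essentially the same route as the paper: both reduce to Lemma~\ref{9.6} via the induced action on the torsion-free quotient $A/T\cong\Z^n$ and then dispose of the kernel. The one (harmless) refinement is that you show the entire kernel $K$ of $\Aut(A)\to\Aut(A/T)$ is finite by injecting it into $\Hom(A,T)$, whereas the paper first passes to the subgroup acting trivially on the torsion part and only proves that this smaller intersection is finite, recovering finite generation from the two quotients.
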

\begin{proof}
For $A$ is the extension of its periodic subgroup $B$ (which is finite) by $A/B$ which is free Abelian of some finite rank $r$ and the group of automorphisms of $A/B$ in $U_r.$ Hence if $G$ is an Abelian group of automorphisms of $A$, and $G_1$ the subgroup of all $a\in G$ such that $[A,a] \leq B,$ then $G/G_1$ is finitely generated by (\ref{9.6}). If $G_2$ consists of all $\beta \in G$ such that $[B, \beta] =1,$ then $|G:G_2|$ is finite. Let $G_0 = G_1 \cap G_2.$ Let $A = \left<B, a_1, a_2, \ldots, a_r\right>.$ Each $\gamma \in G_0$ determines the $r$ elements $b_i = [a_i, \gamma]$ of $B$ and since $[B, \gamma] =1,$ these $r$ elements determine $\gamma$ uniquely. Thus $G_0$ is finite. Since $G/G_0$ is finitely generated, it follows that $G$ is too.
\end{proof}

\begin{lemma}\label{9.7}
A soluble group $G$ is polycyclic if and only if all its Abelian subgroups are finitely generated.
\end{lemma}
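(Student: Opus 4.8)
The plan is to prove the two implications separately. The forward direction is immediate: if $G$ is polycyclic then by Lemma~\ref{lemma1.9}(i) it is soluble and satisfies \textbf{Max}, so in particular every Abelian subgroup of $G$ is finitely generated. All the work is in the converse, and here I would argue by induction on the derived length $d$ of $G$. The delicate point is that the hypothesis ``every Abelian subgroup is finitely generated'' is inherited by subgroups but not obviously by quotients, so a naive quotient-and-recurse cannot work directly.

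First I would dispose of the special case in which $G$ is nilpotent of class at most $2$, proving that such a $G$ (with all Abelian subgroups finitely generated) is polycyclic. Put $Z=\zeta_1(G)$; this is Abelian, hence finitely generated, and $G'\leq Z$ is finitely generated as well. The commutator induces a well-defined non-degenerate alternating bilinear map $\beta\colon G/Z\times G/Z\to G'$, $(xZ,yZ)\mapsto[x,y]$ (well-definedness and bilinearity use $G'\leq Z$; non-degeneracy says exactly that the radical is $Z/Z$). By Zorn's Lemma pick a maximal totally isotropic subgroup $S$ of $G/Z$ (unions of chains of such are totally isotropic, so Zorn applies); maximality forces $S^{\perp}=S$, since the subgroup generated by $S$ together with any $v\in S^{\perp}$ is again totally isotropic. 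The preimage of $S$ in $G$ is Abelian (its commutators vanish), hence finitely generated, so $S$ is finitely generated; and then the quotient $(G/Z)/S$ (note $S=S^{\perp}$) embeds in $\Hom(S,G')$ via $v\mapsto\beta(v,\cdot)|_S$, which is finitely generated since $S$ and $G'$ are. Therefore $G/Z$ is finitely generated, hence so is $G$, hence $G$ is polycyclic by Lemma~\ref{lemma1.9}(i).

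For the general inductive step, if $d\leq 1$ then $G$ is Abelian and finitely generated, so polycyclic. If $d\geq 2$, set $A=G^{(d-1)}$: this is a characteristic Abelian subgroup, finitely generated by hypothesis, hence polycyclic. Since an extension of a polycyclic group by a polycyclic group is polycyclic, it suffices to show $G/A$ is polycyclic; as $G/A$ has derived length $\leq d-1$, by the inductive hypothesis it is enough to check that every Abelian subgroup $B/A$ of $G/A$ is finitely generated (equivalently, $B'\leq A$). The device is to use the conjugation action of $B$ on $A$: its kernel is $D:=C_B(A)\supseteq A$, so $B/D$ embeds in $\Aut(A)$, and being a quotient of the Abelian group $B/A$ it is Abelian, hence finitely generated by the Corollary to Lemma~\ref{9.6} (here $A$ being finitely generated Abelian is essential). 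On the other hand $D'\leq B'\leq A\leq\zeta_1(D)$, so $D$ is nilpotent of class $\leq 2$ with all Abelian subgroups finitely generated, hence polycyclic by the special case above; thus $D/A$ is finitely generated. Then $B/A$, an extension of the finitely generated group $B/D$ by the finitely generated group $D/A$, is finitely generated, completing the induction.

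I expect the main obstacle to be locating the correct reduction in the inductive step: because the property in question is not quotient-closed, one must instead control the kernel $C_B(A)$ of the action on the finitely generated Abelian normal subgroup $A$, and then handle that nilpotent-of-class-$\leq 2$ group by the symplectic-flavoured argument above. The key inputs there are the Corollary to Lemma~\ref{9.6} and the elementary but slightly surprising fact that a maximal totally isotropic subgroup of $G/\zeta_1(G)$ is its own perpendicular and lifts to an Abelian --- hence, by hypothesis, finitely generated --- subgroup of $G$.
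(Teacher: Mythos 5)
Your proof is correct, but the inductive step takes a genuinely different (and longer) route than Hall's. Both arguments induct on the derived length, set $A=G^{(d-1)}$, and reduce to showing that each Abelian subgroup $B/A$ of $G/A$ is finitely generated, with the Corollary to Lemma~\ref{9.6} as the essential input; the difference lies in how that reduction is completed. Hall chooses (by Zorn) a maximal Abelian subgroup $C$ of $B$ containing $A$: since $B'\leq A\leq C$ it is normal in $B$, maximality forces $C_B(C)=C$, and so $B/C$ is an Abelian group of automorphisms of the finitely generated Abelian group $C$ --- the Corollary to Lemma~\ref{9.6} then finishes in one line. You instead centralize $A$ itself: $B/C_B(A)$ is disposed of by the same corollary, but you are left with $D=C_B(A)$, which is only nilpotent of class at most $2$, and you must prove separately that such a group with all Abelian subgroups finitely generated is polycyclic; this is where your alternating-form argument on $G/\zeta_1(G)$ with a maximal totally isotropic subgroup comes in. That argument is sound (the self-perpendicular maximal isotropic subgroup plays, one level down, exactly the role of Hall's self-centralizing maximal Abelian subgroup, and $\Hom(S,G')$ is indeed finitely generated for $S$, $G'$ finitely generated Abelian), but choosing $C$ maximal Abelian rather than as the centralizer of $A$ makes the whole class-$2$ detour unnecessary. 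What your version buys is an explicit, reusable structural fact about class-$2$ nilpotent groups; what Hall's buys is brevity and a single uniform mechanism for the inductive step.
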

\begin{proof}
Let $G$ be a soluble group with derived series of length $r$ such that every Abelian subgroup $A$ of $G$ is finitely generated. If $r=1$, $G$ is Abelian, hence finitely generated and therefore polycyclic. Suppose $r>1;$ take $A = G^{(r-1)}$ which is Abelian and therefore finitely generated. Let $B/A$ be any Abelian subgroup of $G/A$, and let $C$ be a maximal Abelian subgroup of $G$ which contains $A$. Such a group $C$ exists by Zorn's lemma and $C$ is finitely generated. Since $B/A$ is Abelian and $C \geq A$, $C$ is normal in $B$. By the maximal property of $C$, the centraliser of $C$ in $B$ is $C$ itself. Hence $B/C$ is isomorphic with an Abelian subgroup of the group of automorphisms of the finitely generated Abelian group $C$. By (\ref{9.6}) corollary, $B/C$ is finitely generated. Since $C$ is also finitely generated, so is $B$. Thus $G/A = G/G^{(r-1)}$ is a soluble group with derived series of length $r-1$ whose Abelian subgroups are all finitely generated. By the induction hypothesis, $G/G^{(r-1)}$ is polycyclic. Since $G^{(r-1)}$ is finitely generated and Abelian, $G$ itself is polycyclic. The converse is clear.
\end{proof}

\begin{thm}\label{9.8}
Every soluble subgroup of a unimodular group $U_n$ is polycyclic. \footnote{ This is the converse of L. Auslanders theorem (cf. p. 57) that every polycyclic group is embeddable in some $U_d$.}
\end{thm}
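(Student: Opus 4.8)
The plan is to deduce this immediately from the two preceding lemmas, since all the real content has already been established there. Let $G$ be a soluble subgroup of $U_n$. Because $G$ is soluble, Lemma \ref{9.7} tells us that $G$ is polycyclic \emph{if and only if} every Abelian subgroup of $G$ is finitely generated. So the entire statement reduces to verifying that hypothesis.

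First I would observe that any Abelian subgroup $A$ of $G$ is, in particular, an Abelian subgroup of $U_n$, since $G \leq U_n$. Then Lemma \ref{9.6} applies verbatim: every Abelian subgroup of $U_n$ is finitely generated, hence $A$ is finitely generated. Thus all Abelian subgroups of $G$ are finitely generated, and Lemma \ref{9.7} yields that $G$ is polycyclic. That is the whole argument.

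There is no real obstacle: the substance has been packaged into Lemma \ref{9.6} (resting on rational reducibility, Schur's Lemma, and Dirichlet's unit theorem) and Lemma \ref{9.7} (an induction on the derived length via the corollary to Lemma \ref{9.6}). The only points requiring any care are that Lemma \ref{9.7} needs $G$ itself to be soluble, which is precisely the hypothesis here, and that ``Abelian subgroup of $G$'' and ``Abelian subgroup of $U_n$ lying inside $G$'' denote the same objects, which is immediate. So the proof is essentially a two-line citation of (\ref{9.6}) and (\ref{9.7}).
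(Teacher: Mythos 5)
Your argument is correct and is exactly the paper's proof: Hall derives Theorem \ref{9.8} precisely by combining Lemma \ref{9.6} (every Abelian subgroup of $U_n$ is finitely generated) with Lemma \ref{9.7} (a soluble group is polycyclic if and only if all its Abelian subgroups are finitely generated). Nothing further is needed.
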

\begin{proof}
This now follows from (\ref{9.6}) and (\ref{9.7}). 
\end{proof}
More generally, it is easy to show that every soluble group of automorphisms of a polycyclic group is itself polycyclic. \footnote{ L. Auslander has proved that the full automorphism group of a polycyclic group is finitely related.}


\bibliography{bibs}
\bibliographystyle{plain}

\end{document}